\documentclass[11pt]{article}

\usepackage[margin=1in]{geometry}
\usepackage[T1]{fontenc}
\usepackage{lmodern}
\usepackage{amsmath,amssymb,amsfonts,amsthm,mathtools,mathrsfs}
\usepackage{graphicx}
\usepackage{booktabs}
\usepackage{enumitem}
\usepackage{xcolor}
\usepackage{hyperref}
\usepackage{microtype}
\usepackage{indentfirst}
\usepackage{bookmark}
\usepackage{aliascnt}
\usepackage{authblk}
\allowdisplaybreaks[2]
\hypersetup{colorlinks=true,linkcolor=blue!50!black,citecolor=blue!50!black,urlcolor=blue!50!black}

\numberwithin{equation}{section}
\newtheorem{theorem}{Theorem}[section]
\newaliascnt{lemma}{theorem}
\newtheorem{lemma}[lemma]{Lemma}
\aliascntresetthe{lemma}
\newaliascnt{proposition}{theorem}
\newtheorem{proposition}[proposition]{Proposition}
\aliascntresetthe{proposition}
\newaliascnt{corollary}{theorem}

\aliascntresetthe{corollary}
\newaliascnt{claim}{theorem}

\aliascntresetthe{claim}
\newaliascnt{definition}{theorem}
\newtheorem{definition}[definition]{Definition}
\aliascntresetthe{definition}
\newaliascnt{problem}{theorem}

\aliascntresetthe{problem}
\newaliascnt{conjecture}{theorem}
\newtheorem{conjecture}[conjecture]{Conjecture}
\aliascntresetthe{conjecture}
\theoremstyle{remark}
\newaliascnt{remark}{theorem}

\aliascntresetthe{remark}

\usepackage[nameinlink,capitalise]{cleveref}
\crefname{theorem}{Theorem}{Theorems}
\Crefname{theorem}{Theorem}{Theorems}
\crefname{lemma}{Lemma}{Lemmas}
\Crefname{lemma}{Lemma}{Lemmas}
\crefname{proposition}{Proposition}{Propositions}
\Crefname{proposition}{Proposition}{Propositions}
\crefname{corollary}{Corollary}{Corollaries}
\Crefname{corollary}{Corollary}{Corollaries}
\crefname{claim}{Claim}{Claims}
\Crefname{claim}{Claim}{Claims}
\crefname{definition}{Definition}{Definitions}
\Crefname{definition}{Definition}{Definitions}
\crefname{section}{Section}{Sections}
\Crefname{section}{Section}{Sections}
\crefname{equation}{Equation}{Equations}
\Crefname{equation}{Equation}{Equations}
\crefname{conjecture}{Conjecture}{Conjectures}
\Crefname{conjecture}{Conjecture}{Conjectures}
\crefname{problem}{Problem}{Problems}
\Crefname{problem}{Problem}{Problems}
\crefname{remark}{Remark}{Remarks}
\Crefname{remark}{Remark}{Remarks}

\DeclareMathOperator{\ex}{ex}
\DeclareMathOperator{\spex}{spex}

\title{Unbalanced spectral Tur\'an problem for color-critical graphs with prescribed large maximum degree
	\footnote{Corresponding author: Chang Liu (liuchang\_@nudt.edu.cn)}
}
\author{Chang Liu}
\affil{College of Sciences, National University of Defense Technology,\linebreak Changsha 410073, China}
\date{ }

\begin{document}
\maketitle

\begin{abstract}
Let $F$ be a connected color-critical graph with $\chi(F)=r+1\ge4$, let $S_{n,\Delta}^{(r)}=(n-\Delta)K_1\vee T(\Delta,r-1)$. We determine the graph of maximum adjacency spectral radius among all $n$-vertex $F$-free graphs with prescribed maximum degree $\Delta$. There is a constant $s_F\in[0,1)$ such that, for all sufficiently large $n$,
$\left\lceil\frac{(r-1)n}{r}\right\rceil\le \Delta\le n-\Theta(n^{s_F})$
implies that every $n$-vertex $F$-free graph $G$ with $\Delta(G)=\Delta$ satisfies $\rho(G)\le \rho\bigl(S_{n,\Delta}^{(r)}\bigr)$, with equality if and only if $G\cong S_{n,\Delta}^{(r)}$. This is the spectral counterpart of the edge theorem of [European J. Combin. 106 (2022), 103576.] and extends the clique result in \cite{paper1}. This result also provides a benchmark for unbalanced spectral Tur\'an problems arising from other extremal parameters.
\end{abstract}

\noindent\textbf{AMS subject classifications.} 05C35; 05C50; 05C69.\par
\noindent\textbf{Keywords.} Extremal graph theory; Unbalanced Tur\'an problem; Maximum degree; Color-critical graph; Stability

\section{Introduction}

All graphs considered here are finite, simple and undirected.  Let $G=(V(G),E(G))$ be a graph with vertex set $V(G)$ and edge set $E(G)$.  Its order and size are $n(G):=|V(G)|$ and $e(G):=|E(G)|$, respectively.  For $v\in V(G)$, let $N_G(v)$ be the neighbourhood of $v$, let $d_G(v):=|N_G(v)|$ be its degree, and let $\Delta(G):=\max\{d_G(v):v\in V(G)\}$ be the maximum degree of $G$.  The adjacency matrix of an $n$-vertex graph is $A(G)=(a_{uv})_{u,v\in V(G)}$, where $a_{uv}=1$ if $uv\in E(G)$ and $a_{uv}=0$ otherwise.  We write $\rho(G)$ for the spectral radius of $G$, namely the largest eigenvalue of $A(G)$.  By the Perron--Frobenius theorem, $\rho(G)$ has a nonnegative eigenvector, called a Perron vector; if $G$ is connected, every Perron vector is positive.

For a graph $F$, a graph $G$ is called $F$-free if it contains no copy of $F$ as a subgraph.  We write $\ex(n,F)$ for the maximum number of edges in an $n$-vertex $F$-free graph and let $\operatorname{EX}(n,F)$ denote the family of graphs attaining this maximum.  Similarly, $\spex(n,F)$ denotes the maximum adjacency spectral radius among $n$-vertex $F$-free graphs, and $\operatorname{SPEX}(n,F)$ denotes the corresponding family of spectral-extremal graphs.  For vertex-disjoint graphs $G$ and $H$, their union is denoted by $G\cup H$, while their join is denoted by $G\vee H$.  Let $T(n,r)$ be the balanced complete $r$-partite graph on $n$ vertices and put $t(n,r)=e(T(n,r))$.

The classical Tur\'an-type problem asks for $\ex(n,F)$ and for a description of $\operatorname{EX}(n,F)$.  Tur\'an's theorem determines $\ex(n,K_{r+1})$, while the Erd\H{o}s--Stone--Simonovits theorem gives $\ex(n,F)=\left(1-\frac1r+o(1)\right)\frac{n^2}{2}$ for every graph $F$ with $\chi(F)=r+1\ge3$ \cite{Erdos-Stone,ErdosSimonovits,Turan}. The corresponding stability theorem asserts that near-extremal $F$-free graphs are close to $T(n,r)$ \cite{Erdos,Simonovits-1}.  Thus, in the unrestricted problem, the chromatic number determines both the extremal density and the stable structure.

The spectral Tur\'an problem replaces $e(G)$ by $\rho(G)$.  Wilf \cite{Wilf} proved that every $n$-vertex $K_{r+1}$-free graph $G$ satisfies $\rho(G)\le(1-1/r)n$, and Nikiforov \cite{Nikiforov-ESS} established a spectral Erd\H{o}s--Stone--Bollob\'as theorem, showing that the chromatic number gives the same first-order threshold in the spectral setting.  This led to the systematic study of $\operatorname{SPEX}(n,F)$ and to spectral analogues of classical Tur\'an-type theorems; see, for example, \cite{Byrne-Desai-Tait,Fang-Tait-Zhai,Nikiforov-1,Nikiforov-2}.

The edge-spectral Tur\'an problem fixes the number of edges rather than the order and asks for the largest possible spectral radius.  Brualdi and Hoffman, Stanley, and Hong, Shu and Fang obtained fundamental upper bounds for the spectral radius in terms of the number of edges \cite{Brualdi-Hoffman,Stanley,Hong-Shu-Fang}.  For $K_{r+1}$-free graphs with $m$ edges, Nikiforov proved $\rho(G)^2\le\left(1-\frac1r\right)2m$, which implies both the classical Tur\'an bound and Wilf's spectral bound \cite{Nikiforov-2002}.  Recent work has obtained edge-spectral Erd\H{o}s--Stone--Simonovits theorems and exact edge-spectral results for color-critical graphs \cite{Li-Liu-Zhang-ESS,Li-Liu-Zhang-color-critical}.  These results are particularly useful for sparse graphs, where vertex-order spectral estimates may not be sharp.

A graph $F$ is called color-critical if it contains an edge $e$ such that $\chi(F-e)<\chi(F)$; such an edge is called a critical edge.  Complete graphs and odd cycles are basic examples.  Color-critical graphs occupy a central position in all three versions of the Tur\'an problem described above.  Simonovits proved that if $\chi(F)=r+1$, then $T(n,r)$ is the unique member of $\operatorname{EX}(n,F)$ for sufficiently large $n$ \cite{Simonovits-1}.  Moon and Simonovits extended this to disjoint unions: if $F_1,\ldots,F_t$ are color-critical graphs with the same chromatic number $r+1$, then $K_{t-1}\vee T(n-t+1,r)$ is the unique extremal graph for $\bigcup_{i=1}^tF_i$ \cite{Moon,Simonovits-disjoint}.  The corresponding vertex-spectral statement was proved by Nikiforov for one color-critical graph and by Lei and Li for disjoint color-critical graphs \cite{Lei-Li,Nikiforov-2}.  Li, Liu and Zhang established the edge-spectral analogue for color-critical graphs of chromatic number at least four \cite{Li-Liu-Zhang-color-critical}; related signless Laplacian and supersaturation results appear in \cite{Fang-Li-Lin-Ma-supersaturation,Zheng-Li-Li-signless}.  These results show that color-criticality is precisely the condition under which the balanced Tur\'an construction persists across several extremal parameters.

We next turn to Tur\'an problems with a prescribed large maximum degree.  This local condition changes the extremal construction itself and leads to a non-balanced multipartite graph.  Balister, Bollob\'as, Riordan and Schelp \cite{Balister-Bollobas-Riordan-Schelp} determined the edge extremum for $C_{2k+1}$-free graphs with prescribed large maximum degree: in the range $n/2\leq\Delta(G)\leq n-k-1$, the extremal graph is complete bipartite.  Huo and Yuan \cite{Huo-Yuan} extended this result to color-critical graphs.  Their theorem is the edge counterpart of the spectral problem considered here.

\begin{theorem}[Huo and Yuan \cite{Huo-Yuan}]\label{thm:HY-result}
Let $F$ be a color-critical graph with $\chi(F)=r+1\ge4$. There is a constant $s_F\in[0,1)$ such that, for all sufficiently large $n$, every $F$-free graph $G$ with maximum degree $\Delta$ satisfying $\lceil (r-1)n/r\rceil\leq\Delta\leq n-\Theta(n^{s_F})$ satisfies $e(G)\leq\Delta (n-\Delta)+t(\Delta,r-1)$. Equality holds if and only if $G\cong (n-\Delta)K_1\vee T(\Delta,r-1)$.
\end{theorem}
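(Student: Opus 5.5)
The statement is the edge theorem of Huo and Yuan, cited from the literature. A self-contained plan follows the standard stability route; the unbalanced maximum-degree condition is what forces the non-Turán extremal graph.

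\emph{Step 1: stability.} Take an extremal $F$-free graph $G$ with $\Delta(G)=\Delta$ and $e(G)\ge \Delta(n-\Delta)+t(\Delta,r-1)$. Since $\Delta\ge (r-1)n/r$, this number of edges is $t(n,r)-O((\Delta-(r-1)n/r)^2)$. If $\Delta$ is close to $(r-1)n/r$, the Erdős–Simonovits stability theorem gives an $r$-partition with $o(n^2)$ non-crossing edges. Otherwise I would fix a vertex $v$ of degree $\Delta$ and study $G[N(v)]$ and $G-N(v)$. The $\Delta$ neighbours of $v$ span an $F'$-free graph, where $F'=F-x$ for a vertex $x$ of the critical edge, so that $\chi(F')\le r$ and indeed $F'$ is a color-critical-type graph for $K_r$-level counting. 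Hence $e(G[N(v)])\le t(\Delta,r-1)+o(\Delta^2)$. Counting gives $e(G)\le e(N(v))+\sum_{u\notin N(v)}d(u)$ together with $d(u)\le\Delta$. The complement $W=V\setminus N(v)$ has $n-\Delta$ vertices, and the count forces $G[N(v)]$ to be close to $T(\Delta,r-1)$, $W$ to be almost independent, and almost all $W$–$N(v)$ edges to be present.

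\emph{Step 2: exact structure.} Using the near-complete structure, I would take a max-cut $(r-1)$-partition $V_1,\dots,V_{r-1}$ of $N(v)$ together with $V_0=W$, and clean it by deleting the $o(n)$ vertices of small degree to their prescribed parts. The standard color-critical embedding lemma then applies: an edge inside some part whose endpoints have linearly many common-type neighbours in every other part, together with the complete $r$-partite structure, yields $F$. Hence there are no edges inside $V_0$ and none inside the $V_i$. Every $W$-vertex has degree at most $\Delta$, and the edges inside $N(v)$ are at most $t(\Delta,r-1)$. This gives $e(G)\le \Delta(n-\Delta)+t(\Delta,r-1)$, with equality forcing the join structure.

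\emph{Main obstacle.} The hardest step is the upper limit $\Delta\le n-\Theta(n^{s_F})$. When $W$ is tiny, sublinear, its vertices are not controlled by density arguments. A few edges inside $W$, or extra edges inside the parts, could be compensated by the degree slack, and the embedding of $F$ then needs a Kővári–Sós–Turán type count: one must find $K_{t,\dots,t}$ blowups among common neighbourhoods when $|W|\approx n^{s}$. I would therefore choose $s_F$ from the Zarankiewicz exponent of the bipartite "decomposition family" of $F$. With that choice, $|W|\gg n^{s_F}$ guarantees the extra edges create $F$ faster than they gain edges.
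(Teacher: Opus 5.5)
The paper quotes this theorem from Huo and Yuan without proof, so there is no in-paper argument to match. Judged on its own, your outline has a genuine gap at the decisive step.

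In Step 2 you conclude from the critical-edge embedding that no $V_i$ contains an edge, and hence $e(G[N(v)])\le t(\Delta,r-1)$. The embedding only disposes of an internal edge whose endpoints, together with the vertices chosen in the other parts, have a large common neighbourhood in $W$, since the rest of the critical colour class must be placed there. A vertex of $N(v)$ that misses a constant fraction of $W$ can lie on internal edges without creating $F$. The root alone only forces $G[N(v)]$ to be $\partial_cF$-free, and $F-x$ is in general not color-critical, so $\ex(\Delta,\partial_cF)$ can exceed $t(\Delta,r-1)$. For example, $F=K_2\vee C_4$ has $\partial_cF=\{K_1\vee C_4\}$, and $T(\Delta,2)$ plus a matching inside one class is $(K_1\vee C_4)$-free.

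Let $B_2$ be the set of vertices of $N(v)$ with at least $\varepsilon|W|$ non-neighbours in $W$. What the root does give (cf.\ \cref{lem:nonright-B2-internal}) is that each such vertex of non-small degree has at most $f$ neighbours among the good vertices of its own part, and that these vertices span an $\mathcal M(\partial_cF)$-free graph in each part. So the surviving internal edges number at most $(\Lambda_F\Delta^{s_F}+f)|B_2|$.

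The missing idea is to charge these edges to the degree slack. Let $M$ be the number of missing $W$--$N(v)$ pairs and $I=e(G[W])$. Then $e(G)=e(G[N(v)])+(n-\Delta)\Delta-(M-I)$, and $d(u)\le\Delta$ for $u\in W$ gives $M\ge 2I$ (\cref{lem:root-defect-main}). Hence $M-I\ge M/2\ge\frac{\varepsilon}{2}|W||B_2|$, which beats $(\Lambda_F\Delta^{s_F}+f)|B_2|$ once $n-\Delta\ge Cn^{s_F}$ with $C\gg(\Lambda_F+f)/\varepsilon$ (compare \eqref{eq:nonright-B2-positive}). That trade-off is where the upper limit on $\Delta$ comes from, not a K\H{o}v\'ari--S\'os--Tur\'an search for blow-ups inside a small $W$.

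Accordingly, $s_F$ must be read off from $\mathcal M(\partial_cF)$. The decomposition family of the color-critical $F$ itself contains $K_2$ plus isolated vertices and yields no exponent. Likewise, the low-degree vertices of $N(v)$ cannot simply be deleted in an exactness argument. Their internal edges have to be paid for by their missing crossing pairs, as in \eqref{eq:nonright-L-missing-cross}.

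Step 1 is also too coarse in this regime. Erd\H{o}s--Stone gives only $M-I=o(\Delta^2)$, but $|W|\,|N(v)|=(n-\Delta)\Delta$ is itself $o(\Delta^2)$ when $n-\Delta=o(n)$. So nothing follows about the proportion of missing $W$--$N(v)$ pairs, and the $W$-reservoirs needed in Step 2 are not yet available. You need the sharp cap $e(G[N(v)])\le t(\Delta,r-1)+\Lambda_F\Delta^{1+s_F}$ from \eqref{eq:boundary-edge-cap}. Then $M\le 2(M-I)\le 2\Lambda_F\Delta^{1+s_F}\le (2\Lambda_F/C)|W|\Delta$, so only a small proportion of vertices on either side is deficient, and the cleaning can begin.
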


The complete multipartite graph in this theorem is generally unbalanced.  The unbalancedness is forced by the exact degree condition rather than by the forbidden graph: one part has size $n-\Delta$, while the other $r-1$ parts are as equal as possible.  Liu's work on nonregular spectral extremal problems with prescribed maximum degree (see \cite{Liu-nonregular-maximum-degree}) and related results on clique counts under degree constraints further illustrate this phenomenon (see \cite{Chakraborti-Chen}).

The preceding theorem leaves open the corresponding exact spectral problem in the prescribed maximum-degree layer.  Stability alone gives only an approximate multipartite structure, while the Perron vector is sensitive to the location of the exceptional vertices and to changes that may alter the maximum degree.  The purpose of this paper is to resolve this exact spectral problem for connected color-critical graphs of chromatic number at least four.

For a fixed graph $F$ and integers $n,\Delta$, write
\begin{equation*}
	\mathfrak G_{n,\Delta}(F):=\{G: |V(G)|=n,\ G\text{ is }F\text{-free, and }\Delta(G)=\Delta\}.
\end{equation*}
The edge and spectral extremal values in this exact layer are
\begin{equation*}
	\ex_F(n,\Delta):=\max_{G\in\mathfrak G_{n,\Delta}(F)}e(G)
	\quad\text{and}\quad
	\spex_F(n,\Delta):=\max_{G\in\mathfrak G_{n,\Delta}(F)}\rho(G),
\end{equation*}
respectively.  Suppose that $\chi(F)=r+1$ and $\left\lceil\frac{(r-1)n}{r}\right\rceil\leq\Delta\leq n-1$. The lower bound is precisely the range in which the graph
\begin{equation}\label{def:root-graph}
S_{n,\Delta}^{(r)}:=(n-\Delta)K_1\vee T(\Delta,r-1)
\end{equation}
has maximum degree $\Delta$.  It is a complete $r$-partite graph with one part of size $n-\Delta$ and $r-1$ balanced remaining parts.

The previous work \cite{paper1} established the first general results for these unbalanced edge and spectral Tur\'an problems.  In the clique case it determined both extremal values in every admissible degree layer.  We record its spectral conclusion.
\begin{theorem}[\cite{paper1}]\label{thm:result-2}
	If $G$ is an $n$-vertex $K_{r+1}$-free graph with maximum degree $\Delta$ and $\left\lceil (r-1)n/r\right\rceil
	\le \Delta\le n-1$, then $\rho(G)\le\rho(S_{n,\Delta}^{(r)})$, with equality if and only if $G\cong S_{n,\Delta}^{(r)}$.
\end{theorem}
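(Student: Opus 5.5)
Since $G$ is $K_{r+1}$-free, $\chi$-type arguments are replaced by direct structural ones. I would proceed by induction on $r$, exploiting the maximum-degree vertex. Let $v$ have $d(v)=\Delta$, put $A=N(v)$ and $B=V(G)\setminus A$, so $|B|=n-\Delta$ and $v\in B$. Because $G$ is $K_{r+1}$-free, $G[A]$ is $K_r$-free. The target graph $S_{n,\Delta}^{(r)}$ has exactly this shape: $B$ is independent, $A$ induces $T(\Delta,r-1)$, and $A$ and $B$ are completely joined. The aim is to show that any deviation lowers $\rho$.

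\textbf{Step 1 (Rayleigh quotient split).} Let $x$ be a unit Perron vector of $G$; we may assume $G$ is connected, otherwise we pass to a component and use monotonicity. Then
\[
\rho(G)=2\sum_{uw\in E(G[A])}x_ux_w+2\sum_{uw\in E(A,B)}x_ux_w+2\sum_{uw\in E(G[B])}x_ux_w .
\]
Compare this with the graph $G'$ obtained by deleting all edges inside $B$ and adding all edges between $A$ and $B$, while keeping $G[A]$. Each vertex $u\in B$ then has degree $|A|=\Delta$, and each vertex of $A$ has degree at most $\Delta$ provided $G[A]$ has maximum degree at most $\Delta-(n-\Delta)$. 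The graph $G'$ is $K_{r+1}$-free, since $B$ is independent and $G[A]$ is $K_r$-free. The key inequality to establish is $\rho(G')\ge\rho(G)$, with strict inequality unless $G=G'$.

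The main obstacle is controlling the edges inside $B$ against the missing $A$–$B$ edges. Every $u\in B$ satisfies $d(u)\le\Delta$. So if $u$ has $k$ neighbours inside $B$, then $u$ misses at least $k$ vertices of $A$. I would try to use the eigen-equation to show that the Perron entries on $A$ are at least those on $B$ (up to the required comparison), for instance via $\rho x_u=\sum_{w\sim u}x_w$ together with the fact that $A$-vertices are adjacent to $v$. I would then swap, for each $u$, its $B$-edges for missing $A$-edges, using a Kelmans/rotation-type transformation: $\rho$ does not decrease when the edge $uw$ with $w\in B$ is replaced by $uw'$ with $w'\in A$ and $x_{w'}\ge x_w$. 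Each such rotation must keep the graph $K_{r+1}$-free and keep $\Delta$ fixed. Keeping $\Delta$ fixed works because degrees in $B$ are preserved and the degrees in $A$ stay below $\Delta$ after the final cleanup.

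\textbf{Step 2 (optimise $G[A]$).} Once $G=B$-independent$\,\vee\,G[A]$, we have $\rho(G)$ equal to the largest root of an equation in which $G[A]$ enters monotonically; for example, an equitable quotient shows that $\rho$ increases with $\rho$-weighted quantities of $G[A]$. Among $K_r$-free graphs on $\Delta$ vertices, the Perron-type functional is maximised by $T(\Delta,r-1)$, by the spectral Turán theorem (Nikiforov) or by induction on $r$ using the theorem itself. The degree condition $\Delta\ge\lceil (r-1)n/r\rceil$ ensures that $T(\Delta,r-1)$ has maximum degree at most $\Delta-(n-\Delta)$, so the degree constraint on $A$ is harmless. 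Equality tracking through each strict step gives uniqueness.
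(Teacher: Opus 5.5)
The paper does not prove this statement; it is quoted from \cite{paper1}. Your plan therefore has to stand on its own, and its key step fails. The inequality $\rho(G')\ge\rho(G)$ in Step~1 is false in general, even under the hypotheses of the theorem. Here $G'$ is obtained by deleting $E(G[B])$ and completing the $A$--$B$ join while keeping $G[A]$.

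\emph{Counterexample.} Take $r=3$, $n=9$, $\Delta=6=\lceil 2n/3\rceil$. Let $A=N(v)=\{a_0,\dots,a_5\}$ be independent and $B=\{v,b_1,b_2\}$. Let $b_1b_2\in E(G)$, and let $b_1,b_2$ both be adjacent to exactly $a_1,\dots,a_5$.
\begin{itemize}
\item Every clique meets $A$ in at most one vertex, and $G[B]$ is a single edge, so $G$ is $K_4$-free. Also $\Delta(G)=6$.
\item Your $G'$ is $K_{3,6}$, so $\rho(G')=\sqrt{18}$.
\item The partition $\{a_1,\dots,a_5\},\{a_0\},\{v\},\{b_1,b_2\}$ is equitable. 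It shows that $\rho(G)$ is the largest root of $\lambda^3-16\lambda-10$.
\item At $\lambda=\sqrt{18}$ this cubic equals $2\sqrt{18}-10<0$, so $\rho(G)>\rho(G')$.
\end{itemize}
With $m$ vertices in place of $a_1,\dots,a_5$, the cubic is $\lambda^3-(3m+1)\lambda-2m$, and the same happens for every $m\ge4$.

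\emph{Why it fails.} The edge $b_1b_2$ joins two heavy vertices, while the missing pairs $b_1a_0$ and $b_2a_0$ end at $a_0$, whose entry is only $x_v/\rho$. For an arbitrary $G$ nothing prevents such light vertices in $A$. Moreover, your proposed comparison runs the wrong way even in $S_{n,\Delta}^{(r)}$ itself. There the part of size $n-\Delta$ is the smallest part, so by $y_i/y_0=(\rho+s_0)/(\rho+s_i)$ (as in \cref{lem:right-envelope-comparison}) it carries the \emph{largest} Perron entries. So neither the global switch nor the rotations $uw\to uw'$ with $w'\in A$ and $x_{w'}\ge x_w$ are available in general. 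In the example, $\rho(G)\le\rho(S_{9,6}^{(3)})$ holds only because $G[A]$ is far from Tur\'an. A two-stage argument that freezes $G[A]$ while cleaning $B$ cannot let that slack pay for the edges inside $B$.

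\emph{What is needed.} You must argue about a spectral maximizer rather than an arbitrary $G$. Extremality first gives approximate structure and constant-factor lower bounds on all Perron entries. Only then do switchings become effective, or a bilinear comparison $\boldsymbol y^\top\bigl(A(\widehat G)-A(G)\bigr)\boldsymbol x$ against the complete multipartite envelope $\widehat G$, using the Perron vectors of both graphs. Because switchings can raise the maximum degree, the maximization must be taken over a class closed under them; this is the upper-tail device of \cref{sec:windows}. That is how the paper handles the color-critical generalization in \cref{sec:local-exactness}.

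\emph{Step~2 (secondary, repairable).} It is stated too loosely. The partition $\{B,A\}$ is equitable only if $G[A]$ is regular. Also, $\rho\bigl((n-\Delta)K_1\vee H\bigr)$ is the root exceeding $\rho(H)$ of $\lambda=(n-\Delta)\,\mathbf 1^\top(\lambda I-A(H))^{-1}\mathbf 1$. This depends on all walk counts of $H$, not on $\rho(H)$ alone, so the spectral Tur\'an theorem does not apply directly. Zykov symmetrization inside $A$ does work: making a lighter non-neighbour a twin of a heavier one keeps $H$ $K_r$-free and does not decrease $\rho$ of the join. This reduces to complete $(r-1)$-partite $H$, after which \cref{lem:model-spectral-comparison}(ii) applies. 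This repair does not rescue Step~1.
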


For a general $F$ with $\chi(F)=r+1$, the same paper established edge and spectral stability with respect to $S_{n,\Delta}^{(r)}$ whenever
\begin{equation*}
	a(F):=\min\{|I|: I\subseteq V(F)\text{ is independent and }\chi(F-I)\le r\}=1.
\end{equation*}
Every color-critical graph belongs to this singleton-boundary case. We use the corresponding stability theorem and spectral transfer estimate from \cite{paper1}, in the forms stated below. The boundary scale is determined by the vertex-boundary family $\partial_{c}F:=\{F-v:\ v\in V(F),\ \chi(F-v)=r\}$. We choose $s_F\in[0,1)$ so that the decomposition family of $\partial_cF$ has extremal growth $O_F(n^{1+s_F})$.

%The restriction $n-\Delta\ge C_Fn^{s_F}$ is the range in which the gain supplied by the distinguished part of $S_{n,\Delta}^{(r)}$ exceeds the possible contribution of the boundary family.

\begin{theorem}\label{thm:exact-color-critical-main}
Let $F$ be a connected color-critical graph with $\chi(F)=r+1\ge4$. There is a constant $s_F\in [0,1)$ such that, for all sufficiently large $n$, every $F$-free graph $G$ with maximum degree $\Delta$ satisfying $\left\lceil(r-1)n/r\right\rceil \le \Delta\le n-\Theta(n^{s_F})$ satisfies $\rho(G)\le\rho(S_{n,\Delta}^{(r)})$. Equality holds if and only if $G\cong S_{n,\Delta}^{(r)}$.
\end{theorem}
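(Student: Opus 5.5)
We argue by taking an extremal graph and showing it must coincide with $S_{n,\Delta}^{(r)}$. Fix $n$ large and $\Delta$ in the stated range. Let $G\in\mathfrak G_{n,\Delta}(F)$ attain $\spex_F(n,\Delta)$. Since $S_{n,\Delta}^{(r)}$ is $r$-partite, it is $F$-free, and it has maximum degree $\Delta$. Hence
\[
\rho(G)\ge\rho(S_{n,\Delta}^{(r)})\ge \Bigl(1-\tfrac1r\Bigr)n-O(1).
\]
We first reduce to a connected $G$. If $G$ were disconnected, we would add edges between components without creating a copy of $F$ (connectivity of $F$ is used here) and without raising the maximum degree, or else compare component by component. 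Then let $x$ be the positive Perron vector, normalised so that $\max_v x_v=1$.

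The first main step is to invoke the spectral stability theorem and the spectral transfer estimate of \cite{paper1}, which apply because color-critical graphs have $a(F)=1$. These give a partition $V(G)=V_0\cup V_1\cup\dots\cup V_{r-1}$ with $|V_0|=n-\Delta+o(n)$ and $|V_i|=\Delta/(r-1)+o(n)$, and with $o(n^2)$ edges differing from $S_{n,\Delta}^{(r)}$. From the eigen-equations we then clean up the structure in the standard Nikiforov/Lei--Li way. We show that the set $W$ of vertices of small degree into some part, or with many neighbours inside their own part, has size $O(1)$. We also show that every vertex outside $W$ has $x_v=\Theta(1)$, and the entries are nearly constant on each part with explicit ratios computed from the limiting quotient matrix. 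We then use color-criticality. A vertex $w$ with $\Omega(n)$ neighbours in every part, together with the large common neighbourhoods, would embed $F$. So every vertex has a part with only $o(n)$ neighbours, and moving $W$ to such parts yields a partition in which each $V_i$ spans few edges.

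The second step handles edges inside parts. This is where $s_F$ enters. The inner edges of $V_0$, and in general the graph spanned inside a part, together with the almost complete joins to the other parts, must avoid every member of the decomposition family of $\partial_cF$. Otherwise one vertex plus $F-v$ embeds $F$. Hence each part spans $O(n^{1+s_F})$ edges, and more strongly each vertex has $O(n^{s_F})$ inside neighbours. Inside $V_1,\dots,V_{r-1}$ we should in fact get zero inner edges. An inner edge $uv$ in $V_i$ for $i\ge1$, with large common neighbourhoods in all other parts, already yields $F$ through the critical edge. Indeed, $F-e$ is $r$-colorable with the endpoints of $e$ in the same class, so it embeds in the blow-up with $uv$ playing $e$. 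The same argument applies to $V_0$, provided its vertices have linear degree to all $V_i$. So $G$ is $r$-partite, apart from a bounded number of defective vertices, which are removed by a local switching argument that increases $x^{\top}A x$.

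The final step, which I expect to be the main obstacle, is the degree-constrained optimisation. Once $G$ is $r$-partite with parts $V_0,\dots,V_{r-1}$, adding all cross edges increases $\rho$. The problem is that this may push degrees above $\Delta$, and the constraint $\Delta(G)=\Delta$ must be respected exactly. I would argue as follows. Vertices in $V_0$ have degree at most $\Delta$. The maximum-degree vertex forces $\sum_{i\ge1}|V_i|\ge\Delta$, or forces some part to play the role of $V_0$. We compare $\rho$ of the complete $r$-partite graph with part sizes $(a_0,\dots,a_{r-1})$ subject to $n-a_j\le\Delta$ for all $j$. Via the characteristic equation $\sum_j a_j/(\rho+a_j)=1$, $\rho$ is maximised by making the parts as balanced as the constraint allows. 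Since $\Delta\ge\lceil (r-1)n/r\rceil$, this gives $a_0=n-\Delta$ and the remaining parts balanced. The delicate part is excluding non-complete $r$-partite graphs whose missing edges are exactly what keeps degrees at most $\Delta$ while the parts are more balanced. Here the upper bound $n-\Delta\ge C n^{s_F}$ is used. I would first try a Perron-vector comparison: deleting the $\ge$ roughly $(n-\Delta)$-scale deficit edges costs about $x^{\top}$-weight $\Theta(n-\Delta)/n$ in $\rho$. This exceeds the $O(n^{s_F}/n)$ gain available from inner edges in $V_0$ once $n-\Delta\ge Cn^{s_F}$. That comparison settles both the inequality and the uniqueness.
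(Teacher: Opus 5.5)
There is a genuine gap. It is the one you flag as the ``main obstacle'', but it already invalidates your second step, not only the last one.

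\textbf{The exact layer is not closed under your switchings.} You work inside $\mathfrak G_{n,\Delta}(F)$, but the moves you rely on need not stay in it. These are the local switchings that increase $\boldsymbol x^{\top}A\boldsymbol x$, the re-attachment of the defective set $W$, and the addition of missing cross edges. The vertices of $V_0$ already have degree close to $\Delta$, so any new edge at them can leave the layer upwards. Deleting edges at the vertices realising $\Delta$ can leave it downwards. Relaxing to $\Delta(G)\le\Delta$ is not an option either: $T(n,r)$ lies in that class and beats $S^{(r)}_{n,\Delta}$ whenever $\Delta>\lceil (r-1)n/r\rceil$. So extremality of $G$ gives no contradiction from these switchings.

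The paper resolves this as follows. It maximises $\rho$ over the upper tail $\mathfrak U_n(\Delta,\zeta_F;F)$, i.e. graphs with $\Delta\le\Delta(G)\le n-\zeta_Fn^{s_F}$. Via \cref{lem:scale-sensitive-endpoint-cap,lem:endpoint-buffer-exclusion} it shows that the maximiser's actual degree $D$ satisfies $n-D\ge\kappa_Fn^{s_F}\ge2\zeta_Fn^{s_F}$. Hence switchings that raise degrees by $O(1)$ stay admissible near the endpoint; away from it, the linear gap $n-D>\varepsilon^2n/200$ absorbs increases of order $\varepsilon^3D$. It keeps the root edge $u^*v_0$ in every switching, so the maximum degree cannot fall below $D$. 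It proves the tail maximiser is $S_{n,D,p}$, and only then uses strict monotonicity of $D\mapsto\rho(S_{n,D,p})$ to force $D=\Delta$.

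\textbf{The rooting at a maximum-degree vertex is missing.} Take $u^*$ with $d(u^*)=D$, and set $B=N(u^*)$, $A=V\setminus B$. This does three things.
\begin{itemize}
\item It fixes $|A|=n-D$ exactly. Your stability partition only gives $|V_0|=n-\Delta+o(n)$, and moving $W$ between parts changes both the part sizes and which vertex realises the maximum degree.
\item It yields $M\ge2I$ (\cref{lem:root-defect-main}), which pays for every edge inside $A$. So edges inside $V_0$ are not where $n^{s_F}$ enters.
\item It makes $G[B]$ $\partial_cF$-free.
\end{itemize}
The threshold $n-\Delta\gtrsim n^{s_F}$ is actually used elsewhere. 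The internal edges among the defective core vertices $B_2$ (those missing a constant fraction of $A$) number at most $(\Lambda_FD^{s_F}+f)|B_2|$, by $\mathcal M(\partial_cF)$-freeness. These are beaten by the at least $c|A||B_2|$ missing $A$--$B_2$ pairs. This happens inside the bilinear comparison $\boldsymbol y^{\top}(A(\widehat G)-A(G^{\star}))\boldsymbol x\le0$, with envelope $\widehat G=|A|K_1\vee K_{|V_1|,\dots,|V_p|}$ and uniform Perron lower bounds on both vectors. Since $\widehat G$ can have maximum degree up to $D+2\varepsilon^3D$, this comparison is again only legitimate in the upper tail. Your heuristic that the deficit edges ``cost $\Theta(n-\Delta)/n$'' does not replace this argument.

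\textbf{Smaller errors.}
\begin{itemize}
\item The opening bound $\rho(S^{(r)}_{n,\Delta})\ge(1-\frac1r)n-O(1)$ is false away from the bottom of the range. For $n-\Delta=o(n)$ the value is $(1-\frac1{r-1})n+O(n-\Delta)$, so the Nikiforov/Lei--Li cleanup is miscalibrated. The usable baseline is $(\alpha+\beta)D+O(D^{-1})$.
\item Joining components by new edges is unjustified. A connected color-critical $F$ may have a bridge, and the endpoints may already have degree $\Delta$. The paper instead works in a component attaining $\rho$, shows it contains a vertex of degree $D$, and carries the zero-weight set $A^0$ through the final count.
\item The optimisation ``subject to $n-a_j\le\Delta$'' is maximised by $T(n,r)$. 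The correct constraint is that the smallest part has exactly $n-\Delta$ vertices.
\end{itemize}
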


The principal difficulty is that a local spectral switching need not preserve the exact maximum-degree layer. We therefore maximize the spectral radius over an upper degree tail with a buffer at the upper endpoint, so that the switched graphs remain admissible. We then select a component attaining the spectral radius and root it at a vertex of maximum degree. The stability partition from \cite{paper1} is combined with Perron-vector switchings to eliminate the remaining internal edges and missing crossing edges. The resulting complete multipartite graph has maximum degree equal to the actual degree \(D\) of the upper-tail extremal graph. Strict monotonicity in \(D\) then returns the conclusion to the prescribed layer \(D=\Delta\).

\paragraph{Organization.}
\Cref{sec:def-lemma} introduces the boundary and decomposition families, recalls the standard extremal and spectral estimates, and states the stability and boundary estimates from \cite{paper1} in the notation used here. \Cref{sec:windows} defines the upper-tail problem and proves the endpoint estimates needed to obtain a degree buffer and a suitable root. In \Cref{sec:local-exactness}, the right and non-right degree ranges are treated separately; Perron-vector comparisons force the upper-tail extremal graph to be complete multipartite, after which monotonicity returns the result to the prescribed degree layer.

\paragraph{Notation and conventions.}
All graphs considered are finite, simple, and undirected. A graph $F$ is color-critical if there exists an edge $e\in E(F)$ such that $\chi(F-e)=\chi(F)-1$; such an edge is called critical. We denote by $e(G)$ the number of edges of $G$ and by $\rho(G)$ the spectral radius of its adjacency matrix. For $v\in V(G)$, let $N_G(v)$ and $d_G(v)$ denote its neighbourhood and degree, respectively, and let $\Delta(G)$ denote the maximum degree of $G$. If $G$ is connected, a Perron vector of $G$ is a positive eigenvector corresponding to $\rho(G)$. For disjoint subsets $X,Y\subseteq V(G)$, let $e_G(X,Y)$ denote the number of edges with one endpoint in $X$ and the other in $Y$. For two $n$-vertex graphs $G$ and $H$, define
\begin{equation*}
	d_{\mathrm{edit}}(G,H):=\min_{\phi:V(H)\to V(G)}
	\left|E(G)\mathbin{\triangle}\phi(E(H))\right|,
\end{equation*}
where the minimum is over all bijections $\phi:V(H)\to V(G)$ and $\phi(E(H)):=\{\phi(x)\phi(y):xy\in E(H)\}$.  For a family $\mathcal H$, set $d_{\mathrm{edit}}(G,\mathcal H):=\min_{H\in\mathcal H}d_{\mathrm{edit}}(G,H)$. We write $T(n,r)$ for the balanced complete $r$-partite graph on $n$ vertices and set $t(n,r):=e(T(n,r))$. Throughout, put $p:=r-1$ and define $S_{n,\Delta,p}:=(n-\Delta)K_1\vee T(\Delta,p)$. Thus, $S_{n,\Delta,p}$ is the same graph as $S_{n,\Delta}^{(r)}$.

\section{Preliminaries and the Upper-Tail Framework}\label{sec:exactification-prelim}

\subsection{Definitions and auxiliary results}\label{sec:def-lemma}
We record the auxiliary results needed below. Standard extremal facts are stated for reference, while the results from \cite{paper1} are given in the notation of the present paper.

\begin{theorem}[Erd\H{o}s--Stone \cite{Erdos-Stone}]\label{thm:erdos-stone}
	For all integers $p\ge2$ and $m\ge1$, and every $\varepsilon>0$, there
	exists $N$ such that every graph on $n\ge N$ vertices with at least
	$t(n,p)+\varepsilon n^2$ edges contains $T(m,p+1)$ as a subgraph.
\end{theorem}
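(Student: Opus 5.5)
The plan is the classical argument: first pass to a subgraph of large minimum degree, then build the complete $(p+1)$-partite graph $T(m,p+1)$ by induction on the number of parts, using counting and pigeonhole to add one part at a time. It suffices to find $K_{p+1}(t)$, the complete $(p+1)$-partite graph with all parts of size $t=m$, since $T(m,p+1)$ is a subgraph of it.

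\emph{Step 1 (minimum-degree reduction).} Let $G$ have $n$ vertices and at least $t(n,p)+\varepsilon n^2\ge (1-\frac1p+2\varepsilon)\frac{n^2}{2}$ edges. Repeatedly delete a vertex of degree less than $(1-\frac1p+\varepsilon)|V|$ from the current graph. A standard edge count shows that this must stop while at least $\sqrt{\varepsilon}\,n$ vertices remain. Indeed, if it ran down to $\eta n$ vertices, the deleted vertices would account for at most $(1-\frac1p+\varepsilon)\frac{n^2-(\eta n)^2}{2}+O(n)$ edges. The survivors would then carry more than $\binom{\eta n}{2}$ edges, which is impossible. We therefore obtain $H\subseteq G$ with $|H|=n'\ge\sqrt\varepsilon\, n$ and $\delta(H)\ge(1-\frac1p+\varepsilon)n'$, and $n'$ is large once $n$ is.

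\emph{Step 2 (induction on $p$).} We prove the following stronger claim by induction on $p\ge1$. For every $t$ and every $\varepsilon>0$, every graph $H$ on $n'$ vertices with $\delta(H)\ge(1-\frac1p+\varepsilon)n'$, where $n'$ is large, contains $K_{p+1}(t)$.

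For the base case $p=1$, the graph has $\delta\ge\varepsilon n'$, so it contains $K_{2}(t)=K_{t,t}$ by Kővári–Sós–Turán.

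For the inductive step, set $T=\lceil t/\varepsilon\rceil$. The degree condition implies $\delta(H)\ge(1-\frac1{p-1}+\varepsilon)n'$ when $p\ge2$, so by induction $H$ contains $K_p(T)$ with parts $A_1,\dots,A_p$. Let $A=\bigcup A_i$, so $|A|=pT$. Let $W$ be the set of vertices outside $A$ that have at least $t$ neighbours in every $A_i$.

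\emph{Step 3 (counting $W$ and pigeonhole).} Count the edges between $A$ and $V\setminus A$. This number is at least $pT\bigl((1-\frac1p+\varepsilon)n'-pT\bigr)$. A vertex not in $W$ misses at least $T-t$ vertices of some part, so it sends at most $pT-(T-t)$ edges into $A$. With $T\ge t/\varepsilon$, we have $pT-(T-t)\le (p-1+\varepsilon)T$. Comparing the two bounds gives $|W|\ge c\,\varepsilon n'$ for some $c>0$ depending only on $p$. Each vertex of $W$ picks a $t$-subset of each $A_i$. There are at most $\binom{T}{t}^p$ possible choices, a constant. By pigeonhole, some common choice $(B_1,\dots,B_p)$ is shared by at least $|W|/\binom{T}{t}^p\ge t$ vertices once $n'$ is large. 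Those $t$ vertices, together with $B_1,\dots,B_p$, span $K_{p+1}(t)$.

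\emph{Main obstacle.} The main difficulty is the bookkeeping in Step 3. One must choose $T$ as a function of $t$ and $\varepsilon$ so that vertices outside $W$ genuinely lose a linear amount in their edges to $A$. The inductive call also needs the constant $T$, and hence $n'$, to be large enough, with constants chained along the induction. Taking $N$ large in terms of $p$, $m$ and $\varepsilon$ handles this.
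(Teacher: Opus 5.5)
The paper gives no proof of this statement. It is quoted as a black box from Erd\H{o}s--Stone and used only in \cref{lem:low-degree-right} and \cref{lem:non-right-structural}(i) to find $T(N,p+1)$ inside $G^{\star}[B\setminus S]$. There is therefore no paper route to compare against.

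Your argument is the standard textbook proof and it is correct: minimum-degree reduction, then induction on $p$, then a pigeonhole over the $\binom{T}{t}^p$ possible neighbourhood patterns. Two small bookkeeping points:

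\begin{enumerate}
\item Since $t(n,p)$ can fall short of $(1-\frac1p)\frac{n^2}{2}$ by up to $p/8$, the first inequality in Step~1 should read $e(G)\ge(1-\frac1p+2\varepsilon)\frac{n^2}{2}-\frac p8$. This error is harmlessly absorbed into the $O(n)$ term of the deletion count.
\item The edge comparison in Step~3 gives the explicit bound $|W|\ge\bigl((p-1)\varepsilon n'-p^2T\bigr)/(1-\varepsilon)$. This is at least $\varepsilon n'/2$ once $n'$ is large, which is the linear lower bound the pigeonhole step needs.
\end{enumerate}
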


\begin{theorem}[Erd\H{o}s--Simonovits stability theorem
	\cite{Simonovits-1,Erdos}]\label{thm:ES-stability}
	Let $\mathcal F$ be a fixed finite family of graphs with $\min_{F\in\mathcal F}\chi(F)=r+1\ge3$.  For every
	$\varepsilon>0$, there exist constants $\delta=\delta(\varepsilon,\mathcal F)>0$
	and $n_0=n_0(\varepsilon,\mathcal F)$ such that every $\mathcal F$-free graph $G$ on
	$n\ge n_0$ vertices with $e(G)\ge e(T(n,r))-\delta n^2$	differs from $T(n,r)$ in at most $\varepsilon n^2$ edges.
\end{theorem}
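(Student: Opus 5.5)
The plan is to reduce to the clique case through the regularity lemma, and then use a clique-stability statement with an explicit dependence on the parameters. Fix $F_0\in\mathcal F$ with $\chi(F_0)=r+1$. Put $m:=(r+1)|V(F_0)|$; then $F_0\subseteq T(m,r+1)$, so every $\mathcal F$-free graph is $T(m,r+1)$-free. It therefore suffices to prove the statement for the single forbidden graph $T(m,r+1)$.

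Given $\varepsilon>0$, I would choose small constants in the order
\begin{equation*}
\varepsilon\gg\beta\gg\delta\gg\eta,
\end{equation*}
and apply Szemer\'edi's regularity lemma to $G$ with regularity parameter $\eta$. This gives an equipartition $V_0,V_1,\dots,V_k$ with $k\le M(\eta)$. I would then form the cleaned graph $G'$ by deleting four kinds of edges:
\begin{itemize}[nosep]
\item edges meeting the exceptional set $V_0$;
\item edges inside some $V_i$;
\item edges in irregular pairs;
\item edges in pairs of density below $d:=\sqrt{\eta}$ (or some $d$ with $\eta\ll d\ll\delta$).
\end{itemize}
At most $c(\eta+d+1/k)n^2\le\delta n^2$ edges are removed in total. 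Let $R$ be the reduced graph on $[k]$, with $ij\in E(R)$ exactly when $(V_i,V_j)$ is an $\eta$-regular pair of density at least $d$ in $G'$.

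If $R$ contained a $K_{r+1}$, the embedding (counting) lemma applied to the corresponding $r+1$ clusters would yield a copy of $T(m,r+1)$ in $G$, since $m$ is fixed and $n$ is large. So $R$ is $K_{r+1}$-free. Counting edges gives
\begin{equation*}
e(R)\ge \frac{e(G')}{(n/k)^2}\ge t(k,r)-3\delta k^2 .
\end{equation*}
I would then invoke Füredi's clique stability: a $K_{r+1}$-free graph on $k$ vertices with at least $t(k,r)-s$ edges becomes $r$-partite after deleting at most $s$ edges. This gives a partition $[k]=P_1\cup\dots\cup P_r$ such that $R$ has at most $3\delta k^2$ edges inside the parts. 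Füredi's lemma can be proved quickly by Zykov symmetrization, or by induction on $r$ through a vertex of maximum degree.

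Next I pull the partition back to $G$. Let $U_j:=\bigcup_{i\in P_j}V_i$, and distribute $V_0$ arbitrarily among the $U_j$. The edges of $G$ inside the classes $U_j$ are of three types: edges deleted in the cleaning, which number at most $\delta n^2$; edges coming from the at most $3\delta k^2$ reduced edges inside parts, which contribute at most $3\delta n^2$; and edges meeting $V_0$. Altogether $G$ has at most $5\delta n^2$ edges inside the classes. Hence the $r$-partite subgraph $G[U_1,\dots,U_r]$ has at least $t(n,r)-6\delta n^2$ edges. Since $\sum_{i<j}|U_i||U_j|\ge t(n,r)-6\delta n^2$, a convexity estimate forces $\bigl||U_j|-n/r\bigr|\le C\sqrt{\delta}\,n$ for every $j$. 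Moving $O(\sqrt\delta n)$ vertices then makes the partition exactly balanced, and this changes at most $O(\sqrt\delta n^2)$ adjacencies. The complete $r$-partite graph on the balanced partition has $t(n,r)$ edges, and $G$ misses at most $O(\sqrt\delta)n^2$ of its crossing pairs. Therefore
\begin{equation*}
d_{\mathrm{edit}}\bigl(G,T(n,r)\bigr)=O(\sqrt{\delta})\,n^2\le\varepsilon n^2,
\end{equation*}
once $\delta$ is small relative to $\varepsilon$.

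I expect the main obstacle to be the step from $R$ back to $G$. Two things need care there. The constants must be nested so that the cleaning loss, Füredi's loss, and the imbalance correction are all $o_\delta(1)n^2$. In addition, the embedding lemma must be applied to the correct blow-up $T(m,r+1)$, not merely to $K_{r+1}$. The cleanest way to handle this is to state Füredi's lemma with explicit linear dependence on the deficit, so that the only non-explicit constant is the one from the regularity lemma.
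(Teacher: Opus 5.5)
The paper does not prove this theorem. It quotes it from Erd\H{o}s and Simonovits and uses it only as a black box in \cref{lem:B-partition}. Your argument is correct, up to the two repairs below, and is essentially F\"uredi's regularity-based proof. You reduce to $T(m,r+1)$, clean the regular partition, show the reduced graph is $K_{r+1}$-free, apply the linear-deficit clique stability lemma, pull the partition back, and rebalance. The original arguments of Erd\H{o}s and Simonovits predate the regularity lemma. Your route buys modularity: all the extremal content sits in the clique lemma, and $\delta$ comes out of order $\varepsilon^2$, with the square root caused only by the rebalancing step. The price is that the regularity lemma makes $n_0$ tower-type in $1/\varepsilon$, which the classical arguments avoid. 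This is harmless here, since the paper needs only the existence of $\delta$ and $n_0$.

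Incidentally, your ``induction through a vertex of maximum degree'' proof of the clique lemma is exactly the rooted decomposition of \cref{lem:root-defect-main}. Take $B=N(u)$ and $A=V\setminus B$, and use $e(G)=e(G[B])+|A||B|-M+I$. Then $M\ge 2I$ gives $e(G[A])+\bigl(t(|B|,r-1)-e(G[B])\bigr)\le t(|B|,r-1)+|A||B|-e(G)\le t(n,r)-e(G)$, and induction on the $K_r$-free graph $G[B]$ finishes the proof.

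Two repairs are needed. First, the choice $d=\sqrt{\eta}$ does not work. The key lemma for embedding $T(m,r+1)$, whose maximum degree is $\Delta=r|V(F_0)|\ge 6$, requires $\eta$ to be small compared with $(d-\eta)^{\Delta}$. But $(\sqrt{\eta})^{\Delta}\ll\eta$, so this hypothesis fails. Use your parenthetical alternative instead: fix $d\ll\delta$ first, then take $\eta\le\eta_0(d,m)$. Second, bounding the edges inside clusters by $n^2/(2k)$ needs a lower bound $k\ge k_0(\delta)$ on the number of clusters. The regularity lemma supplies this, but you should invoke it explicitly. The constant $\beta$ in your hierarchy plays no role and can be dropped.
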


%\begin{lemma}[Nikiforov \cite{Nikiforov-1}]\label{lem:nikiforov-structural-alternative}
%	Let $G$ be a graph of order $n$.  Suppose that $p\ge2$,
%	$\frac1{\ln n}<c<p^{-8(p+21)(p+1)}$, and $0<\varepsilon<2^{-36}p^{-24}$. If $\rho(G)>\left(1-\frac1p-\varepsilon\right)n$, then one of the following holds:
%	\begin{itemize}
%		\item[\textnormal{(i)}]
%		$G$ contains $K_{p+1}\bigl(\lfloor c\ln n\rfloor,\dots,\lfloor c\ln n\rfloor,\lceil n^{1-\sqrt c}\rceil\bigr);$
%		\item[\textnormal{(ii)}]
%		$G$ differs from $T(n,p)$ in fewer than $\bigl(\varepsilon^{1/4}+c^{1/(8p+8)}\bigr)n^2$ edges.
%	\end{itemize}
%\end{lemma}
%
%\begin{lemma}[\cite{Desai-Kang-Li-Ni-Tait-Wang}]
%	\label{lem:spectral-stability-1}
%	Let $\mathcal F$ be a family of graphs with $\min_{F\in\mathcal F}\chi(F)=p+1$. For every $\xi>0$, there exist $\delta=\delta(\xi,\mathcal{F})>0$ and $n_1=n_1(\xi,\mathcal{F})$ such that the following holds.  If $G$ is an $\mathcal F$-free graph of order
%	$n\ge n_1$ with $\rho(G)\ge	\left(1-\frac1p-\delta\right)n$, then $G$ can be obtained from $T(n,p)$ by adding and deleting at most	$\xi n^2$ edges.
%\end{lemma}

\begin{definition}[\cite{paper1}]\label{def:independent-deletion}
	For a graph $F$ with $\chi(F)=r+1\geq 3$, define
	\begin{equation*}
		a(F):=\min\{|I|: I\subseteq V(F)\text{ is independent and }\chi(F-I)\le r\}.
	\end{equation*}
	If $a(F)=1$, the graph $F$ is called singleton-boundary.
\end{definition}

\begin{definition}[\cite{paper1}]\label{def:color-boundary-main}
	For a graph $F$ with $\chi(F)=r+1\geq 3$, define
	\begin{equation*}
		\partial_cF:=
		\{F-I:I\subseteq V(F)\text{ is independent},\ |I|=a(F),\ \chi(F-I)=r\}.
	\end{equation*}
	If $a(F)=1$, then
	\begin{equation*}
		\partial_cF=\{F-v:v\in V(F),\ \chi(F-v)=r\}.
	\end{equation*}
\end{definition}

\begin{definition}\label{def:decomposition}
	Given a family of graphs $\mathcal{F}$ with $\min_{F\in\mathcal F} \chi(F)=r+1\geq 2$ and $\mathcal{F}_r:=\{F\in\mathcal{F}: \chi(F)=r+1\}$, the decomposition family $\mathcal M(\mathcal F)$ of $\mathcal{F}$ consists of all bipartite graphs obtained from some $F\in\mathcal F_r$ by deleting $r-1$ color classes in some $(r+1)$-coloring of $V(F)$.
\end{definition}

For a graph $F$, the associated boundary decomposition family is
$\mathcal M(\partial_{c}F)$.  Following \cite{paper1,Huo-Yuan}, we measure its growth by
the following exponent.
\begin{definition}[see \cite{paper1}]\label{def:boundary-exponent}
	Let $F$ be a graph with $\chi(F)=r+1\ge3$.  Define
	\begin{equation*}
		t_F:=\inf\left\{t\in[-1,1):\,\limsup_{n\to\infty}\frac{\ex(n,\mathcal M(\partial_c F))}{n^{1+t}}<+\infty\right\}.
	\end{equation*}
	Then $t_F\in\{-1\}\cup[0,1)$, where the gap $(-1,0)$ is excluded by \cite[Theorem 2.36]{Furedi-Simonovits}.  For each fixed boundary family used below, we choose an exponent $s_F\in[0,1)$ large enough so that
	\begin{equation*}
		\max\{0,t_F\}\le s_F
		\quad\text{and}\quad
		\ex(n,\mathcal M(\partial_c F))=O_{F}(n^{1+s_F}).
	\end{equation*}
	The value of $s_F$ is not assumed to be optimal.
\end{definition}

\begin{theorem}[Erd\H{o}s \cite{Erdos}]\label{thm:erdos_decomposition}
	Let $\mathcal F$ be a family of graphs with $\chi(\mathcal F)=p+1\ge2$.	Then there exists $c=c(\mathcal F)>0$ such that
	\begin{equation*}
		\ex(n,\mathcal F)\le t(n,p)+(1+o(1))p\cdot\ex\!\left(\frac np,\mathcal M(\mathcal F)\right)+cn.
	\end{equation*}
\end{theorem}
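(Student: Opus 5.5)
The statement immediately above is Erdős's decomposition bound (\Cref{thm:erdos_decomposition}), which is a classical result quoted from \cite{Erdos}. I would prove it by the standard stability-plus-cleaning argument. Let $G$ be an extremal $\mathcal F$-free graph on $n$ vertices. Since $\ex(n,\mathcal F)\ge t(n,p)$, \Cref{thm:ES-stability} shows that $G$ is $o(n^2)$-close to $T(n,p)$.

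\textbf{Step 1: a max-cut partition.} Take a partition $V_1,\dots,V_p$ maximizing the number of crossing edges. Then $\sum_i e(G[V_i])=o(n^2)$ and $|V_i|=n/p+o(n)$. By the usual deletion argument we may assume $\delta(G)\ge(1-1/p-o(1))n$. Concretely, repeatedly delete vertices of small degree. Each deletion raises $e-t(\cdot,p)$ by a linear amount, so only $O(1)$ vertices can be removed, and these cost at most $cn$ edges in total.

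\textbf{Step 2: bounding the internal edges.} Let $W$ be the set of vertices having at least $\gamma n$ neighbours in their own part. Then $|W|=o(n)$, and a vertex of $W$ with many neighbours in every part would create a copy of a member of $\mathcal F$ via \Cref{thm:erdos-stone}. One checks that $|W|$ is bounded, hence contributes $O(n)$ edges.

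Outside $W$, suppose some $G[V_i]$ contains a member $M\in\mathcal M(\mathcal F)$. Typical vertices have almost complete neighbourhoods in the other parts. So we can embed $M$ inside $V_i$ and then find the remaining $p-1$ colour classes in common neighbourhoods in the other parts, which yields a copy of $F$. This is a contradiction, so each $G[V_i]-W$ is $\mathcal M(\mathcal F)$-free. Therefore
\[
e(G)\le t(n,p)+\sum_i \ex(|V_i|,\mathcal M(\mathcal F))+O(n).
\]
Since $|V_i|=(1+o(1))n/p$, this becomes $t(n,p)+(1+o(1))p\,\ex(n/p,\mathcal M(\mathcal F))+cn$.

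\textbf{Main obstacle.} The delicate point is the embedding in Step 2. The common neighbourhoods of the $|V(M)|$ embedded vertices must remain linear in each other part, which needs the minimum-degree cleaning so that every vertex misses only $\gamma n$ vertices in the other parts. It also needs the bounded-$W$ argument. I would handle this by first fixing $\gamma\ll 1/|V(F)|$ and using a greedy embedding.
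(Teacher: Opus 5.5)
The paper does not prove this statement; it quotes it from Erd\H{o}s, so there is no internal argument to compare with. Your outline (stability, a max-cut partition, minimum-degree cleaning, a bounded exceptional set, and $\mathcal M(\mathcal F)$-freeness of the cleaned parts) is the standard Simonovits-type route. However, the step that produces the \emph{linear} error term is not justified.

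You assert that a single vertex $w\in W$ with at least $\gamma n$ neighbours in every part yields a member of $\mathcal F$. It only yields $K_1\vee T(pt,p)$. That graph contains a member of $\mathcal F$ only when some $F\in\mathcal F$ has a $(p+1)$-colouring with a singleton class, which is the $a(F)=1$ situation of the paper. For $\mathcal F=\{K_{3,3,3}\}$ and $p=2$, the graph $K_{1,t,t}$ is $\mathcal F$-free, because $K_{3,3,3}$ is uniquely $3$-colourable with all classes of size $3$. Without another argument you only have $|W|=o(n)$, hence an $o(n^2)$ error, which is useless.

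The standard repair is to find $t=|V(F)|$ vertices of $W$ whose common neighbourhood contains a copy of $T(pt,p)$ with one class in each $V_j$. This gives $tK_1\vee T(pt,p)=T((p+1)t,p+1)\supseteq F$. Each $w\in W$ has $\Omega_\gamma(n^{pt})$ such copies inside $N(w)$. Pigeonholing over the at most $n^{pt}$ candidate copies gives one shared by $\Omega_\gamma(|W|)$ vertices of $W$, so $|W|\le C(\gamma,p,t)$.

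This count needs two scales, which your ``main obstacle'' paragraph lacks. With a single $\gamma$, a vertex of $N(w)\cap V_1$ may miss all of $N(w)\cap V_2$, since both sets have size only about $\gamma n$. You must take the minimum-degree slack $\gamma_0$ and a second threshold $\gamma'$ both much smaller than $\gamma/(pt)$. Then every vertex outside the $o(n)$-set $W'=\{v\in V_i: d_{V_i}(v)\ge\gamma' n\}$ misses at most $(\gamma_0+\gamma')n+o(n)$ vertices in the other parts, and the greedy count works inside $N(w)\setminus W'$. The $\mathcal M$-embedding step itself is fine with $\gamma\ll 1/|V(F)|$, because its reservoirs are whole parts.

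Two further steps are misstated.

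First, the deletion count. ``Each deletion raises $e-t(\cdot,p)$ linearly, so only $O(1)$ vertices are removed'' presupposes an $O(n)$ bound on the excess. That bound is essentially the conclusion, and it is false in general: the excess is of order $\ex(n/p,\mathcal M)$, which can be superlinear. Erd\H{o}s--Stone only gives $k=o(n)$ deletions, but that suffices. For large $m$, a vertex of degree at most $(1-1/p-\gamma_0)m$ has degree below $t(m,p)-t(m-1,p)=m-\lceil m/p\rceil$. Hence $e(G)-t(n,p)\le e(G_k)-t(n-k,p)$, and the deleted vertices cost nothing, rather than $cn$ edges.

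Second, the final asymptotic step. Passing from $\sum_i\ex(|V_i|,\mathcal M)$ with $|V_i|=n/p+o(n)$ to $(1+o(1))p\,\ex(n/p,\mathcal M)$ needs the averaging fact that $\ex(m,\mathcal M)/\binom m2$ is non-increasing in $m$. Monotonicity of $\ex$ alone points the wrong way when $|V_i|>n/p$.
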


Let $F$ be singleton-boundary with $\chi(F)=p+2\geq 3$. By \cref{def:color-boundary-main,def:boundary-exponent} and \cref{thm:erdos_decomposition}, we have, there exists a $\Lambda_F>0$, such that
\begin{equation}\label{eq:boundary-edge-cap}
\ex(n,\mathcal{M}(\partial_{c}F))\leq\Lambda_F n^{1+s_F}
\quad\text{and}\quad
\ex(n,\partial_cF)\leq t(n,p)+\Lambda_F n^{1+s_F}
\quad\text{for all sufficiently large }n.
\end{equation}
%We shall use \cref{thm:erdos_decomposition} in the following standard form.  Let $G\in\mathrm{EX}(n,\mathcal F)$, and fix a $p$-partition $V(G)=V_1\cup\cdots\cup V_p$ maximizing the number of crossing edges.  By \cref{thm:bollo-stability}, we may assume $|V_i|=\bigl(\frac1p+o(1)\bigr)n$ for all $i\in[p]$.  Moreover, after discarding $O(1)$ exceptional vertices in each class, every remaining vertex has at most $\varepsilon n$ non-neighbours in any other class. Consequently, each $G[V_i]$ is controlled by the decomposition family and
%\begin{equation}\label{eq_edros_decomp_1}
%	e(V_i)
%	\le
%	(1+o(1))
%	\ex\!\left(\frac np,\mathcal M(\mathcal F)\right)+c_i n.
%\end{equation}

\begin{theorem}[\cite{paper1}]\label{thm:unbalanced-stability-input}
Let $F$ be fixed with $\chi(F)=r+1\ge3$ and $a(F)=1$. For every $\varepsilon>0$, there are $\sigma>0$ and $n_0$ such that, for all $n\ge n_0$, all $\lceil(r-1)n/r\rceil\le\Delta\le n-1$, and every $F$-free graph $G$ with $\Delta(G)=\Delta$, the condition $\rho(G)\ge\rho(S_{n,\Delta}^{(r)})-\sigma n$ implies $d_{\mathrm{edit}}(G,S_{n,\Delta}^{(r)})\le\varepsilon n^2$.
%	Let $F$ be fixed and connected with $\chi(F)=r+1\ge3$ and $a(F)=1$.  For every $\xi>0$, there are $\sigma>0$ and $n_0$ satisfying the following.  If $n\ge n_0$, $\left\lceil\frac{r-1}{r}n\right\rceil\le\Delta\le n-1$, and $G$ is an $n$-vertex $F$-free graph with $\Delta(G)=\Delta$ and $\rho(G)\ge \rho(S_{n,\Delta}^{(r)})-\sigma n$, then $d_{\rm edit}(G,S_{n,\Delta}^{(r)})\le \xi n^2$.
\end{theorem}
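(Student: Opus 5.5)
The plan is to reduce the statement to a clique-free problem by removing few edges, and then to obtain stability by a compactness argument in the space of graphons, using \cref{thm:result-2} as the finite model for the extremal values. Throughout put $\delta:=\Delta/n\in[(r-1)/r,1]$ and let $W_\delta$ be the graphon of $S_{n,\Delta}^{(r)}$: a complete $r$-partite graphon with one part of measure $1-\delta$ and $r-1$ parts of measure $\delta/(r-1)$.

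\textbf{Step 1 (cleaning to clique-free).} Since $\chi(F)=r+1$, $F$ is contained in a blow-up $K_{r+1}(t)$ with $t=|V(F)|$. By graph removal together with Erd\H{o}s--Stone supersaturation (\cref{thm:erdos-stone}), every $F$-free $G$ becomes $K_{r+1}$-free after deleting $o(n^2)$ edges. Call the resulting graph $G'$. This deletion changes $\rho$ by at most $o(n)$ and the edit distance by $o(n^2)$. It also gives $\Delta(G')\le\Delta$, so the degree cap survives even though the exact degree layer may not.

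\textbf{Step 2 (compactness).} Suppose the theorem fails for some $\varepsilon$. Then there are $n_k\to\infty$, $\sigma_k\to0$, degrees $\Delta_k$ and $F$-free graphs $G_k$ with $\rho(G_k)\ge\rho(S_{n_k,\Delta_k}^{(r)})-\sigma_kn_k$ and $d_{\mathrm{edit}}(G_k,S^{(r)}_{n_k,\Delta_k})>\varepsilon n_k^2$. Pass to a subsequence with $\Delta_k/n_k\to\delta$ and $G'_k\to W$ in cut distance. The limit $W$ satisfies the following:
\begin{itemize}
\item $t(K_{r+1},W)=0$;
\item the degree function satisfies $d_W(x)\le\delta$ almost everywhere, since degree caps pass to cut-limits after averaging over sets;
\item $\lambda(W)=\lim\rho(G_k)/n_k\ge\lambda(W_\delta)$, since the normalized spectral radius is cut-continuous.
\end{itemize}

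\textbf{Step 3 (continuous extremal problem).} Show that every graphon $W$ with $t(K_{r+1},W)=0$ and $d_W\le\delta$ has $\lambda(W)\le\lambda(W_\delta)$, with equality only for $W$ weakly isomorphic to $W_\delta$.

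For the inequality I would first try sampling. For large $m$, a $W$-random graph on $m$ vertices can be cleaned to be $K_{r+1}$-free with degrees at most $(\delta+o(1))m$. Apply \cref{thm:result-2} to it, using that $\rho(S^{(r)}_{m,D})$ is nondecreasing in $D$ and continuous in $D/m$; below the admissible range, Wilf's bound $(1-1/r)m$ is already no larger.

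For the equality case I would run a Zykov-type symmetrization on the Perron eigenfunction $f$ of $W$. First, $W$ must be complete multipartite: otherwise a nonadjacent pair can be cloned toward the larger $f$-value, and this strictly increases $\lambda$ without creating $K_{r+1}$ and without exceeding the degree cap. Hence $W$ has at most $r$ parts of measures $a_1\ge\dots\ge a_r$ with $1-a_r\le\delta$, i.e.\ $a_r\ge1-\delta$. Maximizing $\lambda$ of a complete $r$-partite graphon under this single constraint is then a finite-dimensional optimization. The optimum takes $a_r=1-\delta$ and balances the other $r-1$ parts.

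This equality characterization, and in particular checking that the symmetrization respects the degree cap, is the main obstacle. If cloning along $f$ threatens the cap, I would instead argue directly with the eigen-equation and the constraint $d_W\le\delta$.

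\textbf{Step 4 (from cut to edit distance).} By uniqueness, $G'_k\to W_\delta$ in cut distance. For complete multipartite limits, cut convergence upgrades to edit convergence: a graph cut-close to $W_\delta$ admits a vertex partition with near-complete crossing pairs and sparse parts, and hence is $o(n^2)$-edit-close to $S^{(r)}_{n_k,\Delta_k}$. Adding back the $o(n^2)$ edges deleted in Step 1 contradicts $d_{\mathrm{edit}}(G_k,S^{(r)}_{n_k,\Delta_k})>\varepsilon n_k^2$, which proves the theorem.
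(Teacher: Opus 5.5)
\textbf{There is a genuine gap, and it is built into Steps 1--2.} You explicitly trade the exact condition $\Delta(G)=\Delta$ for the cap $\Delta(G')\le\Delta$, and the limit then records only $d_W\le\delta$.

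That information cannot suffice. The quantity $\rho(S^{(r)}_{n,D})$ is strictly \emph{decreasing} in $D$ (\cref{lem:model-spectral-comparison}(iii)), not nondecreasing as you use in Step~3. Concretely, $T(n,r)$ is $F$-free with maximum degree at most $\lceil (r-1)n/r\rceil\le\Delta$. Yet $\rho(T(n,r))-\rho(S^{(r)}_{n,\Delta})=\Omega(n)$ as soon as $\Delta-\frac{r-1}{r}n=\Omega(n)$.

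In graphon language, the continuous claim in Step~3 is false. The balanced complete $r$-partite graphon $W_T$ has $t(K_{r+1},W_T)=0$ and $d_{W_T}\equiv 1-\frac1r\le\delta$. But $\lambda(W_T)=1-\frac1r>\lambda(W_\delta)$ for every $\delta>\frac{r-1}{r}$; for example, $r=2$, $\delta=0.9$ gives $1/2$ versus $0.3$.

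The same reversed monotonicity breaks the rest of Step~3:
\begin{itemize}
\item In the sampling argument, a cap only yields the Tur\'an/Wilf bound, which is the \emph{larger} quantity.
\item In your finite-dimensional optimization, the constraint $a_r\ge1-\delta$ is satisfied by the balanced choice $a_i=1/r$, and that choice is the optimum, not $a_r=1-\delta$.
\end{itemize}

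A quick diagnostic: your argument never uses $a(F)=1$, but the statement is false without it. Take $F=K_{2,2,2}$, so $r=2$ and $a(F)=2$. The graph $K_1\vee T(n-1,2)$ is $F$-free, since deleting the apex leaves a bipartite graph while $K_{2,2,2}$ minus a vertex contains a triangle. It has $\Delta=n-1$ and $\rho\ge\lfloor (n-1)/2\rfloor$, far above $\rho(S^{(2)}_{n,n-1})=\rho(K_{1,n-1})=\sqrt{n-1}$. Yet it is at edit distance at least $\lfloor (n-1)^2/4\rfloor$ from $K_{1,n-1}$. The purely technical steps are fine: the removal step, cut-continuity of $\lambda$, and upgrading cut convergence to $L^1$ convergence for a $0/1$-valued limit.

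\textbf{What is missing} is the macroscopic constraint imposed by a single vertex of degree exactly $\Delta$. That vertex has measure zero, so it disappears in any graphon limit of $G_k$ alone. Take $u$ with $d(u)=\Delta$, and set $B=N(u)$ and $A=V\setminus B$.
\begin{itemize}
\item Because $a(F)=1$, the graph $G[B]$ is $\partial_cF$-free, and every member of $\partial_cF$ is $r$-chromatic.
\item Hence $G[B]$ is $o(n^2)$-close to $K_r$-free, which gives $e(G[B])\le t(\Delta,r-1)+o(n^2)$ and $\rho(G[B])\le(1-\frac1{r-1})\Delta+o(n)$.
\item The degree cap is then needed only at vertices of $A$, where it gives $M\ge 2I$ (\cref{lem:root-defect-main}).
\end{itemize}

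This rooted decomposition is exactly the mechanism the present paper uses in \cref{lem:scale-sensitive-endpoint-cap,lem:B-partition}. The paper itself does not prove the stability theorem; it imports it from \cite{paper1}.

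To rescue your compactness route, you would have to pass the pair $(G_k,N(u_k))$ to the limit, for instance via a regularity partition refining $\{A,B\}$. This yields a set $B$ of measure $\delta$ on which $W$ is $K_r$-free. You would then need to show that $W_\delta$ is the unique maximizer of $\lambda$ under that constraint, together with $t(K_{r+1},W)=0$ and $d_W\le\delta$. That is where the real work lies, for example via the $2\times2$ block bound from the proof of \cref{lem:scale-sensitive-endpoint-cap} combined with the defect inequality $M\ge2I$.
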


The spectral transfer estimate from \cite{paper1} gives the following form.
\begin{lemma}[see \cite{paper1}]\label{lem:boundary-spectral-cap}
	Let $F$ be singleton-boundary with $\chi(F)=p+2\ge4$.  There is a constant
	$C_\partial=C_\partial(F)$ such that every $\partial_{c}F$-free graph $H$ on
	$n$ vertices satisfies
	\begin{equation*}
		\rho(H)\le \rho(T(n,p))+C_\partial n^{s_F}.
	\end{equation*}
\end{lemma}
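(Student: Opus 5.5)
The plan is to reduce the spectral bound to the edge bound \eqref{eq:boundary-edge-cap} on the decomposition family $\mathcal M(\partial_cF)$, using Perron-vector weights. Since $a(F)=1$, every member of $\partial_cF$ has chromatic number $r=p+1\ge3$. It suffices to treat a graph $H$ maximizing $\rho$ among $n$-vertex $\partial_cF$-free graphs, since any other $H$ has smaller spectral radius. We may assume $\rho(H)\ge\rho(T(n,p))$, because otherwise there is nothing to prove, and that $n$ is large, since small $n$ is absorbed into $C_\partial$.

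First, by Nikiforov's spectral Erdős--Stone--Bollobás theorem and the associated spectral stability, $H$ differs from $T(n,p)$ in $o(n^2)$ edges. Take a partition $V_1\cup\cdots\cup V_p$ maximizing the number of crossing edges, so that $|V_i|=(1/p+o(1))n$. Let $x$ be the Perron vector of (the relevant component of) $H$, normalized by $\max_v x_v=1$.

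Second, I would clean up the exceptional vertices. The eigen-equation $\rho x_v=\sum_{u\sim v}x_u$, combined with the extremality of $H$, should give $d_H(v)\ge(1-1/p-\delta)n$ for every $v$. The comparison is between $H$ and the graph obtained by deleting all edges at a low-degree vertex $v$ and joining $v$ to the neighbourhood of a vertex $w$ with $x_w=1$ (this degree bound is the expected consequence). Here one must check that this move keeps $H$ $\partial_cF$-free, possibly after restricting to a suitable part of $N(w)$. If the check fails, I would instead delete the low-degree set $L$ directly and bound its contribution, using the fact that $|L|=o(n)$ and that $x_v\le d(v)/\rho$ is small for $v\in L$. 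Once every vertex has large degree, each vertex has at most $\delta' n$ neighbours in its own part. In addition, $x_v\ge 1-O(\delta)$ for all $v$, by averaging the eigen-equation over the nearly complete neighbourhoods.

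Third, I would show that each $H[V_i]$ is $\mathcal M(\partial_cF)$-free. Suppose a member $M\in\mathcal M(\partial_cF)$ sits inside $V_i$. Its $|M|=O(1)$ vertices have $\Omega(n)$ common neighbours in every other part, because all degrees are high. The Erdős–Stone theorem (\cref{thm:erdos-stone}), applied inside these common neighbourhoods, then gives a large complete $(p-1)$-partite graph joined to $M$. This yields a copy of the corresponding member of $\partial_cF$, a contradiction. By \eqref{eq:boundary-edge-cap}, it follows that $e(H[V_i])\le\Lambda_F n^{1+s_F}$ for each $i$.

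Finally, split $A(H)=A_{\mathrm{cr}}+A_{\mathrm{in}}$ into crossing and internal edges and write
\begin{equation*}
\rho(H)=\frac{x^{T}A_{\mathrm{cr}}x}{x^{T}x}+\frac{x^{T}A_{\mathrm{in}}x}{x^{T}x}.
\end{equation*}
The first term is at most $\rho(K_{|V_1|,\dots,|V_p|})\le\rho(T(n,p))$, because the balanced complete $p$-partite graph maximizes the spectral radius among complete $p$-partite graphs on $n$ vertices. For the second term, $x_v\le1$ gives $x^{T}A_{\mathrm{in}}x\le 2\sum_i e(H[V_i])\le 2p\Lambda_F n^{1+s_F}$. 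The lower bound on $x$ gives $x^{T}x\ge(1-O(\delta))^2n$. Hence the second term is at most $C_\partial n^{s_F}$.

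The main obstacle is the second step, namely establishing a uniform minimum-degree bound, equivalently a uniform lower bound on Perron entries. This step is needed for two reasons: the embedding in the third step needs it, and the normalization $x^{T}x=\Theta(n)$ needs it. Using crude bounds such as $\rho(H[V_i])\le\sqrt{2e(H[V_i])}$ instead would only give $n^{(1+s_F)/2}$, which is too weak. If the replacement argument does not preserve $\partial_cF$-freeness, I would fall back on an iterative deletion argument. Removing vertices of degree below $(1-1/p-\delta)n$ changes $\rho$ in a controlled way, and vertices are removed only while this keeps $\rho$ above $(1-1/p)$ times the current order. This leaves a large subgraph on which the third and fourth steps apply. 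The removed vertices are then charged against the slack $\rho(T(n,p))-\rho(T(n-k,p))\approx(1-1/p)k$.
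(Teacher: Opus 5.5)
The paper does not prove this lemma; it quotes it from the author's earlier work, so I am judging your outline on its own. Your final step is a sound skeleton. The crossing part of the Rayleigh quotient is at most $\rho(K_{|V_1|,\dots,|V_p|})\le\rho(T(n,p))$, and the internal part is at most $2\sum_i e(H[V_i])/\|x\|_2^2$. The normalization needs no entry lower bound: at a vertex $w$ with $x_w=1$ you have $\rho(H)=\sum_{u\sim w}x_u\le\sqrt n\,\|x\|_2$, so $\|x\|_2^2\ge\rho(H)^2/n=\Omega(n)$. What actually has to be proved is $\sum_i e(H[V_i])=O(n^{1+s_F})$.

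The gap is your claim that, once all degrees are large, every vertex has at most $\delta'n$ neighbours in its own class and $x_v\ge1-O(\delta)$ for all $v$. Neither follows, and both fail for actual maximizers.

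Take $p=2$ and let $F$ be $K_4$ and $C_5$ joined by one edge. Then $F$ is connected and color-critical, and $\partial_cF=\{F-a: a\in V(K_4)\}$. Every member of $\partial_cF$ contains two vertex-disjoint odd cycles, so $K_1\vee T(n-1,2)$ is $\partial_cF$-free. This graph has minimum degree about $n/2$ and spectral radius about $n/2+3/2$. Yet its apex has about $n/2$ neighbours in its own max-cut class, and every other Perron entry is about half the apex entry.

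A maximizer for this $F$ must contain such a vertex. Indeed $2K_2\cup K_1\in\mathcal M(\partial_cF)$, so without one your Steps 3--4 would give $e(H[V_i])\le\delta'n+3$. That yields a gain of only $O(\delta')$ over $T(n,2)$, less than the gain of $K_1\vee T(n-1,2)$.

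This breaks two of your steps. Your Step 3 embedding is unjustified, because a copy of $M$ passing through such a vertex need not have linear common neighbourhoods in the other classes. Your degree-raising switching also fails. Cloning $N(w)$ creates a second apex and hence a member of $\partial_cF$. Even a restricted switch cannot be certified against the global maximum entry, since that entry may sit on an exceptional vertex.

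The missing idea is an exceptional set $W$: the vertices with at least $\delta'n$ neighbours in their own class. Max-cut gives each of them at least $\delta'n$ neighbours in every class. Since only $o(n^2)$ crossing pairs are missing, each such neighbourhood contains $\Omega(n^{pt})$ copies of $K_p(t,\dots,t)$. If $|W|$ exceeded a suitable constant, double counting would put some copy in the common neighbourhood of $t$ further vertices of $W$. That gives $K_{p+1}(t,\dots,t)$, which contains every member of $\partial_cF$ once $t\ge|V(F)|$. So $|W|=O(1)$.

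With this in hand, the argument can be completed:
\begin{itemize}
\item Run the low-degree switching with entries normalized by the maximum over $V(H)\setminus W$; entries on $W$ are then $O(1)$ times this maximum, so they are negligible.
\item Restrict the new neighbourhood to good vertices of the other classes, and re-embed any new copy through an unused common neighbour in the vacated class.
\item Apply Step 3 only to $H[V_i\setminus W]$.
\item Charge the at most $|W|n=O(n)$ edges meeting $W$ to the $O(n^{1+s_F})$ budget, which is legitimate because $s_F\ge0$.
\end{itemize}
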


We shall repeatedly root a graph at a maximum-degree vertex.  If $u$ has degree $\Delta$, set
\begin{equation}\label{eq:root-decomp}
        B=N_G(u),\quad A=V(G)\setminus B,\quad |B|=\Delta,\quad |A|=n-\Delta.
\end{equation}
Let $I=e_G(A)$ and $M=(n-\Delta)\Delta-e_G(A,B)$. Thus $M$ is the number of missing pairs between $A$ and $B$.

\begin{lemma}\label{lem:root-defect-main}
With the notation above, $M\ge2I$. Consequently, $e_G(A,B)+e_G(A)\le (n-\Delta)\Delta$, and $M+I\le3(M-I)$.
\end{lemma}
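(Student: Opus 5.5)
We count $e_G(A,B)$ from the side of $A$, using only the fact that every vertex of $G$ has degree at most $\Delta$. For $a\in A$, write $d_A(a)$ and $d_B(a)$ for the numbers of its neighbours in $A$ and in $B$. Since $d_G(a)=d_A(a)+d_B(a)\le\Delta$, we get $d_B(a)\le \Delta-d_A(a)$. Summing over $a\in A$ gives
\begin{equation*}
e_G(A,B)=\sum_{a\in A}d_B(a)\le (n-\Delta)\Delta-\sum_{a\in A}d_A(a)=(n-\Delta)\Delta-2e_G(A).
\end{equation*}
By the definitions of $M$ and $I$, this is exactly $(n-\Delta)\Delta-M\le (n-\Delta)\Delta-2I$, that is, $M\ge 2I$.

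The second assertion follows at once. From $e_G(A,B)=(n-\Delta)\Delta-M$ we have
\begin{equation*}
e_G(A,B)+e_G(A)=(n-\Delta)\Delta-M+I\le (n-\Delta)\Delta-I\le (n-\Delta)\Delta,
\end{equation*}
because $I\ge0$. In fact the slightly stronger bound $e_G(A,B)+2e_G(A)\le (n-\Delta)\Delta$ holds.

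For the last inequality, note that $M+I\le 3(M-I)$ is equivalent to $4I\le 2M$, which is $M\ge 2I$ again. The only point that needs care is the identity $\sum_{a\in A}d_A(a)=2e_G(A)$, where each edge inside $A$ is counted from both endpoints. The root $u$ lies in $A$ and satisfies $d_A(u)=0$, so it contributes consistently to the sum and needs no separate treatment. No step here is a genuine obstacle; the argument is a single double-counting step.
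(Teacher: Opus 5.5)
Your proof is correct and follows the paper's argument: the bound $d_A(a)+d_B(a)\le\Delta=|B|$ is summed over $a\in A$ to obtain $M\ge 2I$. The two consequences then follow by the same algebra, namely $e_G(A,B)+e_G(A)=(n-\Delta)\Delta-(M-I)$ and the equivalence of $M+I\le 3(M-I)$ with $M\ge 2I$.
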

\begin{proof}
For each $a\in A$, the maximum-degree condition gives
\[
d_A(a)+d_B(a)\leq\Delta=|B|.
\]
Thus $|B|-d_B(a)\geq d_A(a)$. Summing over $a\in A$ yields
$M\geq2I$. Hence $M-I\geq0$, and
\[
e_G(A,B)+e_G(A)=(n-\Delta)\Delta-(M-I)\leq(n-\Delta)\Delta.
\]
Finally, $M+I=(M-I)+2I\leq3(M-I)$.
\end{proof}

Let $\pi=\{V_1,\dots,V_k\}$ be a partition of $V(G)$.  The associated
quotient matrix $B_\pi=(b_{ij})$ is defined by
\begin{equation*}
	b_{ij}:=
	\frac1{|V_i|}
	\sum_{u\in V_i}|N_{G}(u)\cap V_j|.
\end{equation*}
The partition $\pi$ is equitable if $|N_{G}(u)\cap V_j|$ is constant over all
$u\in V_i$ for every pair $i,j$.

\begin{definition}[Interlacing \cite{Brouwer-Haemers}]\label{def:interlacing}
	Let $\theta_1\ge\cdots\ge\theta_n$ and
	$\eta_1\ge\cdots\ge\eta_m$ be real sequences with $m<n$.  We say that
	$(\eta_j)_{j=1}^m$ interlaces $(\theta_i)_{i=1}^n$ if
	\begin{equation*}
		\theta_i\ge\eta_i\ge\theta_{n-m+i}
		\quad
		\text{for all } i\in[m].
	\end{equation*}
	The interlacing is tight if there exists $k\in\{0,1,\dots,m\}$ such that
	$\eta_i=\theta_i$ for $i\le k$ and
	$\eta_i=\theta_{n-m+i}$ for $i>k$.
\end{definition}

\begin{lemma}[\cite{Brouwer-Haemers}]\label{lem:quotient-interlacing}
	Let $M$ be a real symmetric nonnegative irreducible matrix, and let $B$ be
	a quotient matrix of $M$ with respect to some partition.  Then the
	eigenvalues of $B$ interlace those of $M$.  Moreover, if the partition is
	equitable, then the largest eigenvalue of $M$ equals the largest
	eigenvalue of $B$.
\end{lemma}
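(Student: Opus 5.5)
The plan is to reduce the statement to Cauchy interlacing for compressions of a symmetric matrix. Let $\pi=\{V_1,\dots,V_k\}$ be the partition, with $k<n$. Let $S$ be the $n\times k$ characteristic matrix, so $S_{ui}=1$ if $u\in V_i$ and $0$ otherwise. Put $D:=S^{\top}S=\mathrm{diag}(|V_1|,\dots,|V_k|)$. Reading off the definition of $b_{ij}$ gives
\begin{equation*}
B=D^{-1}S^{\top}MS .
\end{equation*}
Next set $R:=SD^{-1/2}$. Its columns are orthonormal, so $R^{\top}R=I_k$, and
\begin{equation*}
R^{\top}MR=D^{1/2}BD^{-1/2}.
\end{equation*}
Hence $B$ is similar to the real symmetric matrix $R^{\top}MR$ and has the same real eigenvalues $\eta_1\ge\dots\ge\eta_k$.

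The interlacing inequalities $\theta_i\ge\eta_i\ge\theta_{n-k+i}$ will come from the Courant--Fischer min--max principle applied to $R^{\top}MR$. For the upper bound, take the $i$-dimensional subspace $W$ spanned by the top $i$ eigenvectors of $R^{\top}MR$. Then $RW$ is an $i$-dimensional subspace of $\mathbb R^n$, because $R$ is an isometry. Moreover $x^{\top}R^{\top}MRx/x^{\top}x=(Rx)^{\top}M(Rx)/(Rx)^{\top}(Rx)$. Therefore
\begin{equation*}
\eta_i=\min_{x\in W\setminus\{0\}}\frac{x^{\top}R^{\top}MRx}{x^{\top}x}\le \max_{\dim U=i}\ \min_{y\in U\setminus\{0\}}\frac{y^{\top}My}{y^{\top}y}=\theta_i .
\end{equation*}
The lower bound $\eta_i\ge\theta_{n-k+i}$ follows by applying the same argument to $-M$, whose compression is $-R^{\top}MR$. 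This step is routine.

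For the equitable case, equitability says exactly that $MS=SB$: the $(u,j)$ entry of $MS$ is $|N(u)\cap V_j|$, which equals $b_{ij}$ for $u\in V_i$. The matrix $B$ is nonnegative. It is also irreducible, since $b_{ij}>0$ whenever some vertex of $V_i$ has a neighbour in $V_j$, and $M$ is irreducible. So Perron--Frobenius gives an eigenvalue $\eta_1=\rho(B)$ with a positive eigenvector $v$. Then $M(Sv)=SBv=\eta_1 Sv$, and $Sv$ is a positive vector. For a nonnegative irreducible matrix the only eigenvalue with a positive eigenvector is its spectral radius. Hence $\eta_1=\rho(M)=\theta_1$, the last equality because $M$ is symmetric and nonnegative.

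The only point needing care is this last identification: one must check that $B$ inherits irreducibility, or at least argue directly. If one prefers to avoid irreducibility of $B$, pair the positive Perron vector $x$ of $M$ with $Sv$ for any nonnegative eigenvector $v$ of $B$ for $\rho(B)$. Then $x^{\top}MSv=\theta_1x^{\top}Sv=\eta_1x^{\top}Sv$ with $x^{\top}Sv>0$, which again gives $\eta_1=\theta_1$. I expect no real obstacle beyond this bookkeeping.
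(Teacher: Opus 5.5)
Your proof is correct. The paper gives no proof of its own and only cites Brouwer--Haemers, and your route is the standard one there: write $B=D^{-1}S^{\top}MS$, observe that $B$ is similar to the compression $R^{\top}MR$ with $R=SD^{-1/2}$, apply Courant--Fischer for interlacing, and in the equitable case combine $MS=SB$ with Perron--Frobenius. Your pairing argument with the positive Perron vector of $M$ gives $\eta_1=\theta_1$ without checking irreducibility of $B$; it only uses that $\rho(B)$ is an eigenvalue of $B$ with a nonnegative eigenvector, and that $\eta_1=\rho(B)$ because the spectrum of $B$ is real.
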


\begin{lemma}[\cite{Lei-Li}]\label{lem:turan-edge-estimate}
	We have $\frac{(p-1)n^2}{2p}-\frac{p}{8}	\le	t(n,p)	\le	\frac{(p-1)n^2}{2p}.$
\end{lemma}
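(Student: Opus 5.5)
The plan is a direct computation from the part sizes of $T(n,p)$, using the identity $2e(K_{a_1,\dots,a_p})=n^2-\sum_i a_i^2$ for a complete $p$-partite graph with parts $a_1,\dots,a_p$ summing to $n$.

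First, write $n=qp+s$ with integers $q\ge0$ and $0\le s<p$. The balanced complete $p$-partite graph $T(n,p)$ then has $s$ parts of size $q+1$ and $p-s$ parts of size $q$. Hence
\begin{equation*}
\sum_{i=1}^p a_i^2=s(q+1)^2+(p-s)q^2=pq^2+2sq+s .
\end{equation*}

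Second, compare this with the value for perfectly equal real parts, namely
\begin{equation*}
\frac{n^2}{p}=\frac{(pq+s)^2}{p}=pq^2+2sq+\frac{s^2}{p}.
\end{equation*}
Subtracting gives the exact formula
\begin{equation*}
\sum_i a_i^2-\frac{n^2}{p}=s-\frac{s^2}{p}=\frac{s(p-s)}{p}.
\end{equation*}
Substituting into the edge identity yields
\begin{equation*}
t(n,p)=\frac{(p-1)n^2}{2p}-\frac{s(p-s)}{2p}.
\end{equation*}

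Both inequalities now follow from this formula. Since $0\le s<p$, the correction term $s(p-s)/(2p)$ is nonnegative, which gives the upper bound $t(n,p)\le \frac{(p-1)n^2}{2p}$. By AM--GM, $s(p-s)\le p^2/4$, so $s(p-s)/(2p)\le p/8$, which gives the lower bound $t(n,p)\ge \frac{(p-1)n^2}{2p}-\frac p8$.

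There is no genuine obstacle in this argument. The only point needing care is the bookkeeping of the remainder $s$ in the expansion of $\sum_i a_i^2$, which makes the correction term an exact expression rather than merely an estimate.
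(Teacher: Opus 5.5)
Your proof is correct: writing $n=qp+s$ with $0\le s<p$ gives the exact identity $t(n,p)=\frac{(p-1)n^2}{2p}-\frac{s(p-s)}{2p}$, and the bounds $0\le s(p-s)\le p^2/4$ then yield both inequalities. The paper gives no proof of this lemma and simply cites Lei and Li; your computation is the standard derivation of that estimate.
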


The following estimate is standard.
\begin{lemma}\label{lem:spectral-radius-estimate}
	$0\leq \rho(T(n,p))-\frac{2t(n,p)}{n}\leq 1$. Thus, $\frac{n}{4}<\rho(T(n,p))< n$.
\end{lemma}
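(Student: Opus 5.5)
The statement to prove is \cref{lem:spectral-radius-estimate}: $0\le\rho(T(n,p))-\frac{2t(n,p)}{n}\le1$, together with $\frac n4<\rho(T(n,p))<n$. I will use two standard facts. The lower bound comes from the Rayleigh quotient: with the all-ones vector $\mathbf 1$,
\[
\rho(T(n,p))\ge \frac{\mathbf 1^{\top}A\mathbf 1}{n}=\frac{2t(n,p)}{n}.
\]
For the upper bound I will use the explicit structure of $T(n,p)$. Write $n=qp+s$ with $0\le s<p$, so that $s$ parts have size $q+1$ and $p-s$ parts have size $q$. The partition into parts is equitable, so by \cref{lem:quotient-interlacing} $\rho$ equals the largest eigenvalue of the $p\times p$ quotient matrix $B$, whose $(i,j)$ entry is $n_j$ for $i\ne j$ and $0$ for $i=j$.

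The key step is to bound the largest eigenvalue of $B$ by the maximum row sum:
\[
\rho\le \max_i\,(n-n_i)=n-q=n-\lfloor n/p\rfloor .
\]
It remains to compare this with $\frac{2t(n,p)}{n}$. Let $d_{\min}=n-q-1$ and $d_{\max}=n-q$ (if $s=0$ the graph is regular and $\rho=2t/n$ exactly). The average degree is at least $d_{\min}$, so
\[
\rho-\frac{2t}{n}\le d_{\max}-d_{\min}=1 .
\]
So the bound $0\le\rho-2t/n\le1$ follows from the squeeze between the average degree and the maximum degree; no delicate spectral analysis is needed.

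For the rough bounds I combine this with \cref{lem:turan-edge-estimate}. First, $\rho\le\Delta(T(n,p))\le n-1<n$. For the lower bound, $\rho\ge\frac{2t}{n}\ge\frac{(p-1)n}{p}-\frac{p}{4n}$. Since $p\ge2$ we have $\frac{p-1}{p}\ge\frac12$, which gives $\rho\ge\frac n2-\frac{p}{4n}>\frac n4$ once $n$ is not tiny relative to $p$. For very small $n$ (for example $n<p$, where $T(n,p)=K_n$ and $\rho=n-1$), I will check directly that $n-1>n/4$ whenever $n\ge2$. The only real obstacle is this degenerate small-$n$ bookkeeping, which I handle with that direct check. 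For $n=1$ the bound $\rho>n/4$ fails, so I will tacitly assume $n\ge2$, as in all later uses.
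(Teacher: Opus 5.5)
Your proof is correct: the all-ones Rayleigh quotient gives $\rho(T(n,p))\ge 2t(n,p)/n$, and $\rho\le\Delta(T(n,p))$ together with the fact that degrees in $T(n,p)$ differ by at most one gives the upper estimate. The crude bounds then follow as you describe, and your caveat that $\rho>n/4$ fails for $n=1$ is accurate but harmless, since the paper only applies the lemma for large $n$. The paper states this lemma without proof as standard, and the route it has in mind (Rayleigh quotient on the all-ones vector plus the maximum-degree bound) is the same as yours; the detour through the quotient matrix is unnecessary, since $\rho\le\Delta$ holds directly.
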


\begin{lemma}\label{lem:rho-join-turan-lb}
	We have
	\begin{equation*}
		\rho(S_{n,\Delta,p})
		\ge
		\frac12\left(
		\frac{p-1}{p}\Delta+
		\sqrt{
			\left(\frac{p-1}{p}\Delta\right)^2
			+4\Delta(n-\Delta)}
		\right)
		+O(\Delta^{-1}).
	\end{equation*}
\end{lemma}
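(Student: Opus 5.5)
The plan is to compare $S_{n,\Delta,p}$ with a coarser equitable partition and read off a $2\times2$ quotient. Write $a:=n-\Delta$ and let $A$ be the independent part of size $a$. Let $B$ be the union of the $p$ balanced classes of $T(\Delta,p)$. Put $V(S_{n,\Delta,p})=A\cup B$.

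If $p\mid\Delta$, the partition $\{A,B\}$ is equitable. Each vertex of $A$ has $\Delta$ neighbours in $B$ and none in $A$, and each vertex of $B$ has $a$ neighbours in $A$ and $\frac{p-1}{p}\Delta$ in $B$. By \cref{lem:quotient-interlacing}, $\rho(S_{n,\Delta,p})$ is then the largest root of
\begin{equation*}
\begin{pmatrix}0&\Delta\\ a&\frac{p-1}{p}\Delta\end{pmatrix},
\end{equation*}
which is
\begin{equation*}
\frac12\left(\frac{p-1}{p}\Delta+\sqrt{\left(\frac{p-1}{p}\Delta\right)^2+4\Delta a}\right).
\end{equation*}
This already gives the claim with no error term.

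In general the degrees inside $B$ vary between $\Delta-\lceil\Delta/p\rceil$ and $\Delta-\lfloor\Delta/p\rfloor$. In that case I will avoid quotient interlacing, because the quotient of a non-equitable partition does not directly bound $\rho$ from below in a clean way. Instead I will use the Rayleigh quotient with a test vector $x$ that is constant on $A$ (value $\alpha$) and constant on $B$ (value $\beta$). This gives
\begin{equation*}
\rho\ge\frac{2a\Delta\alpha\beta+2e(B)\beta^2}{a\alpha^2+\Delta\beta^2}.
\end{equation*}
Maximizing over $\alpha/\beta$ yields exactly the largest eigenvalue of the symmetrized quotient matrix
\begin{equation*}
\begin{pmatrix}0&\sqrt{a\Delta}\\ \sqrt{a\Delta}&2t(\Delta,p)/\Delta\end{pmatrix}.
\end{equation*}
Its largest eigenvalue is
\begin{equation*}
\frac12\left(\tau+\sqrt{\tau^2+4a\Delta}\right),\qquad \tau=\frac{2t(\Delta,p)}{\Delta}.
\end{equation*}
By \cref{lem:turan-edge-estimate}, $\tau=\frac{p-1}{p}\Delta-\eta$ with $0\le\eta\le\frac{p}{4\Delta}=O(\Delta^{-1})$.

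The last step is a perturbation estimate. The function $f(\tau)=\frac12(\tau+\sqrt{\tau^2+4a\Delta})$ has derivative $f'(\tau)=\frac12\bigl(1+\tau/\sqrt{\tau^2+4a\Delta}\bigr)\in[0,1]$. By the mean value theorem, $f(\tau)\ge f(\frac{p-1}{p}\Delta)-\eta$, which is the stated bound with an $O(\Delta^{-1})$ error.

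The only delicate point is to make sure the error is genuinely $O(\Delta^{-1})$ and not $O(1)$. This is why I take $e(B)=t(\Delta,p)$ exactly in the Rayleigh quotient and then invoke the sharp lower bound $t(\Delta,p)\ge\frac{(p-1)\Delta^2}{2p}-\frac p8$. A cruder degree-based estimate would lose a constant.
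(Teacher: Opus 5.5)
Your proposal is correct and is essentially the paper's argument: a Rayleigh quotient on vectors constant on the independent part and on the Tur\'an core gives the largest eigenvalue of the $2\times 2$ quotient, and the $O(\Delta^{-1})$ error comes from \cref{lem:turan-edge-estimate} via the monotone $1$-Lipschitz map $\tau\mapsto\frac12\bigl(\tau+\sqrt{\tau^2+4a\Delta}\bigr)$, which you spell out more explicitly than the paper. The case split on $p\mid\Delta$ is unnecessary, and your stated reason for it is mistaken: quotient interlacing (\cref{lem:quotient-interlacing}) already gives $\rho\ge\lambda_1$ of the quotient matrix for every partition, equitable or not.
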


\begin{proof}
	Consider the partition $V(S_{n,\Delta,p})=V\bigl((n-\Delta)K_1\bigr)\cup V\bigl(T(\Delta,p)\bigr).$
	The quotient matrix of $A(S_{n,\Delta,p})$ is
	\begin{equation*}
		B_\Delta=
		\begin{bmatrix}
			0 & \Delta \\
			n-\Delta & {2t(\Delta,p)}/{\Delta}
		\end{bmatrix}.
	\end{equation*}
	By \cref{lem:quotient-interlacing}, the Rayleigh quotient on vectors constant on the two displayed blocks gives $\rho(S_{n,\Delta,p})
	\ge \rho(B_\Delta)$, and a direct calculation gives
	\begin{equation*}
		\rho(B_\Delta)	= \frac12\left(\frac{2t(\Delta,p)}{\Delta}+\sqrt{\left(\frac{2t(\Delta,p)}{\Delta}\right)^2+4\Delta(n-\Delta)}\right).
	\end{equation*}
	The estimate follows from \cref{lem:turan-edge-estimate}.
\end{proof}

In applications, if $G$ is extremal against the candidate
$S_{n,\Delta,p}$, then \cref{lem:rho-join-turan-lb} gives
\begin{equation*}
	\rho(G)
	\ge
	\frac12
	\Bigl(\alpha+\sqrt{\alpha^2+4\beta}\Bigr)\Delta
	+O(\Delta^{-1}),
\end{equation*}
where $\alpha:=\frac{p-1}{p}$ and $\beta:=\frac{n-\Delta}{\Delta}$. If $\left\lceil\frac{p}{p+1}n\right\rceil\le\Delta\le n-C_Fn^{s_F}$ for a sufficiently large constant $C_F$, 
then $\beta\le1/p=1-\alpha$, and hence
\begin{equation*}
	\frac12
	\Bigl(\alpha+\sqrt{\alpha^2+4\beta}\Bigr)
	-(\alpha+\beta)
	=
	\frac{\beta(1-\alpha-\beta)}
	{\frac12(\alpha+\sqrt{\alpha^2+4\beta})+\alpha+\beta}
	\ge0.
\end{equation*}
Therefore
\begin{equation}\label{eq:esti-radius-lb}
	\rho(G)\ge(\alpha+\beta)\Delta+O(\Delta^{-1}).
\end{equation}

%For fixed $n-\Delta$, $\Delta$ and $p\ge1$, write
%\begin{equation*}
%	H(m_1,\ldots,m_p)
%	:=qK_1\vee K_{m_1,\ldots,m_p},
%	\quad
%	\sum_{i=1}^{p}m_i=\Delta .
%\end{equation*}

We also need elementary facts about the complete multipartite model.

\begin{lemma}\label{lem:model-spectral-comparison}
Fix $p\ge2$. The following claims hold.
\begin{enumerate}[label=\textnormal{(\roman*)}]
\item If a complete multipartite graph has positive part sizes $s_1,\ldots,s_p$, then its spectral radius is the unique positive solution of
\begin{equation}\label{eq:complete-multipartite-root}
        \sum_{i=1}^{p}\frac{s_i}{\lambda+s_i}=1 .
\end{equation}
\item For fixed $D$ and $p$, the graph $(n-D)K_1\vee K_{m_1,\ldots,m_p}$ with $\sum_{i=1}^{p} m_i= D$ has maximum spectral radius when $m_1,\ldots,m_p$ are as equal as possible, and equality occurs only for such a balanced choice, up to permutation of the parts.
\item For fixed $n$, define the function $f_{n,p}(D):=\rho(S_{n,D,p})$, where $\left\lceil\frac{p}{p+1}n\right\rceil\le D\le n-1$.
%\[
%        R_{n,p}(D):=\rho(S_{n,D,p})
%        \quad
%        \left(\left\lceil\frac{p}{p+1}n\right\rceil\le D\le n-1\right).
%\]
Then $f_{n,p}(D)$ is strictly decreasing in $D$.
\item Denote $\eta_p:=\frac{2p-1}{p(p-1)}$.
%\begin{equation}\label{eq:eta-p-def}
%        \eta_p:=\frac{2p-1}{p(p-1)}.
%\end{equation}
There are constants $\beta_p\in(0,(100p^2)^{-1}]$ and $C_p>0$ such that, uniformly for $(1-\beta_p)n\le D\le n$,
\begin{equation}\label{eq:right-quotient-expansion}
        \left|\rho(S_{n,D,p})-\rho(T(n,p))-\eta_p(n-D)\right|
        \le C_p\left(\frac{(n-D)^2}{n}+1\right).
\end{equation}
In particular,
\[
        \rho(S_{n,D,p})-\rho(T(n,p))
        =\eta_p(n-D)+O_p((n-D)^2/n)+O_p(1).
\]
\item For fixed $D$ and $p$, define $g_{D,p}(N):=\rho(S_{N,D,p})$, where $N\geq D+1$.
%\[
%        Q_{D,p}(N):=\rho(S_{N,D,p})\quad(N\ge D+1).
%\]
Then $g_{D,p}(N)$ is strictly increasing in $N$.
\end{enumerate}
\end{lemma}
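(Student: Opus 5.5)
The plan is to reduce every item to the secular equation \eqref{eq:complete-multipartite-root}, and then use that its left-hand side is monotone in $\lambda$ and concave in the part sizes.

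For (i), let $x$ be the Perron vector of the complete multipartite graph $K_{s_1,\dots,s_p}$. It is positive and, by symmetry and uniqueness, constant on each part, say equal to $x_i$ on the $i$-th part. Put $S:=\sum_j s_jx_j$. The eigen-equation reads $\lambda x_i=S-s_ix_i$, so $x_i=S/(\lambda+s_i)$. Multiplying by $s_i$ and summing, and using $S>0$, gives $\sum_i s_i/(\lambda+s_i)=1$. The function $\phi(\lambda):=\sum_i s_i/(\lambda+s_i)$ is strictly decreasing on $(0,\infty)$, from $p>1$ down to $0$, so the positive root is unique.

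For (ii), $(n-D)K_1\vee K_{m_1,\dots,m_p}$ is complete $(p+1)$-partite with parts $n-D,m_1,\dots,m_p$. At a fixed $\lambda$, the map $m\mapsto m/(\lambda+m)$ is strictly concave. Suppose $m_i\ge m_j+2$, and move one vertex from part $i$ to part $j$. This strictly increases $\phi(\lambda)$ for every $\lambda$. Since $\phi$ is decreasing in $\lambda$, the root strictly increases. Repeating this move reaches the balanced partition, which proves both the maximality and the uniqueness claim.

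For (iii), compare $S_{n,D,p}$ with $S_{n,D+1,p}$. Set $q=n-D$ and let $m=\lfloor D/p\rfloor$ be the smallest part of $T(D,p)$. Passing from $D$ to $D+1$ is exactly moving one vertex from the part of size $q$ into a part of size $m$. The hypothesis $D\ge pn/(p+1)$ gives $q\le n/(p+1)\le D/p$, and since $q$ is an integer, $q\le m$. Moving a vertex from a part of size $q$ to a part of size $m\ge q$ strictly decreases $\frac{q}{\lambda+q}+\frac{m}{\lambda+m}$, again by strict concavity. The case $q-1=0$ is allowed, since a zero part contributes zero. Hence $\phi$ drops and the root strictly decreases.

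For (v), $S_{N,D,p}$ is a proper subgraph of the connected graph $S_{N+1,D,p}$ (delete one vertex of the independent part). Strict monotonicity of the spectral radius under proper subgraphs of connected graphs (Perron--Frobenius) gives the claim.

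The main obstacle is (iv), because the error must be uniform in $D$. I would first treat the real relaxation
\[
\Phi(\lambda,q):=\frac{q}{\lambda+q}+p\,\frac{m}{\lambda+m},\qquad m=\frac{n-q}{p},
\]
and let $\lambda(q)$ be its root, so that $\lambda(0)=\frac{p-1}{p}n$. Implicit differentiation at $q=0$ uses $\lambda+m=n$, which gives
\[
\Phi_q=\frac1\lambda-\frac{\lambda}{n^2}=\frac{p}{(p-1)n}-\frac{p-1}{pn},\qquad \Phi_\lambda=-\frac1n .
\]
Therefore $\lambda'(0)=\frac{p^2-(p-1)^2}{p(p-1)}=\eta_p$. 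On the region $0\le q\le\beta_pn$ with $\lambda\asymp n$, all second partial derivatives of $\Phi$ are $O(n^{-2})$ and $|\Phi_\lambda|\asymp n^{-1}$. Consequently $\lambda''=O(1/n)$ there, and Taylor's theorem gives $\lambda(q)=\lambda(0)+\eta_pq+O(q^2/n)$. Choosing $\beta_p$ small keeps $\lambda$ bounded away from $0$ relative to $n$ throughout, which justifies these bounds.

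Finally, I would pass from the relaxation to the actual graphs. Integer rounding of the parts of $T(D,p)$ and of $T(n,p)$ changes each $s_i$ by less than $1$. This perturbs $\Phi$ by $O(1/n)$, while $|\Phi_\lambda|\asymp 1/n$, so the root moves by only $O(1)$. Combining the Taylor estimate with this rounding bound yields \eqref{eq:right-quotient-expansion}.
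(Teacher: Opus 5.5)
Your proposal is correct and follows the paper's proof item by item. You use the secular equation for (i), strict concavity of $x\mapsto x/(\lambda+x)$ together with monotonicity in $\lambda$ for (ii) and (iii), the real relaxation with implicit differentiation and an $O_p(1)$ rounding correction for (iv), and strict Perron--Frobenius monotonicity for (v). Your explicit one-vertex transfer in (ii) and your check that $n-D\le\lfloor D/p\rfloor$ in (iii) make precise two steps that the paper only asserts.
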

\begin{proof}
\textnormal{(i)}: Let $H=K_{s_1,\ldots,s_p}$ and let $\lambda=\rho(H)$.  The Perron vector is constant on each part; write its value on the $i$-th part as $y_i$ and put
$Y:=\sum_{i=1}^{p} s_i y_i$.  The eigenvalue equations are
\begin{equation*}
	\lambda y_i=\sum_{j\in[p]\setminus\{i\}}s_jy_j=Y-s_i y_i, 
\end{equation*}
where $i\in [p]$. Thus $(\lambda+s_i)y_i=Y$.  Since $Y>0$, summing $s_iy_i=s_iY/(\lambda+s_i)$ over all $i$ gives \eqref{eq:complete-multipartite-root}.  Conversely, the positive solution of \eqref{eq:complete-multipartite-root} gives a positive eigenvector.  Uniqueness follows because $\Phi(\lambda):=\sum_{i=1}^{p}\frac{s_i}{\lambda+s_i}$ is strictly decreasing on $(0,\infty)$, with $\Phi(0)=p>1$ and
$\Phi(\lambda)\to0$ as $\lambda\to\infty$. This proves (i).

\textnormal{(ii)}: Fix $\lambda>0$.  The function $h_\lambda(x):=\frac{x}{\lambda+x}$
is strictly concave for $x>0$.  Hence
$\sum_{i=1}^{p} h_\lambda(m_i)$ is maximized, under $\sum_{i=1}^{p} m_i=D$, when
$m_1,\ldots,m_p$ differ by at most one.  Since the left side of
\eqref{eq:complete-multipartite-root} is strictly decreasing in $\lambda$, the
balanced choice gives the largest spectral radius. This proves (ii).

\textnormal{(iii)}: The part sizes of $S_{n,D,p}$ are $n-D$, $m_1(D),\ldots,m_p(D)$, $|m_i(D)-m_j(D)|\le1$, and $\sum_{i=1}^{p} m_i(D)=D$.
When $D$ increases by one, one vertex is moved from the part of size $n-D$ to one of the $p$ core parts.  In the range $D\ge \lceil pn/(p+1)\rceil$, the distinguished part has size at most every core part.  This transfer makes the vector of part sizes strictly more majorized.  By the strict concavity of $h_\lambda(x)$, the left side of \eqref{eq:complete-multipartite-root} strictly decreases for every fixed $\lambda>0$.  Hence $f_{n,p}(D):=\rho(S_{n,D,p})$ strictly decreases with respect to $D$. This proves (iii).

\textnormal{(iv)}: Consider $H_D:=(n-D)K_1\vee K_{a,\ldots,a}$, where $a=\frac{D}{p}$. Let $\lambda(D)=\rho(H_D)$.  It is determined by
\begin{equation*}
        F(\lambda,D):=
        \frac{n-D}{\lambda+n-D}
        +\frac{D}{\lambda+D/p}-1=0 .
\end{equation*}
At $D=n$, one has $\lambda(n)=\frac{p-1}{p}n=:\alpha n$.
Moreover,
\begin{equation*}
	 F_\lambda(\alpha n,n)=-\frac{1}{n},\quad F_D(\alpha n,n)=-\frac{1-\alpha^2}{\alpha n}.
\end{equation*}
The implicit-function theorem gives, uniformly for
$(1-\beta_p)n\le D\le n$ after $\beta_p>0$ is chosen sufficiently small,
\begin{equation*}
	\lambda(D)=\alpha n+\frac{1-\alpha^2}{\alpha}(n-D)+O_p((n-D)^2/n).
\end{equation*}
Since $\frac{1-\alpha^2}{\alpha}=\frac{2p-1}{p(p-1)}=\eta_p$, and replacing the equal real parts $a,\ldots,a$ by the integer Turan parts
changes the spectral radius by $O_p(1)$, \eqref{eq:right-quotient-expansion}
follows.  We fix $\beta_p\le(100p^2)^{-1}$ small enough for this uniform
implicit expansion; decreasing $\beta_p$ if necessary, the displayed bound
holds with a constant $C_p$ depending only on $p$.

\textnormal{(v)}: Finally, $S_{N+1,D,p}$ is obtained from $S_{N,D,p}$ by adding one vertex
adjacent to all vertices of the fixed $D$-vertex Turan core.  The larger graph
is connected and contains the former as a proper subgraph.  By
\Cref{lem:radius-subgraph}, $g_{D,p}(N+1)>g_{D,p}(N)$. This proves (v).
\end{proof}

\begin{lemma}[\cite{Cvetkovic-Doob}]\label{lem:radius-subgraph}
If $H$ is a subgraph of $G$, then $\rho(H)\le\rho(G)$.  If $G$ is connected and $H$ is a proper subgraph of $G$, then $\rho(H)<\rho(G)$.
\end{lemma}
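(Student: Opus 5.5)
The statement is the classical monotonicity of the spectral radius under taking subgraphs, and I would prove it from the Rayleigh quotient characterization together with Perron--Frobenius. Let $H$ be a subgraph of $G$. First pad $H$ with isolated vertices so that it has vertex set $V(G)$; this changes no positive eigenvalue, so $\rho(H)$ is unchanged. Then $A(H)\le A(G)$ entrywise. Take a nonnegative unit Perron vector $x$ of $A(H)$. We get
\[
\rho(H)=x^{\top}A(H)x\le x^{\top}A(G)x\le \max_{\|y\|=1}y^{\top}A(G)y=\rho(G),
\]
where the first inequality uses $x\ge0$ and $A(G)-A(H)\ge0$ entrywise. This proves the weak inequality.

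For the strict inequality, assume $G$ is connected, $H$ is a proper subgraph, and $\rho(H)=\rho(G)$. Then both inequalities above are equalities. The second equality forces $x$ to be an eigenvector of $A(G)$ for $\rho(G)$: a unit vector attaining the maximum Rayleigh quotient of a symmetric matrix lies in the top eigenspace. Since $G$ is connected, $A(G)$ is irreducible, and Perron--Frobenius says the top eigenspace is spanned by a strictly positive vector. As $x\ge0$ and $x\neq0$, it follows that $x>0$.

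The first equality now reads $x^{\top}(A(G)-A(H))x=0$. Because $x>0$ and $A(G)-A(H)\ge 0$, this forces $A(G)=A(H)$. So $H$ has every edge of $G$. If $H$ was padded with isolated vertices, those vertices would be isolated in $G$, which is impossible since $G$ is connected with at least two vertices. (If $G$ is a single vertex, it has no proper subgraph with a nonempty vertex set.) Hence $H=G$, contradicting properness.

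The only delicate step is the positivity of $x$, that is, turning equality in the Rayleigh quotient into $x$ being a positive eigenvector of $A(G)$. Everything else is entrywise comparison. An alternative I would keep in reserve is to compare $A(H)\le A(G)$ with $A(H)\neq A(G)$ and $A(G)$ irreducible, and apply the standard Perron--Frobenius strict monotonicity for irreducible nonnegative matrices, as in \cite{Cvetkovic-Doob}.
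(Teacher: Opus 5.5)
Your proof is correct, including the edge cases (padding with isolated vertices, and $G=K_1$), and it is the standard argument. The paper gives no proof of its own and simply cites Cvetkovi\'c--Doob, where this is exactly the Perron--Frobenius monotonicity you derive from the Rayleigh quotient and the strict positivity of the Perron vector of the irreducible matrix $A(G)$.
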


\subsection{The upper-tail framework}\label{sec:windows}

The exact layer $\Delta(G)=\Delta$ is too rigid for local switching arguments, because a switching can increase the maximum degree by a bounded amount.  We therefore work first in an upward-closed degree tail and return to the exact layer only at the end.

Let $\zeta_F>0$ be a safety constant.  For a target degree $\Delta$ define
\begin{equation}\label{eq:safe-interval}
	\mathcal{I}(\Delta,\zeta_F) = [\Delta,n- \zeta_F n^{s_F}]\cap\mathbb Z
\end{equation}
and let $\mathfrak U_n(\Delta,\zeta_F;F)$ be the family of $n$-vertex $F$-free graphs $G$ such that $\Delta(G)\in\mathcal I(\Delta,\zeta_F)$.
A graph $G\in\mathfrak U_n(\Delta,\zeta_F;F)$ is called upper-tail extremal if it has maximum spectral radius in this family.  If $S_{n,\Delta}^{(r)}\in\mathfrak U_n(\Delta,\zeta_F;F)$, then every upper-tail extremal graph satisfies
\begin{equation*}
        \rho(G)\ge\rho(S_{n,\Delta}^{(r)}).
\end{equation*}

Denote $|V(F)|=f$. The constants are chosen in the following order.  First, fix
$\Lambda_F,C_\partial$ from \eqref{eq:boundary-edge-cap} and
\Cref{lem:boundary-spectral-cap}, and $\eta_p,\beta_p,C_p$ from
\Cref{lem:model-spectral-comparison}.  Choose $\varepsilon>0$ sufficiently
small so that
\begin{equation*}
	10^4p^2f^2\sqrt\varepsilon\ll1,
	\quad 10^8p^2f^2\varepsilon\ll1,
\end{equation*}
and all local inequalities in \Cref{sec:local-exactness} hold.  Set
\begin{equation}\label{eq:small-constant-choice}
	\zeta_F:=10^{10}p^3f^2\bigl(\Lambda_F+C_\partial+f+2\bigr)\varepsilon^{-2},\quad\kappa_F:=2\max\{\zeta_F,1\}.
\end{equation}
With $\kappa_F$ now fixed, the constant
$\vartheta_F=\vartheta_F(F,\kappa_F)$ is supplied by
\Cref{lem:scale-sensitive-endpoint-cap}.  We then choose
$\omega_F>\kappa_F$ sufficiently large, depending only on
$F,p,\kappa_F,\vartheta_F,C_p$, so that the endpoint comparison in
the proof of \Cref{lem:endpoint-buffer-exclusion} absorbs the fixed
$O_p(1)$ terms in \eqref{eq:right-quotient-expansion}; this also covers
$s_F=0$.  The final constant in the theorem is chosen as
\begin{equation*}
        C_F\ge \max\{\zeta_F,\vartheta_F,\kappa_F,\omega_F\}.
\end{equation*}

Let $\lceil\frac{p}{p+1}n\rceil\leq \Delta\leq n- C_F n^{s_F}$. The next estimate is the only place where the precise boundary scale enters.

\begin{lemma}\label{lem:scale-sensitive-endpoint-cap}
There is a constant $\vartheta_F=\vartheta_F(F,\kappa_F)>0$ such that the following holds.  Let $G$ be an $n$-vertex $F$-free graph with maximum degree $D$.  If $\left\lceil\frac{p}{p+1}n\right\rceil\le D\le n-1$ and $n-D\le\kappa_Fn^{s_F}$,
%\[
%\left\lceil\frac{p}{p+1}n\right\rceil\le D\le n-1
%\quad\text{and}\quad
%n-D\le\kappa_Fn^{s_F},
%\]
then
\[
\rho(G)\le \rho(S_{n,D,p})+\vartheta_Fn^{s_F}.
\]
\end{lemma}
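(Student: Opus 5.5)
The plan is to compare $\rho(G)$ with $\rho(T(n,p))$ and then use the expansion \eqref{eq:right-quotient-expansion}. Since $n-D\le\kappa_Fn^{s_F}$ and $s_F<1$, we have $(n-D)^2/n=O(\kappa_F^2 n^{2s_F-1})=O(n^{s_F})$ for large $n$. \Cref{lem:model-spectral-comparison}(iv) therefore gives
\[
\rho(S_{n,D,p})\ge\rho(T(n,p))-O_p(\kappa_F^2n^{s_F}+1).
\]
The $+1$ term is harmless since $n^{s_F}\ge1$. It thus suffices to prove that every $n$-vertex $F$-free graph $G$ with $n-D\le\kappa_Fn^{s_F}$ satisfies
\[
\rho(G)\le\rho(T(n,p))+O_F(\kappa_Fn^{s_F}).
\]
At this scale the spectral gain of the distinguished part, of order $\eta_p(n-D)$, is itself $O(\kappa_Fn^{s_F})$, so it can be absorbed into $\vartheta_F$.

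For the main step I would delete the small set $A=V(G)\setminus N_G(u)$, where $u$ is a vertex of maximum degree $D$; note $|A|=n-D\le\kappa_Fn^{s_F}$. Put $H=G[B]$ with $B=N_G(u)$. Then $H$ is $\partial_cF$-free: if $H$ contained $F-v$ for some $v$ with $\chi(F-v)=r$, I want to add $u$ as $v$. This needs $u$ adjacent to the images of all neighbours of $v$, which holds because $u$ is adjacent to every vertex of $B$. So $G[B\cup\{u\}]$ would contain $F$, a contradiction. By \Cref{lem:boundary-spectral-cap},
\[
\rho(H)\le\rho(T(D,p))+C_\partial D^{s_F}\le\rho(T(n,p))+C_\partial n^{s_F},
\]
using \Cref{lem:radius-subgraph}, since $T(D,p)$ is a subgraph of $T(n,p)$.

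Next I bound the effect of adding back $A$. Let $x$ be a unit Perron vector of $G$. Then
\[
\rho(G)=x^{T}A(G)x\le\rho(H)+2\sum_{a\in A}x_a\sum_{w\sim a}x_w\le\rho(H)+2\rho(G)\sum_{a\in A}x_a^2 ,
\]
where the last step uses $\sum_{w\sim a}x_w=\rho x_a$. It therefore remains to show $\sum_{a\in A}x_a^2=O(|A|/n)$, i.e. the $x_a$ are uniformly of order $n^{-1/2}$. Then the correction is $O(\rho|A|/n)=O(|A|)=O(\kappa_Fn^{s_F})$.

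To get this uniform bound I would use $\rho x_v=\sum_{w\sim v}x_w\le\sqrt{d(v)}$ together with $\rho\ge\rho(H)\ge cn$, valid because $\rho(G)\ge\rho(T(D,p))$ may be assumed (otherwise the claim is trivial). This gives $x_v\le\sqrt n/(cn)=O(n^{-1/2})$ for every vertex, hence
\[
\sum_{a\in A}x_a^2\le|A|\cdot O(1/n).
\]
Combining, $\rho(G)\le\rho(T(n,p))+C_\partial n^{s_F}+O(\kappa_Fn^{s_F})$, and subtracting the bound from the first paragraph yields the lemma with $\vartheta_F=C_\partial+O_{F}(\kappa_F^2)$.

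I expect the delicate point to be the $\partial_cF$-freeness of $G[B]$. It requires that the vertex $v$ may be embedded at $u$, which works because $u$ sees all of $B$; no color-critical structure beyond $a(F)=1$ is needed. The $O(1)$ and $n^{2s_F-1}$ terms are routine.
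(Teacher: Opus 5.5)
Your argument is correct, but it handles the vertices outside $B=N_G(u)$ differently from the paper.

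\textbf{The paper's route.} The paper first passes to a Perron component containing a degree-$D$ vertex. It bounds $\boldsymbol x^{\top}A(G)\boldsymbol x$ blockwise to get
\[
\rho(G)\le\lambda_{\max}\begin{pmatrix} n-D-1 & \sqrt{(n-D)D}\\ \sqrt{(n-D)D} & \rho(G[B])\end{pmatrix}.
\]
It then compares this $2\times2$ matrix entry by entry with the two-block quotient of $S_{n,D,p}$:
\begin{enumerate}[label=\textnormal{(\roman*)}]
\item it drops the diagonal entry $n-D-1$;
\item it replaces $\rho(G[B])$ by $\rho(T(D,p))+C_\partial D^{s_F}$;
\item it replaces $\rho(T(D,p))$ by $2t(D,p)/D+O(1)$.
\end{enumerate}
Disconnected graphs need a separate component argument using \Cref{lem:model-spectral-comparison}(v).

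\textbf{Your route.} You discard the whole $A$-contribution via the eigen-equation, $\rho(G)\le\rho(G[B])+2\rho(G)\|\boldsymbol x_A\|_2^2$. You then control $\|\boldsymbol x_A\|_2^2$ by the uniform bound $x_v\le\sqrt n/\rho(G)$ for a unit Perron vector. This is more elementary. It also needs no reduction to a Perron component, since the eigen-equation and Cauchy--Schwarz hold for any nonnegative unit eigenvector of a disconnected graph.

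The cost is an $O(n-D)$ loss with a worse constant, and the cross interaction $\sqrt{(n-D)D}$ is not retained. This is harmless here only because $n-D\le\kappa_Fn^{s_F}$ already lies inside the error budget. The paper's $2\times2$ framework is what gets reused in \Cref{lem:B2-small-right}. There $n-D$ is not small, and the off-diagonal deficit must be weighed precisely against the diagonal cost.

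\textbf{Two small remarks.}
\begin{enumerate}[label=\textnormal{(\roman*)}]
\item The detour through $\rho(T(n,p))$ and \Cref{lem:model-spectral-comparison}(iv) is unnecessary. Since $T(D,p)\subseteq S_{n,D,p}$, you get directly $\rho(G[B])\le\rho(T(D,p))+C_\partial D^{s_F}\le\rho(S_{n,D,p})+C_\partial n^{s_F}$. This gives $\vartheta_F=C_\partial+O(\kappa_F)$.
\item The chain ``$\rho\ge\rho(H)\ge cn$'' is misstated, because $G[B]$ need not have spectral radius linear in $n$. What you actually need, and what your parenthetical supplies, is $\rho(G)\ge\rho(T(D,p))\ge cn$. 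This holds once you dispose of the trivial case $\rho(G)<\rho(T(D,p))\le\rho(S_{n,D,p})$.
\end{enumerate}
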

\begin{proof}
It is enough to consider a Perron component.  If this component contains a vertex $u$ of degree $D$, use the rooted decomposition \eqref{eq:root-decomp}.  The graph $G[B]$ is $\partial_{c}F$-free, because a copy of any $F-v\in\partial_{c}F$ inside $B$ together with the root $u$ gives a copy of $F$.  Hence, by \cref{lem:boundary-spectral-cap}, we have
\begin{equation*}
	\rho(G[B])\le \rho(T(D,p))+C_\partial D^{s_F}.
\end{equation*}
Let $\boldsymbol{x}$ be an unit Perron vector of $G$, and let $\boldsymbol{x}_{A}=(x_u)_{u\in A}$ and $\boldsymbol{x}_{B}=(x_v)_{v\in B}$. Then
\begin{align*}
	\boldsymbol{x}^T A(G)\boldsymbol{x} &=
	\boldsymbol{x}_{A}^{\top} A(G[A]) \boldsymbol{x}_{A} + 2\boldsymbol{x}_{A}^{\top} A(K_{|A|,|B|})\boldsymbol{x}_{B}+\boldsymbol{x}_{B}^{\top} A(G[B]) \boldsymbol{x}_{B}\\
	&\leq (n-D-1)\|\boldsymbol x_A\|_2^2
	+2\sqrt{(n-D)D}\,\|\boldsymbol x_A\|_2\|\boldsymbol x_B\|_2
	+\rho(G[B])\|\boldsymbol x_B\|_2^2 .
\end{align*}
Therefore, $\rho(G)$ is at most the largest eigenvalue of
\begin{equation*}
	\begin{pmatrix}
		n-D-1 & \sqrt{(n-D)D}\\
		\sqrt{(n-D)D} & \rho(G[B])
	\end{pmatrix}.
\end{equation*}

Changing the first diagonal entry from $n-D-1$ to $0$ changes this largest eigenvalue by at most
\[
n-D-1\le \kappa_Fn^{s_F}.
\]
For fixed $D$, the function
\[
        h_D(x):=\frac{x+\sqrt{x^2+4(n-D)D}}{2}
\]
is increasing and $1$-Lipschitz on $[0,\infty)$.  Hence
\begin{equation*}
	\rho(G) \le	\frac{\rho(T(D,p))+\sqrt{\rho(T(D,p))^2+4(n-D)D}}{2} + O_F(n^{s_F}).
\end{equation*}
The Rayleigh quotient of $S_{n,D,p}=(n-D)K_1\vee T(D,p)$ on vectors that are constant on $(n-D)K_1$ and on $T(D,p)$ gives
\begin{equation*}
	\rho(S_{n,D,p})	\ge \frac{(2t(D,p)/D)+\sqrt{(2t(D,p)/D)^2+4(n-D)D}}{2}.
\end{equation*}
Since $\rho(T(D,p))= 2t(D,p)/D +O_p(1)$, the preceding two displays imply the claimed scale-sensitive estimate.  Rewriting the same comparison relative to $\rho(T(n,p))$ and using \eqref{eq:right-quotient-expansion} gives the equivalent Tur\'an-form estimate, with rounding terms absorbed into $\vartheta_Fn^{s_F}$.
It remains to pass from connected graphs to the Perron component.  Let $H$ be
disconnected, and let $C$ be a component with $\rho(C)=\rho(H)$.  If $C$
contains no vertex of degree $D$, then the component containing a degree-$D$
vertex has at least $D+1$ vertices, so $|C|\le n-D-1$.

Thus $\rho(C)\le |C|-1\le n-D-2=O_{F,\kappa_F}(n^{s_F})$. Otherwise $C$ contains a vertex of degree $D$.  Since
\[
        |C|-D\le n-D\le\kappa_Fn^{s_F}
\]
and $|C|\ge D+1\ge \frac{p}{p+1}n$, every resulting error is still
$O_{F,\kappa_F}(n^{s_F})$.  Applying the connected estimate to $C$ and using
\Cref{lem:model-spectral-comparison}\textnormal{(v)} gives the stated bound after
increasing $\vartheta_F$ if necessary.
\end{proof}

\begin{lemma}\label{lem:endpoint-buffer-exclusion}
There is a constant $\omega_F>\kappa_F$ such that the following holds.  Suppose $n-\Delta\ge \omega_Fn^{s_F}$, and let $G^{\star}$ be upper-tail extremal in $\mathfrak U_n(\Delta,\zeta_F;F)$.  If $D=\Delta(G^{\star})$, then $n-D\ge \kappa_F n^{s_F}$ for all sufficiently large $n$.
\end{lemma}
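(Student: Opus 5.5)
We argue by contradiction: suppose $n-D<\kappa_Fn^{s_F}$. We then compare $\rho(G^\star)$ with the competitor $S_{n,\Delta,p}$, using the endpoint cap of \Cref{lem:scale-sensitive-endpoint-cap} and the expansion \eqref{eq:right-quotient-expansion}.

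\textbf{Admissibility of the competitor.} Since $\lceil\frac{p}{p+1}n\rceil\le\Delta\le n-C_Fn^{s_F}$ and $C_F\ge\zeta_F$, the graph $S_{n,\Delta,p}$ is $F$-free. It has maximum degree $\Delta\in\mathcal I(\Delta,\zeta_F)$, so it lies in $\mathfrak U_n(\Delta,\zeta_F;F)$. Upper-tail extremality of $G^\star$ therefore gives
\[
\rho(G^\star)\ge\rho(S_{n,\Delta,p}).
\]

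\textbf{Upper bound on $\rho(G^\star)$.} Since $D\ge\Delta\ge\lceil\frac{p}{p+1}n\rceil$ and $n-D\le\kappa_Fn^{s_F}$, \Cref{lem:scale-sensitive-endpoint-cap} applies and gives
\[
\rho(G^\star)\le\rho(S_{n,D,p})+\vartheta_Fn^{s_F}.
\]
Both $n-D$ and $n-\Delta$ need to be comparable to $n^{s_F}$ with $s_F<1$ for the next step. If instead $n-\Delta>\beta_pn$, we do not need the expansion. The monotonicity in \Cref{lem:model-spectral-comparison}(iii) together with the explicit quotient bound of \Cref{lem:rho-join-turan-lb} shows that $\rho(S_{n,\Delta,p})-\rho(S_{n,D,p})$ is at least a constant multiple of $n$ whenever $D-\Delta=\Omega(n)$. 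That linear gap beats $\vartheta_Fn^{s_F}$ and gives the contradiction directly.

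\textbf{Main comparison.} Suppose now $n-\Delta\le\beta_pn$. Apply \eqref{eq:right-quotient-expansion} at $D$ and at $\Delta$ and subtract:
\[
\rho(S_{n,\Delta,p})-\rho(S_{n,D,p})\ge\eta_p\bigl((n-\Delta)-(n-D)\bigr)-2C_p\Bigl(\frac{(n-\Delta)^2}{n}+\frac{(n-D)^2}{n}+2\Bigr).
\]
The first-order term is at least $\eta_p(n-\Delta-\kappa_Fn^{s_F})$.

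\textbf{Main obstacle: the quadratic error.} When $n-\Delta$ is large, the error $(n-\Delta)^2/n$ competes with the linear term $\eta_p(n-\Delta)$. Because $n-\Delta\le\beta_pn$ and $\beta_p\le(100p^2)^{-1}$, we have
\[
C_p\frac{(n-\Delta)^2}{n}\le C_p\beta_p(n-\Delta).
\]
This is absorbed into $\tfrac12\eta_p(n-\Delta)$ only if $\beta_p$ is small relative to $\eta_p/C_p$. If that fails, I would shrink $\beta_p$ further, which is allowed since \Cref{lem:model-spectral-comparison}(iv) only requires $\beta_p$ small. Alternatively, I would avoid the expansion at $\Delta$ altogether and use the strict monotonicity of (iii) to replace $\Delta$ by
\[
\Delta':=\max\{\Delta,\;n-\lfloor\omega_Fn^{s_F}\rfloor\},
\]
which gives $\rho(S_{n,\Delta,p})\ge\rho(S_{n,\Delta',p})$. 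Working with $\Delta'$, both $n-\Delta'$ and $n-D$ are $O(\omega_Fn^{s_F})$, so both quadratic errors are $O(\omega_F^2n^{2s_F-1})+O(1)=o(n^{s_F})+O(1)$. This gives
\[
\rho(S_{n,\Delta,p})-\rho(S_{n,D,p})\ge\eta_p(\omega_F-\kappa_F)n^{s_F}-O_p(1)-o(n^{s_F}).
\]

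\textbf{Conclusion.} Choose $\omega_F$ so large that $\eta_p(\omega_F-\kappa_F)>\vartheta_F+1$. When $s_F=0$, the bound $\eta_p(\omega_F-\kappa_F)>\vartheta_F+4C_p+1$ also absorbs the $O_p(1)$ terms. Then
\[
\rho(S_{n,\Delta,p})>\rho(S_{n,D,p})+\vartheta_Fn^{s_F}\ge\rho(G^\star),
\]
which contradicts extremality of $G^\star$. Hence $n-D\ge\kappa_Fn^{s_F}$.
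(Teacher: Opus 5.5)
Your proof is correct, and its main line is the paper's. You assume $n-D<\kappa_Fn^{s_F}$, bound $\rho(G^\star)$ by $\rho(S_{n,D,p})+\vartheta_Fn^{s_F}$ via \Cref{lem:scale-sensitive-endpoint-cap}, and beat this with $\rho(S_{n,\Delta,p})$ using \eqref{eq:right-quotient-expansion} at $D$ and at $\Delta$. The paper's threshold $\delta$ (chosen so the error is at most $\eta_pq_0/4$ when $q_0=n-\Delta\le\delta n$) plays the role of your shrunken $\beta_p$. Its separate case $q_0>\delta n$ is your linear-gap case.

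Your alternative through \Cref{lem:model-spectral-comparison}(iii) is a genuine simplification. You replace $\Delta$ by $\Delta'=n-\lfloor\omega_Fn^{s_F}\rfloor\ge\Delta$, which is valid since $f_{n,p}$ is strictly decreasing on the whole admissible range. Both expansion points then lie within $O(\omega_Fn^{s_F})$ of $n$, so the quadratic errors are $O(n^{2s_F-1})=o(n^{s_F})$. The case split then disappears, and so does the linear separation estimate. Both you and the paper only sketch that estimate: it needs, for instance, an upper bound on $\rho(S_{n,D,p})$ together with $\tfrac12\bigl(\alpha\Delta+\sqrt{\alpha^2\Delta^2+4\Delta(n-\Delta)}\bigr)>\alpha n$, which reduces to $\Delta/n>\alpha^2$.

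The only bookkeeping to add in the $\Delta'$ route is that the floor costs at most $\eta_p$ in the leading term. Your choice of $\omega_F$ absorbs this, including when $s_F=0$.
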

\begin{proof}
Assume instead that $n-D<\kappa_F n^{s_F}$. Recall that $\eta_p=\frac{2p-1}{p(p-1)}$.  By  \eqref{eq:right-quotient-expansion} (\cref{lem:model-spectral-comparison} (iv)) and \cref{lem:scale-sensitive-endpoint-cap},
\begin{equation*}
	\rho(G^{\star})\le \rho(S_{n,D,p})+\vartheta_F n^{s_F}\leq \rho(T(n,p))+(\eta_p\kappa_F+\vartheta_F+o(1))n^{s_F}.
\end{equation*}
Put $q_0:=n-\Delta$ and $q_D:=n-D$. Choose $\delta>0$ so small that the
error term in \eqref{eq:right-quotient-expansion} is at most
$\eta_pq_0/4$ whenever $q_0\leq\delta n$. If $q_0>\delta n$, then the quotient description of the multipartite model and the strict separation from the endpoint give
\[
\rho(S_{n,\Delta,p})\geq \rho(T(n,p))+c_{\delta,p}n
\]
for some $c_{\delta,p}>0$. This contradicts the preceding upper bound, since $s_F<1$. If $q_0\leq\delta n$, then \eqref{eq:right-quotient-expansion}, applied at $D$ and at $\Delta$, gives
\[
\rho(S_{n,\Delta,p})-\rho(S_{n,D,p})
\geq \eta_p(q_0-q_D)-\frac{\eta_p}{2}q_0-O_p(1).
\]
As $q_0\geq\omega_F n^{s_F}$ and $q_D<\kappa_Fn^{s_F}$, choosing $\omega_F$ sufficiently large yields
\[
\rho(S_{n,\Delta,p})>\rho(T(n,p))+(\eta_p\kappa_F+\vartheta_F+o(1))n^{s_F}
\geq \rho(G^\star),
\]
again contradicting upper-tail extremality.
\end{proof}

\begin{lemma}\label{lem:perron-component-root}
Let $G^\star\in\mathfrak U_n(\Delta,\zeta_F;F)$ be an upper-tail extremal graph, and put
$D=\Delta(G^\star)$.  Let $C^\star$ be a component of $G^\star$ such that
$\rho(C^\star)=\rho(G^\star)$.  Then $C^\star$ contains a vertex of degree $D$.
\end{lemma}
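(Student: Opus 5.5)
We argue by contradiction. Suppose that $C^\star$ contains no vertex of degree $D$. Then every vertex of degree $D$ in $G^\star$ lies in some other component $C'$. Such a component has at least $D+1$ vertices, so
\[
|C^\star|\le n-D-1 .
\]
By \Cref{lem:radius-subgraph} applied inside $K_{|C^\star|}$,
\[
\rho(G^\star)=\rho(C^\star)\le |C^\star|-1\le n-D-2 .
\]
The contradiction will come from comparing this with the lower bound $\rho(G^\star)\ge\rho(S_{n,\Delta}^{(r)})$.

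\textbf{Admissibility of the competitor.} We first check that $S_{n,\Delta}^{(r)}\in\mathfrak U_n(\Delta,\zeta_F;F)$.

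\emph{Degree range.} Its maximum degree is $\Delta$, because $\Delta\ge\lceil pn/(p+1)\rceil$. Also $\Delta\le n-C_Fn^{s_F}\le n-\zeta_Fn^{s_F}$, since $C_F\ge\zeta_F$. Hence $\Delta\in\mathcal I(\Delta,\zeta_F)$.

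\emph{$F$-freeness.} The graph $S_{n,\Delta}^{(r)}$ is $r$-partite, so it is $F$-free because $\chi(F)=r+1$.

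Consequently $\rho(G^\star)\ge\rho(S_{n,\Delta}^{(r)})$.

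\textbf{The comparison.} By \Cref{lem:rho-join-turan-lb} (equivalently \eqref{eq:esti-radius-lb}),
\[
\rho(S_{n,\Delta}^{(r)})\ \ge\ \tfrac{p-1}{p}\Delta+O(1)\ \ge\ \tfrac{p-1}{p}\cdot\tfrac{p}{p+1}n+O(1)=\tfrac{p-1}{p+1}n+O(1).
\]
This is linear in $n$, since $p\ge2$. On the other hand $D\ge\Delta\ge pn/(p+1)$, so
\[
\rho(C^\star)\le n-D-2\le \tfrac{n}{p+1}-2 .
\]
For $p\ge2$ we have $\tfrac{n}{p+1}<\tfrac{p-1}{p+1}n$ only when $p>2$. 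So in the case $p=2$ (that is, $r=3$) this crude inequality gives no strict separation, and a sharper estimate is needed.

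\textbf{Sharper estimate via \Cref{lem:endpoint-buffer-exclusion}.} For the sharper comparison I would use the cruder bound $n-D\le n-\Delta\le n/(p+1)$ together with the exact eigenvalue of $S_{n,\Delta,p}$. By \Cref{lem:model-spectral-comparison}(iii),
\[
\rho(S_{n,\Delta,p})\ge \rho(S_{n,n-1,p})\ge\rho(T(n-1,p)),
\]
using that $S_{n,n-1,p}$ contains $T(n-1,p)$ as a subgraph. By \Cref{lem:spectral-radius-estimate},
\[
\rho(T(n-1,p))\ge \tfrac{2t(n-1,p)}{n-1}\ge\tfrac{p-1}{p}(n-1)-O(1).
\]
Comparing this with $\tfrac{n}{p+1}-2$:
\begin{itemize}[label=--]
\item for $p\ge2$ we have $\tfrac{p-1}{p}\ge\tfrac12>\tfrac13\ge\tfrac1{p+1}$;
\item so for large $n$, $\tfrac{p-1}{p}(n-1)-O(1)>\tfrac{n}{p+1}-2$.
\end{itemize}
Thus $\rho(G^\star)\ge\rho(S_{n,\Delta,p})>n-D-2\ge\rho(C^\star)$, contradicting $\rho(C^\star)=\rho(G^\star)$. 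Therefore $C^\star$ contains a vertex of degree $D$.

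\textbf{Main obstacle.} The delicate point is only the constant comparison between $1/(p+1)$ and $(p-1)/p$. Routing it through monotonicity (iii) to the $D=n-1$ endpoint avoids the loss in \eqref{eq:esti-radius-lb}. One must also note that (iii) requires $\Delta\le n-1$, which holds. \Cref{lem:endpoint-buffer-exclusion} is not needed here, since $n-D\le n-\Delta$ already suffices.
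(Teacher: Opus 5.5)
Your proof is correct and follows the paper's argument. A component with no degree-$D$ vertex has at most $n-D-1$ vertices, so $\rho(C^\star)\le n-D-2$, and this contradicts $\rho(G^\star)\ge\rho(S_{n,\Delta,p})$.

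The only difference is how you lower-bound $\rho(S_{n,\Delta,p})$. The paper uses $K_{n-\Delta,\Delta}\subseteq S_{n,\Delta,p}$ to get $\sqrt{(n-\Delta)\Delta}\ge\sqrt p\,(n-\Delta)>n-D-2$ directly, with no asymptotics. You instead detour through the monotonicity in \Cref{lem:model-spectral-comparison}(iii) and $T(n-1,p)$, which is valid but needs $n$ large.

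Your ``crude'' attempt in fact already sufficed. You only used $\alpha\Delta$ from \eqref{eq:esti-radius-lb}, but that bound is $(\alpha+\beta)\Delta=\alpha\Delta+(n-\Delta)$, which exceeds $n-\Delta\ge n-D>n-D-2$ outright.
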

\begin{proof}
Suppose that $C^\star$ contains no vertex of degree $D$.  A different component then contains a vertex of degree $D$ and consequently has at least $D+1$ vertices.  Hence
$|C^\star|\le n-D-1$.  Since $\rho(C^\star)\le |C^\star|-1$, we obtain
$\rho(G^\star)\le n-D-2$.  On the other hand, the model $S_{n,\Delta,p}$ contains
$K_{n-\Delta,\Delta}$, and $D\ge\Delta\ge pn/(p+1)$ gives
\[
 \rho(G^\star)\ge\rho(S_{n,\Delta,p})\ge\sqrt{(n-\Delta)\Delta}
 \ge\sqrt p\,(n-\Delta)>n-D-2,
\]
where the last inequality follows from $D\ge\Delta$ and $p\ge2$.  This contradiction proves the lemma.
\end{proof}

\section{\texorpdfstring{Proof of \cref{thm:exact-color-critical-main}}{Proof of the main theorem}}\label{sec:local-exactness}

Let $F$ be a color-critical graph with $\chi(F)=p+2\geq4$, and put $f:=|V(F)|$.  Fix an arbitrary upper-tail extremal graph
$G^{\star}\in\mathfrak U_n(\Delta,\zeta_F;F)$, and write $D:=\Delta(G^{\star})$.
Because $C_F\ge\omega_F$, \cref{lem:endpoint-buffer-exclusion} yields
\begin{equation}\label{eq:actual-degree-buffer}
 n-D\ge\kappa_Fn^{s_F}\ge2\zeta_Fn^{s_F}.
\end{equation}
In the right range, the switchings increase the maximum degree by at most a constant, so \eqref{eq:actual-degree-buffer} keeps the resulting graph in $\mathfrak U_n(\Delta,\zeta_F;F)$.  In the non-right range, the linear gap $n-D>\varepsilon^2n/200$ absorbs the possible increase $O(\varepsilon^3D)$.  These are the only admissibility checks required for the local switchings.

We analyze the local structure according to the actual degree $D$: 
\begin{itemize}
	\item[\textnormal{(i)}] the right range:  $D\ge\left(1-\frac{\varepsilon^2}{200}\right)n$.
	\item[\textnormal{(ii)}] the non-right range:  $\left\lceil\frac{p}{p+1}n\right\rceil\le D<
	\left(1-\frac{\varepsilon^2}{200}\right)n$.
\end{itemize}

Both arguments begin with the same rooted decomposition.  The right range uses the endpoint expansion, whereas the non-right range is completed by a bilinear comparison with a complete multipartite envelope.
In both ranges we prove the same structural conclusion: every upper-tail
extremal graph satisfying the buffer is isomorphic to $S_{n,D,p}$.

Fix a component $C^{\star}$ with $\rho(C^{\star})=\rho(G^{\star})$. By \cref{lem:perron-component-root}, choose
$u^*\in V(C^{\star})$ with $d_{G^{\star}}(u^*)=D$ and put
\begin{equation}\label{eq:extremal-uptail-decomposition}
	B=N_{G^{\star}}(u^*),\quad A=V(G^{\star})\setminus B, \quad\text{and}\quad |A|=n-D.
\end{equation}
Define
\begin{equation*}
	A^+:=A\cap V(C^{\star}),\quad A^0:=A\setminus V(C^{\star}).
\end{equation*}
We shall use the following support facts throughout both ranges:
\begin{equation*}
B\subseteq V(C^{\star}),\quad
x_v>0\ \text{for }v\in B\cup A^+,\quad
x_u=0\ \text{for }u\in A^0,
\end{equation*}
with
\begin{equation*}
e_{G^{\star}}(A^0,B)=e_{G^{\star}}(A^0,A^+)=0.
\end{equation*}
Let $B=V_1\cup\cdots\cup V_p$ be a partition maximizing $\sum_{1\le i<j\le p}e_{G^{\star}}(V_i,V_j)$.  Then the following lemma holds.

\begin{lemma}\label{lem:B-partition}
Let $F$ be a color-critical graph with $\chi(F)=p+2\geq 4$.  With the cleaning tolerance $\varepsilon>0$ fixed in \Cref{sec:windows}, for sufficient large $n$,
\begin{align*}
&e(G^{\star}[A])+(n-D)D-e_{G^{\star}}(A,B)
  +\sum_{i=1}^{p} e(G^{\star}[V_i])
  \le 5\varepsilon^6D^2<5\varepsilon^2D^2,\\
&e(G^{\star}[B])\ge t(D,p)-\varepsilon^6D^2
  >t(D,p)-\varepsilon^2D^2,\\
&\sum_{i=1}^{p} e(G^{\star}[V_i])
  \le \varepsilon^6D^2<\varepsilon^2D^2,\\
&\max_{1\le i\le p}\left||V_i|-\frac Dp\right|
  \le 2\varepsilon^3D<2\varepsilon D.
\end{align*}
\end{lemma}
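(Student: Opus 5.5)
We would prove the lemma by first placing $G^\star$ spectrally close to $S_{n,D,p}$, then obtaining an edit-distance bound from stability, and finally converting that bound into the four listed inequalities through a spectral (Rayleigh-quotient) comparison. The input is $\rho(G^\star)\ge\rho(S_{n,\Delta,p})\ge\rho(S_{n,D,p})$, where the last inequality uses $D\ge\Delta$ and \Cref{lem:model-spectral-comparison}(iii). Since $G^\star$ is $F$-free with $\Delta(G^\star)=D$ in the admissible range, \Cref{thm:unbalanced-stability-input} applies with tolerance $\varepsilon^{12}$ (so $\sigma$ may be taken as any positive number). It gives $d_{\mathrm{edit}}(G^\star,S_{n,D,p})\le\varepsilon^{12}n^2$. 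This fixes a near-copy of the model, but not yet relative to the rooted sets $A,B$. The rest of the argument transfers that closeness to $A,B$ and to the max-cut partition of $B$.

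\textbf{Step 1 (edges inside $B$).} We would show $e(G^\star[B])\ge t(D,p)-\varepsilon^6D^2$. Write $I=e(G^\star[A])$ and $M$ as in \Cref{lem:root-defect-main}; that lemma gives $e(A,B)+e(A)\le(n-D)D$. Take the unit Perron vector $\boldsymbol x$ of $C^\star$ and split the quadratic form as in the proof of \Cref{lem:scale-sensitive-endpoint-cap}. This gives
\[
\rho(G^\star)\le\lambda_1\begin{pmatrix} n-D & \sqrt{(n-D)D}\\ \sqrt{(n-D)D} & \rho(G^\star[B])\end{pmatrix}.
\]
Now suppose $e(G^\star[B])<t(D,p)-\varepsilon^6D^2$. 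Since $G^\star[B]$ is $\partial_cF$-free, spectral stability (Nikiforov-type, or \Cref{thm:ES-stability} combined with \Cref{lem:boundary-spectral-cap}) would force $\rho(G^\star[B])\le\rho(T(D,p))-c\varepsilon^{12}D$, or $G^\star[B]$ would be far from Turán. In either case the $2\times2$ bound falls below $\rho(S_{n,D,p})-c'\varepsilon^{12}n$, a contradiction.

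A cleaner route would use the edit bound directly. Any near-isomorphism of $G^\star$ with $S_{n,D,p}$ maps the $D$-neighbourhood $B$ of $u^*$ mostly onto the Turán core, because vertices of the small part have degree $D$ into the core. Hence $G^\star[B]$ differs from $T(D,p)$ in $O(\varepsilon^{12}n^2)\le\varepsilon^6D^2$ edges. Here $D\ge pn/(p+1)$ lets us replace $n^2$ by $D^2$ at the cost of constants.

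\textbf{Step 2 (defects at $A$).} We would bound $I+M\le 3(M-I)$ using \Cref{lem:root-defect-main}. We have $e(G^\star)=e(G^\star[B])+(n-D)D-(M-I)$. Combine this with the edit bound, which gives $e(G^\star)\ge e(S_{n,D,p})-\varepsilon^{12}n^2$, and with the upper bound $e(G^\star[B])\le\ex(D,\partial_cF)\le t(D,p)+\Lambda_FD^{1+s_F}$ from \eqref{eq:boundary-edge-cap}. Together these give $M-I\le 2\varepsilon^{12}n^2$, and hence $I+M\le\varepsilon^6D^2$.

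\textbf{Step 3 (the max-cut partition).} In $G^\star[B]$, which has at least $t(D,p)-\varepsilon^6D^2$ edges and is within $O(\varepsilon^{12}D^2)$ edits of $T(D,p)$, the max-cut partition has at least as many crossing edges as the Turán partition. Hence $\sum_i e(V_i)\le\varepsilon^6D^2$. With Step 2, the total in the first display is at most $2\varepsilon^6D^2\le5\varepsilon^6D^2$.

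For the part sizes, write $\sum_{i<j}|V_i||V_j|\ge e_{\rm cross}\ge t(D,p)-2\varepsilon^6D^2$. The standard identity $t(D,p)-\sum_{i<j}|V_i||V_j|\ge\frac12\sum_i(|V_i|-D/p)^2-O(1)$ then yields $\max_i\bigl||V_i|-D/p\bigr|\le2\varepsilon^3D$. The final strict inequalities $\varepsilon^6<\varepsilon^2$ and $\varepsilon^3<\varepsilon$ are trivial.

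\textbf{Main obstacle.} The hard step is Step 1's identification that $B$ (a chosen rooted neighbourhood) rather than some other set aligns with the core of the model. The edit-distance isomorphism need not send $u^*$ to the small part. To handle this, we would note that a vertex of degree $D$ must, under the near-isomorphism, have most of its neighbourhood inside the large parts. If $u^*$ were mapped into a core part, its neighbourhood would omit about $D/p$ core vertices. Counting with the edit bound would then force $D\le n-D/p+O(\varepsilon^6 n)$, which must be reconciled with the range; alternatively, we would argue directly with the degree-sum estimate of Step 2, which is root-independent.
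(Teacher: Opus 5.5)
Your Step 2 and the part-size computation are essentially the paper's. Step 1, which you correctly identify as the crux, is not established by either route you give, and Step 3 inherits the defect.

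The spectral route breaks down in the non-right range. Bounding the $A$-block by $n-D$ on the diagonal of the $2\times2$ matrix inflates $\lambda_1$ by order $\beta^2D$, with $\beta=(n-D)/D$. This is linear in $D$ once $\beta$ is bounded below. Stability, by contrast, only yields a deficit $\delta D$ in $\rho(G^\star[B])$ with $\delta=\delta(\varepsilon^6)$ possibly much smaller, so no contradiction follows.

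The edit-distance route rests on a false claim. The bound $d_{\mathrm{edit}}(G^\star,S_{n,D,p})\le\varepsilon^{12}n^2$ gives no information about the neighbourhood of a single vertex, since rewiring every edge at $u^*$ costs at most $n=o(\varepsilon^{12}n^2)$ edits. So you cannot deduce that $B=N(u^*)$ is aligned with the Tur\'an core, nor that $G^\star[B]$ is $O(\varepsilon^{12}D^2)$-close to $T(D,p)$. At $D=pn/(p+1)$ the alignment already fails for $G^\star=S_{n,D,p}$ under the identity map. Your proposed repair, by counting at $u^*$, fails for the same reason.

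The missing step is a one-liner from ingredients you already wrote down. \Cref{lem:root-defect-main} gives $e_{G^\star}(A,B)+e(G^\star[A])\le(n-D)D$. Together with the global edge bound $e(G^\star)\ge(n-D)D+t(D,p)-\xi n^2$ from stability, this gives
\[
e(G^\star[B])=e(G^\star)-e_{G^\star}(A,B)-e(G^\star[A])\ge e(G^\star)-(n-D)D\ge t(D,p)-\xi n^2 .
\]
This is root-independent and is exactly the paper's argument. Your closing remark about the ``degree-sum estimate'' points to it, but it should replace Step 1, not supplement it.

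Step 3 needs a partition of $B$ with few internal edges, and this cannot come from the unproved edit-closeness of $G^\star[B]$ to $T(D,p)$. Instead, apply \Cref{thm:ES-stability} to the $\partial_cF$-free graph $G^\star[B]$ with edit tolerance $\varepsilon^6$. This gives a balanced partition $U_1,\dots,U_p$ with $\sum_i e(G^\star[U_i])\le\varepsilon^6D^2$, and the max-cut partition has at most as many internal edges.

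This forces the order of constants. The input tolerance $\xi$ for \Cref{thm:unbalanced-stability-input} must be chosen after $\delta_{\mathrm{ES}}=\delta_{\mathrm{ES}}(\varepsilon^6,\partial_cF)$, so that $\xi n^2\le\delta_{\mathrm{ES}}D^2$. The paper takes $((p+1)/p)^2\xi\le\min\{\varepsilon^6/4,\delta_{\mathrm{ES}}/2\}$. Fixing $\xi=\varepsilon^{12}$ in advance does not guarantee this.
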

\begin{proof}
We first separate the auxiliary stability accuracy from the fixed cleaning tolerance $\varepsilon$. Apply \Cref{thm:ES-stability} to the fixed family $\partial_cF$ with edit
tolerance $\varepsilon^6$, and let
$\delta_{\mathrm{ES}}=\delta_{\mathrm{ES}}(\varepsilon,F)>0$ be the resulting
edge-deficit tolerance.  Choose an auxiliary number $\xi >0$ so small that
\begin{equation*}
	\left(\frac{p+1}{p}\right)^2\xi \le\min\left\{\frac{\varepsilon^6}{4}, \frac{\delta_{\mathrm{ES}}}{2}\right\}.
\end{equation*}
We use \Cref{thm:unbalanced-stability-input} with this fixed $\xi $.
Since $S_{n,D,p}\in\mathfrak U_n(\Delta,\zeta_F;F)$, upper-tail extremality gives
$\rho(G^{\star})\ge\rho(S_{n,D,p})$, so the spectral hypothesis of that theorem
is automatic.  After increasing $n_0$ if necessary, it follows that
$d_{\rm edit}(G^{\star},S_{n,D,p})\le\xi n^2$, and hence
\begin{equation}\label{eq:rooted-global-edge-lower1}
 e(G^{\star})\ge e(S_{n,D,p})-\xi n^2
 =(n-D)D+t(D,p)-\xi n^2.
\end{equation}

Put $I=e(G^{\star}[A])$ and
$M=(n-D)D-e_{G^{\star}}(A,B)$.  By \Cref{lem:root-defect-main},
$M\ge2I$ and $M+I\le3(M-I)$.  Since $G^{\star}$ is $F$-free,
$G^{\star}[B]$ is $\partial_cF$-free.  Hence \eqref{eq:boundary-edge-cap} gives
\begin{equation}\label{eq:rooted-local-B-upper1}
 e(G^{\star}[B])\le t(D,p)+\Lambda_FD^{1+s_F}.
\end{equation}
Since
$e(G^{\star})=e(G^{\star}[B])+(n-D)D-M+I$, equations
\eqref{eq:rooted-global-edge-lower1} and
\eqref{eq:rooted-local-B-upper1} imply
\[
 M-I\le \Lambda_FD^{1+s_F}+\xi n^2.
\]
Because $D\ge \frac{p}{p+1}n$ and $s_F<1$, we may increase $n_0$ so that
$\Lambda_FD^{1+s_F}\le \frac{\varepsilon^6}{4}D^2$.  Moreover,
the choice of $\xi $ gives
$\xi n^2\le \frac{\varepsilon^6}{4}D^2$.  Therefore
\begin{equation}\label{eq:esitimate-Miss-int-edge}
 M+I\le3(M-I)
 \le3\left(\Lambda_FD^{1+s_F}+\xi n^2\right)
 \le\frac32\varepsilon^6D^2
 <4\varepsilon^6D^2.
\end{equation}

On the other hand, $M\ge I$ gives
$e(G^{\star})\le e(G^{\star}[B])+(n-D)D$.  Together with
\eqref{eq:rooted-global-edge-lower1},
\[
 e(G^{\star}[B])\ge t(D,p)-\xi n^2
 \ge t(D,p)-\varepsilon^6D^2.
\]
The same choice also gives
$\xi n^2\le \frac{\delta_{\mathrm{ES}}}{2}D^2$, so in particular
$e(G^{\star}[B])\ge t(D,p)-\delta_{\mathrm{ES}}D^2$.  After increasing $n_0$ once more, $D\ge p n/(p+1)$ is also above the
order threshold in \Cref{thm:ES-stability}.  Applying that theorem to the
$\partial_cF$-free graph $G^{\star}[B]$ therefore yields a balanced partition
$B=U_1\cup\cdots\cup U_p$, with
$\lfloor D/p\rfloor\le |U_i|\le\lceil D/p\rceil$, such that
\[
 \sum_{i=1}^p e(G^{\star}[U_i])\le\varepsilon^6D^2.
\]
Indeed, after relabelling the comparison copy of $T(D,p)$, every edge inside
one of its $p$ parts is among the at most $\varepsilon^6D^2$ edits.

Since $V_1\cup\cdots\cup V_p$ maximizes the number of crossing edges, it
minimizes the total number of internal edges.  Hence
\[
 \sum_{i=1}^p e(G^{\star}[V_i])
 \le\sum_{i=1}^p e(G^{\star}[U_i])
 \le\varepsilon^6D^2.
\]
Together with \eqref{eq:esitimate-Miss-int-edge}, this gives
\[
 e(G^{\star}[A])+(n-D)D-e_{G^{\star}}(A,B)
 +\sum_{i=1}^p e(G^{\star}[V_i])
 =I+M+\sum_{i=1}^p e(G^{\star}[V_i])
 <5\varepsilon^6D^2.
\]

Let
$\ell:=\max_{1\le i\le p}\left||V_i|-\frac Dp\right|$ and, without loss of
generality, assume
$\left||V_1|-\frac Dp\right|=\ell$.  Then
\begin{align*}
 e(G^{\star}[B])
 &\le \sum_{1\le i<j\le p}|V_i||V_j|+\varepsilon^6D^2\nonumber\\
 &=|V_1|(D-|V_1|)
   +\frac12\left((D-|V_1|)^2-\sum_{i=2}^p|V_i|^2\right)
   +\varepsilon^6D^2\nonumber\\
 &\le |V_1|(D-|V_1|)
   +\frac{p-2}{2(p-1)}(D-|V_1|)^2
   +\varepsilon^6D^2\nonumber\\
 &=\frac{p-1}{2p}D^2
   -\frac{p}{2(p-1)}\left(|V_1|-\frac Dp\right)^2
   +\varepsilon^6D^2\nonumber\\
 &=\frac{p-1}{2p}D^2-\frac{p}{2(p-1)}\ell^2
   +\varepsilon^6D^2.
\end{align*}
Here the penultimate inequality is Cauchy--Schwarz on
$|V_2|,\ldots,|V_p|$.  By \Cref{lem:turan-edge-estimate} and the preceding lower bound on $e(G^{\star}[B])$, $e(G^{\star}[B])
\ge \frac{p-1}{2p}D^2-\frac p8-\varepsilon^6D^2$.
Combining the last two displays gives $\ell\le
\sqrt{\frac{4(p-1)}p\varepsilon^6D^2+\frac{p-1}{4}}
\le2\varepsilon^3D$.
for sufficiently large $n$.  Since the globally fixed $\varepsilon$ is
smaller than $1$, the displayed strict comparisons with the coarser
$\varepsilon^2$ and $\varepsilon$ bounds follow as well.
\end{proof}

\subsection{The right-endpoint range}\label{subsec:right}
In what follows, $G^{\star}$ is an arbitrary upper-tail extremal graph, with $\Delta(G^{\star})=D$, and the root and the sets $A,B,A^+,A^0$ are those fixed above.  In this subsection we assume that the actual degree lies in the right range,
\begin{equation*}
	\left(1-\frac{\varepsilon^2}{200}\right)n\le D\le n-\kappa_Fn^{s_F}.
\end{equation*}
We use the coarser error estimate provided by \Cref{lem:B-partition}.  Set $\alpha:=\frac{p-1}{p}$ and $\beta:=\frac{n-D}{D}$.  Then
\begin{equation}\label{eq:right-beta-bound}
 \beta=\frac{n-D}{D}
 \le \frac{\varepsilon^2/200}{1-\varepsilon^2/200}
 <\frac{\varepsilon^2}{100}<\varepsilon.
\end{equation}

\begin{lemma}\label{lem:low-degree-right}
	Let	$	L:=\left\{v\in B:\ d_B(v)\leq\Bigl(1-\frac1p-12\varepsilon\Bigr) D\right\}.	$
	Then $	|L|\leq \varepsilon D.	$
\end{lemma}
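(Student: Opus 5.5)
This is a counting argument that uses only the edge estimates of \cref{lem:B-partition}; no spectral input is needed. The idea is that the nearly complete $p$-partite structure on $B$ forces almost every vertex to have large degree inside $B$. We bound the number of low-degree vertices by comparing a lower bound on $e(G^{\star}[B])$ with the upper bound that the low-degree vertices impose.

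\emph{Step 1: lower bound.} By \cref{lem:B-partition},
\[
e(G^{\star}[B])\ge t(D,p)-\varepsilon^2D^2 .
\]
Combining this with \cref{lem:turan-edge-estimate} gives
\[
2e(G^{\star}[B])\ge \Bigl(1-\tfrac1p\Bigr)D^2-2\varepsilon^2D^2-\tfrac p4 .
\]

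\emph{Step 2: upper bound from crossing pairs.} For $v\in V_i$, write
\[
d_B(v)=d_{V_i}(v)+\sum_{j\ne i}d_{V_j}(v),\qquad \sum_{j\ne i}d_{V_j}(v)\le D-|V_i|.
\]
Summing over $v\in B$:
\begin{itemize}
\item the internal contributions give $2\sum_i e(G^{\star}[V_i])\le 2\varepsilon^2D^2$;
\item each vertex of $B\setminus L$ contributes at most $D-|V_i|$;
\item each vertex of $L$ contributes at most $(1-\frac1p-12\varepsilon)D$.
\end{itemize}
By the part-size estimate $\bigl||V_i|-D/p\bigr|\le 2\varepsilon D$, every vertex outside $L$ contributes at most $(1-\frac1p+2\varepsilon)D$.

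It is cleaner to count missing crossing pairs directly. Each $v\in L\cap V_i$ has at least
\[
(D-|V_i|)-d_B(v)\ge \Bigl(1-\tfrac1p-2\varepsilon\Bigr)D-\Bigl(1-\tfrac1p-12\varepsilon\Bigr)D=10\varepsilon D
\]
non-neighbours in $B\setminus V_i$. Since each missing pair is counted at most twice, the number of missing crossing pairs is at least $\tfrac12|L|\cdot 10\varepsilon D=5\varepsilon D|L|$.

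Therefore
\[
e(G^{\star}[B])\le \sum_{i<j}|V_i||V_j|-5\varepsilon D|L|+\varepsilon^2D^2,
\]
and $\sum_{i<j}|V_i||V_j|\le t(D,p)$, because balanced parts maximize this sum.

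\emph{Step 3: combine.} Comparing with Step 1 gives
\[
5\varepsilon D|L|\le 2\varepsilon^2D^2 ,
\]
so $|L|\le \tfrac25\varepsilon D\le\varepsilon D$.

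The sharper $\varepsilon^6$ bounds in \cref{lem:B-partition} would give even more room. The only delicate point is the bookkeeping: the part-size deviation $2\varepsilon D$ must stay well below the $12\varepsilon D$ threshold, which it does with margin $10\varepsilon D$, and $t(D,p)$ must bound $\sum_{i<j}|V_i||V_j|$. Both are routine, so I expect no real obstacle.
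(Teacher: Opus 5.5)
Your argument is correct, but it takes a different route from the paper.

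\emph{The paper's route.} It argues by contradiction. If $|L|>\varepsilon D$, it deletes a set $S\subseteq L$ of $\lfloor\varepsilon D\rfloor$ low-degree vertices and shows $e(G^{\star}[B\setminus S])\ge t(D-|S|,p)+10\varepsilon^2D^2$. The Erd\H{o}s--Stone theorem then gives $T(N,p+1)$ inside $B\setminus S$, which contains some $F-v\in\partial_cF$. Finally the root $u^*$, adjacent to all of $B$, completes a copy of $F$. That proof uses only the total edge bound $e(G^{\star}[B])\ge t(D,p)-\varepsilon^2D^2$ together with the $F$-freeness of $G^{\star}$. It never uses the partition $V_1,\dots,V_p$.

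\emph{Your route.} You use the partition data of \cref{lem:B-partition}: near-balanced part sizes and few internal edges. Together with the edge lower bound and $\sum_{i<j}|V_i||V_j|\le t(D,p)$, these cap the total number of missing crossing pairs at $2\varepsilon^2D^2$. Each vertex of $L$ accounts for at least $10\varepsilon D$ of them, and double counting finishes.

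\emph{Comparison.} Your version is more elementary. It needs no Erd\H{o}s--Stone and no appeal to $F$-freeness beyond what is already built into \cref{lem:B-partition}. It is direct rather than by contradiction, and it gives the stronger bound $|L|\le\frac25\varepsilon D$, indeed $O(\varepsilon^5D)$ if you use the $\varepsilon^6$ estimates. The paper itself uses this same missing-crossing-pair bookkeeping later in the non-right range (\cref{lem:non-right-structural}(iv) and \eqref{eq:nonright-L-missing-cross}), although there too it bounds $|L|$ via Erd\H{o}s--Stone.

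The displayed inequality in your Step 1 and the degree-sum detour at the start of Step 2 are never used and can be dropped.
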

\begin{proof}
	Suppose, to the contrary, that $|L|>\varepsilon D$. Then there exists a set $S\subseteq L$ with
	$
	|S|=\lfloor \varepsilon D\rfloor.
	$
	We have
	\begin{align*}
		e(G^{\star}[B\setminus S])
		&\geq e(G^{\star}[B])-\sum_{v\in S}d_B(v)\\
		&\geq t(D,p)-\varepsilon^2 D^2-\lfloor \varepsilon D\rfloor\Bigl(1-\frac{1}{p}-12\varepsilon\Bigr) D\\
		&\geq \frac{p-1}{2p} D^2-\frac{p}{8}-\varepsilon^2 D^2-\frac{p-1}{p}\varepsilon D^2+12\varepsilon^2 D^2\\
		&>\frac{p-1}{2p}(D-\lfloor \varepsilon D \rfloor)^2+\Bigl(11-\frac{p-1}{2p}+o(1)\Bigr)\varepsilon^2 D^2\\
		&\geq t(D-\lfloor \varepsilon D\rfloor,p)+10\varepsilon^2 D^2.
	\end{align*}
	By \cref{thm:erdos-stone}, the graph $G^{\star}[B\setminus S]$ contains $T(N,p+1)$ for some sufficiently large fixed $N$. Hence there exists a graph $F-v \in \partial_{c}F$ such that $F-v \subseteq T(N,p+1)$.	Consequently, $u^*\vee (F-v)$ contains a copy of $F$, a contradiction. Therefore $|L|\leq \varepsilon D$.
\end{proof}

\begin{lemma}\label{lem:K-small-right}
	For each $i\in[p]$, define $K_i:=\{v\in V_i:\ d_{V_i}(v)\geq 2\varepsilon D\}$, and $K:=\bigcup_{i=1}^p K_i$.
	Then $|K|\leq \varepsilon D$.
\end{lemma}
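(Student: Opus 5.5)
This is a direct double-counting argument from the third inequality of \Cref{lem:B-partition}. That lemma gives $\sum_{i=1}^p e(G^{\star}[V_i])\le \varepsilon^6D^2<\varepsilon^2D^2$. For each $i$, every vertex of $K_i$ has at least $2\varepsilon D$ neighbours inside $V_i$. Summing degrees inside $V_i$ over $K_i$ counts each edge of $G^{\star}[V_i]$ at most twice, so
\[
2\varepsilon D\,|K_i|\le \sum_{v\in K_i} d_{V_i}(v)\le \sum_{v\in V_i} d_{V_i}(v)=2e(G^{\star}[V_i]).
\]
Summing over $i\in[p]$ then gives
\[
2\varepsilon D\,|K|\le 2\sum_{i=1}^p e(G^{\star}[V_i])\le 2\varepsilon^6D^2 .
\]

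Dividing through, we get $|K|\le \varepsilon^5 D\le\varepsilon D$, since the globally fixed $\varepsilon$ is smaller than $1$. This is the claimed bound, with considerable room to spare.

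The only point to check is that the sets $V_i$ are the max-cut partition of $B$ fixed before \Cref{lem:B-partition}, so that lemma's internal-edge bound applies to exactly these parts. Nothing from the right-range hypothesis is needed beyond what \Cref{lem:B-partition} already provides. I do not expect any real obstacle; even the coarser bound $\varepsilon^2D^2$ would yield $|K|\le \varepsilon D/2$.
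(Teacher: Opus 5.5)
Your proof is correct and is the same double-counting argument as the paper's; the paper uses the coarser bound $\sum_{i=1}^p e(G^{\star}[V_i])\le\varepsilon^2D^2$ to get exactly $|K|\le\varepsilon D$, whereas you use the $\varepsilon^6D^2$ bound to get $|K|\le\varepsilon^5D$. One small slip in your closing remark: the coarser bound gives $2\varepsilon D|K|\le 2\varepsilon^2D^2$, hence $|K|\le\varepsilon D$, not $\varepsilon D/2$.
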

\begin{proof}
	By the definition of $K$,
	$
	\sum_{i=1}^p e(V_i)=\frac12\sum_{i=1}^p \sum_{v\in V_i}d_{V_i}(v)
	\geq \frac12\sum_{i=1}^p\sum_{v\in K_i}d_{V_i}(v)
	\geq |K|\varepsilon D.
	$
	Since \cref{lem:B-partition} gives
	$\sum_{i=1}^p e(G^{\star}[V_i])\leq \varepsilon^2 D^2$, it follows that $|K|\leq \varepsilon D$.
\end{proof}

\begin{lemma}\label{lem:Vprime-degrees-right}
	Let	$ V_i':=V_i\setminus (L\cup K) $. Then, for every $i\in[p]$, every $j\in[p]\setminus\{i\}$, and every $v\in V_i'$, we have
	\begin{equation*}
		d_{V_j'}(v)\geq
		\begin{cases}
			\bigl(\frac1p-16\varepsilon\bigr)D, & \text{if } p=2,\\
			\bigl(\frac1p-20\varepsilon\bigr)D, & \text{if } p\geq 3.
		\end{cases}
	\end{equation*}
\end{lemma}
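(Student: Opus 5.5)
The plan is to bound $d_{V_j'}(v)$ from below by combining three inputs: the degree lower bound for vertices outside $L$, the internal-degree upper bound for vertices outside $K$, and the sizes of the removed sets $L\cup K$ and of the parts $V_j$.

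Fix $i\neq j$ and $v\in V_i'$. Since $v\notin L$, we have $d_B(v)>(1-\frac1p-12\varepsilon)D$. Since $v\notin K$, we have $d_{V_i}(v)<2\varepsilon D$. By \cref{lem:B-partition}, $|V_k|\le(\frac1p+2\varepsilon)D$ for every $k$. Hence the neighbours of $v$ in $B\setminus V_i$ number at least $(1-\frac1p-14\varepsilon)D$. These neighbours are spread over the $p-1$ parts $V_k$ with $k\ne i$.

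For $p=2$ there is only one other part, $V_j$, so $d_{V_j}(v)\ge(\frac12-14\varepsilon)D$. Removing $L\cup K$, which has size at most $2\varepsilon D$ by \cref{lem:low-degree-right,lem:K-small-right}, gives
\[
d_{V_j'}(v)\ge\Bigl(\frac12-16\varepsilon\Bigr)D.
\]

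For $p\ge3$, we subtract the maximum possible contribution of the other $p-2$ parts:
\[
d_{V_j}(v)\ge\Bigl(1-\frac1p-14\varepsilon\Bigr)D-(p-2)\Bigl(\frac1p+2\varepsilon\Bigr)D=\Bigl(\frac1p-(10+2p)\varepsilon\Bigr)D.
\]
This yields a constant depending on $p$ instead of the clean value $20\varepsilon$.

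The main obstacle is therefore to obtain the stated $p$-independent constant for $p\ge3$. I would use the sharper bounds in \cref{lem:B-partition}: it gives $\big||V_k|-\frac Dp\big|\le2\varepsilon^3D$, not merely $2\varepsilon D$. With this, the loss from the other parts is $(p-2)\cdot2\varepsilon^3D\le\varepsilon D$, since $\varepsilon$ satisfies $10^4p^2f^2\sqrt\varepsilon\ll1$. The same sharper bounds also give $d_{V_i}(v)<2\varepsilon D$ together with $|V_i|$ nearly $D/p$. The estimate then becomes
\[
d_{V_j}(v)\ge\Bigl(1-\frac1p-12\varepsilon-2\varepsilon\Bigr)D-(p-2)\Bigl(\frac1p+2\varepsilon^3\Bigr)D\ge\Bigl(\frac1p-15\varepsilon\Bigr)D-O(\varepsilon^3 pD).
\]
Subtracting $|L\cup K|\le2\varepsilon D$ leaves at least $(\frac1p-18\varepsilon)D$, which is within the claimed $(\frac1p-20\varepsilon)D$. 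For $p=2$ the bound $(\frac12-16\varepsilon)D$ from the first argument stands directly.
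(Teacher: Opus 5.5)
Your proof is correct and follows the paper's argument: lower-bound $d_B(v)$ using $v\notin L$, remove $d_{V_i}(v)<2\varepsilon D$ using $v\notin K$, discard the other $p-2$ parts, and subtract $|L|+|K|\le 2\varepsilon D$. The only difference is in how the other parts are bounded. The paper avoids the $p$-dependence you ran into by writing $\sum_{k\ne i,j}|V_k|=D-|V_i|-|V_j|\le(1-\frac2p+4\varepsilon)D$, which uses only the coarse lower bounds $|V_i|,|V_j|\ge(\frac1p-2\varepsilon)D$; your detour through the sharper $2\varepsilon^3D$ deviation is also valid but not needed.
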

\begin{proof}
	For every $v\in V_i'$, by the definitions of $L$ and $K$, we have $d_B(v)>\Bigl(1-\frac1p-12\varepsilon\Bigr)D$, and $d_{V_i}(v)<2\varepsilon D$. Therefore,
	\begin{equation}\label{eq_sumdegree_lb_new}
		\sum_{j\in[p]\setminus\{i\}}d_{V_j'}(v)
		\geq d_B(v)-|L|-|K|-d_{V_i}(v)	> \Bigl(1-\frac1p-16\varepsilon\Bigr)D.
	\end{equation}
	
	If $p=2$, then the only possible $j$ satisfies $j\neq i$, and thus
	$
	d_{V_j'}(v)\ge d_B(v)-|L|-|K|-d_{V_i}(v)>\Bigl(1-\frac12-16\varepsilon\Bigr) D.
	$
	
	Now assume $p\geq 3$. Then
	\begin{align*}
		d_{V_j'}(v)
		&\geq d_B(v)-(D-|V_i|-|V_j|)-d_{V_i}(v)-|L|-|K|\\
		&\geq \Bigl(1-\frac{1}{p}-12\varepsilon\Bigr)D-\Bigl(1-2\Bigl(\frac{1}{p}-2\varepsilon\Bigr)\Bigr)D-4\varepsilon D\\
		&=\Bigl(\frac1p-20\varepsilon\Bigr)D.
	\end{align*}
	This proves the lemma.
\end{proof}

Set
\begin{align*}
	A_1:&=\{u\in A:\ d_B(u)\geq (1-\varepsilon) D\}, \quad A_2:=A\setminus A_1,\\
	B_1:&=\{v\in B:\ d_A(v)\geq (1-\varepsilon)|A|\}, \quad B_2:=B\setminus B_1.
\end{align*}
For $i\in\{1,2\}$ and $j\in[p]$, write $B_{i,j}:=B_i\cap V_j$.

Let $\boldsymbol x=(x_v)_{v\in V(G)}$ be a Perron vector of $G$, normalized so that $ x_{\hat u}:=\max_{v\in V(G)}x_v=1 $.
Let $ x_{\hat v}:=\max_{v\in B\setminus K}x_v $. If $\hat v\in V_i$ for some $i\in[p]$, choose $\tilde v\in V_i'$ such that
$ x_{\tilde v}=\min_{u\in V_i'}x_u $.  Also choose $\bar v\in B\setminus (L\cup K)$ such that $ x_{\bar v}=\min_{u\in B\setminus (L\cup K)}x_u $.

\begin{lemma}\label{lem:perron-entry-right}
	With the notation above, the following statements hold for sufficiently large $n$.
	\begin{itemize}
		\item[\textnormal{(i)}]
		$
		x_{\hat v}>1-\frac1p-2\varepsilon.
		$
		In particular,
		$
		d_B(\hat v)>\Bigl(1-\frac1p-12\varepsilon\Bigr) D,
		$
		and hence $\hat v\notin L$.
		
		\item[\textnormal{(ii)}]
		$
		x_{\tilde v}>1-\frac{1}{p}-30\varepsilon>\frac{2}{5}.
		$
		
		\item[\textnormal{(iii)}]
		$
		x_{\bar v}>\frac25.
		$
	\end{itemize}
\end{lemma}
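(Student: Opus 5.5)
The plan is to read off all three bounds from eigenvalue equations, with the normalisation $x_{\hat u}=1$ (so $0\le x_w\le1$ for every $w$). Three inputs are used throughout: the lower bound \eqref{eq:esti-radius-lb}, $\rho\ge(\alpha+\beta)D-o(1)$; the smallness $\beta<\varepsilon^2/100$ from \eqref{eq:right-beta-bound}; and the size and degree controls of \cref{lem:B-partition}, \cref{lem:low-degree-right,lem:K-small-right,lem:Vprime-degrees-right}. Here $\rho=\rho(G^\star)$. I also need an upper bound on $\rho$, which I obtain exactly as in the proof of \cref{lem:scale-sensitive-endpoint-cap}. Since $G^\star[B]$ is $\partial_cF$-free, \cref{lem:boundary-spectral-cap} gives $\rho(G^\star[B])\le\alpha D+O(D^{s_F})$. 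The $2\times2$ comparison matrix has diagonal entries at most $n-D=\beta D$ and $\rho(G^\star[B])$, and off-diagonal entry $\sqrt{\beta}\,D$. Hence
\[
\rho\le\alpha D+O(\sqrt\beta D)+o(D)\le\Bigl(\alpha+\tfrac{\varepsilon}{5}\Bigr)D .
\]

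For (i), evaluate the eigen-equation at $\hat u$. I bound the neighbours of $\hat u$ by type: vertices of $A$ contribute at most $|A|=\beta D$, vertices of $K$ contribute at most $|K|\le\varepsilon D$, and vertices of $B\setminus K$ contribute at most $D x_{\hat v}$. Combining with the lower bound on $\rho$ gives $(\alpha+\beta)D-o(1)\le\beta D+\varepsilon D+Dx_{\hat v}$, so $x_{\hat v}>\alpha-2\varepsilon$. For the degree claim, evaluate the eigen-equation at $\hat v$:
\[
\rho x_{\hat v}\le \beta D+|K|+d_B(\hat v)\,x_{\hat v},
\]
so $d_B(\hat v)\ge\rho-(\beta+\varepsilon)D/x_{\hat v}$. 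With $x_{\hat v}>\alpha-2\varepsilon\ge\frac12-2\varepsilon$ this is at least $(\alpha-3\varepsilon)D$, which exceeds $(\alpha-12\varepsilon)D$. Hence $\hat v\notin L$.

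For (ii), subtract the eigen-equations at $\hat v$ and $\tilde v$, both of which lie in $V_i$. This gives
\[
\rho(x_{\hat v}-x_{\tilde v})\le|N(\hat v)\setminus N(\tilde v)|.
\]
I split this set into three pieces.
\begin{enumerate}[label=\textnormal{(\alph*)}]
\item The part inside $V_i$ has size less than $2\varepsilon D$, since $\hat v\notin K$.
\item The part inside $A$ has size at most $\beta D$.
\item The part inside $B\setminus V_i$ has size at most $|B\setminus V_i|-d_{B\setminus V_i}(\tilde v)$. Here $|B\setminus V_i|\le(\alpha+2\varepsilon^3)D$ by \cref{lem:B-partition}, and $d_{B\setminus V_i}(\tilde v)\ge d_B(\tilde v)-d_{V_i}(\tilde v)>(\alpha-14\varepsilon)D$ because $\tilde v\notin L\cup K$. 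So this part has size about $14\varepsilon D$.
\end{enumerate}
Dividing by $\rho\ge\alpha D\ge D/2$ gives $x_{\hat v}-x_{\tilde v}=O(\varepsilon)$, and hence $x_{\tilde v}>\alpha-30\varepsilon>2/5$ since $\alpha\ge1/2$. I expect the constant to be the delicate point for $p=2$, where $\alpha=1/2$ doubles the loss. There I would sharpen piece (c) using the exact $p=2$ bound of \cref{lem:Vprime-degrees-right}, and if necessary weaken $30\varepsilon$ to a larger multiple; only $x_{\tilde v}>2/5$ is used later.

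For (iii), set $m_j:=\min_{V_j'}x$ and $m:=\min_j m_j=x_{\bar v}$, so (ii) says $m_i>\alpha-30\varepsilon$. If the minimum is attained in class $i$ we are done, so suppose it is attained in some $k\ne i$. Each $v\in V_k'$ has at least $c D$ neighbours in every $V_j'$ with $j\ne k$, where $c:=\frac1p-20\varepsilon$, by \cref{lem:Vprime-degrees-right}. Bounding the eigen-equation at $\bar v$ from below with this gives
\[
\rho m\ge cD\,m_i+(p-2)cD\,m .
\]
Hence
\[
m\ge \frac{cD\,m_i}{\rho-(p-2)cD}\ge\frac{c\,m_i}{\alpha+\varepsilon/5-(p-2)c}.
\]
The denominator equals $\frac1p+O(p\varepsilon)$, so $m\ge m_i\bigl(1-O(p^2\varepsilon)\bigr)>2/5$ by the choice $10^8p^2f^2\varepsilon\ll1$. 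I expect the main obstacle to be exactly this step: it needs the sharp upper bound $\rho\le(\alpha+O(\varepsilon))D$ rather than a crude one, so that the denominator really is close to $\frac1p$.
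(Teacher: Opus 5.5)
Your (i) is the paper's argument. Your (ii) follows the paper's strategy of subtracting the eigen-equations at $\hat v$ and $\tilde v$. As written, however, (ii) does not reach the stated constant when $p=2$, and the repair you suggest would not help.

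Bounding every Perron entry by $1$ gives $|N(\hat v)\setminus N(\tilde v)|<(16\varepsilon+\beta+2\varepsilon^3)D$. Dividing by $\rho\ge(\tfrac12+\beta)D-o(1)$ doubles this. Combined with $x_{\hat v}\ge\tfrac12-\varepsilon-o(1)$, you only obtain roughly $x_{\tilde v}>\tfrac12-33\varepsilon$, not $\tfrac12-30\varepsilon$.

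Your piece (c) cannot be sharpened. The $12\varepsilon$ and $2\varepsilon$ there are exactly the defining thresholds of $L$ and $K$. Moreover, \cref{lem:Vprime-degrees-right} for $p=2$ gives $16\varepsilon$, which is weaker than the $14\varepsilon$ you already have. Weakening $30\varepsilon$ would be harmless for later uses, but it does not prove the lemma as stated.

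The missing idea is the one the paper uses. For $w\in(N(\hat v)\setminus N(\tilde v))\cap(B\setminus K)$, use $x_w\le x_{\hat v}$, which is just the definition of $\hat v$. Keep the crude bound $x_w\le 1$ only on $A\cup K$. Your three pieces then give
\[
\rho\,(x_{\hat v}-x_{\tilde v})\le(\beta+\varepsilon)D+(16\varepsilon+2\varepsilon^3)D\,x_{\hat v}.
\]
Equivalently, $x_{\tilde v}\ge(1-a)x_{\hat v}-b$, where $a=(16\varepsilon+2\varepsilon^3)D/\rho<1$ and $b=(\beta+\varepsilon)D/\rho$. The right-hand side is increasing in $x_{\hat v}$, so you may insert $x_{\hat v}>\alpha-2\varepsilon$. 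This gives $x_{\tilde v}>\alpha-2\varepsilon-a\alpha-b>\alpha-21\varepsilon$ for every $p\ge2$. The loss is now proportional to $x_{\hat v}\approx\alpha$, which cancels the factor $1/\alpha$ coming from $\rho\approx\alpha D$.

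Your (iii) is correct and genuinely different from the paper's route. The paper subtracts the eigen-equation at $\bar v$ from the upper bound at $\hat v$. It pays for the class of $\hat v$, which $\bar v$ sees only through $x_{\tilde v}$, by reusing the bound on $x_{\hat v}-x_{\tilde v}$ from the proof of (ii). It treats $p=2$ and $p\ge3$ separately and needs only the lower bound on $\rho$.

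Your minimum-propagation inequality $\rho m\ge cD\,m_i+(p-2)cD\,m$ instead requires $\rho\le(\alpha+O(\varepsilon))D$. Your derivation of this is valid: \cref{lem:boundary-spectral-cap} plus the $2\times2$ Rayleigh comparison, with off-diagonal entry $\sqrt\beta D<\varepsilon D/10$. Your check that $\rho-(p-2)cD>0$ is also valid. What this buys is a uniform argument in $p$ that tolerates any constant loss in (ii). The cost is an extra global spectral input, which the paper only invokes later, in \cref{lem:B2-small-right}.
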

\begin{proof}
	By \cref{lem:rho-join-turan-lb} and the extremality of $G^{\star}$, $\rho(G^{\star})\geq\frac12\Bigl(\alpha+\sqrt{\alpha^2+4\beta}\Bigr) D+O( D^{-1})$.
	For sufficiently large $n$, by \eqref{eq:esti-radius-lb},
	\begin{equation}\label{eq:rho_lower_xi2}
		\rho(G^{\star})\geq (\alpha+\beta-\varepsilon^2) D.
	\end{equation}
	
	\medskip
	\noindent\textbf{Proof of \textnormal{(i)}.}
	Since $x_{\hat u}=1$, the eigenequation at $\hat u$ yields
	$
	\rho(G^{\star})=\sum_{w\in N_{G^{\star}}(\hat u)}x_w.
	$
	Among the neighbours of $\hat u$, at most $|A|+|K|$ lie in $A\cup K$, and every other neighbour lies in $B\setminus K$, where the Perron entry is at most $x_{\hat v}$. Thus
	$
	\rho(G^{\star})\leq |A|+|K|+\bigl(D-|A|-|K|\bigr)x_{\hat v}.
	$
	Hence
	\begin{equation*}
		x_{\hat v}\geq
		\frac{\rho(G^{\star})-|A|-|K|}{D-|A|-|K|}
		\geq
		\frac{(\alpha+\beta-\varepsilon^2)D-\beta D-\varepsilon D}{(1-\beta-\varepsilon)D}
		=
		\frac{\alpha-\varepsilon-\varepsilon^2}{1-\beta-\varepsilon}.
	\end{equation*}
	Since $1-\beta-\varepsilon<1$, we obtain
	$
	x_{\hat v}\geq \alpha-\varepsilon-\varepsilon^2>\alpha-2\varepsilon=1-\frac1p-2\varepsilon
	$
	for sufficiently small $\varepsilon$.
	
	Next, the eigenequation at $\hat v$ gives
	$
	\rho(G^{\star})x_{\hat v}\leq |A|+|K|+d_B(\hat v)x_{\hat v},
	$
	and therefore
	$
	d_B(\hat v)\geq \rho(G^{\star})-\frac{|A|+|K|}{x_{\hat v}}
	\geq
	(\alpha+\beta-\varepsilon^2)D-\frac{(\beta+\varepsilon)D}{\alpha-2\varepsilon}.
	$
	Since $\alpha\geq \frac12$, for sufficiently small $\varepsilon$,
	$
	\frac{1}{\alpha-2\varepsilon}\leq \frac{1}{1/2-2\varepsilon}\leq 2+12\varepsilon.
	$
	Thus
	\begin{align*}
		d_B(\hat v)
		&\geq
		\Bigl(\alpha+\beta-\varepsilon^2-(2+12\varepsilon)(\beta+\varepsilon)\Bigr) D>
		\Bigl(\alpha-3\varepsilon-25\varepsilon^2\Bigr)D\\
		&>\Bigl(\alpha-12\varepsilon\Bigr)D	= \Bigl(1-\frac1p-12\varepsilon\Bigr)D.
	\end{align*}
	Hence $\hat v\notin L$.
	
	\medskip
	\noindent\textbf{Proof of \textnormal{(ii)}.}
	Without loss of generality, assume that $\hat v\in V_1'$, and let
	$
	x_{\tilde v}=\min_{u\in V_1'}x_u.
	$
	Set
	$
	L_0:=L\setminus K.
	$
	Since $L_0\subseteq B\setminus K$, every vertex in $L_0$ has Perron entry at most $x_{\hat v}$. Therefore the eigenequation at $\hat v$ yields
	\begin{equation}\label{eq:hatv_upper_refined_new}
		\rho(G^{\star})x_{\hat v}
		\leq
		|A|+|K|
		+\bigl(|L_0|+d_{V_1'}(\hat v)\bigr)x_{\hat v}
		+\sum_{i=2}^p\sum_{u\in V_i'}x_u.
	\end{equation}
	On the other hand, the eigenequation at $\tilde v$ gives
	\begin{equation}\label{eq:tildev_lower_refined_new}
		\rho(G^{\star})x_{\tilde v}
		\geq
		\sum_{i=2}^p\sum_{u\in N_{V_i'}(\tilde v)}x_u.
	\end{equation}
	Subtracting~\eqref{eq:tildev_lower_refined_new} from~\eqref{eq:hatv_upper_refined_new}, we obtain
	\begin{equation*}
		\rho(G^{\star})(x_{\hat v}-x_{\tilde v})
		\leq
		|A|+|K|
		+
		\Bigl(
		|L_0|+d_{V_1'}(\hat v)+\sum_{i=2}^p\bigl(|V_i'|-d_{V_i'}(\tilde v)\bigr)
		\Bigr)x_{\hat v}.
	\end{equation*}
	Now $|L_0|\leq |L|\leq \varepsilon D$, and $d_{V_1'}(\hat v)\leq d_{V_1}(\hat v)<2\varepsilon D$, because $\hat v\notin K$. Moreover, by~\eqref{eq_sumdegree_lb_new},
	$
	\sum_{i=2}^p d_{V_i'}(\tilde v)>
	\Bigl(1-\frac1p-16\varepsilon\Bigr) D.
	$
	On the other hand,
	\begin{equation*}
		\sum_{i=2}^p |V_i'|
		\leq \sum_{i=2}^p |V_i|
		=
		D-|V_1|
		\leq \Bigl(1-\frac1p+2\varepsilon\Bigr)D.
	\end{equation*}
	Hence,
	$
	\sum_{i=2}^p\bigl(|V_i'|-d_{V_i'}(\tilde v)\bigr)\leq 18\varepsilon D.
	$
	Substituting these estimates, we obtain
	\begin{equation*}
		\rho(G^{\star})(x_{\hat v}-x_{\tilde v})
		\leq
		(\beta+\varepsilon) D+21\varepsilon D x_{\hat v}.
	\end{equation*}
	Using~\eqref{eq:rho_lower_xi2}, this implies
	$
	x_{\tilde v}
	\geq
	\Bigl(1-\frac{21\varepsilon}{\alpha+\beta-\varepsilon^2}\Bigr)x_{\hat v}
	-\frac{\beta+\varepsilon}{\alpha+\beta-\varepsilon^2}.
	$
	By part~\textnormal{(i)},
	$
	x_{\hat v}>\alpha-2\varepsilon.
	$
	Therefore,
	\begin{equation*}
		x_{\tilde v}>
		\Bigl(1-\frac{21\varepsilon}{\alpha+\beta-\varepsilon^2}\Bigr)(\alpha-2\varepsilon)
		-\frac{2\varepsilon}{\alpha+\beta-\varepsilon^2}>\alpha-2\varepsilon-\frac{21\alpha\varepsilon+2\varepsilon}{\alpha-\varepsilon^2}.		
	\end{equation*}
	Since $\alpha\geq \frac12$, for sufficiently small $\varepsilon$,
	$\frac{\alpha}{\alpha-\varepsilon^2}<\frac{22}{21}$, and $\frac{1}{\alpha-\varepsilon^2}<\frac52$.
	Hence
	$
	\frac{21\alpha\varepsilon+2\varepsilon}{\alpha-\varepsilon^2}<22\varepsilon+5\varepsilon=27\varepsilon,
	$
	and thus
	\begin{equation*}
		x_{\tilde v}>\alpha-29\varepsilon>1-\frac1p-30\varepsilon.
	\end{equation*}
	In particular,
	$
	x_{\tilde v}>1-\frac1p-30\varepsilon>\frac25
	$
	for sufficiently small $\varepsilon$.
	
	\medskip
	\noindent\textbf{Proof of \textnormal{(iii)}.}
	We continue to assume that $\hat v\in V_1'$.
	
	\smallskip
	\noindent\emph{Case 1: $p\geq 3$.}
	By relabelling if necessary, we may assume $\bar v\in V_2'$. The eigenequation at $\bar v$ gives
	\begin{equation*}
		\rho(G^{\star})x_{\bar v}
		\geq
		\sum_{u\in N_{V_1'}(\bar v)}x_u
		+
		\sum_{i=3}^p\sum_{u\in N_{V_i'}(\bar v)}x_u.
	\end{equation*}
	Since every vertex in $V_1'$ has Perron entry at least $x_{\tilde v}$,
	\begin{equation*}
		\rho(G^{\star})x_{\bar v}
		\geq
		d_{V_1'}(\bar v)x_{\tilde v}
		+
		\sum_{i=3}^p\sum_{u\in N_{V_i'}(\bar v)}x_u.
	\end{equation*}
	Subtracting this inequality from~\eqref{eq:hatv_upper_refined_new}, we obtain
	\begin{equation*}
		\rho(G^{\star})(x_{\hat v}-x_{\bar v}) \leq (\beta+\varepsilon)D +	\Bigl(|L_0|+d_{V_1'}(\hat v)+|V_2'|+\sum_{i=3}^p(|V_i'|-d_{V_i'}(\bar v))\Bigr)x_{\hat v}	-d_{V_1'}(\bar v)x_{\tilde v}.
	\end{equation*}
	Now $|L_0|\leq \varepsilon D$, $d_{V_1'}(\hat v)<2\varepsilon D$,  and $|V_2'|\leq |V_2|\leq \Bigl(\frac1p+2\varepsilon\Bigr) D$. Moreover, by \cref{lem:Vprime-degrees-right}, $d_{V_i'}(\bar v)\geq \Bigl(\frac1p-20\varepsilon\Bigr) D$ with $ i\in\{1,3,\dots,p\}$.
	Hence $d_{V_1'}(\bar v)\geq \Bigl(\frac1p-20\varepsilon\Bigr) D$, and $\sum_{i=3}^p d_{V_i'}(\bar v)\geq (p-2)\Bigl(\frac1p-20\varepsilon\Bigr) D$.
	Since
	\begin{equation*}
		\sum_{i=3}^p |V_i'|	\leq \sum_{i=3}^p |V_i| = D-|V_1|-|V_2|\leq \Bigl(1-\frac2p+4\varepsilon\Bigr)D,
	\end{equation*}
	we obtain $\sum_{i=3}^p\bigl(|V_i'|-d_{V_i'}(\bar v)\bigr)\leq (20p-36)\varepsilon D$.
	Therefore,
	\begin{equation*}
		\rho(G^{\star})(x_{\hat v}-x_{\bar v}) \leq	(\beta+\varepsilon)D +\Bigl(\frac1p+(20p-31)\varepsilon\Bigr)D x_{\hat v} -\Bigl(\frac1p-20\varepsilon\Bigr)D x_{\tilde v}.
	\end{equation*}
	Equivalently,
	\begin{equation*}
		x_{\bar v} \geq x_{\hat v}	-
		\frac{
			\frac1p(x_{\hat v}-x_{\tilde v})
			+(20p-31)\varepsilon x_{\hat v}
			+20\varepsilon x_{\tilde v}
			+(\beta+\varepsilon)
		}{\rho(G^{\star})/ D}.
	\end{equation*}
	From the proof of part~\textnormal{(ii)},
	$
	\rho(G^{\star})(x_{\hat v}-x_{\tilde v})
	\leq
	(\beta+\varepsilon)D+21\varepsilon D x_{\hat v}
	\leq (\beta+22\varepsilon)D.
	$
	Using~\eqref{eq:rho_lower_xi2}, and observing that $\alpha\geq \frac23$ when $p\geq 3$, we get
	\begin{equation*}
		x_{\hat v}-x_{\tilde v}
		\leq \frac{\beta+22\varepsilon}{\alpha+\beta-\varepsilon^2}
		<
		\frac{23\varepsilon}{3/5}
		<39\varepsilon.
	\end{equation*}
	for sufficiently small $\varepsilon$. 
	
	Hence
	$
	\frac1p(x_{\hat v}-x_{\tilde v})\leq \frac{39}{p}\varepsilon\leq 13\varepsilon.
	$
	Since $x_{\hat v}\leq 1$, $x_{\tilde v}\leq 1$, and $\beta\leq \varepsilon$, we infer
	$
	\frac1p(x_{\hat v}-x_{\tilde v})+(20p-31)\varepsilon x_{\hat v}+20\varepsilon x_{\tilde v}+(\beta+\varepsilon)
	\leq (20p+4)\varepsilon.
	$
	Again using $\alpha+\beta-\varepsilon^2>\frac35$ for sufficiently small $\varepsilon$, we obtain
	\begin{equation*}
		x_{\bar v}>
		x_{\hat v}-\frac{(20p+4)\varepsilon}{3/5}
		=
		x_{\hat v}-\frac53(20p+4)\varepsilon.
	\end{equation*}
	By part~\textnormal{(i)}, $x_{\hat v}>\alpha-2\varepsilon$,	and hence
	\begin{equation*}
		x_{\bar v}>
		\alpha-2\varepsilon-\frac53(20p+4)\varepsilon
		=
		\alpha-\frac{100p+26}{3}\varepsilon
		>
		\alpha-(34p+9)\varepsilon.
	\end{equation*}
	Since $\alpha\geq \frac23$, we conclude that $x_{\bar v}>\frac{2}{3}-(34p+9)\varepsilon>\frac{2}{5}$
	for sufficiently small $\varepsilon$.
	
	\smallskip
	\noindent\emph{Case 2: $p=2$.}
	In this case $\alpha=\frac12$. By relabelling, we may assume that $\hat v\in V_1'$ and $\bar v\in V_2'$. The eigenequation at $\bar v$ gives $\rho(G^{\star})x_{\bar v}\geq d_{V_1'}(\bar v)x_{\tilde v}$.
	By \cref{lem:Vprime-degrees-right},
	$
	d_{V_1'}(\bar v)\geq \Bigl(\frac12-16\varepsilon\Bigr)D.
	$
	Subtracting this from~\eqref{eq:hatv_upper_refined_new}, we obtain
	\begin{equation*}
		\rho(G^{\star})(x_{\hat v}-x_{\bar v})
		\leq
		(\beta+\varepsilon) D
		+
		\Bigl(\frac12+5\varepsilon\Bigr) D x_{\hat v}
		-
		\Bigl(\frac12-16\varepsilon\Bigr) D x_{\tilde v}.
	\end{equation*}
	Thus
	\begin{equation*}
		x_{\bar v}
		\geq
		x_{\hat v}
		-
		\frac{
			\frac12(x_{\hat v}-x_{\tilde v})
			+5\varepsilon x_{\hat v}
			+16\varepsilon x_{\tilde v}
			+(\beta+\varepsilon)
		}{\rho(G^{\star})/ D}.
	\end{equation*}
	As above,
	\begin{equation*}
		x_{\hat v}-x_{\tilde v}
		\leq
		\frac{\beta+22\varepsilon}{\frac12+\beta-\varepsilon^2}
		<
		\frac{23\varepsilon}{2/5}
		<58\varepsilon.
	\end{equation*}
	for sufficiently small $\varepsilon$, and therefore
	$
	\frac12(x_{\hat v}-x_{\tilde v})<29\varepsilon.
	$
	
	Using $x_{\hat v}\leq 1$, $x_{\tilde v}\leq 1$, $\beta\leq \varepsilon$, and
	$
	\frac12+\beta-\varepsilon^2>\frac25,
	$
	we deduce that
	\begin{equation*}
		x_{\bar v}>
		x_{\hat v}-\frac{29\varepsilon+5\varepsilon+16\varepsilon+2\varepsilon}{2/5}
		=
		x_{\hat v}-130\varepsilon.
	\end{equation*}
	By part~\textnormal{(i)},
	$
	x_{\hat v}>\frac12-2\varepsilon,
	$
	whence
	$
	x_{\bar v}>\frac12-132\varepsilon>\frac25
	$
	for sufficiently small $\varepsilon$. This completes the proof.
\end{proof}

\begin{lemma}\label{lem:B2-small-right}
	We have	$ |B_2\setminus (L\cup K)|\leq \varepsilon D $.
\end{lemma}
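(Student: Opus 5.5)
We bound $|B_2\setminus(L\cup K)|$ by double counting the missing pairs between $A$ and $B$, using only the edge estimates of \cref{lem:B-partition}. No Perron-vector input is needed here.

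By definition, every $v\in B_2$ satisfies $d_A(v)<(1-\varepsilon)|A|$. So $v$ misses more than $\varepsilon|A|=\varepsilon(n-D)$ vertices of $A$, and therefore
\[
M=(n-D)D-e_{G^\star}(A,B)\ \ge\ |B_2|\,\varepsilon(n-D).
\]

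The natural upper bound on $M$ from \cref{lem:B-partition} is $M\le 5\varepsilon^6D^2$. In the right range, however, $n-D$ may be as small as $\kappa_Fn^{s_F}$, which is sublinear. The absolute bound $5\varepsilon^6D^2$ is then useless, and we need an estimate for $M$ that scales with $n-D$.

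To get one, we return to the rooted counting in the proof of \cref{lem:B-partition}, with the roles now sharpened:
\[
e(G^\star)=e(G^\star[B])+(n-D)D-(M-I).
\]
We compare this with the extremal graph through the spectral radius rather than the edge count. The main step, and the expected obstacle, is to show
\[
M-I\le c\,\varepsilon^{2}(n-D)D.
\]
The planned route is the following.

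First, write $\rho(G^\star)\,x^{\top}x=x^{\top}A(G^\star)x$ and split it over the blocks $A$ and $B$, as in the proof of \cref{lem:scale-sensitive-endpoint-cap}. \cref{lem:perron-entry-right}(iii) gives $x_v>2/5$ on $B\setminus(L\cup K)$. Each missing $A$--$B$ pair $av$ with $v\notin L\cup K$ therefore costs a Rayleigh-quotient contribution of order $x_ax_v$.

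Second, the same Rayleigh-quotient computation applied to $S_{n,D,p}$ shows that these losses cannot be recovered. The block $G^\star[B]$ is $\partial_cF$-free, so by \cref{lem:boundary-spectral-cap} the $B$-block exceeds $\rho(T(D,p))$ by at most $C_\partial D^{s_F}$. The internal edges of $A$ are dominated by $M$, by \cref{lem:root-defect-main}.

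Third, to control the entries $x_a$ for $a\in A$, use the eigenequation at $a$. This gives $\rho x_a\ge\sum_{v\in N_B(a)}x_v$, which is at least of order $\varepsilon$ for $a\in A_1$. Together with $\rho(G^\star)\ge\rho(S_{n,D,p})$ and the choice of $\zeta_F$ relative to $\Lambda_F$ and $C_\partial$, this should yield
\[
\#\{\text{missing pairs } av:\ v\in B\setminus(L\cup K)\}\ \le\ \varepsilon^2(n-D)D .
\]

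A simpler alternative is to use the eigenequation at each $a\in A^+$:
\[
\rho(G^\star)x_a=\sum_{w\in N(a)}x_w\le d_A(a)+\sum_{v\in N_B(a)}x_v .
\]
Summing over $a$ and comparing with the value $\sum_{v\in B}x_v$ that the extremal graph would give, each missing neighbour $v\in B\setminus(L\cup K)$ of $a$ loses at least $2/5$. This turns the spectral deficit into a count of missing pairs weighted by $x_a$.

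Once the estimate above is in hand, we conclude:
\[
|B_2\setminus(L\cup K)|\,\varepsilon(n-D)\ \le\ \varepsilon^2(n-D)D,
\]
which gives $|B_2\setminus(L\cup K)|\le\varepsilon D$. I expect the hard part to be bounding the $A$-side Perron entries from below uniformly, in particular handling $A_2$ and the vertices of $A^0$ where $x_a=0$, so that the missing-pair count is genuinely proportional to $n-D$.
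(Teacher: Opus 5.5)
Your double count $M\ge\varepsilon(n-D)\,|B_2\setminus(L\cup K)|$ is correct. So is your observation that $M\le5\varepsilon^6D^2$ is useless when $n-D$ is sublinear, and so is the reduction: it suffices to show that $M>\varepsilon^2(n-D)D$ is incompatible with $\rho(G^\star)\ge\rho(S_{n,D,p})$. But that implication is the whole content of the lemma, and your proposal only asserts that it ``should'' follow.

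The route you sketch does not reach it. A Perron-weighted deficit $\sum_{av\ \mathrm{missing}}x_ax_v$ gives a count of missing pairs only if $x_a$ is bounded below at the $A$-endpoints, and in the right range nothing supplies this.
\begin{itemize}
\item Since $|A|=n-D$ may be of order $n^{s_F}$, the bound $|A_2|\le M/(\varepsilon D)\le 5\varepsilon^5D$ is vacuous, and $A_2$ may be all of $A$.
\item Each vertex of $A^0$ misses all of $B$ but has $x_a=0$, so it contributes nothing to any weighted deficit.
\item Your ``simpler alternative'' compares $\rho(G^\star)x_a$ with the value $\sum_{v\in B}x_v$ that the extremal graph ``would give''. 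That is not an inequality you have; it would require a switching argument with its own $F$-freeness and degree checks.
\item Even granting a switching bound $\rho(G^\star)x_a\ge\sum_{v\in B\setminus(L\cup K)}x_v$, the slack $d_A(a)+\sum_{v\in L\cup K}x_v$ is of order $\varepsilon D$ per vertex. That bounds the missing pairs only by $O(\varepsilon(n-D)D)$, not $\varepsilon^2(n-D)D$.
\end{itemize}

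The missing idea is that no information about $\boldsymbol x$ is needed: control the $A$--$B$ block by its operator norm. The bipartite graph of $A$--$B$ edges has spectral radius at most $\sqrt{e_{G^\star}(A,B)}<\sqrt{(1-\varepsilon^2)(n-D)D}$, however the missing pairs are distributed. Split the Rayleigh quotient into blocks, and use $\|A(G^\star[A])\|\le n-D-1$ and $\rho(G^\star[B])\le\rho(T(D,p))+C_\partial D^{s_F}$ from \cref{lem:boundary-spectral-cap}. This gives
\[
\rho(G^\star)\le\lambda_{\max}\begin{pmatrix}n-D-1&\sqrt{(1-\varepsilon^2)(n-D)D}\\ \sqrt{(1-\varepsilon^2)(n-D)D}&\rho(T(D,p))+C_\partial D^{s_F}\end{pmatrix}.
\]
On the other side, the two-block quotient of $S_{n,D,p}$ shows that $\rho(S_{n,D,p})$ is at least the largest eigenvalue of the matrix with entries $0$, $\sqrt{(n-D)D}$, $\rho(T(D,p))$, minus $1$. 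Compare the two bounds term by term:
\begin{itemize}
\item Shrinking the off-diagonal entry costs at least $\varepsilon^2(n-D)/\sqrt5$.
\item The diagonal entry $n-D-1$ adds at most $25\beta(n-D)\le\varepsilon^2(n-D)/4$. This is exactly where the right-range bound $\beta<\varepsilon^2/100$ enters, and why the internal edges of $A$ (your $I$) never need to be paid for.
\item The hypothesis $n-D\ge\zeta_Fn^{s_F}$ makes $\tfrac{1}{10}\varepsilon^2(n-D)$ exceed $C_\partial D^{s_F}+1$.
\end{itemize}
Hence $\rho(S_{n,D,p})>\rho(G^\star)$, a contradiction. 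This argument in fact shows $M\le\varepsilon^2(n-D)D$ outright, hence $|B_2|\le\varepsilon D$, without excluding $L\cup K$ and without \cref{lem:perron-entry-right}.
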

\begin{proof}
	Suppose, to the contrary, that
	$|B_2\setminus (L\cup K)|>\varepsilon D$.  By definition of $B_2$,
	\begin{equation*}
		(n-D) D-e_{G^{\star}}(A,B)=|A||B|-e_{G^{\star}}(A,B) \ge \varepsilon |A||B_2\setminus(L\cup K)|	>\varepsilon^2 (n-D)D .
	\end{equation*}
	Thus
	\begin{equation}\label{eq:normA-B}
		\|A(|A|K_1\cup |B|K_1\cup E_{{G^{\star}}}(A,B))\|_2
		\leq \sqrt{(1-\varepsilon^2)(n-D)D}.
	\end{equation}
	On the other hand, $G^{\star}[B]$ is $\partial_{c}F$-free.  By \Cref{lem:boundary-spectral-cap},
	\begin{equation}\label{eq:ubGB-radius}
		\rho(G^{\star}[B])\leq \rho(T(D,p))+C_{\partial} D^{s_F}.
	\end{equation}
	Similar to the proof of \Cref{lem:scale-sensitive-endpoint-cap}, for every
	unit Perron vector $\boldsymbol{x}$, let
	$\boldsymbol{x}_A=(x_u)_{u\in A}$ and
	$\boldsymbol{x}_B=(x_v)_{v\in B}$.  Then
	\begin{equation}\label{eq:ubG-radius}
		\boldsymbol{x}^TA(G^{\star})\boldsymbol{x}
		\le (n-D-1)\|\boldsymbol{x}_A\|_2^2
		+2\|A(|A|K_1\cup |B|K_1\cup E_{G^{\star}}(A,B))\|_2
		\|\boldsymbol{x}_A\|_2\|\boldsymbol{x}_B\|_2
		+\rho(G^{\star}[B])\|\boldsymbol{x}_B\|_2^2 .
	\end{equation}
	
	Therefore, $\rho(G^{\star})$ is at most the largest eigenvalue of
	\begin{equation*}
		\begin{pmatrix}
			n-D-1 & \sqrt{(1-\varepsilon^2)(n-D)D}\\
			\sqrt{(1-\varepsilon^2)(n-D)D} & \rho(G^{\star}[B])
		\end{pmatrix}.
	\end{equation*}
	
	Since $p\ge2$, for all sufficiently large $D$, by \cref{lem:spectral-radius-estimate}, we have
	\begin{equation}\label{eq:B2-T-radius-crude}
		\frac{D}{4}\le\rho(T(D,p))\le D.
	\end{equation}
	
%	The upper bound follows from the maximum-degree bound.  The lower bound
%	follows from the Rayleigh quotient on the all-one vector and
%	\Cref{lem:turan-edge-estimate}:
%	\[
%	\rho(T(\Delta,p))\ge \frac{2t(\Delta,p)}{\Delta}
%	\ge \left(1-\frac1p\right)\Delta-\frac{p}{4\Delta}
%	\ge \frac{\Delta}{4}
%	\]
%	for $p\ge2$ and all large $\Delta$.
	
	Combining \eqref{eq:normA-B}, \eqref{eq:ubGB-radius} and \eqref{eq:ubG-radius}, we have
	\begin{align}
		\rho(G^{\star})&\leq \lambda_{\max}
		\begin{pmatrix}
			n-D-1 & \sqrt{(1-\varepsilon^2)(n-D)D}\\
			\sqrt{(1-\varepsilon^2)(n-D)D} & \rho(T(D,p))
		\end{pmatrix}
		+ C_{\partial} D^{s_F}\nonumber\\
		&<
		\lambda_{\max}
		\begin{pmatrix}
			n-D & \sqrt{(1-\varepsilon^2)(n-D)D}\\
			\sqrt{(1-\varepsilon^2)(n-D)D} & \rho(T(D,p))
		\end{pmatrix}
		+ C_{\partial} D^{s_F}.\label{eq:B2-upper-explicit}
	\end{align}
	Here the last step uses that the largest eigenvalue is monotone in a
	diagonal entry and increases by at most the amount added to that entry.
	
	For the model graph $S_{n,D,p}=(n-D)K_1\vee T(D,p)$, the Rayleigh
	quotient on vectors constant on the two displayed blocks gives the same
	matrix with bottom-right entry $2t(D,p)/D$.  By
	\cref{lem:spectral-radius-estimate}, replacing this entry by
	$\rho(T(D,p))$ changes the largest eigenvalue by at most
	$1$.  Hence
\begin{equation}\label{eq:B2-model-lower-explicit}
		\rho(S_{n,D,p})
		\ge
		\lambda_{\max}
		\begin{pmatrix}
			0 & \sqrt{(n-D) D}\\
			\sqrt{(n-D) D} & \rho(T(D,p))
		\end{pmatrix}
		-1.
	\end{equation}
	
	First, we calculate that
	\begin{align*}
		&\lambda_{\max}
		\begin{pmatrix}
			0 & \sqrt{(n-D)D}\\
			\sqrt{(n-D)D} & \rho(T(D,p))
		\end{pmatrix}
		-
		\lambda_{\max}
		\begin{pmatrix}
			0 & \sqrt{(1-\varepsilon^2)(n-D)D}\\
			\sqrt{(1-\varepsilon^2)(n-D)D} & \rho(T(D,p))
		\end{pmatrix}
		\\
		&\quad =
		\frac{
			\sqrt{\rho(T(D,p))^2+4(n-D) D}
			-
			\sqrt{\rho(T(D,p))^2+4(1-\varepsilon^2)(n-D)D}
		}{2}
		\\
		&\quad =
		\frac{
			2\varepsilon^2(n-D)D
		}{
			\sqrt{\rho(T(D,p))^2+4(n-D)D}
			+
			\sqrt{\rho(T(D,p))^2+4(1-\varepsilon^2)(n-D)D}
		}.
	\end{align*}
	Using \eqref{eq:B2-T-radius-crude} and $n-D\le D$, the denominator is at most $2\sqrt{5} D$.  Therefore
	\begin{equation}\label{eq:B2-offdiag-gain}
	\begin{gathered}
		\lambda_{\max}
		\begin{pmatrix}
			0 & \sqrt{(n-D)D}\\
			\sqrt{(n-D)D} & \rho(T(D,p))
		\end{pmatrix}
		\\
		-\lambda_{\max}
		\begin{pmatrix}
			0 & \sqrt{(1-\varepsilon^2)(n-D)D}\\
			\sqrt{(1-\varepsilon^2)(n-D)D} & \rho(T(D,p))
		\end{pmatrix}
		\\
		\ge \frac1{\sqrt{5}}\varepsilon^2(n-D) .
	\end{gathered}
	\end{equation}
	
	It remains to bound the cost of raising the upper-left diagonal entry from	$0$ to $n-D$.  For $0\le x\le n-D$, the derivative of
	\begin{equation*}
		\lambda_{\max}
		\begin{pmatrix}
			x & \sqrt{(1-\varepsilon^2)(n-D)D}\\
			\sqrt{(1-\varepsilon^2)(n-D)D} & \rho(T(D,p))
		\end{pmatrix}
	\end{equation*}
	with respect to $x$ is
	\begin{equation*}
		\frac{1}{2}
		\left(1-\frac{\rho(T(D,p))-x}{\sqrt{(\rho(T(D,p))-x)^2+4(1-\varepsilon^2)(n-D)D}}
		\right).
	\end{equation*}
	By \eqref{eq:right-beta-bound}, $n-D=\beta D \le \varepsilon D$, and so \eqref{eq:B2-T-radius-crude} gives
	$\rho(T(D,p))-x\geq \frac{D}{4}-(n-D) \geq D/5$.  The displayed derivative is at most
	\begin{equation*}
		\frac{(1-\varepsilon^2)(n-D) D}{(\rho(T(D,p))-x)^2}
		\le
		\frac{(n-D)D}{(D/5)^2}=25\beta .
	\end{equation*}
	Integrating from $0$ to $(n-D)$ yields
	\begin{align}
		&\quad\lambda_{\max}
		\begin{pmatrix}
			n-D & \sqrt{(1-\varepsilon^2)(n-D)D}\\
			\sqrt{(1-\varepsilon^2)(n-D)D} & \rho(T(D,p))
		\end{pmatrix}
		\nonumber\\
		&\quad-\lambda_{\max}
		\begin{pmatrix}
			0 & \sqrt{(1-\varepsilon^2)(n-D)D}\\
			\sqrt{(1-\varepsilon^2)(n-D)D} & \rho(T(D,p))
		\end{pmatrix}\nonumber\\
		&\le 25\beta (n-D) .\label{eq:B2-leftdiag-loss}
	\end{align}
	
	Combining \eqref{eq:B2-upper-explicit},
	\eqref{eq:B2-model-lower-explicit}, \eqref{eq:B2-offdiag-gain}, and
	\eqref{eq:B2-leftdiag-loss}, we obtain
	\begin{align}
		\rho(S_{n,D,p})-\rho(G^{\star})
		&\ge
		\left(\frac1{\sqrt5}\varepsilon^2-25\beta\right)(n-D)
		-C_{\partial} D^{s_F}-1\nonumber\\
		&\ge
		\left(\frac{1}{\sqrt5}-\frac{1}{4}\right)\varepsilon^2(n-D)
		-C_{\partial} D^{s_F}-1\nonumber\\
		&\ge
		\frac{1}{10}\varepsilon^2 (n-D)
		-C_{\partial} D^{s_F}-1.
		\label{eq:B2-final-positive-gap}
	\end{align}
	Here \eqref{eq:right-beta-bound} gives $25\beta\le\varepsilon^2/4$.
	Finally, by $G^{\star}\in\mathfrak{U}_{n}(\Delta,\zeta_F;F)$ and \eqref{eq:safe-interval}, the right-endpoint safety range gives $n-D\ge\zeta_F n^{s_F}\ge\zeta_F D^{s_F}$. By \eqref{eq:small-constant-choice} and \eqref{eq:B2-final-positive-gap},
	\begin{equation*}
		\rho(S_{n,D,p})-\rho(G^{\star}) > (2C_{\partial}+2)D^{s_F}-C_{\partial}D^{s_F}-1>0.
	\end{equation*}
	Thus the final line of \eqref{eq:B2-final-positive-gap} is positive for all	large $n$.  This gives $\rho(S_{n,D,p})>\rho(G^{\star})$, contradicting the local upper-tail extremality assumption $\rho(G^{\star})\ge \rho(S_{n,D,p})$.
	Therefore $|B_2\setminus(L\cup K)|\le\varepsilon D$.
\end{proof}

Define $B_{1,i}':=B_{1,i}\setminus (L\cup K)$. If $v\in B_{1,i}'$, then for every $j\in[p]\setminus\{i\}$,
\begin{align}
	d_{B_{1,j}'}(v)
	&=d_{V_j}(v)-d_{B_{2,j}\setminus (L\cup K)}(v)-d_{L\cap V_j}(v)-d_{K_j}(v)\nonumber\\
	&\geq d_B(v)-d_{V_i}(v)-\sum_{k\in[p]\setminus\{i,j\}}|V_k|-|B_2\setminus (L\cup K)|-|L|-|K|\nonumber\\
	&\geq \Bigl(1-\frac{1}{p}-12\varepsilon\Bigr)D-2\varepsilon D-\Bigl(1-2\Bigl(\frac{1}{p}-2\varepsilon\Bigr)\Bigr)D-\varepsilon D-\varepsilon D-\varepsilon D\nonumber\\
	&\geq \Bigl(\frac{1}{p}-21\varepsilon\Bigr) D.\label{eq_b1i_degree}
\end{align}

\begin{lemma}\label{lem:B1i-independent-right}
	For every $i\in[p]$, we have
	$	E\bigl(G^{\star}[B_{1,i}']\bigr)=\emptyset.	$
\end{lemma}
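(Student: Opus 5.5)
The plan is a direct embedding argument: an edge inside some $B_{1,i}'$ together with the near-complete structure of $A\cup B$ yields a copy of $F$. Suppose, for a contradiction, that $v_1v_2\in E(G^{\star}[B_{1,i}'])$ for some $i\in[p]$.

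\textbf{Choice of colouring.} Since $F$ is color-critical with $\chi(F)=p+2$, fix a critical edge $xy$ and a proper $(p+1)$-colouring of $F-xy$ with classes $X_0,X_1,\dots,X_p$. The vertices $x$ and $y$ must lie in the same class, since otherwise the colouring would be a proper $(p+1)$-colouring of $F$. Relabel so that $x,y\in X_0$. Then $F$ is a subgraph of the graph obtained from $K_{p+1}(f,\dots,f)$ by adding one edge inside the first part. So it suffices to find, in $G^{\star}$, sets $W_0\ni v_1,v_2$ and $W_1,\dots,W_p$ of size $f$ such that all pairs between different $W$'s are edges. Map $x\mapsto v_1$, $y\mapsto v_2$, and embed the rest greedily.

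\textbf{Placement of the parts.} I would take $W_0\subseteq B_{1,i}'$ consisting of $v_1,v_2$ and $f-2$ further arbitrary vertices of $B_{1,i}'$. Here $|B_{1,i}'|\ge |V_i|-|B_2\setminus(L\cup K)|-|L|-|K|\ge (1/p-5\varepsilon)D$ by \cref{lem:B-partition}, \cref{lem:low-degree-right}, \cref{lem:K-small-right} and \cref{lem:B2-small-right}. Next, for each $j\neq i$, I would choose $W_j\subseteq B_{1,j}'$ greedily, each new vertex adjacent to all previously chosen vertices. By \eqref{eq_b1i_degree} and $|V_j|\le(1/p+2\varepsilon)D$, every vertex of some $B_{1,k}'$ has at most $23\varepsilon D$ non-neighbours in $B_{1,j}'$ for $j\neq k$. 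At most $pf$ vertices are ever chosen, so the number of admissible candidates in $B_{1,j}'$ is at least $(1/p-5\varepsilon)D-23pf\varepsilon D-pf>0$. This uses the choice $10^4p^2f^2\sqrt\varepsilon\ll1$.

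The last part $W_p$ is placed in $A$. Every vertex of $B_1$ is adjacent to at least $(1-\varepsilon)|A|$ vertices of $A$, so the $pf$ chosen $B$-vertices together exclude at most $pf\varepsilon|A|$ vertices of $A$. The remaining common neighbourhood has size at least $(1-pf\varepsilon)|A|\ge f$, because $|A|=n-D\ge 2\zeta_Fn^{s_F}\ge 2\zeta_F$ and $\zeta_F\gg f$ by \eqref{eq:small-constant-choice}; this also covers $s_F=0$. No edges inside $A$ or inside any $W_j$ with $j\ge1$ are needed.

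\textbf{Conclusion and main obstacle.} The sets $W_0,\dots,W_p$ span a complete $(p+1)$-partite graph with parts of size $f$, and the edge $v_1v_2$ lies inside $W_0$. This contains $F$, contradicting that $G^{\star}$ is $F$-free. Hence $E(G^{\star}[B_{1,i}'])=\emptyset$. The only delicate step is the bookkeeping that the common neighbourhoods stay nonempty. Inside $B$ this follows from the linear degree bound \eqref{eq_b1i_degree}. Inside $A$, where $|A|$ may be only of order $n^{s_F}$, it relies on the defining property $d_A(v)\ge(1-\varepsilon)|A|$ of $B_1$ together with the buffer \eqref{eq:actual-degree-buffer}.
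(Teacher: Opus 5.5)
Your proposal is correct and follows the paper's proof essentially step for step: the part containing the edge goes into $B_{1,i}'$, the other $p-1$ parts are chosen greedily in the sets $B_{1,j}'$ ($j\ne i$) using \eqref{eq_b1i_degree}, and the remaining part is taken in the common $A$-neighbourhood via the $B_1$ condition. Your justification that $(1-pf\varepsilon)|A|\ge f$ from the buffer $|A|\ge 2\zeta_F n^{s_F}$ (which also covers $s_F=0$) is a slightly more careful version of the paper's ``for sufficiently large $|A|$''. One cosmetic fix: relabel so that the part placed in $A$ is not also called $W_p$ when $i\ne p$.
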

\begin{proof}
	Suppose, to the contrary, that $ E\bigl(G^{\star}[B_{1,1}']\bigr)\neq\emptyset $.
	Choose an edge 	$ v_{1,1}v_{1,2}\in E\bigl(G^{\star}[B_{1,1}']\bigr) $.	By \cref{lem:B2-small-right} and the estimates $|L|,|K|\leq \varepsilon D$, we have, for every $i\in[p]$,
	$
	|B_{1,i}'|
	\geq |V_i|-|B_2\setminus (L\cup K)|-|L|-|K|
	\geq \Bigl(\frac1p-5\varepsilon\Bigr) D.
	$
	In particular, $ |B_{1,i}'|>f $	for sufficiently large $D$.
	
	Choose distinct vertices $v_{1,3},\dots,v_{1,f}\in B_{1,1}'$, and let
	$ S_1:=\{v_{1,1},\dots,v_{1,f}\}\subseteq B_{1,1}'	$.	Then $G^{\star}[S_1]$ contains an edge.
	
	By~\eqref{eq_b1i_degree}, every vertex of $B_{1,1}'$ has at least
	$
	\Bigl(\frac1p-21\varepsilon\Bigr) D
	$
	neighbours in $B_{1,2}'$. Hence
	\begin{align*}
		\left|\bigcap_{j=1}^{f}N_{B_{1,2}'}(v_{1,j})\right|
		&\geq \sum_{j=1}^f |N_{B_{1,2}'}(v_{1,j})|-(f-1)\left|\bigcup_{j=1}^f N_{B_{1,2}'}(v_{1,j})\right|\\
		&\geq \sum_{j=1}^f d_{B_{1,2}'}(v_{1,j})-(f-1)|V_2|\\
		&\geq f\Bigl(\frac1p-21\varepsilon\Bigr)D-(f-1)\Bigl(\frac1p+2\varepsilon\Bigr)D\\
		&=\Bigl(\frac1p-23f\varepsilon+2\varepsilon\Bigr)D
		>f.
	\end{align*}
	Choose
	$
	S_2:=\{v_{2,1},\dots,v_{2,f}\}\subseteq \bigcap_{j=1}^f N_{B_{1,2}'}(v_{1,j}).
	$
	Then every vertex of $S_2$ is adjacent to every vertex of $S_1$.
	
	Proceed inductively. Suppose that for some $2\leq \ell\leq p-1$ we have already chosen pairwise disjoint sets $S_t=\{v_{t,1},\dots,v_{t,f}\}\subseteq B_{1,t}'$ $(1\leq t\leq \ell)$
	such that
	$
	G^{\star}[S_1\cup\cdots\cup S_\ell]\supseteq T(\ell f,\ell)+e,
	$
	where the additional edge lies inside $S_1$. Again by~\eqref{eq_b1i_degree},
	\begin{align*}
		\left|\bigcap_{t=1}^{\ell}\bigcap_{v\in S_t}N_{B_{1,\ell+1}'}(v)\right|
		&\geq \sum_{t=1}^{\ell}\sum_{v\in S_t}|N_{B_{1,\ell+1}'}(v)|-(\ell f-1)\left|\bigcup_{t=1}^{\ell}\bigcup_{v\in S_t}N_{B_{1,\ell+1}'}(v)\right|\\
		&\geq \ell f\Bigl(\frac1p-21\varepsilon\Bigr)D-(\ell f-1)\Bigl(\frac1p+2\varepsilon\Bigr)D\\
		&=\Bigl(\frac1p-23\ell f\varepsilon+2\varepsilon\Bigr)D
		>f.
	\end{align*}
	Hence, we may choose
	$
	S_{\ell+1}\subseteq \bigcap_{t=1}^{\ell}\bigcap_{v\in S_t}N_{B_{1,\ell+1}'}(v)
	$
	with $|S_{\ell+1}|=f$.
	
	At the end, we obtain pairwise disjoint sets
	$S_i\subseteq B_{1,i}'$ $(i\in[p])$
	of size $f$ such that $ G^{\star}[S_1\cup\cdots\cup S_p]\supseteq T(fp,p)+e $. 
	Finally, since every vertex of $B_{1,i}'\subseteq B_{1,i}$ has at most $\varepsilon|A|$ non-neighbours in $A$, we have
	\begin{align*}
		\left|\bigcap_{i=1}^p\bigcap_{v\in S_i}N_A(v)\right|
		&\geq |A|-\sum_{i=1}^p\sum_{v\in S_i}|A\setminus N_A(v)|\\
		&>(1-rf\varepsilon)|A|>f
	\end{align*}
	for sufficiently large $|A|$. Choose a set
	$
	U\subseteq \bigcap_{i=1}^p\bigcap_{v\in S_i}N_A(v)
	$
	with $|U|=f$. Then $U$ is joined to every $S_i$, and the sets $S_1,\ldots,S_p$ are mutually complete with one additional edge inside $S_1$.  In other words,
	\begin{equation*}
		G^{\star}[U\cup S_1\cup\cdots\cup S_p]
		\supseteq fK_1\vee\bigl(T(fp,p)+e\bigr),
	\end{equation*}
%	where the extra edge lies in the first Tur\'an class.  This graph contains $F$.  To see this, choose a critical edge $ab$ of $F$ and a proper $(p+1)$-coloring of $F-ab$ in which $a$ and $b$ have the same color.  Embed $a$ and $b$ on the two endpoints of the extra edge in $S_1$, embed the remaining vertices of their color class into $D_1$, embed one of the other color classes into $U$, and embed the remaining $p-1$ color classes into $S_2,\ldots,S_p$.  All required cross-color edges are present, and the edge $ab$ is supplied by the internal edge in $S_1$.  
	This gives a copy of $F$ in $G^{\star}$, a contradiction.
\end{proof}

\begin{lemma}\label{lem:low-degree-right_empty}
	We have $ L=\emptyset. $
\end{lemma}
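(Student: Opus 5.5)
We argue by contradiction with a Perron-vector switching. Suppose some $w\in L$, so $d_B(w)\le(1-\frac1p-12\varepsilon)D$. The plan is to delete all edges at $w$ and reconnect $w$ to a large, well-behaved set, obtaining a graph $G'$ that stays in $\mathfrak U_n(\Delta,\zeta_F;F)$, is $F$-free, and has $\rho(G')>\rho(G^\star)$. This contradicts upper-tail extremality.

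\emph{The new neighbourhood.} Choose an index $j$ and let $w$ be joined to
\[
N':=\Bigl(\bigcup_{i\ne j}B_{1,i}'\Bigr)\cup A_1',
\]
where $A_1'$ is a suitable subset of the vertices of $A$ that see almost all of $B$. The natural choice is $A_1'=\{u^*\}\cup(N_A(w)\cap A_1)$; alternatively one can take $w$'s new neighbours in $A$ to be just $u^*$ together with the common $A$-neighbours used below.

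\emph{Size and degree control.} By \cref{lem:B2-small-right} and $|L|,|K|\le\varepsilon D$, we have $|B_{1,i}'|\ge(\frac1p-5\varepsilon)D$. Hence the $B$-part of $N'$ has size at least $(1-\frac1p-5(p-1)\varepsilon)D$.

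\emph{Spectral gain.} By \cref{lem:perron-entry-right}(iii), every vertex of $B\setminus(L\cup K)$ has Perron entry greater than $2/5$. So the gain $\sum_{v\in N'}x_v-\sum_{v\in N(w)}x_v$ is estimated as follows.
\begin{itemize}
\item The old $B$-neighbours contribute at most $(1-\frac1p-12\varepsilon)D$ (entries at most $1$).
\item The old $A$-neighbours contribute at most $|A|\le\beta D\le\varepsilon^2D/100$.
\item The new $B$-neighbours contribute at least $\frac25(1-\frac1p-5(p-1)\varepsilon)D$.
\end{itemize}
This crude bound does not win by itself, because $2/5<1$. I would therefore sharpen the entries. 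Repeating the argument of \cref{lem:perron-entry-right}(ii)--(iii), the entries on $B\setminus(L\cup K)$ are all within $O(p\varepsilon)$ of $x_{\hat v}$, which is the maximum over $B\setminus K$. Also, the vertices of $K$ number at most $\varepsilon D$. Then
\[
\sum_{N(w)}x\le\bigl(1-\tfrac1p-12\varepsilon\bigr)Dx_{\hat v}+|K|+|A|,
\qquad
\sum_{N'}x\ge\bigl(1-\tfrac1p-5p\varepsilon\bigr)D\bigl(x_{\hat v}-O(p\varepsilon)\bigr).
\]
The difference then comes down to comparing $12\varepsilon$ against the losses. This is the main obstacle: the constants must be balanced, and if needed I would shrink $N'$ to a set avoiding only $B_2$ and pay for $K$ separately. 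Once $\sum_{N'}x>\sum_{N(w)}x$, the Rayleigh quotient gives $\rho(G')>\rho(G^\star)$, since $x_w>0$ and $w\in B\subseteq V(C^\star)$.

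\emph{Admissibility.} The degree of $w$ is at most $D$, and every other vertex gains at most one edge. Hence $\Delta(G')\le D+1$, which stays below $n-\zeta_Fn^{s_F}$ by \eqref{eq:actual-degree-buffer}. It also stays at least $\Delta$, since $u^*$ keeps degree at least $D-1$; in the borderline case $D=\Delta$, I would keep $u^*$'s degree by joining $w$ to $u^*$.

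\emph{$F$-freeness.} Suppose $G'$ contains a copy of $F$; it must use $w$. The neighbourhood of $w$ in $G'$ lies in $(p-1)$ classes $B_{1,i}'$ ($i\neq j$) plus part of $A$. By \cref{lem:B1i-independent-right}, each class $B_{1,i}'$ is independent. The sets $B_{1,i}'$ ($i\ne j$) together with $A$ carry the structure used in \cref{lem:B1i-independent-right}; there are no edges inside the $B_{1,i}'$, and the edges inside $A$ are few. Using $F$ connected and color-critical, a copy of $F$ through $w$ would give a copy of $F$ already in $G^\star$: we replace $w$ by a vertex of $B_{1,j}'$ whose common neighbourhood in the relevant $f$-sets is large, found by the intersection-counting argument of \cref{lem:B1i-independent-right}. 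This contradicts $F$-freeness of $G^\star$. Thus $L=\emptyset$.
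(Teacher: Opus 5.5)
The switching, the degree bookkeeping and the $F$-freeness step (replacing $w$ by an unused common neighbour in $B_{1,j}'$) match the paper. However, the step everything hinges on, $\sum_{N'}x>\sum_{N(w)}x$, is left open, and the route you propose for it cannot be closed.

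Your only margin is the $12\varepsilon D$ built into the definition of $L$. A pointwise comparison spends more than that in two places:
\begin{itemize}
\item \emph{Size loss.} Your count of $|N'\cap B|$ loses $5(p-1)\varepsilon D$, which already exceeds $12\varepsilon D$ for $p\ge4$. Your own display has leading coefficient $(12-5p)\varepsilon x_{\hat v}<0$ for every $p\ge3$.
\item \emph{Entry error.} This error is not a small multiple of $\varepsilon$. \cref{lem:perron-entry-right} only gives $x_{\hat v}-x_v\le\frac53(20p+4)\varepsilon$ on $B\setminus(L\cup K)$, or $130\varepsilon$ when $p=2$. Multiplied by $|N'\cap B|\approx\alpha D$, this alone exceeds $12\varepsilon\alpha D$, even with the sharper count $|N'\cap B|\ge(\alpha-5\varepsilon)D$.
\end{itemize}
Shrinking $N'$ only makes this worse. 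So ``balancing the constants'' is not a technicality: the pointwise approach fails.

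The missing idea is to avoid pointwise entries and bound the total Perron mass of the new neighbourhood through the eigenequation at $\hat v$. For this, $j$ must be the index with $\hat v\in V_j$, not an arbitrary one. Since $\hat v\notin K$, its neighbours outside $\bigcup_{i\ne j}B_{1,i}'$ are of two kinds:
\begin{itemize}
\item vertices of $A\cup K$, with entries at most $1$;
\item vertices of $(L\setminus K)\cup(V_j\setminus K)\cup(B_2\setminus(L\cup K))$. There are fewer than $|L|+d_{V_j}(\hat v)+|B_2\setminus(L\cup K)|<4\varepsilon D$ of these, each with entry at most $x_{\hat v}$.
\end{itemize}
Hence $\sum_{i\ne j}\sum_{u\in B_{1,i}'}x_u>\rho(G^\star)x_{\hat v}-4\varepsilon Dx_{\hat v}-|A|-|K|$. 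The mass lost at $w$, keeping the edge $u^*w$, is at most $d_B(w)x_{\hat v}+|A|+|K|$.

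Now use $\rho(G^\star)\ge(\alpha+\beta-\varepsilon^2)D$ and $d_B(w)\le(\alpha-12\varepsilon)D$. The net gain is at least $(8\varepsilon-\varepsilon^2)Dx_{\hat v}-2(\beta+\varepsilon)D>0$, since $x_{\hat v}>\alpha-2\varepsilon\ge\frac12-2\varepsilon$ and $\beta<\varepsilon^2/100$. The point is that it is $\rho(G^\star)$, not $|N'|$, that exceeds $d_B(w)$ by about $12\varepsilon D$. The eigenequation at $\hat v$ transfers this excess to $\bigcup_{i\ne j}B_{1,i}'$ with only $O(\varepsilon D)$ leakage.

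With this estimate, the paper's switch $N'=\{u^*\}\cup\bigcup_{i\ne j}B_{1,i}'$ suffices; your extra $A$-neighbours are harmless but unnecessary. Your replacement argument for $F$-freeness then goes through, since each vertex of $B_{1,i}'$ with $i\ne j$ misses at most $23\varepsilon D$ vertices of $B_{1,j}'$. Connectivity of $F$ and the sparsity of $G^\star[A]$ play no role in it.
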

\begin{proof}
	Assume, to the contrary, that $L\neq \emptyset$, and choose $v_0\in L$. Recall that
	$
	x_{\hat v}=\max_{v\in B\setminus K}x_v.
	$
	By relabelling the parts if necessary, we may assume that
	$
	\hat v\in V_1\setminus K.
	$
	By \cref{lem:perron-entry-right}, we know that $\hat v\notin L$.
	
	Using the eigenequation at $\hat v$, and covering its neighbourhood by $A$, $K$, $L$, $V_1\setminus (L\cup K)$, $B_2\setminus (L\cup K)$, $\bigcup_{i=2}^p B_{1,i}'$, we obtain the upper estimate
	\begin{align*}
		\rho(G^{\star})x_{\hat v}
		&\le\sum_{u\in N_A(\hat v)}x_u+\sum_{u\in N_K(\hat v)}x_u+\sum_{u\in N_L(\hat v)}x_u
		+\sum_{u\in N_{V_1\setminus (L\cup K)}(\hat v)}x_u\\
		&\quad
		+\sum_{u\in N_{B_2\setminus (L\cup K)}(\hat v)}x_u
		+\sum_{i=2}^p \sum_{u\in N_{B_{1,i}'}(\hat v)}x_u.
	\end{align*}
	Since every vertex of $L\cup (B\setminus K)$ has Perron entry at most $x_{\hat v}$, we have
	\begin{align*}
		\rho(G^{\star})x_{\hat v}
		&\leq |A|+|K|
		+|L|x_{\hat v}
		+d_{V_1\setminus (L\cup K)}(\hat v)x_{\hat v}
		+\sum_{u\in B_2\setminus (L\cup K)}x_u
		+\sum_{i=2}^p \sum_{u\in B_{1,i}'}x_u\\
		&\leq |A|+|K|
		+\bigl(|L|+d_{V_1}(\hat v)+|B_2\setminus (L\cup K)|\bigr)x_{\hat v}
		+\sum_{i=2}^p \sum_{u\in B_{1,i}'}x_u.
	\end{align*}
	Now, by \Cref{lem:low-degree-right},
	$|L|\leq \varepsilon D$; moreover, $d_{V_1}(\hat v)<2\varepsilon D$ and $|B_2\setminus (L\cup K)|\leq \varepsilon D$,
	and therefore
	\begin{equation}\label{eq_sum_b1prime_lowdeg}
		\sum_{i=2}^p \sum_{u\in B_{1,i}'}x_u
		>
		\rho(G^{\star})x_{\hat v}-(|A|+|K|)-4\varepsilon D x_{\hat v}.
	\end{equation}
	
	 Since $v_0\in B=N_{G^{\star}}(u^*)$, the edge $u^*v_0$ is present in $G^{\star}$.  Define $G'$ by deleting all edges incident with $v_0$ except $u^*v_0$, and then joining $v_0$ to every vertex of $\bigcup_{i=2}^p B_{1,i}'$:
	\begin{equation*}
		G'
		=
		G^{\star}-\{v_0u:\  u\in N_{G^{\star}}(v_0)\setminus\{u^*\}\}
		+\{v_0w:\  w\in \bigcup_{i=2}^p B_{1,i}'\}.
	\end{equation*}
	Then
	\begin{equation}\label{eq_diff_sr_1}
		(\rho(G')-\rho(G^{\star}))\|\boldsymbol x\|_2^2
		\geq \boldsymbol x^\top \left( A(G')-A(G^{\star}) \right)\boldsymbol x
		=2x_{v_0}\biggl(\sum_{i=2}^p \sum_{w\in B_{1,i}'}x_w-\sum_{u\in N_{G^{\star}}(v_0)\setminus\{u^*\}}x_u\biggr).
	\end{equation}
	The equality holds because the root edge $u^*v_0$ is kept.  Using~\eqref{eq_sum_b1prime_lowdeg} and
	$
	\sum_{u\in N_{G^{\star}}(v_0)\setminus\{u^*\}}x_u
	\leq |A|+|K|+d_B(v_0)x_{\hat v}$, and
	$d_B(v_0)\leq \Bigl(1-\frac{1}{p}-12\varepsilon\Bigr) D$,
	we get
	\begin{align*}
		\sum_{i=2}^p \sum_{w\in B_{1,i}'}x_w-\sum_{u\in N_{G^{\star}}(v_0)\setminus\{u^*\}}x_u
		&>
		\bigl(\rho(G^{\star})-4\varepsilon D-d_B(v_0)\bigr)x_{\hat v}-2(|A|+|K|)\\
		&>
		\bigl(\beta+8\varepsilon+o(1)\bigr)D\,x_{\hat v}-2(\beta+\varepsilon)D.
	\end{align*}
	By \cref{lem:perron-entry-right},
	$x_{\hat v}>1-\frac1p-2\varepsilon=:\alpha-2\varepsilon$, $\alpha=\frac{p-1}{p}\geq \frac{1}{2}$.
	Since $\beta\leq \varepsilon$, we obtain
	\begin{align*}
		\bigl(\beta+8\varepsilon+o(1)\bigr)D x_{\hat v}-2(\beta+\varepsilon)D
		&>
		\Bigl((\beta+8\varepsilon)(\alpha-2\varepsilon)-2(\beta+\varepsilon)+o(1)\Bigr)D\\
		&\geq
		\Bigl((\beta+8\varepsilon)\Bigl(\frac12-2\varepsilon\Bigr)-2(\beta+\varepsilon)+o(1)\Bigr)D\\
		&\geq
		\Bigl(\frac12\varepsilon-18\varepsilon^2+o(1)\Bigr)D>0
	\end{align*}
	for sufficiently large $D$. Hence, by~\eqref{eq_diff_sr_1},
	$ \rho(G')\ge\rho(G^{\star}) $. By \Cref{lem:perron-component-root}, the root $u^*$ is chosen in the component $C^\star$ attaining the spectral radius. Since $v_0\in B=N_{G^{\star}}(u^*)$, we have $x_{v_0}>0$, and the inequality is strict.
	
	It remains to verify that $G'$ is still $F$-free. Suppose otherwise that $G'$ contains a copy $F'$. Since $G^{\star}$ and $G'$ differ only in the edges incident with $v_0$, we must have
	$	v_0\in V(F')	$.	Write $|N_{F'}(v_0)|=f_0<f$.
	By construction,
	$
	 N_{F'}(v_0) \subseteq \bigcup_{i=2}^p B_{1,i}' \cup\{u^*\}.
	$
	Then $N_{F'}(v_0)\setminus\{u^*\} \subseteq \bigcup_{i=2}^p B_{1,i}'$.  Every vertex in $N_{F'}(v_0)\setminus\{u^*\}$ satisfies
	$
	d_{B_{1,1}'}(v)\geq \Bigl(\frac{1}{p}-21\varepsilon\Bigr) D
	$
	by~\eqref{eq_b1i_degree}. Therefore
	\begin{align*}
		\left|\bigcap_{v\in N_{F'}(v_0)\setminus\{u^*\}}N_{B_{1,1}'}(v)\right|
		&\geq |B_{1,1}'|-|N_{F'}(v_0)\setminus\{u^*\}|\Bigl(\frac1p+2\varepsilon-\frac1p+21\varepsilon\Bigr) D\\
		&\geq \Bigl(\frac1p-2\varepsilon-23f\varepsilon\Bigr) D>f
	\end{align*}
	for sufficiently small $\varepsilon$. Hence, we may choose
	$
	v_1\in \bigcap_{v\in N_{F'}(v_0)\setminus\{u^*\}}N_{B_{1,1}'}(v)\setminus V(F').
	$
	The vertex $v_1$ is also adjacent to $u^*$, because $v_1\in B=N_{G^{\star}}(u^*)$.  Thus every neighbour of $v_0$ in $F'$ is adjacent to $v_1$ in $G^{\star}$.  Consequently
	$
	G^{\star}[(V(F')\setminus\{v_0\})\cup\{v_1\}]
	$
	contains a copy of $F$, a contradiction.  Hence $G'$ is $F$-free.
	
	The edge $u^*v_0$ was kept, so $d_{G'}(u^*)= D$.  Therefore $\Delta(G')\ge D$.  If $\Delta(G')=D'> D$, then only vertices in $\bigcup_{i=2}^p B_{1,i}'$ can gain one new incident edge.  Moreover, the bounds on $\bigcup_{i=2}^p B_{1,i}'$, the vertex $v_0$ has degree at most $|\bigcup_{i=2}^p B_{1,i}'|+1\leq D+1$.  
	
	Hence $D'\le D+1$.  Recall that $n- D\geq \kappa_{F} n^{s_F}$ and $\kappa_{F}\geq 2\zeta_F$ (see \eqref{eq:small-constant-choice}). Then $\Delta \leq D'\leq n-\zeta_F n^{s_F}$ for all sufficiently large $n$. Thus $G'\in\mathfrak{U}_n(\Delta,\zeta_F;F)$ and $\rho(G')>\rho(G^{\star})$, this contradicts the upper-tail extremality of $G^{\star}$.
	
	%Here the local margin $\kappa_F\geq 2\zeta_F$ is used.  
	
%	The endpoint buffer, \Cref{lem:endpoint-buffer-exclusion}, has already been applied before the right-endpoint cleaning is invoked.  Thus the actual degree satisfies $n-\Delta\ge\kappa_Fn^{s_F}=2\zeta_Fn^{s_F}$.  Hence, if the switched graph has maximum degree $D'\le \Delta+1$, then
	
%	  Thus the switched graph stays in the safe upper-tail class from \Cref{rem:endpoint-bookkeeping}.  Since it is $F$-free and has strictly larger Rayleigh quotient, 
\end{proof}

\begin{lemma}\label{lem_nob1k}
	$B_{1,i}\cap K_i=\emptyset$.
\end{lemma}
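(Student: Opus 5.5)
The plan is to argue by contradiction and embed $F$ directly, in the same way as in \cref{lem:B1i-independent-right}. Suppose some $w\in B_{1,i}\cap K_i$; relabel so that $i=1$. Then $w$ has the following properties:
\begin{itemize}
\item $d_{V_1}(w)\ge 2\varepsilon D$, because $w\in K_1$;
\item $w$ misses at most $\varepsilon|A|$ vertices of $A$, because $w\in B_1$;
\item $w\notin L$, because $L=\emptyset$ by \cref{lem:low-degree-right_empty}, so $d_B(w)>\bigl(1-\frac1p-12\varepsilon\bigr)D$.
\end{itemize}

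\emph{Step 1: locate neighbours of $w$ inside $B_{1,1}'$.} Since $|K|,|B_2\setminus(L\cup K)|\le\varepsilon D$, the set $N_{B_{1,1}'}(w)$ is missing at most $2\varepsilon D$ vertices of $N_{V_1}(w)$. This alone need not be positive, so we first use the maximality of the partition $V_1\cup\cdots\cup V_p$. Moving $w$ to another class cannot increase the number of crossing edges, hence $d_{V_j}(w)\ge d_{V_1}(w)\ge 2\varepsilon D$ for every $j$.

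Combining this with $d_B(w)>(1-\frac1p-12\varepsilon)D$ and $|V_k|\le(\frac1p+2\varepsilon)D$ for all $k$, we get, for each $j$,
\[
d_{V_j}(w)\ge d_B(w)-\sum_{k\ne j}|V_k|\ge\Bigl(\tfrac1p-c_p\varepsilon\Bigr)D
\]
for a constant $c_p$ depending only on $p$. In particular this holds for $j=1$. Removing the at most $3\varepsilon D$ vertices of $K\cup(B_2\setminus(L\cup K))$, we obtain $|N_{B_{1,j}'}(w)|\ge(\frac1p-c'_p\varepsilon)D$ for every $j\in[p]$, including $j=1$.

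\emph{Step 2: build $f K_1\vee(T(fp,p)+e)$.} Pick $v\in N_{B_{1,1}'}(w)$. The edge $wv$ will play the role of the critical edge. Choose $S_1\subseteq N_{B_{1,1}'}(w)\cup\{w\}$ with $|S_1|=f$, containing both $w$ and $v$.

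Next, build $S_2,\dots,S_p$ inductively, taking $S_{\ell+1}\subseteq B_{1,\ell+1}'$ inside the common neighbourhood of $S_1\cup\cdots\cup S_\ell$, exactly as in \cref{lem:B1i-independent-right}. The vertices of $B_{1,1}'$ have at least $(\frac1p-21\varepsilon)D$ neighbours in each $B_{1,j}'$ by \eqref{eq_b1i_degree}, and $w$ has at least $(\frac1p-c'_p\varepsilon)D$ by Step 1. Inclusion--exclusion therefore gives common neighbourhoods of size at least $(\frac1p-O(pf\varepsilon))D>f$.

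Finally, every vertex of $S_1\cup\cdots\cup S_p$ lies in $B_1$ (recall $w\in B_1$), so each misses at most $\varepsilon|A|$ vertices of $A$. Hence their common neighbourhood in $A$ has size at least $(1-pf\varepsilon)|A|>f$, since $|A|=n-D\ge\kappa_F n^{s_F}\to\infty$; this requires $s_F>0$, or $\kappa_F$ large when $s_F=0$. Choose $U$ of size $f$ in it. Then $G^\star[U\cup S_1\cup\cdots\cup S_p]\supseteq fK_1\vee(T(fp,p)+e)$, which contains $F$ because $F$ is color-critical with $\chi(F)=p+2$. This contradicts $F$-freeness.

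The main obstacle is Step 1, namely guaranteeing that $w$ has many neighbours in its own cleaned class $B_{1,1}'$ and in every other $B_{1,j}'$. The partition-maximality trick $d_{V_j}(w)\ge d_{V_1}(w)$ is what I would try first. If the constants do not close, the fallback is to use only $d_{V_1}(w)\ge2\varepsilon D>|K|+|B_2\setminus(L\cup K)|+1$; tightening the definition of $K$ would make this strict. That gives a single neighbour $v\in B_{1,1}'$, and $S_1$ can then be built from $v$, $w$ and the rest of $B_{1,1}'$, with the $p$ classes assigned so that $w$'s class needs common neighbours only where $w$ has them.
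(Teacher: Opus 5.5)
Your overall plan of re-running the embedding with $w$ as an endpoint of the critical edge is sound. However, Step 1 is the crux of the argument, and it is not established as written. The displayed chain fails at its second inequality. You only know $d_B(w)>(1-\frac1p-12\varepsilon)D$, while $\sum_{k\ne j}|V_k|=D-|V_j|$ may be as large as $(1-\frac1p+2\varepsilon)D$. So $d_B(w)-\sum_{k\ne j}|V_k|$ is bounded below only by $-14\varepsilon D$, not by $(\frac1p-c_p\varepsilon)D$. The max-cut inequality you cite plays no role in that display.

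The claim for $j=1$ is also unjustified: membership in $K_1$ gives only $d_{V_1}(w)\ge 2\varepsilon D$. Your fallback repairs only the $j=1$ issue. It does not supply what Step 2 actually needs, namely that $w$ has linearly many neighbours in every $B'_{1,j}$ with $j\ne 1$. This is required because $S_2,\dots,S_p$ must lie in the common neighbourhood of $S_1\ni w$. Since $w\notin B'_{1,1}$, \eqref{eq_b1i_degree} does not apply to $w$.

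The missing estimate comes from combining max-cut with a two-class count, as in \cref{lem_b2minusK_otherdegree}. For $j\ne 1$,
\[
2d_{V_j}(w)\ge d_{V_1}(w)+d_{V_j}(w)\ge d_B(w)-\sum_{k\ne 1,j}|V_k|\ge\Bigl(\tfrac1p-16\varepsilon\Bigr)D .
\]
Hence $d_{B'_{1,j}}(w)\ge d_{V_j}(w)-|B_2\setminus K|-|K|\ge(\frac1{2p}-10\varepsilon)D$. This uses $V_j\setminus B'_{1,j}\subseteq(B_2\setminus K)\cup K$ once $L=\emptyset$, not the present lemma.

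The weaker constant $\frac1{2p}$ is harmless. In the greedy choice of $S_{\ell+1}$, the reservoir $B'_{1,\ell+1}$ has at least $(\frac1p-4\varepsilon)D$ vertices. The vertex $w$ excludes at most $(\frac1{2p}+12\varepsilon)D$ of them, and every other chosen vertex excludes at most $23\varepsilon D$. This leaves $(\frac1{2p}-O(pf\varepsilon))D>f$ candidates.

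In the class of $w$ you need only one neighbour $v\in B'_{1,1}$. The count $2\varepsilon D-|B_{2,1}\setminus K_1|-(|K_1|-1)\ge 1$ already provides it; the $-1$ accounts for $w$ itself, so no change to the definition of $K$ is needed. The other $f-2$ vertices of $S_1$ may be arbitrary vertices of $B'_{1,1}$, because $S_1$ need not lie inside $N(w)$. With these corrections, the rest of your Step 2 goes through: the common $A$-neighbourhood exists since all chosen vertices lie in $B_1$, and $fK_1\vee(T(fp,p)+e)\supseteq F$.

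For comparison, the paper's proof stops right after producing this neighbour $v$. It then declares $E(G^{\star}[B'_{1,i}])\neq\emptyset$, contradicting \cref{lem:B1i-independent-right}. But $w\in K_i$ is excluded from $B'_{1,i}=B_{1,i}\setminus(L\cup K)$, so $wv$ is not an edge of $G^{\star}[B'_{1,i}]$. Moreover, the proof of \cref{lem:B1i-independent-right} uses \eqref{eq_b1i_degree} for both endpoints of the edge, which is unavailable for $w$. Your route, once Step 1 is corrected as above, is what is actually needed to close this step.
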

\begin{proof}
	Suppose to the contrary that there exists a vertex $v\in B_{1,i}\cap K_i$ for some $i\in[p]$.
	Since $v\in K_i$, by the definition of $K_i$ we have $d_{V_i}(v)\geq 2\varepsilon D$.
	By \cref{lem:low-degree-right_empty}, we already know that $L=\emptyset$, so $B'_{1,i}=B_{1,i}\setminus K_i$.
	Hence
	\begin{equation*}
		d_{B'_{1,i}}(v)=d_{V_i}(v)-d_{B_{2,i}\setminus K_i}(v)-d_{K_i}(v)\geq 2\varepsilon D-|B_{2,i}\setminus K_i|-(|K_i|-1).
	\end{equation*}
	Using
	\begin{equation*}
		|B_{2,i}\setminus K_i|\leq |B_2\setminus K|\leq \varepsilon D,
		\quad
		|K_i|\leq |K|\leq \varepsilon D,
	\end{equation*}
	we obtain
	$
	d_{B'_{1,i}}(v)\geq 2\varepsilon D-\varepsilon D -(\varepsilon D-1)\geq 1.
	$
	Thus $v$ has a neighbour in $B'_{1,i}$, so
	$
	E(B'_{1,i})\neq \emptyset,
	$
	contradicting \cref{lem:B1i-independent-right}. Therefore
	$
	B_{1,i}\cap K_i=\emptyset.
	$
\end{proof}

Since $V_i=B_{1,i}\cup B_{2,i}$, it follows that $K_i\subseteq B_{2,i}$. Moreover, because $L=\emptyset$, we also have $B'_{1,i}=B_{1,i}$.

\begin{lemma}\label{lem_b2minusK_otherdegree}
	For every $i\in[p]$, every $j\in[p]\setminus\{i\}$, and every $v\in K_i$, we have
	$
	d_{B_{1,j}}(v)\geq \Bigl(\frac{1}{2p}-10\varepsilon\Bigr) D.
	$
\end{lemma}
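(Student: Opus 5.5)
The idea is to combine the local optimality of the max-cut partition $B=V_1\cup\cdots\cup V_p$ with the degree information now available, namely $L=\emptyset$ and $K_i\subseteq B_{2,i}$. Fix $i$, $j\neq i$ and $v\in K_i$. The proof has three steps: a lower bound on $d_B(v)$, a transfer of that bound to $d_{V_j}(v)$ via the partition, and removal of the few vertices of $V_j$ lying outside $B_{1,j}$.

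\emph{Step 1 (total degree).} By \cref{lem:low-degree-right_empty} we have $L=\emptyset$. Hence every $v\in B$ satisfies $d_B(v)>\bigl(1-\frac1p-12\varepsilon\bigr)D$.

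\emph{Step 2 (max-cut switching).} Moving $v$ from $V_i$ to $V_j$ changes $\sum_{k<l}e(V_k,V_l)$ by $d_{V_i}(v)-d_{V_j}(v)$. Maximality of the partition forces $d_{V_i}(v)\le d_{V_j}(v)$. For $k\notin\{i,j\}$ we use only $d_{V_k}(v)\le|V_k|$. Here it matters that we take the sharp size estimate $|V_k|\le\bigl(\frac1p+2\varepsilon^3\bigr)D$ from \cref{lem:B-partition}, not the coarser $2\varepsilon D$ bound. With the coarser bound the error would grow like $p\varepsilon$ and the constant $10$ would fail for large $p$; I expect this to be the only delicate point. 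We then obtain
\[
\Bigl(1-\tfrac1p-12\varepsilon\Bigr)D<d_B(v)\le 2d_{V_j}(v)+(p-2)\Bigl(\tfrac1p+2\varepsilon^3\Bigr)D .
\]
Rearranging gives
\[
d_{V_j}(v)\ge\Bigl(\tfrac1{2p}-6\varepsilon-p\varepsilon^3\Bigr)D .
\]

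\emph{Step 3 (passing to $B_{1,j}$).} Since $L=\emptyset$, we have $B_2=(B_2\setminus K)\cup (K\cap B_2)$. Therefore
\[
|B_{2,j}|\le|B_2\setminus(L\cup K)|+|K|\le 2\varepsilon D,
\]
using \cref{lem:B2-small-right} and \cref{lem:K-small-right}. As $V_j=B_{1,j}\cup B_{2,j}$, it follows that
\[
d_{B_{1,j}}(v)\ge d_{V_j}(v)-|B_{2,j}|\ge\Bigl(\tfrac1{2p}-8\varepsilon-p\varepsilon^3\Bigr)D .
\]
The choice of $\varepsilon$ in \Cref{sec:windows} gives $p\varepsilon^3\le\varepsilon$. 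So the right-hand side is at least $\bigl(\frac1{2p}-10\varepsilon\bigr)D$, which proves the lemma.
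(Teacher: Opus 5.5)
Your proof is correct and follows the paper's argument. Max-cut switching gives $d_{V_j}(v)\ge d_{V_i}(v)$, $L=\emptyset$ gives the lower bound on $d_B(v)$, and removing at most $|B_2\setminus K|+|K|\le 2\varepsilon D$ vertices passes from $V_j$ to $B_{1,j}$; your direct bound on $|B_{2,j}|$ also makes the paper's appeal to $B_{1,j}\cap K_j=\emptyset$ unnecessary. The point you flag as delicate is not one: the paper writes $\sum_{k\ne i,j}|V_k|=D-|V_i|-|V_j|\le(1-\frac2p+4\varepsilon)D$ using only the coarse lower bounds on two parts, which already gives a $p$-independent error (giving $d_{V_j}(v)\ge(\frac1{2p}-8\varepsilon)D$).
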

\begin{proof}
	Fix $i\in[p]$, $j\in[p]\setminus\{i\}$, and $v\in K_i$. Since the partition
	$
	B=V_1\cup\cdots\cup V_p
	$
	maximizes
	$
	\sum_{1\leq s<t\leq p}e_{G^{\star}}(V_s,V_t),
	$
	moving $v$ from $V_i$ to $V_j$ cannot increase the number of crossing edges. Hence
	$
	d_{V_j}(v)\geq d_{V_i}(v).
	$
	Since $L=\emptyset$ by \cref{lem:low-degree-right_empty},
	$
	d_B(v)>\Bigl(1-\frac1p-12\varepsilon\Bigr) D.
	$
	Moreover,
	\begin{equation*}
		d_{V_j}(v)+d_{V_i}(v)
		=
		d_B(v)-\sum_{k\in[p]\setminus\{i,j\}}d_{V_k}(v)
		\geq d_B(v)-\sum_{k\in[p]\setminus\{i,j\}}|V_k|.
	\end{equation*}
	Since
	$
	|V_i|,|V_j|\geq \Bigl(\frac1p-2\varepsilon\Bigr) D,
	$
	we have
	\begin{equation*}
		\sum_{k\in[p]\setminus\{i,j\}}|V_k|
		=
		D-|V_i|-|V_j|
		\leq \Bigl(1-\frac2p+4\varepsilon\Bigr) D.
	\end{equation*}
	Therefore,
	$
	d_{V_j}(v)+d_{V_i}(v)
	\geq \Bigl(\frac1p-16\varepsilon\Bigr) D.
	$
	Since $d_{V_j}(v)\geq d_{V_i}(v)$, we get
	$
	d_{V_j}(v)\geq \frac12\Bigl(\frac{1}{p}-16\varepsilon\Bigr)D
	=
	\Bigl(\frac1{2p}-8\varepsilon\Bigr)D.
	$
	
	By \cref{lem_nob1k}, we have $B_{1,j}\cap K_j=\emptyset$. Hence, 
	$
	d_{B_{1,j}}(v)
	=
	d_{V_j}(v)-d_{B_{2,j}\setminus K_j}(v)-d_{K_j}(v)
	\geq d_{V_j}(v)-|B_2\setminus K|-|K|.
	$
	Using $|B_2\setminus K|\leq \varepsilon D$ and
	$|K|\leq \varepsilon D$, we obtain
	$
	d_{B_{1,j}}(v)\geq \Bigl(\frac1{2p}-10\varepsilon\Bigr) D.
	$
\end{proof}

\begin{lemma}\label{lem_b2_otherdegree}
	For every $i\in[p]$, every $j\in[p]\setminus\{i\}$, and every $v\in B_{2,i}$, we have
	$
	d_{B_{1,j}}(v)\geq \Bigl(\frac{1}{2p}-10\varepsilon\Bigr) D.
	$
\end{lemma}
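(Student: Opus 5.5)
The plan is to repeat the argument of \cref{lem_b2minusK_otherdegree} essentially verbatim, since nothing in it really used $v\in K_i$ beyond the facts that $v\in V_i$ and $L=\emptyset$. Fix $i\in[p]$, $j\in[p]\setminus\{i\}$ and $v\in B_{2,i}$. Because the partition $B=V_1\cup\cdots\cup V_p$ maximizes the number of crossing edges, moving $v$ from $V_i$ to $V_j$ cannot increase the crossing count. This gives $d_{V_j}(v)\ge d_{V_i}(v)$; the inequality holds for every vertex of $V_i$, not only for vertices of $K_i$.

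Next I would use \cref{lem:low-degree-right_empty}, that is, $L=\emptyset$. It gives
\[
d_B(v)>\Bigl(1-\frac1p-12\varepsilon\Bigr)D.
\]
Subtracting the contributions of the parts $V_k$ with $k\notin\{i,j\}$, whose total size is at most $\bigl(1-\frac2p+4\varepsilon\bigr)D$ by \cref{lem:B-partition}, I obtain
\[
d_{V_i}(v)+d_{V_j}(v)\ge\Bigl(\frac1p-16\varepsilon\Bigr)D.
\]
Combined with $d_{V_j}(v)\ge d_{V_i}(v)$, this yields
\[
d_{V_j}(v)\ge\Bigl(\frac1{2p}-8\varepsilon\Bigr)D.
\]

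Finally I would pass from $V_j$ to $B_{1,j}$. We have $V_j=B_{1,j}\cup B_{2,j}$, and $B_{2,j}\subseteq (B_2\setminus K)\cup K$. By \cref{lem:B2-small-right} (where $L=\emptyset$ now), $|B_2\setminus K|\le\varepsilon D$, and by \cref{lem:K-small-right}, $|K|\le\varepsilon D$. Hence
\[
d_{B_{1,j}}(v)\ge d_{V_j}(v)-|B_2\setminus K|-|K|\ge\Bigl(\frac1{2p}-10\varepsilon\Bigr)D.
\]
The identity $K_j\subseteq B_{2,j}$ from \cref{lem_nob1k} is not needed here. It only confirms that the decomposition is consistent.

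There is no real obstacle. The only point to check is that the max-cut comparison $d_{V_j}(v)\ge d_{V_i}(v)$ and the degree lower bound from $L=\emptyset$ hold for arbitrary $v\in V_i$, which they do.
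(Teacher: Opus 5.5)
Your proof is correct, and it differs from the paper's only in organization. The paper splits $B_{2,i}$ into two cases:
\begin{itemize}
\item for $v\in B_{2,i}\cap K_i$ it cites \cref{lem_b2minusK_otherdegree}, which is the max-cut argument you reproduce;
\item for $v\in B_{2,i}\setminus K_i$ it skips the max-cut comparison and uses $d_{V_i}(v)<2\varepsilon D$ directly. This gives the stronger bound $d_{B_{1,j}}(v)\ge(\frac1p-20\varepsilon)D$ on that part.
\end{itemize}

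You observe instead that $d_{V_j}(v)\ge d_{V_i}(v)$ holds for every $v\in V_i$, so one argument covers all of $B_{2,i}$, and indeed all of $V_i$. You also correctly note that passing from $V_j$ to $B_{1,j}$ needs only the trivial bound $|B_{2,j}|\le|B_2\setminus K|+|K|$, not \cref{lem_nob1k}.

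The stronger $\frac1p$-bound that the paper's case split gives off $K_i$ is never used later. \cref{lem_ind1_degree} invokes only the $\frac1{2p}$ bound. The strong cross-degree estimate \eqref{eq:b2-strong-otherdegree} is derived afresh for all of $B_{2,i}$ after that lemma. So your uniform version is a legitimate simplification.
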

\begin{proof}
	Fix $i\in[p]$, $j\in[p]\setminus\{i\}$, and $v\in B_{2,i}$.	If $v\in K_i$, then the conclusion follows from \cref{lem_b2minusK_otherdegree}. So assume that $v\in B_{2,i}\setminus K_i$. Then
	$
	d_{V_i}(v)<2\varepsilon D
	$
	by the definition of $K_i$. Since $L=\emptyset$,
	$
	d_B(v)>\Bigl(1-\frac1p-12\varepsilon\Bigr) D.
	$
	Therefore,
	\begin{equation*}
		d_{V_j}(v)=
		d_B(v)-d_{V_i}(v)-\sum_{k\in[p]\setminus\{i,j\}}d_{V_k}(v)\geq d_B(v)-d_{V_i}(v)-\sum_{k\in[p]\setminus\{i,j\}}|V_k|.
	\end{equation*}
	Since  $ d_{V_i}(v)<2\varepsilon D $ and
	\begin{equation*}
		\sum_{k\in[p]\setminus\{i,j\}}|V_k|
		\leq D-2\Bigl(\frac{1}{p}-2\varepsilon\Bigr) D
		=
		\Bigl(1-\frac{2}{p}+4\varepsilon\Bigr) D,
	\end{equation*}
	we get $ d_{V_j}(v) \geq \Bigl(1-\frac{1}{p}-12\varepsilon\Bigr)D-2\varepsilon D-\Bigl(1-\frac{2}{p}+4\varepsilon\Bigr) D = \Bigl(\frac{1}{p}-18\varepsilon\Bigr) D $.
	Again by \cref{lem_nob1k},
	$ d_{B_{1,j}}(v) = d_{V_j}(v)-d_{B_{2,j}\setminus K_j}(v)-d_{K_j}(v)
	\geq d_{V_j}(v)-|B_2\setminus K|-|K| $.
	Using
	$ |B_2\setminus K|\leq \varepsilon D$ and  $ |K|\leq \varepsilon D $,
	we deduce that	$ d_{B_{1,j}}(v)\geq \Bigl(\frac{1}{p}-20\varepsilon\Bigr) D$, which is stronger than the required bound.
\end{proof}

\begin{lemma}\label{lem:B_perron_lower}
	After \cref{lem:low-degree-right_empty} has given $L=\emptyset$, every vertex $v\in B$ satisfies $x_v\geq \frac{1}{6p}$.
\end{lemma}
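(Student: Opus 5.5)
The plan is to combine the eigenequation at each vertex $v\in B$ with the degree lower bounds of \cref{lem_b2_otherdegree} and the entry bound on $B_1$ from \cref{lem:perron-entry-right}. We continue to normalize $\boldsymbol x$ so that $\max_w x_w=x_{\hat u}=1$.

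The first step is to bound the entries on $B_1$. Since $L=\emptyset$ by \cref{lem:low-degree-right_empty} and $B_{1,i}\cap K_i=\emptyset$ by \cref{lem_nob1k}, we have $B_1\subseteq B\setminus(L\cup K)$. Hence \cref{lem:perron-entry-right}(iii) gives
\[
x_w\ge x_{\bar v}>\tfrac25 \qquad\text{for every } w\in B_1 .
\]

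The second step handles an arbitrary $v\in B$, say $v\in V_i$. Choose any $j\neq i$; this is possible since $p\ge2$.
\begin{itemize}
\item If $v\in B_{2,i}$, then \cref{lem_b2_otherdegree} gives $d_{B_{1,j}}(v)\ge(\frac1{2p}-10\varepsilon)D$.
\item If $v\in B_{1,i}=B_{1,i}'$, then \eqref{eq_b1i_degree} gives the stronger bound $d_{B_{1,j}}(v)\ge(\frac1p-21\varepsilon)D$.
\end{itemize}
In either case $d_{B_{1,j}}(v)\ge(\frac1{2p}-21\varepsilon)D$. Dropping all other neighbours in the eigenequation at $v$ yields
\[
\rho(G^{\star})x_v\ \ge \sum_{w\in N_{B_{1,j}}(v)}x_w\ \ge\ \Bigl(\frac1{2p}-21\varepsilon\Bigr)D\cdot\frac25 .
\]

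The third step divides by $\rho(G^{\star})$. Every vertex has degree at most $D$, so $\rho(G^{\star})\le\Delta(G^{\star})=D$. Therefore
\[
x_v\ \ge\ \frac25\Bigl(\frac1{2p}-21\varepsilon\Bigr)=\frac1{5p}-\frac{42}{5}\varepsilon\ \ge\ \frac1{6p},
\]
where the last inequality holds because $\frac1{5p}-\frac1{6p}=\frac1{30p}$ and $\varepsilon$ is small compared with $1/p$. This smallness is guaranteed by the choice $10^4p^2f^2\sqrt\varepsilon\ll1$.

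The main obstacle is confirming that $B_1\subseteq B\setminus(L\cup K)$, since only then does the lower bound $x_{\bar v}>2/5$ apply to all of $B_1$. Once $L=\emptyset$ and $K\subseteq B_2$ are in hand, the rest of the argument is arithmetic.
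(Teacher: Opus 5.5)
Your proof is correct and follows the paper's argument. In both, the bound comes from the eigenequation at $v$: $v$ has at least $(\frac{1}{2p}-O(\varepsilon))D$ neighbours in some $B_{1,j}\subseteq B\setminus(L\cup K)$, each with entry $>2/5$ by \cref{lem:perron-entry-right}(iii), and $\rho(G^{\star})\le D$. The only difference is the case split: the paper uses the eigenequation only for $v\in K$ (via \cref{lem_b2minusK_otherdegree}) and handles $v\notin K$ directly by $x_v\ge x_{\bar v}>2/5$, whereas you apply the eigenequation to every $v\in B_1\cup B_2$, which is slightly longer but equally valid.
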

\begin{proof}
	If $v\notin K$, then $v\in B\setminus(L\cup K)$, and \cref{lem:perron-entry-right}\textnormal{(iii)} gives
	$x_v\ge x_{\bar v}>2/5\ge \frac{1}{6p}$.
	Now let $v\in K_i$. Choose $j\ne i$. By \cref{lem_b2minusK_otherdegree},
	\begin{equation*}
		d_{B_{1,j}}(v)\ge \left(\frac1{2p}-10\varepsilon\right) D .
	\end{equation*}
	\cref{lem_nob1k} and $L=\emptyset$ imply $B_{1,j}\subseteq B\setminus(L\cup K)$, so every vertex of
	$B_{1,j}$ has Perron coordinate at least $x_{\bar v}>2/5$. The eigenequation at $v$, together with
	$\rho(G^{\star})\le D$, therefore gives
	\begin{equation*}
		x_v\ge
		\frac{(2/5)(1/(2p)-10\varepsilon) D}{\rho(G^{\star})}
		\ge \frac25\left(\frac{1}{2p}-10\varepsilon\right)
		\ge \frac{1}{6p}.
	\end{equation*}
\end{proof}

\begin{lemma}\label{lem_ind1_degree}
	For every $i\in[p]$ and every $v\in B_{2,i}$, we have
	$ d_{B_{1,i}}(v)\leq f	$.
\end{lemma}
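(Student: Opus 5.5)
We argue by contradiction. The plan is to show that a vertex $v\in B_{2,i}$ with more than $f$ neighbours in $B_{1,i}$ creates a copy of $F$, using the same embedding as in \cref{lem:B1i-independent-right}; there, the extra edge inside one class was supplied by an edge of $G^{\star}[B'_{1,i}]$. So suppose some $v\in B_{2,i}$, say with $i=1$, satisfies $d_{B_{1,1}}(v)\ge f+1$. Fix a set $W\subseteq N_{B_{1,1}}(v)$ with $|W|=f-1$, and put $S_1:=\{v\}\cup W$. Then $S_1$ has $f$ vertices and induces a star centred at $v$, so in particular it contains an edge $vw$ with $w\in W$. Since $L=\emptyset$ (\cref{lem:low-degree-right_empty}), we have $B'_{1,j}=B_{1,j}$ for all $j$.

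Next we build $S_2,\dots,S_p$ greedily, as in the inductive step of \cref{lem:B1i-independent-right}. Each vertex of $W$ lies in $B_{1,1}$ and has at least $(\frac1p-21\varepsilon)D$ neighbours in $B_{1,j}$ for $j\ne1$, by \eqref{eq_b1i_degree}. By \cref{lem_b2_otherdegree}, the vertex $v$ has at least $(\frac1{2p}-10\varepsilon)D$ neighbours in each $B_{1,j}$. This is the main obstacle: $v$'s degree bound is only about half of $|B_{1,j}|$, so the naive inclusion--exclusion over all $f$ vertices fails. The fix is to start each common-neighbourhood computation from $N_{B_{1,j}}(v)$ rather than from all of $B_{1,j}$. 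Inside this set, each of the at most $pf$ other chosen vertices from $B_{1,1}\cup\dots\cup B_{1,j-1}$ misses at most $|V_j|-(\frac1p-21\varepsilon)D\le 23\varepsilon D$ vertices. Hence the common neighbourhood still has size at least
\[
\Bigl(\frac1{2p}-10\varepsilon-23pf\varepsilon\Bigr)D>f,
\]
by the choice $10^4p^2f^2\sqrt\varepsilon\ll1$. This lets us choose $S_j$ of size $f$ inside it, disjoint from the previously chosen sets.

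It remains to find a common neighbourhood in $A$. Every vertex of $W\cup S_2\cup\dots\cup S_p$ lies in $B_1$, so it misses at most $\varepsilon|A|$ vertices of $A$. The vertex $v$, however, lies in $B_2$ and may miss many vertices of $A$, so we do not require $v$ to be adjacent to the $A$-side. Instead we replace $A$ by the single root $u^*$, which is adjacent to all of $B\ni v$. We then use $u^*$ only to host the colour class corresponding to the removed vertex of a boundary graph. Concretely, take a critical edge $ab$ of $F$, and a proper $(p+1)$-colouring of $F-ab$ in which $a$ and $b$ receive the same colour. Since $F$ is colour-critical, there is a vertex $x$ with $\chi(F-x)=p+1$ such that, in a suitable $(p+2)$-colouring, $x$ alone forms a colour class. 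We may take the colouring of $F$ in which $x$ is a singleton class and the class containing $a$ also contains $b$ after removing the edge $ab$.

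Map $x\mapsto u^*$, $a\mapsto v$, and $b$ together with the rest of its class into $W$. This uses that $v$ is adjacent to all of $W$, while vertices of $W$ need not be mutually adjacent. The other $p$ colour classes of $F-x$ go into $S_2,\dots,S_p$ together with one more set $S_{p+1}$. We obtain $S_{p+1}$ from $A$'s common neighbourhood with $W\cup S_2\cup\dots\cup S_p$, intersected with $N(v)$. If $v$'s neighbourhood in $A$ is too small for this, we instead pick $S_{p+1}$ from a second $B_{1,j}$-type set. Some care is needed here, and I expect this embedding step to require the most adjustment: in particular, if $a$ has neighbours in the class placed on $A$, we must reroute that class to a $B_{1,j}$ part. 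In any case, every required edge is present, giving $F\subseteq G^{\star}$, a contradiction. Hence $d_{B_{1,i}}(v)\le f$.
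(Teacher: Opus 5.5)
Your greedy choice of $S_2,\dots,S_p$ inside $N_{B_{1,j}}(v)$ is fine; it is the same count the paper uses. The final embedding, which you leave at ``some care is needed'', does not work.

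By placing the critical colour class on $\{v\}\cup W\subseteq V_1$, you use up $V_1$. The remaining $p$ non-critical classes then need $p$ mutually complete reservoirs, all complete to $v$ and $W$, but $B$ only offers the $p-1$ parts $V_2,\dots,V_p$. None of your sources for the missing $S_{p+1}$ is available:
\begin{enumerate}[label=\textnormal{(\alph*)}]
\item The root $u^*$ has no neighbours in $A$ at all, since $N_{G^{\star}}(u^*)=B$. So no class containing a neighbour of $x$ can be put in $A$.
\item The vertex $v\in B_2$ may have almost no neighbours in $A$ (possibly only $u^*$). So no class containing a neighbour of $a$ can be put there either.
\item The fallback ``a second $B_{1,j}$-type set'' is impossible. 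By \cref{lem:low-degree-right_empty,lem_nob1k} we have $B_{1,j}=B'_{1,j}$, which is independent by \cref{lem:B1i-independent-right}. So a set inside $B_{1,j}$ can never be complete to $S_j$ (or, for $j=1$, to $W$).
\end{enumerate}
Sending $x$ to $u^*$ only rescues the case where the class of $x$ is $\{x\}$ itself, as for $F=K_{p+2}$. It fails, for instance, when $F-ab=K_{2,\dots,2}$, where every class has two vertices. A symptom of the problem is that your construction uses only $f-1$ neighbours of $v$ in $B_{1,1}$, so the hypothesis $d_{B_{1,1}}(v)>f$ is never actually exploited.

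The missing idea is to realise the critical edge by $u^*v$ instead of by an edge $vw$.
\begin{enumerate}[label=\textnormal{(\arabic*)}]
\item Take a critical edge $ab$ and a proper $(p+1)$-colouring of $F-ab$ in which $a,b$ share a class $C_0$.
\item Map $a\mapsto u^*$, $b\mapsto v$, and place $C_0\setminus\{a,b\}$ in $A$.
\item Use $S_1\subseteq N_{B_{1,1}}(v)$ with $|S_1|=f$ as a full non-critical class; this is where $d_{B_{1,1}}(v)>f$ enters. Take $S_2,\dots,S_p$ exactly as in your greedy step.
\end{enumerate}
Because $u^*$, $v$ and the $A$-reservoir lie in the same colour class, no adjacency between $v$ or $u^*$ and $A$ is ever required. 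The reservoir only has to be a common neighbourhood of $S_1\cup\cdots\cup S_p\subseteq B_1$. That common neighbourhood has at least $(1-pf\varepsilon)|A|$ vertices, which is much larger than $f$ since $|A|\ge\kappa_Fn^{s_F}$, so $C_0\setminus\{a,b\}$ can be placed there avoiding $u^*$. Meanwhile $u^*$ and $v$ are complete to $S_1\cup\cdots\cup S_p$ by construction. This is precisely the paper's proof.
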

\begin{proof}
	By symmetry, it suffices to consider the case $i=1$. Suppose, to the contrary, that there exists a vertex $v\in B_{2,1}$ such that
	$ d_{B_{1,1}}(v)>f $. Choose a set
	$S_1\subseteq N_{B_{1,1}}(v)$ and $|S_1|=f$.
	
	We now construct sets
	$S_j\subseteq B_{1,j}$, $(j=2,\dots,p)$
	each of size $f$, such that every vertex of $S_j$ is adjacent to every vertex in
	$
	\{v\}\cup S_1\cup\cdots\cup S_{j-1}.
	$
	For $2\leq j\leq p$, set
	$
	T_{j-1}:=\{v\}\cup S_1\cup\cdots\cup S_{j-1}.
	$
	Then
	$
	|T_{j-1}|=1+(j-1)f\leq 1+(p-1)f.
	$
	By \cref{lem_b2_otherdegree}, the vertex $v$ has at least
	$
	\Bigl(\frac{1}{2p}-10\varepsilon\Bigr)D
	$
	neighbours in $B_{1,j}$. On the other hand, every vertex of
	$ S_1\cup\cdots\cup S_{j-1}\subseteq B_1 $	has at least $
	\Bigl(\frac{1}{p}-21\varepsilon\Bigr)D	$
	neighbours in $B_{1,j}$ by~\eqref{eq_b1i_degree}. Since
	$
	|B_{1,j}|\leq |V_j|\leq \Bigl(\frac1p+2\varepsilon\Bigr) D,
	$ it follows that
	\begin{align*}
		\left|\bigcap_{x\in T_{j-1}}N_{B_{1,j}}(x)\right|
		&\geq \sum_{x\in T_{j-1}}|N_{B_{1,j}}(x)|-(|T_{j-1}|-1)\left|\bigcup_{x\in T_{j-1}}N_{B_{1,j}}(x)\right|\\
		&\geq \Bigl(\frac{1}{2p}-10\varepsilon\Bigr) D
		+(|T_{j-1}|-1)\Bigl(\frac{1}{p}-21\varepsilon\Bigr) D-(|T_{j-1}|-1)\Bigl(\frac{1}{p}+2\varepsilon\Bigr)D\\
		&\geq
		\Bigl(\frac{1}{2p}-23|T_{j-1}|\varepsilon\Bigr)D.
	\end{align*}
	Since $|T_{j-1}|\leq 1+(p-1)f$ and $p,f$ are fixed, this quantity is greater than $f$ for sufficiently small $\varepsilon$. 
	
	Hence, we may choose $ S_j\subseteq \bigcap_{x\in T_{j-1}}N_{B_{1,j}}(x)$, and $|S_j|=f$. Proceeding inductively, we obtain pairwise disjoint sets
	$S_1\subseteq B_{1,1},\ S_2\subseteq B_{1,2},\ \dots,\ S_p\subseteq B_{1,p}$, and $|S_j|=f$, such that every edge between distinct sets $S_1,\dots,S_p$ is present, and $v$ is adjacent to every vertex of $S_1\cup\cdots\cup S_p$.
	
	Since every vertex of $S_1\cup\cdots\cup S_p$ lies in $B_1$, each has at most $\varepsilon|A|$ non-neighbours in $A$.  Thus
	\begin{equation*}
		\left|\bigcap_{j=1}^p \bigcap_{x\in S_j}N_A(x)\right|
		\geq (1-rf\varepsilon)|A|>f
	\end{equation*}
	for sufficiently small $\varepsilon$.
	
	Choose $S_0\subseteq \bigcap_{j=1}^p \bigcap_{x\in S_j}N_A(x)$, and $ |S_0|=f $. Then $S_0,S_1,\ldots,S_p$ form the required multipartite reservoir.  The fixed root $u^*$ is adjacent to $v$ and to all vertices of $S_1\cup\cdots\cup S_p$, while $v$ is adjacent to every vertex of $S_1\cup\cdots\cup S_p$.  Choose a critical edge $ab$ of $F$ and a proper $(p+1)$-coloring of $F-ab$ in which $a$ and $b$ have the same color.  Embed $a,b$ on $u^*,v$, the remaining vertices of their color class into $S_0$, and the other $p$ color classes into $S_1,\ldots,S_p$.  Thus all edges of $F$ are realised in $G^{\star}$, a contradiction.
\end{proof}

We next strengthen the cross-degree estimate. Since $L=\emptyset$, \cref{lem:B2-small-right,lem:K-small-right}
gives $|B_2|\leq 2\varepsilon D$. Thus, for every $v\in B_{2,i}$,
\cref{lem_ind1_degree} yields
\[
d_{V_i}(v)\leq d_{B_{1,i}}(v)+|B_{2,i}|-1\leq f+2\varepsilon D.
\]
Consequently, by \cref{lem_nob1k,lem:B-partition}, for $j\ne i$,
\begin{align}
d_{B_{1,j}}(v)
&\geq d_B(v)-d_{V_i}(v)-\sum_{k\in[p]\setminus\{i,j\}}|V_k|-|B_2\setminus K|-|K|\nonumber\\
&\geq \Bigl(1-\frac1p-12\varepsilon\Bigr)D-(f+2\varepsilon D)
-\Bigl(1-\frac2p+4\varepsilon\Bigr)D-2\varepsilon D\nonumber\\
&\geq \Bigl(\frac1p-21\varepsilon\Bigr)D
\label{eq:b2-strong-otherdegree}
\end{align}
for all sufficiently large $n$.

\begin{lemma}\label{lem_b2_MFfree}
	For every $i\in[p]$, the graph $G^{\star}[B_{2,i}]$ is $\mathcal M(\partial_{c}F)$-free.
\end{lemma}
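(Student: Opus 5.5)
Suppose, for a contradiction, that $G^{\star}[B_{2,i}]$ contains a copy $M$ of some member of $\mathcal M(\partial_cF)$; by symmetry take $i=1$. We will extend $M$ into a copy of $F$ in $G^{\star}$, exactly as in the proofs of \cref{lem:B1i-independent-right,lem_ind1_degree}. By \cref{def:decomposition}, there is a vertex $w\in V(F)$ with $\chi(F-w)=p+1$ and a proper $(p+1)$-colouring of $F-w$ with classes $X_1,\dots,X_{p+1}$ such that $M\cong(F-w)[X_1\cup X_2]$. Then $F$ is obtained from $M$ as follows: add the independent classes $X_3,\dots,X_{p+1}$, completely joined to each other and to $M$, and add the vertex $w$, joined to everything. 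So it suffices to find:
\begin{itemize}[label={}]
\item $p-1$ disjoint sets $S_2\subseteq B_{1,2},\dots,S_p\subseteq B_{1,p}$ of size $f$, mutually complete and complete to $V(M)$;
\item one vertex playing the role of $w$, adjacent to all of $V(M)\cup S_2\cup\cdots\cup S_p$.
\end{itemize}
The root $u^*$ serves as $w$, since $V(M)\cup S_2\cup\cdots\cup S_p\subseteq B=N_{G^{\star}}(u^*)$. The colour classes $X_3,\dots,X_{p+1}$ are then embedded into $S_2,\dots,S_p$.

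The sets $S_j$ are built greedily, in the same way as in \cref{lem_ind1_degree}. Put $T_{j-1}:=V(M)\cup S_2\cup\cdots\cup S_{j-1}$, so $|T_{j-1}|\le pf$.
\begin{itemize}[label={}]
\item Each $v\in V(M)\subseteq B_{2,1}$ has $d_{B_{1,j}}(v)\ge(\frac1p-21\varepsilon)D$ by \eqref{eq:b2-strong-otherdegree}.
\item Each vertex of $S_2\cup\cdots\cup S_{j-1}\subseteq B_1$ has the same bound by \eqref{eq_b1i_degree}; recall $B'_{1,k}=B_{1,k}$ since $L=\emptyset$.
\item $|B_{1,j}|\le|V_j|\le(\frac1p+2\varepsilon)D$ by \cref{lem:B-partition}.
\end{itemize}
Inclusion--exclusion then gives
\[
\Bigl|\bigcap_{x\in T_{j-1}}N_{B_{1,j}}(x)\Bigr|\ge\Bigl(\frac1p-23pf\varepsilon\Bigr)D>f
\]
for small $\varepsilon$ and large $n$. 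So $S_j$ can be chosen inside this common neighbourhood.

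After embedding, every edge of $F$ is present in $G^{\star}$: edges inside $X_1\cup X_2$ come from $M$, edges between $X_1\cup X_2$ and the $S_j$, and among the $S_j$, come from the construction, and all edges at $w$ go to $u^*$. This contradicts $F$-freeness.

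The main point needing care is the bookkeeping of what is actually required. The copy $M$ must use the exact colour classes $X_1\cup X_2$ of an admissible colouring, so that the two "merged" classes land in $B_{2,1}$ while every remaining class goes to a distinct $B_{1,j}$. Only $p-1$ extra parts are available, and this count is precisely why the decomposition family of $\partial_cF$, rather than that of $F$, is the right one. Unlike earlier lemmas, no vertices from $A$ are needed, because $u^*$ supplies the deleted vertex $w$. The degree bound \eqref{eq:b2-strong-otherdegree}, with its $\frac1p$ rate rather than $\frac1{2p}$, is what makes the greedy intersection over up to $pf$ vertices still nonempty.
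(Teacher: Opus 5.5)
Your proof is correct and matches the paper's argument. The paper likewise greedily picks $f$-sets $S_j\subseteq B_{1,j}$ ($j=2,\dots,p$), complete to the copy in $B_{2,1}$ and to each other, using \eqref{eq:b2-strong-otherdegree}, \eqref{eq_b1i_degree} and inclusion--exclusion, and then lets the root $u^*$ play the deleted vertex of the member of $\partial_cF$. One small bookkeeping point: applying \eqref{eq_b1i_degree} to vertices of $B_{1,k}$ requires $B'_{1,k}=B_{1,k}$, and this uses \cref{lem_nob1k} ($B_{1,k}\cap K_k=\emptyset$) as well as $L=\emptyset$.
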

\begin{proof}
	By symmetry, it suffices to consider the case $i=1$. Suppose, to the contrary, that $G^{\star}[B_{2,1}]$ contains a copy of some graph
	$ F^*\in \mathcal{M}(\partial_{c}F) $. Let $ f_1:=|V(F^*)|<f $.
	
	For every $v\in B_{2,1}$ and $j\in[p]\setminus\{1\}$, \eqref{eq:b2-strong-otherdegree} gives
	\[
	 d_{B_{1,j}}(v)\geq \Bigl(\frac{1}{p}-21\varepsilon\Bigr)D.
	\]
	
	We now construct sets
	$S_j\subseteq B_{1,j}$ $(j=2,\dots,p)$
	each of size $f$, such that every vertex of $S_j$ is adjacent to every vertex in
	$
	V(F^*)\cup S_2\cup\cdots\cup S_{j-1}.
	$
	
	For $j=2$, every vertex of $V(F^*)$ has at least
	$
	\Bigl(\frac{1}{p}-21\varepsilon\Bigr) D
	$
	neighbours in $B_{1,2}$. Since
	$
	|B_{1,2}|\leq |V_2|\leq \Bigl(\frac1p+2\varepsilon\Bigr) D,
	$
	we obtain
	\begin{align*}
		\left|\bigcap_{v\in V(F^*)}N_{B_{1,2}}(v)\right|
		&\geq \sum_{v\in V(F^*)}|N_{B_{1,2}}(v)|-(f_1-1)\left|\bigcup_{v\in V(F^*)}N_{B_{1,2}}(v)\right|\\
		&\geq f_1\Bigl(\frac1p-21\varepsilon\Bigr) D-(f_1-1)\Bigl(\frac1p+2\varepsilon\Bigr) D\\
		&=\Bigl(\frac1p-(23f_1-2)\varepsilon\Bigr) D.
	\end{align*}
	Since $f_1<f$ is fixed, this quantity is greater than $f$ for sufficiently small $\varepsilon$. Hence, we may choose
	$ S_2\subseteq \bigcap_{v\in V(F^*)}N_{B_{1,2}}(v)	$, and $|S_2|=f$.
	
	Now suppose that $S_2,\dots,S_{j-1}$ have been chosen for some $3\leq j\leq p$, and let
	$
	T_{j-1}:=V(F^*)\cup S_2\cup\cdots\cup S_{j-1}.
	$
	Then
	$
	|T_{j-1}|\leq f_1+(j-2)f\leq (p-1)f-1.
	$
	Every vertex of $T_{j-1}$ has at least
	$
	\Bigl(\frac{1}{p}-21\varepsilon\Bigr)D
	$
	neighbours in $B_{1,j}$, and hence
	\begin{align*}
		\left|\bigcap_{v\in T_{j-1}}N_{B_{1,j}}(v)\right|
		&\geq \sum_{v\in T_{j-1}}|N_{B_{1,j}}(v)|-(|T_{j-1}|-1)\left|\bigcup_{v\in T_{j-1}}N_{B_{1,j}}(v)\right|\\
		&\geq |T_{j-1}|\Bigl(\frac1p-21\varepsilon\Bigr)D-(|T_{j-1}|-1)\Bigl(\frac1p+2\varepsilon\Bigr) D\\
		&=\Bigl(\frac1p-(23|T_{j-1}|-2)\varepsilon\Bigr) D.
	\end{align*}
	Again, since $|T_{j-1}|\leq (p-1)f-1$ and $p,f$ are fixed, this quantity is greater than $f$ for sufficiently small $\varepsilon$. Hence we may choose
	$ S_j\subseteq \bigcap_{v\in T_{j-1}}N_{B_{1,j}}(v) $ and $|S_j|=f$.
	
	Proceeding inductively, we obtain sets
	$ S_2,\dots,S_p$, $ S_j\subseteq B_{1,j}$, and $ |S_j|=f $,
	such that every edge between distinct sets $S_2,\dots,S_p$ is present, and every vertex of $F^*$ is adjacent to every vertex in $S_2\cup\cdots\cup S_p$. Thus $	G^{\star}[S_2\cup\cdots\cup S_p]\supseteq T((p-1)f,p-1) $, and by the definition of $\mathcal M(\partial_{c}F)$, the graph $ G^{\star}[V(F^*)\cup S_2\cup\cdots\cup S_p] $ contains a copy of some graph in $\partial_{c}F$.
	
	The copy of a member of $\partial_{c}F$ just obtained lies entirely inside $B=N_{G^{\star}}(u^*)$, where $u^*$ is the fixed maximum-degree root chosen at the beginning of the endpoint proof.  Hence $u^*$ is adjacent to every vertex of that copy.  Adding $u^*$ as the deleted vertex of the corresponding member of $\partial_{c}F$ extends the copy to a copy of $F$ in $G^{\star}$, a contradiction.
\end{proof}

By \eqref{eq:small-constant-choice}, one has in particular $\zeta_F\ge 10^3p(\Lambda_F+C_\partial+2)\varepsilon^{-2}$. By \cref{lem:B1i-independent-right}, \cref{lem_ind1_degree} and~\cref{lem_b2_MFfree},
\begin{equation*}
	e(G^{\star}[V_i])
	=e(G^{\star}[B_{2,i}])+e_{G^{\star}}(B_{1,i},B_{2,i})
	\leq \ex(|B_{2,i}|,\mathcal M(\partial_{c}F))+f|B_{2,i}|.
\end{equation*} 
Together with \cref{lem:K-small-right}, \cref{lem:B2-small-right}, \cref{lem:low-degree-right_empty} and \cref{lem_nob1k}, we have $L=\emptyset$ and $|B_2|\le |B_2\setminus K|+|K|\le 2\varepsilon D\le 2\varepsilon n$.

Then by \eqref{eq:boundary-edge-cap}, we have, there exists a constant $\Lambda_F>0$ such that
\begin{equation}\label{eq:sum_internal_edges}
	\sum_{i=1}^p e(G^{\star}[V_i])
	\leq
	\sum_{i=1}^p \Bigl(\ex(|B_{2,i}|,\mathcal M(\partial_{c}F))+f|B_{2,i}|\Bigr)
	\leq \Lambda_F\, D^{s_F}|B_2|
	\leq 2\Lambda_F\varepsilon n^{1+s_F}.
\end{equation}

\begin{lemma}\label{lem:A1_structure}
	We have	$ E(G^{\star}[A_1])=\emptyset $.
\end{lemma}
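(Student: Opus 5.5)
The plan is to argue by contradiction and embed $F$ directly, in the same way as in the proofs of \cref{lem:B1i-independent-right} and \cref{lem_ind1_degree}. The difference is that here the critical edge of $F$ will sit inside $A_1$ rather than inside $B$. So suppose $a_1a_2\in E(G^{\star}[A_1])$. Fix a critical edge $ab$ of $F$ and a proper $(p+1)$-colouring of $F-ab$ in which $a$ and $b$ receive the same colour, say colour $0$. Colour classes $1,\dots,p$ have at most $f$ vertices each. We will map $a,b$ to $a_1,a_2$, the other vertices of colour $0$ into a set $S_0\subseteq A$, and colour class $j$ into a set $S_j\subseteq B_{1,j}$.

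\emph{Step 1: the sets $S_j$ inside $B$.} By definition of $A_1$, each $a_i$ has at most $\varepsilon D$ non-neighbours in $B$. So $a_1$ and $a_2$ have at least $(1-2\varepsilon)D$ common neighbours in $B$. At this stage $L=\emptyset$ and $B'_{1,j}=B_{1,j}$. By \cref{lem:B2-small-right}, \cref{lem:K-small-right} and \cref{lem:B-partition},
\[
|B_{1,j}|\ge |V_j|-|B_2|\ge\Bigl(\frac1p-4\varepsilon\Bigr)D .
\]
Hence $W_j:=B_{1,j}\cap N(a_1)\cap N(a_2)$ satisfies $|W_j|\ge(\frac1p-6\varepsilon)D$.

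Now choose $S_1\subseteq W_1$ with $|S_1|=f$. Then proceed inductively exactly as in \cref{lem:B1i-independent-right}: pick $S_j\subseteq W_j$ of size $f$ inside the common $B_{1,j}$-neighbourhood of $S_1\cup\dots\cup S_{j-1}$. By \eqref{eq_b1i_degree}, each earlier vertex misses at most $(\frac1p+2\varepsilon)D-(\frac1p-21\varepsilon)D=23\varepsilon D$ vertices of $B_{1,j}$. So the available set still has size at least $(\frac1p-6\varepsilon-23pf\varepsilon)D>f$. This gives a complete $p$-partite graph on $S_1\cup\dots\cup S_p$, with every vertex joined to both $a_1$ and $a_2$.

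\emph{Step 2: the set $S_0$ in $A$.} Every vertex of $S_1\cup\dots\cup S_p\subseteq B_1$ has at most $\varepsilon|A|$ non-neighbours in $A$. So these vertices have at least $(1-pf\varepsilon)|A|$ common neighbours in $A$. Removing $a_1,a_2$, we still need at least $f$ of them for $S_0$.

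This is the step I expect to be the main obstacle, because $|A|=n-D$ may be as small as order $n^{s_F}$, and even bounded when $s_F=0$. I would handle it using the buffer \eqref{eq:actual-degree-buffer}. It gives $|A|\ge\kappa_F n^{s_F}\ge 2\zeta_F$, and by \eqref{eq:small-constant-choice} $\zeta_F$ carries a factor $\varepsilon^{-2}f^2$. Hence $(1-pf\varepsilon)|A|-2\ge \tfrac12|A|>f$. If that bookkeeping were too tight, a fallback is that the root $u^*\in A$ is adjacent to all of $B$, so it can always host one colour-$0$ vertex.

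\emph{Step 3: conclude.} With $S_0$ in hand, every pair of distinct colour classes is completely joined: $S_0$ to each $S_j$, $\{a_1,a_2\}$ to each $S_j$, and $S_i$ to $S_j$. Vertices of the same colour need no adjacency, and the critical edge $ab$ is realised by $a_1a_2$. This yields a copy of $F$ in $G^{\star}$, a contradiction, so $E(G^{\star}[A_1])=\emptyset$.
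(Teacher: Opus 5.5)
Your proposal is correct and follows essentially the same route as the paper's proof: common $B_{1,j}$-neighbourhoods of the edge $a_1a_2$, a greedy choice of $S_1,\dots,S_p$ via \eqref{eq_b1i_degree}, a common $A$-neighbourhood $S_0$, and an embedding with the critical edge placed on $a_1a_2$. Your explicit use of the buffer $|A|\ge\kappa_Fn^{s_F}\ge 2\zeta_F$ to ensure that $(1-pf\varepsilon)|A|-2>f$, and your exclusion of $a_1,a_2$ from $S_0$, supply details that the paper leaves implicit.
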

\begin{proof}
	Suppose that $uu'\in E(G^{\star}[A_1])$.  We first record the precise reservoirs needed for the critical-edge embedding.  Since $u,u'\in A_1$, each of them has at most $\varepsilon D$ non-neighbours in $B$.  Also $L=\emptyset$, $B_{1,j}\cap K_j=\emptyset$ by \cref{lem_nob1k}, and \cref{lem:B2-small-right} gives, for $j\in[p]$,
	\begin{equation*}
		|B_{1,j}|\ge |V_j|-|B_2\setminus K|-|K|
		\ge \Bigl(\frac1p-4\varepsilon\Bigr)D.
	\end{equation*}
	Hence
	\begin{equation*}
		|B_{1,j}\cap N_{G^{\star}}(u)\cap N_{G^{\star}}(u')|
		\ge \Bigl(\frac1p-6\varepsilon\Bigr)D>f
	\end{equation*}
	for every $j$, once $n$ is large and $\varepsilon$ is fixed sufficiently small.
	
	Choose $S_1\subseteq B_{1,1}\cap N_{G^{\star}}(u)\cap N_{G^{\star}}(u')$ with $|S_1|=f$.  Suppose that $S_1,\ldots,S_{j-1}$ have been chosen, where $2\le j\le p$, each $S_i\subseteq B_{1,i}$ has size $f$, all cross-edges between previously chosen sets are present, and every vertex in the previous sets is adjacent to both $u$ and $u'$.  For a vertex $z\in B_{1,i}$ and $j\ne i$, the estimate \eqref{eq_b1i_degree} gives $	d_{B_{1,j}}(z)\ge \Bigl(\frac1p-21\varepsilon\Bigr)D $, while $|B_{1,j}|\le |V_j|\le(1/p+2\varepsilon) D$.  Thus, each previously chosen vertex misses at most $23\varepsilon D$ vertices of $B_{1,j}$.  Therefore, the common candidate set
	\begin{equation*}
		T_j:=B_{1,j}\cap N_{G^{\star}}(u)\cap N_{G^{\star}}(u')\cap
		\bigcap_{i<j}\bigcap_{z\in S_i}N_{G^{\star}}(z)
	\end{equation*}
	satisfies
	\begin{equation*}
		|T_j|\ge \Bigl(\frac{1}{p}-6\varepsilon-23(j-1)f\varepsilon\Bigr)D>f.
	\end{equation*}
	Choose $S_j\subseteq T_j$ with $|S_j|=f$.  This induction gives sets $S_j\subseteq B_{1,j}$, $j\in[p]$, such that $u$ and $u'$ are adjacent to every vertex of $S_1\cup\cdots\cup S_p$ and the sets $S_1,\ldots,S_p$ are mutually complete.
	
	Since every vertex of $B_1$ has at most $\varepsilon |A|$ non-neighbours in $A$, we have
	\begin{equation*}
		\left|A\cap\bigcap_{j=1}^p\bigcap_{w\in S_j}N_{G^{\star}}(w)\right|
		\ge (1-pf\varepsilon)|A|>f.
	\end{equation*}
	Choose $S_0\subseteq A\cap\bigcap_{j=1}^p\bigcap_{w\in S_j}N_{G^{\star}}(w)$, and $|S_0|=f$. Now choose a critical edge $ab$ of $F$ and a proper $(p+1)$-coloring of $F-ab$ in which $a$ and $b$ have the same color.  Embed $a$ and $b$ on $u$ and $u'$, embed the remaining vertices of their color class into $S_0$, and embed the other $p$ color classes into $S_1,\ldots,S_p$.  The edge $ab$ is realised by $uu'$, every required edge between distinct color classes is present by construction, and extra edges are harmless because we seek a non-induced copy.  This gives $F\subseteq G^{\star}$, a contradiction.
\end{proof}

\begin{lemma}\label{lem:right-envelope-comparison}
	Let $\widehat G:=|A|K_1\vee K_{|V_1|,\ldots,|V_p|}$. Then
	$\widehat G$ is $F$-free, $\Delta(\widehat G)= D$, and
	$\rho(\widehat G)\le\rho(G^{\star})$.  If $\boldsymbol y$ is the Perron vector of
	$\widehat G$ normalized by $\max_{v\in V(\widehat{G})} y_v=1$, then $y_v\geq \frac{1}{4}$ for every
	vertex $v$.
\end{lemma}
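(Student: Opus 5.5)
The plan is to verify the four assertions in turn, using only the complete multipartite structure of $\widehat G$, the size estimates of \Cref{lem:B-partition}, the right-range bound \eqref{eq:right-beta-bound}, and the upper-tail extremality of $G^{\star}$.

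\emph{$F$-freeness.} The graph $\widehat G$ is complete $(p+1)$-partite, with parts $A,V_1,\dots,V_p$. Hence $\chi(\widehat G)\le p+1<p+2=\chi(F)$. Every subgraph of $\widehat G$ is therefore $(p+1)$-colourable, so $\widehat G$ contains no copy of $F$.

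\emph{Maximum degree.}
\begin{itemize}
\item A vertex of $A$ is adjacent to all of $B$, so it has degree $|B|=D$.
\item A vertex of $V_i$ has degree $n-|V_i|$.
\item By \Cref{lem:B-partition}, $|V_i|\ge(\frac1p-2\varepsilon^3)D$.
\item In the right range, \eqref{eq:right-beta-bound} gives $|A|=n-D=\beta D<\frac{\varepsilon^2}{100}D$.
\end{itemize}
Thus $|V_i|>|A|$, which is the same as $n-|V_i|<D$. Hence $\Delta(\widehat G)=D$, and the maximum is attained exactly on $A$ (which is nonempty, since $n-D\ge\kappa_Fn^{s_F}>0$).

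\emph{Spectral comparison.} Recall $\lceil\frac{p}{p+1}n\rceil\le\Delta\le D\le n-\kappa_Fn^{s_F}$, and $\kappa_F\ge\zeta_F$ by \eqref{eq:small-constant-choice}. So $D\in\mathcal I(\Delta,\zeta_F)$. Together with $F$-freeness this puts $\widehat G\in\mathfrak U_n(\Delta,\zeta_F;F)$. Upper-tail extremality of $G^{\star}$ then gives $\rho(\widehat G)\le\rho(G^{\star})$ directly; no Perron-vector computation is needed for this part.

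\emph{Perron entries.} I would use the proof of \Cref{lem:model-spectral-comparison}(i), applied to the complete $(p+1)$-partite graph $\widehat G$ with $\lambda=\rho(\widehat G)$. The Perron vector is constant on each part, with value $y=Y/(\lambda+s)$ on a part of size $s$. Hence the ratio of the minimum to the maximum entry is
\[
\frac{\lambda+s_{\min}}{\lambda+s_{\max}}\ \ge\ \frac{\lambda}{\lambda+s_{\max}}.
\]
The largest part size is $s_{\max}\le(\frac1p+2\varepsilon^3)D$, again by \Cref{lem:B-partition} and $|A|<|V_i|$. For a lower bound on $\lambda$, I would use that $\widehat G$ contains $K_{|V_1|,\dots,|V_p|}$. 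By a Rayleigh quotient, or by \eqref{eq:complete-multipartite-root} with nearly balanced parts, this gives $\lambda\ge(1-\frac1p-O(\varepsilon))D$. Therefore
\[
\frac{\lambda}{\lambda+s_{\max}}\ \ge\ \frac{1-\frac1p-O(\varepsilon)}{1+O(\varepsilon)}\ \ge\ \frac12-O(\varepsilon)\ >\ \frac14,
\]
for $p\ge2$ and $\varepsilon$ small. Normalising so that $\max_v y_v=1$ yields $y_v\ge\frac14$ for every vertex $v$.

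The only mildly delicate point is the lower bound on $\lambda$. The cleanest route is the Rayleigh quotient of the all-ones vector on $K_{|V_1|,\dots,|V_p|}$, which gives $\lambda\ge 2e/D$. Here $e=\sum_{i<j}|V_i||V_j|\ge\frac{p-1}{2p}D^2-O(\varepsilon^6D^2)$, by the part-size bounds in \Cref{lem:B-partition}.
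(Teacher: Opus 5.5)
Your proposal is correct and follows essentially the same route as the paper. You get $F$-freeness from the $(p+1)$-partite structure, $\Delta(\widehat G)=D$ from $|A|<|V_i|$, the spectral comparison directly from upper-tail extremality, and the entry bound from the relation $y_i/y_0=(\rho+s_0)/(\rho+s_i)$ together with a Rayleigh-quotient lower bound on $\rho(\widehat G)$ via $K_{|V_1|,\dots,|V_p|}$. The only difference is cosmetic: you bound the largest part by $(\frac1p+2\varepsilon^3)D$ rather than by $D$, which gives a ratio of roughly $\frac{p-1}{p}$ instead of the paper's $\frac{9(p-1)}{19p-9}$; both exceed $\frac14$.
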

\begin{proof}
	The graph $\widehat G$ is complete $(p+1)$-partite and is therefore $F$-free. By \cref{lem:B-partition},
	$|V_i|\ge (1/p-2\varepsilon) D$ for every $i\in[p]$. On the other hand,
	\eqref{eq:right-beta-bound} gives
	\begin{equation*}
		|A|=n-D=\beta D
		<\frac{\varepsilon^2}{100}D
		<(1/p-2\varepsilon)D
		\le |V_i|.
	\end{equation*}
	Thus $A$ is a smallest part of $\widehat G$, and hence $\Delta(\widehat G)=n-|A|=D$. Therefore, $\widehat G\in\mathfrak{U}_{n}(\Delta,\zeta_F;F)$. Then the upper-tail extremality gives $\rho(\widehat G)\le \rho(G^{\star})$.
	
%	It remains to record a uniform lower bound for the Perron entries of
%	$\widehat G$. Set $s_0:=|A|$, and $s_i:=|V_i|\,\ (i\in[p])$. 

	Since 	$\widehat G[B]=K_{|V_1|,\ldots,|V_p|}$ and the parts $V_i$ are within
	$2\varepsilon D$ of $D/p$, the Rayleigh quotient on $B$ gives 
	\begin{equation*}
		\rho(\widehat G)>\rho(\widehat{G}[B])\geq\frac{2e(\widehat{G}[B])}{D}=\frac{2\sum_{1\leq i<j\leq p}|V_i||V_j|}{D}\geq (\frac{p-1}{p}-2\varepsilon(p-1)+4p(p-1)\varepsilon^2) D>\frac{9(p-1)}{10p} D.
	\end{equation*}
	for all sufficiently large $D$ and small $\varepsilon$. 
	
	%Also each part of $\widehat G$ has size at most $D$.

	%By \eqref{eq:esti-radius-lb}, 
	
	By symmetry, the Perron vector $\boldsymbol y$ is constant on each part of
	$\widehat G$. Let $y_0$ be its common value on $A$, and let $y_i$ be
	its common value on $V_i$ for each $i\in[p]$. Then the eigenvalue equations for the complete multipartite graph
	$\widehat G$ are 
	\begin{equation*}
		(\rho(\widehat G)+s_0)y_0=s_0 y_0+\sum_{i=1}^p s_i y_i, \quad (\rho(\widehat G)+s_i)y_i=s_0 y_0+\sum_{i=1}^p s_i y_i \quad\text{for}\quad i\in[p].
	\end{equation*}
%	and for $$, 
%	\begin{equation*}
%		
%	\end{equation*}
	Hence, for every $i\in[p]$, $\frac{y_i}{y_0}=\frac{\rho(\widehat G)+s_0}{\rho(\widehat G)+s_i}$.
	Since $s_0=|A|\le s_i=|V_i|$ for every $i$, we have $y_0\ge y_i$ for
	all $i$. Because $\boldsymbol y$ is normalized by $\max_{v\in V(\widehat{G})} y_v=1$, it follows
	that $y_0=1$. Therefore, for every $i\in[p]$,
	\begin{equation*}
		y_i
		=\frac{\rho(\widehat G)+s_0}{\rho(\widehat G)+s_i}
		\ge \frac{\rho(\widehat G)}{\rho(\widehat G)+s_i}
		\ge \frac{\rho(\widehat G)}{\rho(\widehat G)+D}
		\ge \frac{9(p-1)/(10p)}{9(p-1)/(10p)+1}
		= \frac{9(p-1)}{19p-9}\geq \frac{1}{4}.
	\end{equation*}
	Vertices of $A$ have entry $y_0=1$, while vertices of each $V_i$ have entry
	$y_i\ge \frac{1}{4}$. Thus $y_v\ge \frac{1}{4}$ for every vertex
	$v\in V(\widehat G)$.
\end{proof}

\begin{lemma}\label{lem:A2_B2_empty}
	We have $ A_2\cup B_2=\emptyset $.
\end{lemma}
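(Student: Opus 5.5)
We argue by contradiction with a single global comparison between $G^{\star}$ and the envelope $\widehat G=|A|K_1\vee K_{|V_1|,\ldots,|V_p|}$ of \Cref{lem:right-envelope-comparison}. Write $\boldsymbol y$ for the Perron vector of $\widehat G$ normalized with $\max y_v=1$, so that $y_v\ge \frac14$ everywhere. Since $\rho(G^{\star})\ge\rho(\widehat G)$ and $\boldsymbol x$ is a nonnegative Perron vector of $G^{\star}$ supported on $C^{\star}$, we use the identity
\[
 (\rho(\widehat G)-\rho(G^{\star}))\,\boldsymbol x^{\top}\boldsymbol y=\boldsymbol x^{\top}\bigl(A(\widehat G)-A(G^{\star})\bigr)\boldsymbol y
 =\sum_{uv\in E(\widehat G)\setminus E(G^{\star})}(x_uy_v+x_vy_u)-\sum_{uv\in E(G^{\star})\setminus E(\widehat G)}(x_uy_v+x_vy_u).
\]
Together with $\rho(\widehat G)\le\rho(G^{\star})$, this shows that the gain from missing crossing pairs is at most the loss from internal edges.

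The edges of $G^{\star}$ not in $\widehat G$ are exactly the internal edges of $G^{\star}[A]$ and of the $G^{\star}[V_i]$. By \Cref{lem:A1_structure}, the edges of $G^{\star}[A]$ all meet $A_2$, so $e(G^{\star}[A])\le |A_2|\,|A|$. By \eqref{eq:sum_internal_edges}, $\sum_i e(G^{\star}[V_i])\le \Lambda_F D^{s_F}|B_2|$. With $x,y\le 1$, the loss term is therefore at most $2|A_2||A|+2\Lambda_F D^{s_F}|B_2|$.

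For the gain, each $a\in A_2$ misses at least $\varepsilon D$ vertices of $B$. By \Cref{lem:B_perron_lower} these have $x\ge \frac1{6p}$, and $y_a=1$, so the gain is at least $\frac{\varepsilon}{6p}D|A_2|$. Similarly, each $b\in B_2$ misses at least $\varepsilon|A|$ vertices of $A$. Using $y_b\ge\frac14$, $x_b\ge\frac1{6p}$ and $y_a=1$, the contribution $x_by_a$ alone gives a gain of at least $\frac{\varepsilon}{6p}|A||B_2|$ over the $B_2$ missing pairs. A pair in $A_2\times B_2$ may be counted twice, so we take the maximum of the two counts, or half their sum. Since $|A|=n-D\ge\kappa_F n^{s_F}\ge\zeta_F n^{s_F}$ and $\zeta_F$ is huge compared with $\Lambda_F p\,\varepsilon^{-1}$ by \eqref{eq:small-constant-choice}, we get $\frac{\varepsilon}{12p}|A||B_2|>2\Lambda_F D^{s_F}|B_2|$ whenever $B_2\ne\emptyset$. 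Likewise, $|A|\le\beta D<\varepsilon^2 D/100$ gives $\frac{\varepsilon}{12p}D|A_2|>2|A_2||A|$ whenever $A_2\ne\emptyset$. So if $A_2\cup B_2\ne\emptyset$, the right-hand side of the identity is strictly positive. Since $\boldsymbol x^{\top}\boldsymbol y>0$, this gives $\rho(\widehat G)>\rho(G^{\star})$, contradicting \Cref{lem:right-envelope-comparison}.

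The main obstacle is the support issue: vertices of $A^0$ have $x_u=0$, so a missing pair $ab$ with $a\in A^0$ contributes only $x_by_a$. This is why the gain must be counted through $x_b\ge\frac1{6p}$ on the $B$-side paired with $y_a=1$, never through $x_a$. The loss terms are bounded using $x\le1$ and $y\le1$ only, so they need no positivity, and the pair count in the identity is designed so that each missing pair retains at least one term of size at least $\frac1{6p}$. A secondary point is verifying $\boldsymbol x^{\top}\boldsymbol y>0$, which holds because $\boldsymbol y>0$ and $\boldsymbol x\neq 0$.
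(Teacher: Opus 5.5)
Your proposal is correct and is essentially the paper's own proof. It uses the same bilinear comparison $\boldsymbol y^{\top}(A(\widehat G)-A(G^{\star}))\boldsymbol x=(\rho(\widehat G)-\rho(G^{\star}))\boldsymbol y^{\top}\boldsymbol x\le 0$ with the envelope, gains from missing $A$--$B$ pairs through $x_v\ge\frac1{6p}$ on $B$, a factor $\frac12$ to absorb the overlap in $A_2\times B_2$, and losses bounded via \Cref{lem:A1_structure} and \eqref{eq:sum_internal_edges}. The only difference is cosmetic: you use $y_a=1$ on $A$ (available from the proof of \Cref{lem:right-envelope-comparison}) instead of $y_a\ge\frac14$, which improves the constant $\frac{1}{48p}$ to $\frac{1}{12p}$.
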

\begin{proof}
	Let $\widehat G:=|A|K_1\vee K_{|V_1|,\ldots,|V_p|}$, and let
	$\boldsymbol y$ be its Perron vector normalized by $\max_{v\in V(\widehat{G})} y_v=1$.  By
	\Cref{lem:right-envelope-comparison}, $\widehat{G}\in\mathfrak{U}_{n}(\Delta,\zeta_F;F)$, $\rho(\widehat G)\le\rho(G^{\star})$, and $y_v\ge \frac{1}{4}$ for every
	vertex $v$.  Hence
	\begin{equation}\label{eq:right-envelope-bilinear-nonpositive}
		\boldsymbol y^\top (A(\widehat{G})-A(G^{\star}))\boldsymbol x
		=(\rho(\widehat{G})-\rho(G^{\star}))\boldsymbol y^\top\boldsymbol x\le 0 .
	\end{equation}
	By~\eqref{eq:sum_internal_edges},
	$
	\sum_{i=1}^p e(G^{\star}[V_i])\leq \Lambda_F D^{s_F}|B_2|.
	$
	On the other hand, \Cref{lem:B_perron_lower} gives $x_v\geq \frac{1}{6p}$ for every
	$v\in B$, while \Cref{lem:right-envelope-comparison} gives $y_u\ge \frac{1}{4}$
	for every $u\in A$.  Therefore
	\begin{align*}
		\boldsymbol y^\top (A(\widehat G)-A(G^{\star}))\boldsymbol x
		&\geq
		\frac12\sum_{u\in A_2}\sum_{v\in B\setminus N_{G^{\star}}(u)}(y_u x_v+x_u y_v)\\
		&\quad
		+\frac12\sum_{v\in B_2}\sum_{u\in A\setminus N_{G^{\star}}(v)}(y_u x_v+x_u y_v)
		-2e(G^{\star}[A])-2\sum_{i=1}^p e(G^{\star}[V_i])\\
		&\geq
		\frac{1}{48p}\varepsilon D |A_2|
		+\frac{1}{48p}\varepsilon|A||B_2|
		-2e(G^{\star}[A])-2\sum_{i=1}^p e(G^{\star}[V_i]).
	\end{align*}
	Since $E(G^{\star}[A_1])=\emptyset$, every edge in $G^{\star}[A]$ is incident with $A_2$.
	Therefore $2e(G^{\star}[A])\leq 2|A||A_2|=2\beta D |A_2| \leq \frac{\varepsilon^2}{50}D|A_2|$ by \eqref{eq:right-beta-bound}. Moreover, by \eqref{eq:sum_internal_edges}, $\sum_{i=1}^p e(G^{\star}[V_i])\leq \Lambda_F|B_2|D^{s_F}$. Then by \eqref{eq:small-constant-choice}, we obtain
	\begin{equation*}
		\boldsymbol y^\top (A(\widehat G)-A(G^{\star}))\boldsymbol x \geq \varepsilon\left(\frac{1}{48p}-\frac{\varepsilon}{50}\right)D|A_2|+\left(\frac{\zeta_F\varepsilon}{48p}n^{s_F}-2\Lambda_F D^{s_F}\right)|B_2|>0.
	\end{equation*}
	Thus, if $A_2\cup B_2\neq\emptyset$, then
	$\boldsymbol y^\top (A(\widehat G)-A(G^{\star}))\boldsymbol x>0$, contradicting
	\eqref{eq:right-envelope-bilinear-nonpositive}.  Therefore
	$A_2\cup B_2=\emptyset$.
\end{proof}

\Cref{lem:A2_B2_empty} implies that $A=A_1$ and $B=B_1$.
Combining this with \Cref{lem:B1i-independent-right,lem_nob1k,lem:A1_structure},
we conclude that $A$ and each $V_i$ are independent in $G^{\star}$.  Hence $G^{\star}$ is a spanning subgraph of
\[
        \widehat G=|A|K_1\vee K_{|V_1|,\ldots,|V_p|}.
\]
By \Cref{lem:radius-subgraph} and \Cref{lem:model-spectral-comparison}(ii), upper-tail extremality forces
$G^{\star}\cong S_{n,D,p}$.

\begin{proposition}\label{prop:right-range-exact} Let $G^{\star}\in\mathfrak U_n(\Delta,\zeta_F;F)$ be an upper-tail extremal graph and put $D=\Delta(G^{\star})$.  If $\left(1-\frac{\varepsilon^2}{200}\right)n\le D$ and $n-D\ge\kappa_Fn^{s_F}$, then $G^{\star}\cong S_{n,D,p}$ for all sufficiently large $n$.
\end{proposition}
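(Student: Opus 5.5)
The proposition simply packages the chain of lemmas in \Cref{subsec:right}, so the plan is to run that chain in order and conclude with the envelope comparison. Fix the root $u^*$ of maximum degree $D$ in the Perron component $C^\star$, given by \Cref{lem:perron-component-root}. Take the sets $A,B,A^+,A^0$ and the max-cut partition $B=V_1\cup\cdots\cup V_p$. The hypothesis $n-D\ge\kappa_Fn^{s_F}$ is exactly the buffer \eqref{eq:actual-degree-buffer}: every one-step switching (raising $\Delta$ by at most one) and every envelope $|A|K_1\vee K_{|V_1|,\ldots,|V_p|}$ stays in $\mathfrak U_n(\Delta,\zeta_F;F)$. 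Therefore upper-tail extremality can be invoked against each of them.

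The structural steps are as follows. First, \Cref{lem:B-partition} gives the approximate structure: few missing $A$--$B$ pairs, few edges inside $A$ and inside the $V_i$, and nearly balanced parts. \Cref{lem:low-degree-right,lem:K-small-right,lem:Vprime-degrees-right} then control the low-degree set $L$ and the high-internal-degree set $K$. \Cref{lem:perron-entry-right} gives Perron lower bounds on $B\setminus(L\cup K)$. \Cref{lem:B2-small-right} shows $B_2$ is small: otherwise the $2\times2$ bilinear bound together with \Cref{lem:boundary-spectral-cap} would give $\rho(G^\star)<\rho(S_{n,D,p})$, and this is where $\zeta_F$ is used. Next come the embedding steps for the critical edge:
\begin{itemize}
\item \Cref{lem:B1i-independent-right} shows each $B'_{1,i}$ is independent;
\item \Cref{lem:low-degree-right_empty} shows $L=\emptyset$, by switching $v_0$ onto $\bigcup_{i\ge2}B'_{1,i}$ while keeping $u^*v_0$;
\item \Cref{lem_nob1k} shows $K\subseteq B_2$;
\item \Cref{lem_ind1_degree} shows each vertex of $B_2$ has bounded degree into its own $B_1$-class;
\item \Cref{lem_b2_MFfree} shows $G^\star[B_{2,i}]$ is $\mathcal M(\partial_cF)$-free.
\end{itemize}
Together these yield the internal-edge bound \eqref{eq:sum_internal_edges}, and \Cref{lem:A1_structure} shows $A_1$ is independent.

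The key quantitative step, and the one I expect to be the main obstacle, is \Cref{lem:A2_B2_empty}. There I compare $G^\star$ with the envelope $\widehat G$ through the bilinear form $\boldsymbol y^\top(A(\widehat G)-A(G^\star))\boldsymbol x$. This form is nonpositive because $\rho(\widehat G)\le\rho(G^\star)$ (\Cref{lem:right-envelope-comparison}). It is also bounded below by the missing-pair gain, which is linear in $\varepsilon D|A_2|+\varepsilon|A||B_2|$ and uses $x_v\ge 1/(6p)$ on $B$ from \Cref{lem:B_perron_lower} and $y\ge1/4$. The losses are at most $2e(G^\star[A])\le 2\beta D|A_2|$ and the internal cost $O(\Lambda_F D^{s_F}|B_2|)$. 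Since $|A|\ge\zeta_Fn^{s_F}$ with $\zeta_F\gg\Lambda_F\varepsilon^{-2}$, the gain dominates unless $A_2\cup B_2=\emptyset$. The delicate point is that the $B_2$ gain scales with $|A|$ while the loss scales with $D^{s_F}$, which is exactly why the buffer is needed.

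To conclude, once $A=A_1$ and $B=B_1$, the set $A$ and every $V_i=B_{1,i}$ are independent, so $G^\star$ is a spanning subgraph of $\widehat G$. Since $\widehat G$ is connected and lies in the admissible family, \Cref{lem:radius-subgraph} together with extremality forces $G^\star=\widehat G$. Then \Cref{lem:model-spectral-comparison}(ii) forces the parts $|V_i|$ to be balanced, because $\widehat G$ has maximum degree $D$ and $\rho(S_{n,D,p})\le\rho(G^\star)$, with equality only in the balanced case. Hence $G^\star\cong S_{n,D,p}$.
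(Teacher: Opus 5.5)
Your proposal is correct and is the paper's own argument: the proposition is proved there simply as the conclusion of the right-range cleaning chain you list, closed by the envelope comparison in \Cref{lem:A2_B2_empty} and then $G^{\star}\subseteq\widehat G$ together with \Cref{lem:radius-subgraph} and \Cref{lem:model-spectral-comparison}(ii). One slip you inherit by citing \Cref{lem_nob1k}: its written proof only produces an edge from $v\in K_i$ into $B'_{1,i}$, which is not an edge inside $B'_{1,i}$ because $v\notin B'_{1,i}$, so that step should instead use this edge as the critical edge in the embedding of \Cref{lem:B1i-independent-right}, with $v$'s linear degree into each $B'_{1,j}$ ($j\ne i$) supplied by the max-cut estimate $d_{V_j}(v)\ge(\frac{1}{2p}-8\varepsilon)D$ from \Cref{lem_b2minusK_otherdegree}.
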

\begin{proof}
This is exactly the conclusion of the right-end cleaning argument above; all estimates there are expressed in terms of the actual degree $D$ and the buffer $n-D\ge\kappa_Fn^{s_F}$.
\end{proof}

\subsection{The non-right range}\label{subsec:non-right}
In this subsection we keep the notation fixed in
\eqref{eq:extremal-uptail-decomposition} and assume that the actual degree lies
in the non-right range,
\[
\left\lceil\frac{p}{p+1}n\right\rceil
\le D<\left(1-\frac{\varepsilon^2}{200}\right)n.
\]

%Thus
%$B=N_{G^{\star}}(u^*)$, $A=V(G^{\star})\setminus B$, $|B|=D$, and
%$|A|=n-D$.  We assume
%\[
%        \left\lceil\frac{p}{p+1}n\right\rceil\le D<
%        \left(1-\frac{\varepsilon^2}{200}\right)n .
%\]
Put, as before, $\alpha:=\frac{p-1}{p}$, and $\beta:=\frac{n-D}{D}=\frac{|A|}{D}$.
Then
\begin{equation}\label{eq:estimate-beta-non-right}
        \frac{\varepsilon^2}{200}<\beta\le\frac1p .
\end{equation}
Since $S_{n,D,p}\in\mathfrak U_n(\Delta,\zeta_F;F)$, upper-tail extremality and
\eqref{eq:esti-radius-lb} give
\begin{equation}\label{eq:middle-esti-radius}
        \rho(G^{\star})\ge \rho(S_{n,D,p})
        >(\alpha+\beta-\varepsilon^3)D
\end{equation}
for all sufficiently large $n$.

The definitions of the exceptional sets are adjusted to the scale that is actually
available in the non-right range.  We use the following notations:
\begin{align}
 A_1&:=\{u\in A:d_B(u)\ge(1-4\varepsilon^3)D\},
 & A_2&:=A\setminus A_1,\label{eq:nonright-A12}\\
 B_1&:=\{v\in B:d_A(v)\ge(1-(10^4p^2f^2)^{-1})|A|\},
 & B_2&:=B\setminus B_1,\label{eq:nonright-B12}\\
 L&:=\{v\in B:d_B(v)\le(1-\frac{1}{p}-40(10^4p^2f^2)\varepsilon^3)D\},
 &K_A&:=\{u\in A:d_A(u)\ge4\varepsilon^3D\},\label{eq:nonright-LKA}\\
 K_i&:=\{v\in V_i:d_{V_i}(v)\ge(10^4p^2f^2)\varepsilon^3D\},
 &K&:=\bigcup_{i=1}^pK_i.\label{eq:nonright-K}
\end{align}
As in \cref{subsec:right}, put
\begin{equation*}
        V_i':=V_i\setminus(L\cup K),\quad
        B'_{s,i}:=B_s\cap V_i'\quad (s\in[2],\ i\in[p]).
\end{equation*}
Recall from \eqref{eq:esitimate-Miss-int-edge} and \Cref{lem:B-partition} that,
with $I=e(G^{\star}[A])$ and
$M=(n-D)D-e_{G^{\star}}(A,B)$,
\begin{equation}\label{eq:nonright-basic-defect}
        M+I<4\varepsilon^6D^2,
        \quad
        \sum_{i=1}^p e(G^{\star}[V_i])\le\varepsilon^6D^2,
        \quad
        \left||V_i|-\frac Dp\right|\le2\varepsilon^3D .
\end{equation}
We shall repeatedly use the rooted inequality
\begin{equation}\label{eq:nonright-root-defect}
        M\ge2I
\end{equation}
from \Cref{lem:root-defect-main}.

\begin{lemma}\label{lem:non-right-structural}
The following statements hold for all sufficiently large $n$.
\begin{enumerate}[label=\textnormal{(\roman*)}]
\item $|A_2|<\varepsilon^3D$, $|B_2|<\varepsilon^3D$, $|L|<\varepsilon^3D$, $|K_A|<\varepsilon^3D$, and $|K|<\varepsilon^3D$.
\item If $v\in V_i'$ and $j\ne i$, then
\begin{equation}\label{eq:nonright-good-cross-degree}
 d_{V_j'}(v)\ge\left(\frac1p-50(10^4p^2f^2)\varepsilon^3\right)D,
 \quad
 d_{B'_{1,j}}(v)\ge\left(\frac1p-51(10^4p^2f^2)\varepsilon^3\right)D.
\end{equation}
\item $ E(G^{\star}[A_1])=\emptyset$, and $E(G^{\star}[B'_{1,i}])=\emptyset\quad(i\in[p])$.
%\begin{equation}\label{eq:nonright-good-independent}
% E(G^{\star}[A_1])=\emptyset,
% \quad
% E(G^{\star}[B'_{1,i}])=\emptyset\quad(i\in[p]).
%\end{equation}
\item The total number of missing crossing pairs between the $V_i$ satisfies
\begin{equation}\label{eq:nonright-cross-missing-total}
 \sum_{1\le i<j\le p}
 \bigl(|V_i||V_j|-e_{G^{\star}}(V_i,V_j)\bigr)
 \le2\varepsilon^6D^2.
\end{equation}
\end{enumerate}
\end{lemma}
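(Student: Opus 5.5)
Set $c:=10^4p^2f^2$ and write $\eta:=(10^4p^2f^2)^{-1}$. The plan follows the right-range cleaning, but runs on the finer defect budget \eqref{eq:nonright-basic-defect}. That budget is of order $\varepsilon^6D^2$, which is much smaller than the exceptional thresholds $\varepsilon^3D$. We prove (i) and (iv) first by pure counting, then (ii), and finally the independence statements (iii) by critical-edge embeddings.

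\emph{Counting bounds (i) and (iv).} Each $u\in A_2$ contributes more than $4\varepsilon^3D$ missing pairs to $M$. Hence $|A_2|\cdot 4\varepsilon^3D\le M<4\varepsilon^6D^2$, which gives $|A_2|<\varepsilon^3D$.

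For $B_2$, each vertex misses at least $\eta|A|=\eta\beta D$ vertices of $A$. Using $\beta>\varepsilon^2/200$ from \eqref{eq:estimate-beta-non-right}, we get $|B_2|\le 4\varepsilon^6D/(\eta\beta)\le 800c\,\varepsilon^4D$, which is below $\varepsilon^3D$ for small $\varepsilon$.

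For $K_A$, note $2I\ge|K_A|\cdot4\varepsilon^3D$ and $I\le M/2<2\varepsilon^6D^2$, so $|K_A|<\varepsilon^3D$. For $K$, the bound $\sum_i e(V_i)\le\varepsilon^6D^2$ gives $|K|\le 2\varepsilon^3D/c$.

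For (iv), use $e(G^\star[B])\ge t(D,p)-\varepsilon^6D^2$ from \Cref{lem:B-partition}. Combine it with $\sum_{i<j}|V_i||V_j|\le t(D,p)$ and $\sum e(V_i)\le\varepsilon^6D^2$. Then the missing crossing pairs number at most $\sum_{i<j}|V_i||V_j|-e(B)+\sum e(V_i)\le 2\varepsilon^6D^2$.

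The bound on $|L|$ needs more care. A pure degree count gives only $|L|\cdot 40c\varepsilon^3D\lesssim$ (number of missing crossing pairs plus internal edges, adjusted for part-size deviation). Here $\sum_{v}(D-|V_{i(v)}|-d_{\rm cross}(v))\le 4\varepsilon^6D^2$ by (iv). The catch is that $D-|V_i|$ may exceed $(1-1/p)D$ by as much as $2\varepsilon^3D$, which is still far below $40c\varepsilon^3D$. So the count still gives $|L|\le \varepsilon^3D/(10c)$. This avoids the Erd\H{o}s--Stone argument of \Cref{lem:low-degree-right}, although that argument is available as a fallback.

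\emph{Cross degrees (ii).} For $v\in V_i'$ we have $d_B(v)>(1-1/p-40c\varepsilon^3)D$ and $d_{V_i}(v)<c\varepsilon^3D$. The other parts $V_k$ with $k\ne i,j$ have total size at most $(1-2/p+4\varepsilon^3)D$, and $|L\cup K|<2\varepsilon^3D$. Subtracting exactly as in \Cref{lem:Vprime-degrees-right} yields $d_{V_j'}(v)\ge(1/p-50c\varepsilon^3)D$. Removing $B_2$ in addition costs at most $\varepsilon^3D$ more, which gives the second inequality in \eqref{eq:nonright-good-cross-degree}.

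\emph{Independence (iii).} This is the step I expect to be the main obstacle, because the $A$-side defect tolerance is now only $\eta|A|$ rather than $\varepsilon|A|$.

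Suppose first that $uu'\in E(A_1)$. Greedily choose $S_j\subseteq B'_{1,j}\cap N(u)\cap N(u')$ with $|S_j|=f$, mutually complete across $j$. By (ii), each chosen vertex misses at most $O(c\varepsilon^3)D$ vertices of the next part, and $u,u'$ each miss at most $4\varepsilon^3D$. Next we need $S_0\subseteq A$ common to all $pf$ vertices of $\bigcup_j S_j$. Each vertex of $B_1$ misses at most $\eta|A|$ vertices of $A$, so the common neighbourhood has size at least $(1-pf\eta)|A|\ge(1-1/(10^4pf))|A|>f$. This is exactly why $\eta$ was chosen as $(10^4p^2f^2)^{-1}$. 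Embedding a critical edge $ab$ on $uu'$, its colour class into $S_0$, and the other classes into $S_1,\dots,S_p$ produces a copy of $F$.

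Similarly, an edge inside $B'_{1,1}$ extends to $S_1\subseteq B'_{1,1}$ containing that edge, then to $S_2,\dots,S_p$ and $S_0\subseteq A$. This gives $fK_1\vee(T(pf,p)+e)\supseteq F$, as in \Cref{lem:B1i-independent-right}. The numeric checks are that $1/p-O(pf\cdot c\varepsilon^3)>0$, which holds since $10^4p^2f^2\sqrt\varepsilon\ll1$, and that $|A|\ge\varepsilon^2D/200$ is large.
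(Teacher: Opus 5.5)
Your proof is correct. For $A_2$, $B_2$, $K_A$, $K$ and parts (ii)--(iv) it follows the paper's own counting and reservoir embeddings. The one genuine difference is the bound on $|L|$.

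\textbf{The paper's route for $|L|$.} It assumes $|L|\ge\varepsilon^3D$ and deletes a set $S\subseteq L$ of $\lfloor\varepsilon^3D\rfloor$ vertices from $B$. It then shows that $e(G^\star[B\setminus S])$ exceeds $t(D-|S|,p)$ by $(40c-2)\varepsilon^6D^2$, a fixed positive fraction of $(D-|S|)^2$. Erd\H{o}s--Stone then gives a member of $\partial_cF$ inside $B$, and adjoining the root $u^*$ produces a copy of $F$.

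\textbf{Your route for $|L|$.} You double count missing crossing pairs. Each $v\in L\cap V_i$ has at least $D-|V_i|-d_B(v)\ge(40c-2)\varepsilon^3D$ of them, using $|V_i|\le D/p+2\varepsilon^3D$. By (iv), the sum of these counts over all of $B$ is at most $4\varepsilon^6D^2$. Hence $|L|\le 4\varepsilon^3D/(40c-2)<\varepsilon^3D$. This is legitimate because (iv) does not involve $L$, so proving it first is fine.

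\textbf{What each buys.} Your argument is purely arithmetic: it uses no $F$-freeness and gives a sharper bound. It is in fact the same count the paper itself uses later, in \eqref{eq:nonright-L-missing-cross}. The paper's version instead reuses the Erd\H{o}s--Stone template of \Cref{lem:low-degree-right}, which does not need the missing-pair budget.

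\textbf{Two harmless slips.}
\begin{itemize}
\item The dangerous deviation is that $D-|V_i|$ can fall \emph{below} $(1-1/p)D$ by up to $2\varepsilon^3D$, not that it can exceed it. Your magnitude comparison with $40c\varepsilon^3D$ is still the right one.
\item The resulting constant is $4/(40c-2)$ rather than $1/(10c)$.
\end{itemize}
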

\begin{proof}
\textnormal{(i)}: By \eqref{eq:nonright-A12}, for $u\in A_2$, $D-d_{B}(u)> 4\varepsilon^3 D$.  Hence $4\varepsilon^3D|A_2|<M<4\varepsilon^6D^2$, which gives $|A_2|<\varepsilon^3D$.
For $B_2$, \eqref{eq:nonright-B12} and the identity
$M=\sum_{v\in B}(|A|-d_A(v))$ give $(10^4p^2f^2)^{-1}|A||B_2|<M<4\varepsilon^6D^2.$
By \eqref{eq:estimate-beta-non-right}, $|A|=\beta D>\varepsilon^2D/200$, and therefore
%\label{eq:nonright-B2-size-calc}
\begin{equation*}
        |B_2|<\frac{4(10^4p^2f^2)\varepsilon^6D^2}{|A|}
        <800(10^4p^2f^2)\varepsilon^4D<\varepsilon^3D,
\end{equation*}
where the last inequality is part of the choice of $\varepsilon$.
Moreover, $2I=\sum_{u\in A}d_A(u)
\ge4\varepsilon^3D|K_A|$, so $|K_A|<\varepsilon^3D$.  Likewise,
\begin{equation*}
	2\sum_{i=1}^p e(G^{\star}[V_i])
	\ge\sum_{i=1}^p\sum_{v\in K_i}d_{V_i}(v)
	\ge(10^4p^2f^2)\varepsilon^3D|K|,
\end{equation*}
hence
%\label{eq:nonright-K-size-calc}
\begin{equation*}
        |K|\le\frac{2\varepsilon^6D^2}
        {(10^4p^2f^2)\varepsilon^3D}
        =\frac{2}{(10^4p^2f^2)}\varepsilon^3D<\varepsilon^3D.
\end{equation*}

It remains to bound $L$.  Suppose $|L|\ge\varepsilon^3D$, and choose
$S\subseteq L$ with $|S|=\lfloor\varepsilon^3D\rfloor$.  By
\Cref{lem:B-partition},
\begin{align*}
 e(G^{\star}[B\setminus S])\ge e(G^{\star}[B])-\sum_{v\in S}d_B(v)\ge t(D,p)-\varepsilon^6D^2-|S|(1-\frac{1}{p}-40(10^4p^2f^2)\varepsilon^3)D.
\end{align*}
Using
$t(D,p)\ge \frac{\alpha}{2}D^2-\frac p8$ and
$t(D-|S|,p)\le\frac{\alpha}{2}(D-|S|)^2$, we obtain
\begin{align*}
 e(G^{\star}[B\setminus S])-t(D-|S|,p)
 &\ge 40(10^4p^2f^2)\varepsilon^3|S|D-\varepsilon^6D^2
      -\frac{\alpha}{2}|S|^2-\frac p8\\
 &\ge (40(10^4p^2f^2)-2)\varepsilon^6D^2>0
\end{align*}
for sufficiently large $n$.  In fact, the last quantity is a fixed positive multiple of $(D-|S|)^2$, since $\varepsilon$ is fixed.  By the Erd\H{o}s--Stone theorem (\cref{thm:erdos-stone}), $G^{\star}[B\setminus S]$ contains a complete $(p+1)$-partite graph with all classes larger than $f$.  It therefore contains a member of $\partial_cF$, and adjoining the root $u^*$ produces a copy of $F$, a contradiction.  Hence $|L|<\varepsilon^3D$.  This proves (i).

\textnormal{(ii)}: Let $v\in V_i'$ and $j\ne i$.  By \eqref{eq:nonright-LKA} and \eqref{eq:nonright-K},
\begin{equation*}
	d_B(v)>(1-\frac{1}{p}-40(10^4p^2f^2)\varepsilon^3)D,
	\quad d_{V_i}(v)<(10^4p^2f^2)\varepsilon^3D.
\end{equation*}
Also, by \eqref{eq:nonright-basic-defect}, 
\begin{equation*}
	 \sum_{k\in[p]\setminus\{i,j\}}|V_k|=|B|-|V_i|-|V_j|\leq\left(1-\frac2p+4\varepsilon^3\right)D.
\end{equation*}
Consequently, using (i),
\begin{align*}
 d_{V_j'}(v)
 &\ge d_B(v)-d_{V_i}(v)
      -\sum_{k\in[p]\setminus\{i,j\}}|V_k|-|L|-|K|\\
 &>\left(\frac1p-(41(10^4p^2f^2)+6)\varepsilon^3\right)D\ge\left(\frac1p-50(10^4p^2f^2)\varepsilon^3\right)D.
\end{align*}
Subtracting $|B_2|<\varepsilon^3D$ gives the second inequality in
\eqref{eq:nonright-good-cross-degree}.

\textnormal{(iii)}:  If $a\in A_1$, then
for every $j$, %$|B'_{1,j}\setminus N_{G^{\star}}(a)|\le D-d_B(a)\le4\varepsilon^3D$.
\begin{equation}\label{eq:nonright-A1-to-Bgood}
 |B'_{1,j}\setminus N(a)|\le D-d_B(a)\le4\varepsilon^3D.
\end{equation}
If $v\in B'_{1,i}$ and $j\ne i$, then 
%$|B'_{1,j}\setminus N_{G^{\star}}(v)|\le |V_j|-d_{B'_{1,j}}(v)\le 53(10^4p^2f^2)\varepsilon^3D.$
\begin{equation}\label{eq:nonright-Bgood-to-Bgood}
 |B'_{1,j}\setminus N_{G^{\star}}(v)|
 \le |V_j|-d_{B'_{1,j}}(v)
 \le 53(10^4p^2f^2)\varepsilon^3D.
\end{equation}
Finally, for $v\in B_1$, 
%$ |A_1\setminus N(v)|\le |A|-d_A(v)\le(10^4p^2f^2)^{-1}|A| $.
\begin{equation*}
 |A_1\setminus N(v)|
 \le |A|-d_A(v)\le(10^4p^2f^2)^{-1}|A|.
\end{equation*}
Similar to the proof of \cref{lem:B1i-independent-right} and \cref{lem:A1_structure}, for a set $T$ of already embedded vertices to which the next vertex is required to be adjacent, we estimate the corresponding common neighbourhood.Thus, if at most $f$ already chosen vertices have to be joined to a new
reservoir $B'_{1,j}$, then we get
\begin{align}
 \left|B'_{1,j}\cap\bigcap_{w\in T}N_{G^{\star}}(w)\right|
 &\ge |B'_{1,j}|-53f(10^4p^2f^2)\varepsilon^3D-4f\varepsilon^3D\nonumber\\
 &\ge \frac{D}{p}-6\varepsilon^3D
      -57f(10^4p^2f^2)\varepsilon^3D>f.\label{eq:nonright-B-reservoir}
\end{align}
Similarly, for at most $pf$ vertices of $B_1$,
\begin{align}
 \left|A_1\cap\bigcap_{w\in T}N_{G^{\star}}(w)\right|
 &\ge |A|-|A_2|-pf(10^4p^2f^2)^{-1}|A|\nonumber\\
 &\ge \left(1-\frac{pf}{(10^4p^2f^2)}
      -200\varepsilon\right)|A|>f,
 \label{eq:nonright-A-reservoir}
\end{align}
where we used $|A|>\varepsilon^2D/200$ and $|A_2|<\varepsilon^3D$.

Now let $xy$ be a color-critical edge of $F$, so that $F-xy$ has a proper
$(p+1)$-coloring in which $x$ and $y$ have the same color.  If $A_1$ contained
an edge $ab$, map $x,y$ to $a,b$.  The other $p$ color classes are chosen,
one at a time, inside $B'_{1,1},\ldots,B'_{1,p}$ using
\eqref{eq:nonright-B-reservoir}; any remaining vertices in the color class of
$x,y$ are then chosen in $A_1$ using \eqref{eq:nonright-A-reservoir}.  This
embeds $F$, a contradiction.  If $B'_{1,i}$ contained an edge, use it for the
critical pair, use $A_1$ for one of the other color classes and the remaining
$B'_{1,j}$ for the other $p-1$ classes.  The same two reservoir estimates give
an embedding of $F$.  Hence (iii) holds.

\textnormal{(iv)}: Simply, note that
\begin{align*}
 \sum_{1\le i<j\le p}
 \bigl(|V_i||V_j|-e_{G^{\star}}(V_i,V_j)\bigr)&=\sum_{1\le i<j\le p}|V_i||V_j|
   -e(G^{\star}[B])+\sum_{i=1}^pe(G^{\star}[V_i])\\
 &\le t(D,p)-\bigl(t(D,p)-\varepsilon^6D^2\bigr)
       +\varepsilon^6D^2
 \le2\varepsilon^6D^2.
\end{align*}
This completes the proof.
\end{proof}

The next lemma removes $K$ completely. This is where the global
crossing-defect estimate in \eqref{eq:nonright-cross-missing-total} is used; it
avoids any attempt to deduce $K\subseteq B_2$ from a Perron estimate.

\begin{lemma}\label{lem:nonright-K-empty}
We have $K=\emptyset$.
\end{lemma}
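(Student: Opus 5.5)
We argue by contradiction: suppose some $v\in K_i$ exists, so $d_{V_i}(v)\ge(10^4p^2f^2)\varepsilon^3D$. The plan is to show that such a vertex generates too many missing crossing pairs, contradicting \eqref{eq:nonright-cross-missing-total}, or else produces a copy of $F$.

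\textbf{Step 1: $v$ has many good in-part neighbours.} Remove from $N_{V_i}(v)$ the vertices of $L\cup K\cup B_2$. By \Cref{lem:non-right-structural}(i) these sets have total size at most $3\varepsilon^3D$. Hence $v$ has at least $(10^4p^2f^2-3)\varepsilon^3D$ neighbours in $B'_{1,i}$. By \Cref{lem:non-right-structural}(iii), $B'_{1,i}$ is independent.

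\textbf{Step 2: $v$ has no common $(p-1)$-partite reservoir in $B'_{1,j}$, $j\ne i$.} Suppose that for every $j\ne i$ the vertex $v$ had at least, say, $\varepsilon^3 D\cdot 10^3pf$ neighbours in $B'_{1,j}$. We then embed $F$ as in \Cref{lem_ind1_degree}:
\begin{itemize}
\item map the critical edge $ab$ to $u^*v$;
\item take $S_i\subseteq N(v)\cap B'_{1,i}$, and choose the $S_j\subseteq N(v)\cap B'_{1,j}$ greedily, using \eqref{eq:nonright-good-cross-degree} and \eqref{eq:nonright-Bgood-to-Bgood} (each good vertex misses at most $53(10^4p^2f^2)\varepsilon^3D$ of $B'_{1,j}$);
\item place the rest of the colour class of $a,b$ into $A_1$ via \eqref{eq:nonright-A-reservoir}.
\end{itemize}
The root $u^*$ is adjacent to all of $B$. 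The common-neighbourhood count must use only $v$'s neighbourhood in $B'_{1,j}$ intersected with at most $pf$ good vertices. This is the delicate bookkeeping: $v$'s neighbourhood is only guaranteed to be of size $\Theta(\varepsilon^3 D)$, while good vertices miss up to $53\cdot10^4p^2f^2\varepsilon^3D$. So instead of intersecting inside $N(v)$, I would take $S_j$ first within $N(v)\cap B'_{1,j}$ and require only that later sets avoid non-neighbours. In practice it is cleaner to use the following alternative.

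\textbf{Step 2$'$: the counting contradiction.} For each $j\ne i$, compare $d_{V_j}(v)$ with $d_{V_i}(v)$ using maximality of the partition: moving $v$ to $V_j$ does not increase crossing edges, so $d_{V_j}(v)\ge d_{V_i}(v)\ge(10^4p^2f^2)\varepsilon^3D$. After discarding $L\cup K\cup B_2$, $v$ has at least $(10^4p^2f^2-3)\varepsilon^3D$ neighbours in every $B'_{1,j}$. Now choose the $S_j$ inside $N(v)\cap B'_{1,j}$ greedily. The danger is that good vertices may miss all of $N(v)\cap B'_{1,j}$. To handle this, use \eqref{eq:nonright-cross-missing-total}: the number of vertices in $B'_{1,i}$ missing more than $\varepsilon^{3/2}D$ vertices of $V_j$ is at most $4\varepsilon^{9/2}D$. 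Restricting to vertices that miss at most $\varepsilon^{3/2}D$ pairs in total is still not enough against $\varepsilon^3D$, so instead count missing pairs between $N_{B'_{1,i}}(v)$ and $N_{B'_{1,j}}(v)$.

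\textbf{Step 3: supersaturation inside $N(v)$.} If the bipartite graphs between $N(v)\cap B'_{1,j}$ for the various $j$ have density at least $1/2$, then Kővári–Sós–Turán / Erdős–Stone (\cref{thm:erdos-stone}) applied to the $p$-partite graph on sets of size $\Theta(\varepsilon^3D)$ gives complete $p$-partite $K_p(f,\dots,f)$ inside $N(v)\cap B$. Together with $u^*$, $v$ and $A_1$-vertices adjacent to all of them, this yields $F$ via the critical-edge embedding, a contradiction. Otherwise, at least $\tfrac14(10^4p^2f^2)^2\varepsilon^6D^2$ crossing pairs are missing. This exceeds $2\varepsilon^6D^2$ and contradicts \eqref{eq:nonright-cross-missing-total}; the factor $10^4p^2f^2$ in the definition of $K_i$ is exactly what makes this work.

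\textbf{Main obstacle.} I expect Step 3 to be the main difficulty: arranging the density dichotomy so that the missing-pair count beats $2\varepsilon^6D^2$. The common $A_1$-neighbourhood of the embedded vertices is still $\Omega(|A|)$, since those vertices lie in $B_1$.
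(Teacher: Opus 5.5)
Your strategy is the paper's. The max-cut property gives $d_{V_j}(v)\ge d_{V_i}(v)$, so $v$ has at least $(10^4p^2f^2-3)\varepsilon^3D$ neighbours in every $B'_{1,j}$. Inside the sets $W_j:=N(v)\cap B'_{1,j}$ one finds pairwise complete $f$-sets $S_1,\ldots,S_p$ using the global bound \eqref{eq:nonright-cross-missing-total}. Then $F$ is embedded with its critical edge on $u^*v$, the other $p$ colour classes in $S_1,\ldots,S_p$, and the rest of the critical colour class in a common $A$-neighbourhood of the $S_j$. These $A$-vertices need only be adjacent to the $S_j$, not to $u^*$ or $v$.

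\textbf{The gap is in Step 3: the dichotomy at pairwise density $1/2$ is false for $p\ge3$.} Split each $W_j$ into halves $W_j^0,W_j^1$, and join $W_a^0$ to $W_b^1$ and $W_a^1$ to $W_b^0$ for all $a\ne b$. Every pair $(W_a,W_b)$ then has density exactly $1/2$. But all edges go between $\bigcup_a W_a^0$ and $\bigcup_a W_a^1$, so the graph is bipartite, triangle-free, and contains no $K_p(f,\ldots,f)$. K\H{o}v\'ari--S\'os--Tur\'an only covers $p=2$. Erd\H{o}s--Stone does not help either: pairwise density $1/2$ gives overall edge density about $\frac{p-1}{2p}$ on the $pm$ vertices, below the threshold $\frac{p-2}{p-1}$ needed to force $K_p(f,\ldots,f)$ when $p\ge3$.

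\textbf{The repair is to drop the dichotomy, because \eqref{eq:nonright-cross-missing-total} already forces near-completeness.} Take $|W_j|=m:=\lfloor\frac{10^4p^2f^2}{2}\varepsilon^3D\rfloor$. Then at most $2\varepsilon^6D^2\le 9m^2/(10^4p^2f^2)^2$ crossing pairs among the $W_j$ are missing, so the relevant density is $1-O((10^4p^2f^2)^{-2})$, not $1/2$. The paper then picks $f$-subsets $S_j\subseteq W_j$ uniformly. A fixed missing pair falls inside a given choice with probability $(f/m)^2$. The expected number of missing pairs among the chosen sets is therefore at most $2\varepsilon^6D^2(f/m)^2\le 9f^2/(10^4p^2f^2)^2<1$, so some choice is pairwise complete. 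Alternatively, the graph on $W_1\cup\cdots\cup W_p$ has at least $\binom p2 m^2-2\varepsilon^6D^2\ge t(pm,p-1)+\frac{p}{2(p-1)}m^2-\frac{9m^2}{(10^4p^2f^2)^2}$ edges, which exceeds $t(pm,p-1)$ by a fixed fraction of $(pm)^2$, so Erd\H{o}s--Stone applies. Your closing remark, that the factor $10^4p^2f^2$ in $K_i$ is what beats $2\varepsilon^6D^2$, is exactly this computation once the threshold is set correctly.
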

\begin{proof}
Suppose $v\in K_i$.  The maximum-cut choice of $V_1,\ldots,V_p$ gives
$d_{V_i}(v)\le d_{V_j}(v)$ for every $j\ne i$.  Hence $d_{V_j}(v)\ge(10^4p^2f^2)\varepsilon^3D$ for $j\in[p]$. By \Cref{lem:non-right-structural}(i), for every $j\in [p]$,
\begin{equation*}
 |N_{G^{\star}}(v)\cap B'_{1,j}|\ge d_{V_j}(v)-|B_2|-|L|-|K|\ge((10^4p^2f^2)-3)\varepsilon^3D
 >\frac{(10^4p^2f^2)}{2}\varepsilon^3D.
\end{equation*}
Choose $W_j\subseteq N_{G^{\star}}(v)\cap B'_{1,j}$ with $|W_j|=m:=\left\lfloor\frac{(10^4p^2f^2)}{2}\varepsilon^3D\right\rfloor$, where $j\in[p]$.
There are $\binom{m}{f}^p$ choices of $f$-subsets
$S_j\subseteq W_j$ for all $j\in[p]$.  
%A fixed missing edge between two different $W_i,W_j$ destroys at most $\binom{m-1}{f-1}^2\binom mf^{p-2}$ such choices. 
Let $X:=\prod_{i=1}^p \binom{W_i}{f}$. We call a choice $(S_1,\ldots,S_p)\in X$ bad if there exist $i\ne j$ and vertices $x\in S_i$, $y\in S_j$ such that $xy\notin E(G^\star)$. By \eqref{eq:nonright-cross-missing-total}, the number of missing crossing edges is at most $2\varepsilon^6D^2$. For each fixed missing crossing edge, the number of choices containing its two endpoints is $\binom{m-1}{f-1}^2\binom mf^{p-2}$. Hence, by the union bound, the proportion of bad choices in $X$ is at most
\begin{equation*}
	\frac{2\varepsilon^6D^2\binom{m-1}{f-1}^2\binom{m}{f}^{p-2}}{\binom mf^p}=2\varepsilon^6D^2\left(\frac fm\right)^2\leq\frac{9f^2}{(10^4p^2f^2)^2}<1.
\end{equation*}
Therefore at least one choice in $X$ is not bad, that is, there exist $f$-sets $S_i\subseteq W_i$ for $i\in[p]$ such that all crossing edges between distinct $S_i$ and $S_j$ are present in $G^\star$.

%By \eqref{eq:nonright-cross-missing-total}, the proportion of
%choices destroyed by at least one missing crossing edge is at most
%\begin{align*}
% 2\varepsilon^6D^2\frac{\binom{m-1}{f-1}^2\binom mf^{p-2}}{\binom{m}{f}^p}=2\varepsilon^6D^2\left(\frac{f}{m}\right)^2\le \frac{9f^2}{(10^4p^2f^2)^2}<1.
%\end{align*}
%Thus we may choose $S_1,\ldots,S_p$, each of size $f$, such that all crossing edges between distinct $S_j$ are present and every vertex of every $S_j$ is adjacent to $v$.

Every vertex of $S_1\cup\cdots\cup S_p$ lies in $B_1$.  Therefore
\begin{align*}
 \left|\bigcap_{w\in S_1\cup\cdots\cup S_p}N_A(w)\right|\ge |A|-pf(10^4p^2f^2)^{-1}|A|=\left(1-\frac{pf}{(10^4p^2f^2)}\right)|A|>f.
\end{align*}
Choose a set $S_0$ of $f$ vertices in this common neighbourhood, avoiding $u^*$.  Map a
critical edge of $F$ to $u^*v$.  In a proper $(p+1)$-coloring of the graph
obtained by deleting that critical edge, map the other $p$ color classes into
$S_1,\ldots,S_p$ and the remaining vertices in the color class of the critical
pair into $S_0$.  The root $u^*$ is adjacent to all of $B$, while $v$ is
adjacent to all $S_j$, and $S_0$ is adjacent to all $S_j$.  Hence this is a copy
of $F$ in $G^{\star}$, a contradiction.  Thus $K=\emptyset$.
\end{proof}

\begin{lemma}\label{lem:nonright-switch-certificate}
	The following hold.
	\begin{enumerate}[label=\textnormal{(\roman*)}]
		\item For $u\in A^+\setminus\{u^*\}$, we have $\rho(G^{\star})x_{u}\geq \sum_{j=1}^p\sum_{w\in B'_{1,j}}x_w$.
		\item For $v\in V_i$, where $i\in [p]$, we have $\rho(G^{\star})x_{v}\geq \sum_{u\in A_1}x_u+\sum_{j\in [p]\setminus\{i\}}\sum_{w\in B'_{1,j}}x_w$.
	\end{enumerate}
\end{lemma}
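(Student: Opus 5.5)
Both inequalities come from a switching argument. If either failed, we would rewire a single vertex to obtain a graph $G'\in\mathfrak U_n(\Delta,\zeta_F;F)$ with $\rho(G')>\rho(G^{\star})$, contradicting upper-tail extremality. Write $B'_1:=\bigcup_{j=1}^pB'_{1,j}$, and let $\boldsymbol x$ be the Perron vector of $G^{\star}$ supported on $C^\star$. We use the Rayleigh-quotient identity from \eqref{eq_diff_sr_1}: if $G'$ arises from $G^{\star}$ by deleting the edges at a vertex $z$ and joining $z$ to a set $N'$ not containing $z$, then
\[
\boldsymbol x^\top(A(G')-A(G^{\star}))\boldsymbol x=2x_z\Bigl(\sum_{w\in N'}x_w-\rho(G^{\star})x_z\Bigr),
\]
because $\sum_{w\in N_{G^{\star}}(z)}x_w=\rho(G^{\star})x_z$. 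Hence if $x_z>0$ and the asserted inequality fails, then $\rho(G')>\rho(G^{\star})$.

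For (i), take $z=u\in A^+\setminus\{u^*\}$ and $N'=B'_1$; then $x_u>0$ by the support facts. To see that $G'$ is $F$-free, let $F'$ be a copy of $F$ in $G'$. It must use $u$, and $N_{F'}(u)\subseteq B'_1\subseteq B_1$ has fewer than $f$ vertices. By the $A$-reservoir estimate \eqref{eq:nonright-A-reservoir}, the common neighbourhood in $A_1$ of these vertices has more than $f$ elements. So we can pick $u'\in A_1\setminus(V(F')\cup\{u\})$ adjacent in $G^{\star}$ to all of $N_{F'}(u)$. Replacing $u$ by $u'$ gives a copy of $F$ in $G^{\star}$, which is a contradiction. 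For admissibility, note that $u\notin N(u^*)$ and $u\ne u^*$, so $d_{G'}(u^*)=D$ and $\Delta(G')\ge D\ge\Delta$. Also $d_{G'}(u)=|B'_1|\le D$, and every other vertex gains at most one edge, so $\Delta(G')\le D+1\le n-\zeta_Fn^{s_F}$ by \eqref{eq:actual-degree-buffer}.

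For (ii), take $z=v\in V_i$ and $N'=A_1\cup\bigcup_{j\ne i}B'_{1,j}$; here $x_v>0$ since $B\subseteq V(C^\star)$. Since $d_B(u^*)=D$, we have $u^*\in A_1$, so the edge $u^*v$ is kept and $\Delta(G')\ge D$. For $F$-freeness, a copy $F'$ through $v$ has $N_{F'}(v)\subseteq A_1\cup\bigcup_{j\ne i}B'_{1,j}$. By \eqref{eq:nonright-A1-to-Bgood}, each vertex of $A_1$ misses at most $4\varepsilon^3D$ vertices of $B'_{1,i}$. By \eqref{eq:nonright-Bgood-to-Bgood}, each vertex of $B'_{1,j}$ with $j\ne i$ misses at most $53(10^4p^2f^2)\varepsilon^3D$ of them. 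The reservoir count \eqref{eq:nonright-B-reservoir} therefore gives $v'\in B'_{1,i}\setminus(V(F')\cup\{v\})$ adjacent to all of $N_{F'}(v)$, and substituting $v'$ for $v$ produces $F\subseteq G^{\star}$, a contradiction.

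The main obstacle is admissibility in (ii), because $v$ may now have degree $|A_1|+D-|V_i|$. Using $\beta\le1/p$ and $|V_i|\ge D/p-2\varepsilon^3D$ from \eqref{eq:nonright-basic-defect}, this is at most $D+2\varepsilon^3D$. Vertices in $A_1\cup B'_1$ gain at most one edge each. So $\Delta(G')\le D+3\varepsilon^3D$, and this is absorbed because in the non-right range $n-D>\varepsilon^2n/200\ge 3\varepsilon^3D+\zeta_Fn^{s_F}$ for large $n$. Hence $G'\in\mathfrak U_n(\Delta,\zeta_F;F)$ with $\rho(G')>\rho(G^{\star})$, the desired contradiction.
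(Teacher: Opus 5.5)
Your proposal is correct and follows the paper's proof essentially step for step. It uses the same two single-vertex rewirings (to $B'_1$ for $u$, and to $A_1\cup\bigcup_{j\ne i}B'_{1,j}$ for $v$), the same vertex-replacement argument via the $A$- and $B$-reservoir estimates for $F$-freeness, and the same Rayleigh-quotient gain $2x_z\bigl(\sum_{w\in N'}x_w-\rho(G^{\star})x_z\bigr)$. The only cosmetic differences are in the admissibility bounds: in (i) you use the buffer \eqref{eq:actual-degree-buffer} rather than the linear non-right gap, and in (ii) you bound the new maximum degree by $D+3\varepsilon^3D$ instead of $D+2\varepsilon^3D+1$.
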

\begin{proof}
\textnormal{(i)}: Let $u\in A^+\setminus\{u^*\}$ and suppose that $ \rho(G^{\star})x_u<\sum_{j=1}^p\sum_{w\in B'_{1,j}}x_w$. Define $$G_u:=G^{\star}-\{uw:w\in N_{G^{\star}}(u)\}+\{uw:w\in\bigcup_{j=1}^{p}B'_{1,j}\}.$$
We first verify that $G_u$ is $F$-free.  If $G_u$ contained a new copy $F'$ of $F$, then
$u\in V(F')$ and $N_{F'}(u)\subseteq\bigcup_{j=1}^{p}B'_{1,j}$. By \eqref{eq:nonright-A-reservoir}, there is a vertex
\[
a\in A_1\cap\bigcap_{z\in N_{F'}(u)}N_{G^{\star}}(z)
   \setminus V(F').
\]
Replacing $u$ by $a$ gives a copy of $F$ in $G^{\star}$, a contradiction.
Thus $G_u$ is $F$-free.

Moreover, $d_{G_u}(u)=\sum_{j=1}^{p}|B'_{1,j}|\le D$, and every other vertex gains at most one incident edge.  The root $u^*$ is
unchanged and still has degree $D$.  Since $D<(1-\varepsilon^2/200)n$ and $s_F<1$, for all sufficiently large $n$ we have
$D+1\le n-\zeta_Fn^{s_F}$.  Hence
$G_u\in\mathfrak U_n(\Delta,\zeta_F;F)$.
Finally $u\in A^+$ gives $x_u>0$, and
\[
\boldsymbol x^\top(A(G_u)-A(G^{\star}))\boldsymbol x
=2x_u\left(
\sum_{j=1}^{p}\sum_{w\in B'_{1,j}}x_w
-\rho(G^{\star})x_u\right)>0,
\]
contradicting upper-tail extremality.  This proves (i).

\textnormal{(ii)}: Let $v\in V_i$ and suppose that $\rho(G^{\star})x_v<\sum_{u\in A_1}x_u+\sum_{j\in[p]\setminus\{i\}}\sum_{w\in B'_{1,j}}x_w$.
Define
\[
G_v:=G^{\star}-\{vw:w\in N_{G^{\star}}(v)\}
       +\{vw:w\in A_1\cup
       \bigcup_{j\in[p]\setminus\{i\}}B'_{1,j}\}.
\]
If $G_v$ contained a new copy $F'$ of $F$, then $v\in V(F')$ and all new
neighbours of $v$ lie in
$A_1\cup\bigcup_{j\ne i}B'_{1,j}$.  By
\eqref{eq:nonright-A1-to-Bgood} and
\eqref{eq:nonright-Bgood-to-Bgood},
\[
\left|B'_{1,i}\cap
\bigcap_{z\in N_{F'}(v)}N_{G^{\star}}(z)\right|>f.
\]
Replacing $v$ by an unused vertex in this intersection gives a copy of $F$ in
$G^{\star}$, a contradiction.  Thus $G_v$ is $F$-free.

Also
\begin{equation*}
	d_{G_v}(v)\le |A_1|+\sum_{j\ne i}|B'_{1,j}|\le |A|+D-|V_i|\le D+2\varepsilon^3D,
\end{equation*}
and every other vertex gains at most one edge.  Since $u^*\in A_1$, the edge
$u^*v$ is present after the switch, so $d_{G_v}(u^*)=D$.  By the non-right gap, $n-(D+2\varepsilon^3D+1)\ge \frac{\varepsilon^2}{400}D$.
for all sufficiently large $n$, and this is larger than
$\zeta_Fn^{s_F}$.  Hence
$G_v\in\mathfrak U_n(\Delta,\zeta_F;F)$.
Finally $v\in B\subseteq V(C^{\star})$ gives $x_v>0$, and
\[
\boldsymbol x^\top(A(G_v)-A(G^{\star}))\boldsymbol x
=2x_v\left(
\sum_{u\in A_1}x_u+
\sum_{j\in[p]\setminus\{i\}}\sum_{w\in B'_{1,j}}x_w
-\rho(G^{\star})x_v\right)>0,
\]
again contradicting upper-tail extremality.  This proves (ii).
\end{proof}

The preceding switching certificate is now used only to obtain a uniform
Perron lower bound.  In particular, we do not try to prove that a Perron-maximal
vertex of $B$ lies outside $L$.

\begin{lemma}\label{lem:nonright-perron-lower}
Let $\boldsymbol{x}=(x_v)_{v\in V(G^{\star})}$ be a Perron vector of $G^{\star}$ normalized by
$\max_{v\in V(G^{\star})}x_v=1$.  Then, for every $v\in B$ and every $u\in A^+$,
\begin{align}
 x_v&\ge\frac{\alpha+\beta}{1+\beta}-12\varepsilon^3,
 \label{eq:nonright-xB-uniform}\\
 x_u&\ge\frac{\alpha+\beta}{\alpha+2\beta}-12\varepsilon^3.
 \label{eq:nonright-xA-uniform}
\end{align}
\end{lemma}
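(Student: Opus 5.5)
The plan is to combine the two switching certificates of \Cref{lem:nonright-switch-certificate} with the eigen-equation at a vertex $\hat w$ where $x_{\hat w}=1$. We obtain three inequalities in the aggregated Perron sums
\[
S_A:=\sum_{u\in A_1}x_u,\qquad S_j:=\sum_{w\in B'_{1,j}}x_w,\qquad S:=\sum_{j=1}^pS_j,
\]
and solve the resulting small linear system using the lower bound $\rho(G^\star)>(\alpha+\beta-\varepsilon^3)D$ from \eqref{eq:middle-esti-radius}. Throughout, the exceptional sets $A_2,B_2,L,K_A$ are of size $<\varepsilon^3D$ by \Cref{lem:non-right-structural}(i), and $K=\emptyset$ by \Cref{lem:nonright-K-empty}. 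Hence $|A_1|\ge|A|-\varepsilon^3D$, $\sum_j|B'_{1,j}|\ge D-2\varepsilon^3D$, and $\bigl||B'_{1,j}|-D/p\bigr|\le 4\varepsilon^3D$. We also note that $A_1\subseteq A^+$: each vertex of $A_1$ has a neighbour in $B\subseteq V(C^\star)$. So \Cref{lem:nonright-switch-certificate}(i) applies to all of $A_1\setminus\{u^*\}$.

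First step (upper bound from the maximum vertex). Since $\hat w$ has at most $D$ neighbours, each with entry at most $1$, the eigen-equation at $\hat w$ gives
\[
\rho(G^\star)\le S_A+S+O(\varepsilon^3D),
\]
after bounding the contributions of $A_2\cup B_2\cup L$ trivially by their size. If $\hat w\in V_i$, we can sharpen this to $\rho(G^\star)\le S_A+S-S_i+O(\varepsilon^3D)$, because $\hat w$ has few neighbours in $V_i$: $K=\emptyset$ forces $d_{V_i}(\hat w)=O(\varepsilon^3D)$ whenever $\hat w\notin L$, and the case $\hat w\in L$ would be ruled out using the degree bound on $L$. If instead $\hat w\in A$, the bound $d(\hat w)\le D$ together with $|A_1|$ being small compared to $|B|$ gives $\rho(G^\star)\le S+O(\varepsilon^3D)$ up to the share of $A$-neighbours.

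Second step (lower bounds). Certificate (ii) gives $\rho x_v\ge S_A+S-S_i$ for every $v\in V_i$, and certificate (i) gives $\rho x_u\ge S$ for $u\in A^+\setminus\{u^*\}$. For $u^*$ itself we use its eigen-equation directly: it is adjacent to all of $B$, so the same bound holds. Summing (ii) over $v\in B'_{1,i}$ and (i) over $u\in A_1$ yields
\[
\rho S_i\ge |B'_{1,i}|\,(S_A+S-S_i),\qquad \rho S_A\ge |A_1|\,S.
\]
These are the quotient-type inequalities of the complete multipartite envelope $|A|K_1\vee K_{|V_1|,\dots,|V_p|}$. Using the near-balance of the $|B'_{1,j}|$, they force $S_i=(1/p+O(\varepsilon^3))S$. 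Writing $\sigma:=S/D$ and $\tau:=S_A/D$, they also give $\tau\ge\beta\sigma D/\rho-O(\varepsilon^3)$.

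Third step (closing the system). Substitute into the first step and divide by $D$, using $\rho/D\in[\alpha+\beta-\varepsilon^3,\,1]$. This yields a lower bound on $\sigma$, and hence on $S_A+S-S_i$ and on $S$, in terms of $\alpha,\beta$ alone. Plugging back into certificates (ii) and (i), and bounding $\rho\le D$ where the denominator must be controlled, should produce the constants $\frac{\alpha+\beta}{1+\beta}$ and $\frac{\alpha+\beta}{\alpha+2\beta}$ up to $O(\varepsilon^3)$ losses, which are then absorbed into $12\varepsilon^3$.

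The main obstacle is this last bookkeeping. One must decide which upper bound on $\rho$ to use in each denominator and treat the cases $\hat w\in A$ versus $\hat w\in B$ separately; I am not certain the exact target constants fall out without that care. I would first try the case $\hat w\in B$, where the sharpened inequality $\rho\le S_A+S-S_i+O(\varepsilon^3D)$ combined with certificate (ii) directly compares every $x_v$ with $x_{\hat w}=1$. The case $\hat w\in A$ would then be handled via certificate (i) and the relation $\rho S_A\ge|A_1|S$.
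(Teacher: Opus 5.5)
\textbf{There is a genuine gap: your system has only lower bounds on the part masses, so it does not close.}

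Summing the switching certificates of \Cref{lem:nonright-switch-certificate} gives only \emph{lower} bounds on the part masses, $\rho S_i\ge|B'_{1,i}|(S_A+S-S_i)$ and $\rho S_A\ge|A_1|S$. A priori $\rho$ is only known to lie in $[(\alpha+\beta-\varepsilon^3)D,D]$. So these inequalities leave slack, and that slack can be loaded onto a single part. In particular, they do not force $S_i=(1/p+O(\varepsilon^3))S$.

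Here is an explicit configuration. Take $p=3$, $D=1$, $|B'_{1,k}|=\frac13$ and $\hat w\in A$, and ignore terms of order $\beta+\varepsilon^3$. Set $\rho=S=\frac45$, $S_1=S_2=\frac4{17}$ and $S_3=\frac{28}{85}<\frac13$. Put entries $\frac{12}{17}$ on $V_1,V_2$, and entries in $[\frac{10}{17},1]$ on $V_3$. This satisfies every inequality in your proposal, including $\rho\le S+O(\varepsilon^3D)$, both certificates, and $x\le1$. Yet certificate (ii) then yields only $x_v\ge(S-S_3)/\rho=\frac{10}{17}$ on $V_3$, well below $\frac{\alpha+\beta}{1+\beta}\approx\frac23$.

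The case $\hat w\in A$ that you postpone is the generic one: in the envelope the largest Perron entry sits on the smallest part. In that case the degree of $\hat w$ only gives $\rho\le D$. So neither your sharpened bound nor $\rho S_A\ge|A_1|S$ removes the slack. Note also that losses such as $d_{V_i}(\hat w)<10^4p^2f^2\varepsilon^3D$ (when $\hat w\in B_2\setminus L$) cannot be absorbed into $12\varepsilon^3$.

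\textbf{The missing ingredient is an upper bound on each part mass.} It comes from the independence of $B'_{1,i}$ and $A_1$ (\Cref{lem:non-right-structural}(iii)) together with $K=\emptyset$.
\begin{enumerate}[label=\textnormal{(\arabic*)}]
\item Every neighbour of $v\in B'_{1,i}$ lies in $A_1\cup\bigcup_{j\ne i}B'_{1,j}\cup A_2\cup B_2\cup L$. Hence $\rho x_v\le S_A+S-S_i+E_0$, where $E_0:=|A_2|+|B_2|+|L|<3\varepsilon^3D$.
\item Summing over $B'_{1,i}$ gives $\rho S_i\le|B'_{1,i}|(S_A+S-S_i+E_0)$.
\item The eigen-equation at $\hat w$ gives $S_A+S\ge\rho-E_0$, wherever $\hat w$ lies, so no case analysis is needed. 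Combining this with (2) yields
\[
S_A+S-S_i\ge\frac{\rho^2}{\rho+|B'_{1,i}|}-3\varepsilon^3D .
\]
\item Only now apply certificate (ii). It gives $x_v\ge\rho/(\rho+|B'_{1,i}|)-3\varepsilon^3D/\rho$ for every $v\in V_i$. The right side is increasing in $\rho$, so $|B'_{1,i}|\le D/p+2\varepsilon^3D$ and \eqref{eq:middle-esti-radius} give $\frac{\alpha+\beta}{1+\beta}-12\varepsilon^3$.
\end{enumerate}

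The $A$-side works the same way. Independence of $A_1$ gives $\rho S_A\le|A_1|(S+E_0)$, hence $S\ge\rho^2/(\rho+|A_1|)-E_0$. Then certificate (i), or the eigen-equation at $u^*$ (which is adjacent to all of $B$), together with $|A_1|\le\beta D$, gives $\frac{\alpha+\beta}{\alpha+2\beta}-12\varepsilon^3$.

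Without these independence-based upper bounds, or some a priori upper bound on $\rho$ close to the envelope's spectral radius, your system does not close.
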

\begin{proof}
By \Cref{lem:nonright-K-empty}, $K=\emptyset$.  
%Put
%\begin{equation*}
%	E_0:=|A_2|+|B_2|+|L|<3\varepsilon^3D,
%	\quad
%	S_A:=\sum_{u\in A_1}x_u,
%	\quad
%	S_i:=\sum_{v\in B'_{1,i}}x_v,
%	\quad
%	S:=S_A+\sum_{i=1}^pS_i.
%\end{equation*}
Let $\hat{u}\in V(G^{\star})$ satisfy $x_{\hat{u}}=1$.  Every vertex outside
$A_1\cup\bigcup_{i=1}^{p}B'_{1,i}$ belongs to $A_2\cup B_2\cup L$, and by \cref{lem:non-right-structural} (i), hence
\begin{equation}\label{eq:nonright-total-good-mass}
        \sum_{u\in A_1}x_{u}+\sum_{i=1}^{p}\sum_{w\in B'_{1,i}}x_{w}\ge \rho(G^{\star})-(|A_2|+|B_2|+|L|)>\rho(G^{\star})-3\varepsilon^3D.
\end{equation}
Fix $i$.  Since $B'_{1,i}$ is independent, every $v\in B'_{1,i}$ satisfies
\begin{equation*}
	\rho(G^{\star})x_v\leq \sum_{u\in A_1}x_{u}+\sum_{j\in [p]\setminus\{i\}}\sum_{w\in B'_{1,j}}x_{w}+ |A_2|+|B_2|+|L|
\end{equation*}
Summing over $v\in B'_{1,i}$ gives
\begin{equation*}
	\rho(G^{\star})\sum_{v\in B'_{1,i}}x_{v} \leq |B'_{1,i}|(\sum_{u\in A_1}x_{u}+\sum_{j\in [p]\setminus\{i\}}\sum_{w\in B'_{1,j}}x_{w}+ |A_2|+|B_2|+|L|),
\end{equation*}
and therefore
\begin{align*}
&\quad (\rho(G^{\star})+|B'_{1,i}|)
\left(\sum_{u\in A_1}x_{u}
+\sum_{j\in [p]\setminus\{i\}}\sum_{w\in B'_{1,j}}x_{w}\right)\\
&\geq \rho(G^{\star})
\left(\sum_{u\in A_1}x_{u}
+\sum_{i=1}^{p}\sum_{w\in B'_{1,i}}x_{w}\right)-|B'_{1,i}|(|A_2|+|B_2|+|L|)\\
&>\rho(G^{\star})
\left(\sum_{u\in A_1}x_{u}
+\sum_{i=1}^{p}\sum_{w\in B'_{1,i}}x_{w}\right)
-3\varepsilon^3 D|B'_{1,i}|.
\end{align*}
By \eqref{eq:nonright-total-good-mass},
\begin{align*}
 \sum_{u\in A_1}x_{u}+\sum_{j\in [p]\setminus\{i\}}\sum_{w\in B'_{1,j}}x_{w}
 &\ge\frac{\rho(G^{\star})(\sum_{u\in A_1}x_{u}+\sum_{i=1}^{p}\sum_{w\in B'_{1,i}}x_{w})-3\varepsilon^3 D|B'_{1,i}|}
              {\rho(G^{\star})+|B'_{1,i}|}\nonumber\\
 &\ge\frac{\rho(G^{\star})^2}
              {\rho(G^{\star})+|B'_{1,i}|}-3\varepsilon^3 D.
\end{align*}
By \cref{lem:nonright-switch-certificate} (ii), every $v\in V_i$ satisfies
\begin{equation*}
 x_v\ge
 \frac{\rho(G^{\star})}{\rho(G^{\star})+|B'_{1,i}|}
 -\frac{|A_2|+|B_2|+|L|}{\rho(G^{\star})}.
\end{equation*}
Now use $|B'_{1,i}|\le |V_i|\le(D/p+2\varepsilon^3D)$,
\eqref{eq:middle-esti-radius}, and \cref{lem:non-right-structural} (i) to get
\begin{align*}
 x_v\ge\frac{\alpha+\beta-\varepsilon^3}{1+\beta+\varepsilon^3}-8\varepsilon^3\ge\frac{\alpha+\beta}{1+\beta}-12\varepsilon^3.
\end{align*}
This proves \eqref{eq:nonright-xB-uniform}.

Since $A_1$ is independent, the same argument gives
\begin{equation*}
	\rho(G^{\star})\sum_{u\in A_1}x_u\le |A_1|(\sum_{i=1}^{p}\sum_{w\in B'_{1,i}}x_{w}+|A_2|+|B_2|+|L|),
\end{equation*}
so, using \eqref{eq:nonright-total-good-mass},
\begin{equation}\label{eq:nonright-target-A-mass}
 \sum_{i=1}^{p}\sum_{w\in B'_{1,i}}x_{w}\geq\frac{\rho(G^{\star})^2}{\rho(G^{\star})+|A_1|}-(|A_2|+|B_2|+|L|).
\end{equation}
If $u'\in A^+\setminus\{u^*\}$, \cref{lem:nonright-switch-certificate} (i) and
\eqref{eq:nonright-target-A-mass} imply
\begin{equation*}
	 x_{u'}\ge
	\frac{\rho(G^{\star})}{\rho(G^{\star})+|A_1|}
	-\frac{|A_2|+|B_2|+|L|}{\rho(G^{\star})}>\frac{\rho(G^{\star})}{\rho(G^{\star})+|A_1|}-\frac{3\varepsilon^3 D}{\rho(G^{\star})}.
\end{equation*}
For the root, $N(u^*)=B$ gives
$\rho(G^{\star})x_{u^*}=\sum_{w\in B}x_w\ge  \sum_{i=1}^{p}\sum_{w\in B'_{1,i}}x_{w}$, so the same lower
bound holds.  Since $|A_1|\le|A|=\beta D$, we obtain
\begin{equation*}
 x_{u'}\ge\frac{\alpha+\beta-\varepsilon^3}{\alpha+2\beta-\varepsilon^3}-8\varepsilon^3\ge\frac{\alpha+\beta}{\alpha+2\beta}-12\varepsilon^3.
\end{equation*}
This proves \eqref{eq:nonright-xA-uniform}.
\end{proof}

We next isolate the part of the right-end decomposition-family argument that is
still valid away from the endpoint.  The low-$B$ set $L$ is not cleared here;
instead we apply the boundary argument only to vertices outside $L$.

\begin{lemma}\label{lem:nonright-B2-internal}
For every $i\in[p]$ the following hold.
\begin{enumerate}[label=\textnormal{(\roman*)}]
\item If $v\in (B_2\setminus L)\cap V_i$, then $ d_{B'_{1,i}}(v)\le f$.
\item The graph $G^{\star}[(B_2\setminus L)\cap V_i]$ is
$\mathcal M(\partial_cF)$-free.
\item
\begin{equation*}
 \sum_{i=1}^p e\bigl(G^{\star}[V_i\setminus L]\bigr)
 \le (\Lambda_FD^{s_F}+f)|B_2\setminus L|.
\end{equation*}
\end{enumerate}
\end{lemma}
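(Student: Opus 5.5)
The plan is to adapt the proofs of \cref{lem_ind1_degree}, \cref{lem_b2_MFfree} and the bound \eqref{eq:sum_internal_edges} from the right range. The one new input is a replacement for \eqref{eq:b2-strong-otherdegree} that works for vertices of $B_2\setminus L$. The reservoirs are $B'_{1,j}$, whose independence and cross-degree bounds come from \cref{lem:non-right-structural}, and $A_1$, which is large by \eqref{eq:nonright-A-reservoir}. Since $K=\emptyset$ by \cref{lem:nonright-K-empty}, we have $V_i'=V_i\setminus L$ and $B'_{1,i}=B_1\cap V_i\setminus L$.

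\textbf{Cross-degree for $B_2\setminus L$.} Fix $v\in(B_2\setminus L)\cap V_i$. Because $v\notin L$, we have $d_B(v)>(1-\frac1p-40(10^4p^2f^2)\varepsilon^3)D$. Because $v\notin K$, we have $d_{V_i}(v)<(10^4p^2f^2)\varepsilon^3D$. This is the same situation as \cref{lem:non-right-structural}(ii), so the same computation gives
\[
d_{B'_{1,j}}(v)\ge\Bigl(\frac1p-51(10^4p^2f^2)\varepsilon^3\Bigr)D\quad (j\ne i).
\]
Vertices of $B'_{1,j}$ miss at most $53(10^4p^2f^2)\varepsilon^3D$ vertices of $B'_{1,k}$ by \eqref{eq:nonright-Bgood-to-Bgood}. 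Hence the greedy common-neighbourhood estimate \eqref{eq:nonright-B-reservoir} applies to any set $T$ of at most $pf$ already chosen vertices drawn from $B'_{1,*}\cup(B_2\setminus L)$.

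\textbf{(i).} Suppose $d_{B'_{1,i}}(v)>f$. Pick $S_i\subseteq N_{B'_{1,i}}(v)$ with $|S_i|=f$. Then greedily pick $S_j\subseteq B'_{1,j}$ for $j\ne i$, each of size $f$, complete to $v$ and to all earlier sets. Finally pick $S_0\subseteq A_1$ of size $f$ in the common neighbourhood of $\bigcup_j S_j$, which exists by \eqref{eq:nonright-A-reservoir}. Now embed $F$ as in \cref{lem_ind1_degree}: the critical pair goes to $u^*v$ (note $u^*$ is adjacent to all of $B$), the rest of its colour class goes to $S_0$, and the other $p$ classes go to $S_1,\dots,S_p$. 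This gives $F\subseteq G^\star$, a contradiction.

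\textbf{(ii).} Suppose $G^\star[(B_2\setminus L)\cap V_i]$ contains some $F^*\in\mathcal M(\partial_cF)$ with $|F^*|<f$. Greedily choose $S_j\subseteq B'_{1,j}$ for $j\ne i$, each of size $f$, complete to $V(F^*)$ and to each other, using the cross-degree bound above. By \cref{def:decomposition}, $F^*\vee T((p-1)f,p-1)$ contains a member of $\partial_cF$ inside $B$. Adjoining $u^*$ then yields $F$, a contradiction.

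\textbf{(iii).} Since $B'_{1,i}$ is independent by \cref{lem:non-right-structural}(iii), every edge of $G^\star[V_i\setminus L]$ either lies inside $(B_2\setminus L)\cap V_i$ or joins it to $B'_{1,i}$. By (ii) and \eqref{eq:boundary-edge-cap}, the first kind contributes at most $\Lambda_F|B_{2,i}\setminus L|^{1+s_F}\le\Lambda_FD^{s_F}|B_{2,i}\setminus L|$ edges. By (i), the second kind contributes at most $f|B_{2,i}\setminus L|$ edges. Summing over $i$ gives the bound. One caveat: \eqref{eq:boundary-edge-cap} is stated only for large orders. For small $|B_{2,i}\setminus L|$ we instead use the trivial bound $\binom{m}{2}\le \Lambda_F m^{1+s_F}$ after enlarging $\Lambda_F$, or simply note that the constant is adjustable.

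\textbf{Main obstacle.} The delicate step is the greedy reservoir counting. It must allow up to roughly $pf$ constraint vertices while the error terms stay at scale $(10^4p^2f^2)\varepsilon^3D$. This relies on the choice $10^8p^2f^2\varepsilon\ll1$ to guarantee that each intersection still exceeds $f$.
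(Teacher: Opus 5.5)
Your proposal is correct and follows the paper's proof essentially step for step. The four ingredients match: the cross-degree bound for $B_2\setminus L$ is in fact \cref{lem:non-right-structural}(ii) applied directly, since $K=\emptyset$ gives $(B_2\setminus L)\cap V_i\subseteq V_i'$; in (i), the greedy reservoir construction places the critical edge on $u^*v$; in (ii), $F^*$ is greedily completed to a member of $\partial_cF$ inside $B$; and in (iii), you split $V_i\setminus L=B'_{1,i}\cup((B_2\setminus L)\cap V_i)$. Your remark about enlarging $\Lambda_F$ so that \eqref{eq:boundary-edge-cap} also covers small orders fixes a point the paper passes over, and the only detail you should add is that $S_0$ must avoid $u^*\in A_1$, which is trivial because the $A_1$-reservoir has linear size.
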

\begin{proof}
\textnormal{(i)}: By \Cref{lem:nonright-K-empty}, $K=\emptyset$.  Let $v\in(B_2\setminus L)\cap V_i$ and $j\ne i$.  Since $d_{V_i}(v)<(10^4p^2f^2)\varepsilon^3D$ and $d_B(v)>(1-\frac{1}{p}-40(10^4p^2f^2)\varepsilon^3)D$, the same calculation as in \eqref{eq:nonright-good-cross-degree}, 
%now subtracting only $B_2\cup L$ in the target part, gives
\begin{equation}\label{eq:nonright-B2-cross-good}
        d_{B'_{1,j}}(v)
        \ge\left(\frac1p-50(10^4p^2f^2)\varepsilon^3\right)D.
\end{equation}

Suppose first that $d_{B'_{1,i}}(v)>f$, and choose
$S_i\subseteq N(v)\cap B'_{1,i}$ with $|S_i|=f$.  For each $j\ne i$, construct
$S_j\subseteq B'_{1,j}$ of size $f$ successively.  At any step, the vertex $v$
has at least the quantity in \eqref{eq:nonright-B2-cross-good} neighbours in
the new reservoir, while every previously chosen vertex of $B'_1$ misses at
most $53(10^4p^2f^2)\varepsilon^3D$ vertices there.  Hence the number of choices is
at least
\begin{equation*}
	\left(\frac1p-50(10^4p^2f^2)\varepsilon^3\right)D-pf\,53(10^4p^2f^2)\varepsilon^3D>f.
\end{equation*}
Thus the $S_j$ can be chosen so that they are pairwise complete and $v$ is
adjacent to all of them.  Since every selected vertex lies in $B_1$,
\begin{equation*}
	\left|\bigcap_{w\in\bigcup_{j=1}^{p}S_j}N_A(w)\right|
	\ge\left(1-\frac{pf}{(10^4p^2f^2)}\right)|A|>f.
\end{equation*}
Choose $f$ common neighbours in $A$, avoiding $u^*$.  As in the proof of
\Cref{lem:nonright-K-empty}, map a critical edge of $F$ to $u^*v$, the other
$p$ color classes into $S_1,\ldots,S_p$, and the remaining vertices in the
critical color class into the common $A$-reservoir.  This gives a copy of $F$,
a contradiction.  Hence (i) holds.

\textnormal{(ii)}: Suppose that
$G^{\star}[(B_2\setminus L)\cap V_i]$ contains a copy of some
$F^{-}\in\mathcal M(\partial_cF)$.  We first record the common-reservoir estimate
used in the greedy choice.  If $w\in B'_{1,k}$ and $k\ne j$, then
\eqref{eq:nonright-good-cross-degree} and
$|B'_{1,j}|\le |V_j|\le D/p+2\varepsilon^3D$ give
\begin{equation*}
	|B'_{1,j}\setminus N_{G^{\star}}(w)|\le |V_j|-d_{B'_{1,j}}(w)\le 53(10^4p^2f^2)\varepsilon^3D.
\end{equation*}
If instead $w\in V(F^{-})\subseteq(B_2\setminus L)\cap V_i$, then
\eqref{eq:nonright-B2-cross-good} gives the same bound.  Thus every vertex that is
prescribed before choosing a set in a new reservoir $B'_{1,j}$ excludes at
most $53(10^4p^2f^2)\varepsilon^3D$ vertices of that reservoir.

Greedily, for each $j\ne i$, choose a set $S_j\subseteq B'_{1,j}$ of size $f$ that is complete to $V(F^{-})$ and to all previously chosen sets.  Fix one vertex $w_0\in V(F^{-})$.  At a given step let $P$ be the union of the sets chosen at earlier steps.  Besides $w_0$, the set $(V(F^{-})\setminus\{w_0\})\cup P$ contains at most $pf$ prescribed vertices.
Hence
\begin{align*}
 \left|B'_{1,j}\cap
   \bigcap_{w\in V(F^{-})\cup P}N_{G^{\star}}(w)\right|&\ge d_{B'_{1,j}}(w_0)-pf\,53(10^4p^2f^2)\varepsilon^3D\\
 &\ge
 \left(\frac1p-50(10^4p^2f^2)\varepsilon^3\right)D
 -pf\,53(10^4p^2f^2)\varepsilon^3D>f.
\end{align*}
Thus the greedy construction succeeds at every step.  By the definition of
$\mathcal M(\partial_cF)$,
$G^{\star}[V(F^{-})\cup\bigcup_{j\ne i}S_j]$ contains a member of
$\partial_cF$.  All its vertices lie in $B=N_{G^{\star}}(u^*)$, so adjoining $u^*$ yields
a copy of $F$, a contradiction.  This proves (ii).

\textnormal{(iii)}: Finally, because $K=\emptyset$ and $B'_{1,i}$ is independent,
$V_i\setminus L$ is the disjoint union of $B'_{1,i}$ and
$(B_2\setminus L)\cap V_i$.  Therefore (i), (ii), and
\eqref{eq:boundary-edge-cap} give
\begin{equation*}
 e(G^{\star}[V_i\setminus L])\le \ex\bigl(|(B_2\setminus L)\cap V_i|,\mathcal M(\partial_cF)\bigr)+f|(B_2\setminus L)\cap V_i|\le (\Lambda_FD^{s_F}+f)|(B_2\setminus L)\cap V_i|.
\end{equation*}
Summing over $i$ proves \textnormal{(iii)}.
\end{proof}

We now carry out the bilinear envelope comparison.  Unlike the invalid
middle-$L$ switching, the $A$-side Perron mass is not counted as a local gain.
Instead, the global inequality $M\ge2I$ pays for all edges inside $A$, while
missing crossing edges incident with $L$ pay for all internal $B$-edges that
touch $L$.

Let $\widehat G:=|A|K_1\vee K_{|V_1|,\ldots,|V_p|}$. Let $\boldsymbol y$ be a Perron vector of $\widehat G$, constant on every part, normalized so that its value on $|A|K_1$ is $y_0=1$, and write $y_i$ for its value on $V_i$.
\begin{lemma}\label{lem:nonright-envelope}
  We have $\widehat G\in\mathfrak U_n(\Delta,\zeta_F;F)$. For $i\in[p]$, $\frac{\alpha+2\beta}{1+\beta}-10\varepsilon^3\le y_i\le1+5\varepsilon^3$.
%\begin{align}
% \frac{\alpha+2\beta}{1+\beta}-10\varepsilon^3
% \le y_i\le1+5\varepsilon^3.\label{eq:nonright-y-bounds}
%\end{align}
%Moreover $\widehat G\in\mathfrak U_n(\Delta,\zeta_F;F)$ and
%\begin{equation}\label{eq:nonright-envelope-bilinear}
% \boldsymbol y^\top(A(\widehat G)-A(G^{\star}))\boldsymbol x\le0.
%\end{equation}
\end{lemma}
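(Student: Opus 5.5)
We split the lemma into a membership claim and a two-sided estimate on the entries $y_i$, and obtain the latter from the explicit eigen-equations of the complete $(p+1)$-partite graph, exactly as in the proof of \Cref{lem:right-envelope-comparison}.

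\emph{Membership.} The graph $\widehat G$ is complete $(p+1)$-partite, so it has chromatic number $p+1<\chi(F)$ and is therefore $F$-free. Its maximum degree equals $n$ minus the size of its smallest part. By \eqref{eq:nonright-basic-defect}, each $|V_i|\ge D/p-2\varepsilon^3D$. We have $|A|=n-D\le D/p$, because $D\ge\lceil pn/(p+1)\rceil$. If $|A|\le\min_i|V_i|$, then $A$ is a smallest part and $\Delta(\widehat G)=D$. Otherwise some $|V_i|<|A|\le D/p$, and then $\Delta(\widehat G)=n-\min_i|V_i|$ lies in $[D,\,D+2\varepsilon^3D]$. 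In the non-right range this is still at most $n-\zeta_Fn^{s_F}$, since $n-D>\varepsilon^2n/200$ dominates $2\varepsilon^3D+\zeta_Fn^{s_F}$ for small $\varepsilon$ and large $n$. In either case $\Delta(\widehat G)\in\mathcal I(\Delta,\zeta_F)$, because $D\ge\Delta$. Hence $\widehat G\in\mathfrak U_n(\Delta,\zeta_F;F)$.

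\emph{Entry formula.} Write $\lambda=\rho(\widehat G)$, $s_0=|A|=\beta D$ and $s_i=|V_i|$. As in \Cref{lem:right-envelope-comparison}, the eigen-equations give $(\lambda+s_k)y_k=Y$ for every part $k$, where $Y:=\sum_k s_ky_k$. With the normalization $y_0=1$ this yields
\[
y_i=\frac{\lambda+s_0}{\lambda+s_i}.
\]
It remains to estimate $\lambda$ and $s_i$ relative to $D$.

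\emph{Upper bound on $y_i$.} Two facts suffice: $\lambda\ge\rho(\widehat G[B])\ge(\alpha-O(\varepsilon^3))D$ from the Rayleigh quotient, and $s_i\ge(1/p-2\varepsilon^3)D$. Using $s_0\le D/p$ as well, we get
\[
y_i\le \frac{\lambda+s_i+2\varepsilon^3D}{\lambda+s_i}\le 1+\frac{2\varepsilon^3D}{(\alpha-O(\varepsilon^3))D}\le 1+5\varepsilon^3,
\]
since $\alpha\ge 1/2$.

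\emph{Lower bound on $y_i$.} The map $\lambda\mapsto(\lambda+s_0)/(\lambda+s_i)$ is monotone, and the direction depends on the sign of $s_i-s_0$. If $s_i\le s_0$, then $y_i\ge1$, and $\frac{\alpha+2\beta}{1+\beta}\le1$ holds because $\beta\le 1/p=1-\alpha$; so there is nothing to prove. If $s_i>s_0$, the ratio is increasing in $\lambda$, so a lower bound on $\lambda$ suffices. Since $\widehat G\supseteq S_{n,D,p}$ up to the part imbalance, \eqref{eq:middle-esti-radius} together with a Lipschitz perturbation in the part sizes gives $\lambda\ge(\alpha+\beta-O(\varepsilon^3))D$. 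Alternatively, one can apply the Rayleigh bound \Cref{lem:rho-join-turan-lb} directly with part sizes within $2\varepsilon^3D$ of $D/p$. Substituting this bound, $s_0=\beta D$ and $s_i\le(1/p+2\varepsilon^3)D=(1-\alpha+2\varepsilon^3)D$ gives
\[
y_i\ge\frac{\alpha+2\beta-O(\varepsilon^3)}{1+\beta+O(\varepsilon^3)}\ge\frac{\alpha+2\beta}{1+\beta}-10\varepsilon^3.
\]
The denominator satisfies $1+\beta\ge1$, so each $O(\varepsilon^3)$ shift changes the ratio by at most a constant multiple of $\varepsilon^3$.

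The main obstacle is the lower bound on $\lambda=\rho(\widehat G)$ with an error of $O(\varepsilon^3D)$ uniformly over the whole non-right range. I would obtain it by evaluating the Rayleigh quotient of $A(\widehat G)$ on the two-block vector used in \Cref{lem:rho-join-turan-lb}. The cross term equals $2|A|D$ exactly, and the $B$-internal term is $2e(K_{|V_1|,\dots,|V_p|})\ge 2t(D,p)-O(\varepsilon^6D^2)$ by the part-size control. Then the computation leading to \eqref{eq:esti-radius-lb} applies verbatim with an extra $O(\varepsilon^3)D$ loss. The constants $5$ and $10$ are comfortable once we note that $\alpha\ge1/2$ and $\beta\le 1/2$.
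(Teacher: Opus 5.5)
Your proof is correct and follows the paper's argument. Membership comes from the degree bound $n-|V_i|\le D+2\varepsilon^3D$ together with the linear gap $n-D>\varepsilon^2n/200$. The entry bounds come from the quotient formula $y_i=(\rho(\widehat G)+|A|)/(\rho(\widehat G)+|V_i|)$, with the same case split on $|A|$ versus $|V_i|$ and monotonicity in $\rho(\widehat G)$.

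The only minor difference is how you lower-bound $\rho(\widehat G)$. The paper uses the average degree $2e(\widehat G)/n\ge\frac{\alpha+2\beta-5\varepsilon^6}{1+\beta}D\ge(\alpha+\beta-5\varepsilon^6)D$, whereas you use a two-block Rayleigh quotient. Both actually lose only order $\varepsilon^6D$, not the $\varepsilon^3D$ you state, and this is what makes the explicit constant $10$ safe.
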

\begin{proof}
The graph $\widehat G$ is $(p+1)$-partite and hence $F$-free. Recall that $\beta=\frac{n-D}{D}$.  A vertex of $|A|K_1$ has degree $D$, while a vertex of $V_i$ has degree $n-|V_i|=D+\beta D-|V_i|\le D+2\varepsilon^3D$, because $\beta D\le D/p$ and $|V_i|\ge D/p-2\varepsilon^3D$ by \eqref{eq:estimate-beta-non-right} and \eqref{eq:nonright-basic-defect}.  Therefore
\begin{equation*}
	n-\Delta(\widehat G)\ge \beta D-2\varepsilon^3D\ge\left(\frac{\varepsilon^2}{200}-2\varepsilon^3\right)D\ge\frac{\varepsilon^2}{400}D.
\end{equation*}
For sufficiently large $n$ this exceeds $\zeta_Fn^{s_F}$, so $\widehat G \in \mathfrak{U}_{n}(\Delta,\zeta_F;F)$.  Hence
$\rho(\widehat G)\le\rho(G^{\star})$,  and
\begin{equation}\label{eq:nonright-envelope-bilinear}
	 \boldsymbol y^\top(A(\widehat G)-A(G^{\star}))\boldsymbol x
	=(\rho(\widehat G)-\rho(G^{\star}))\boldsymbol y^\top\boldsymbol x \leq 0.
\end{equation}

%Set $\widehat\rho:=\rho(\widehat G)$.  
Since
\begin{equation*}
	\sum_{1\leq i<j\leq p}|V_i||V_j|
	\ge e(G^{\star}[B])-\sum_{i=1}^{p} e(G^{\star}[V_i])
	\ge t(D,p)-2\varepsilon^6D^2,
\end{equation*}
we have, using the average-degree lower bound for spectral radius,
\begin{align*}
\rho(\widehat G)\ge\frac{2e(\widehat G)}{n}\ge\frac{2|A|D+2t(D,p)-4\varepsilon^6D^2}{n}\ge\frac{\alpha+2\beta-5\varepsilon^6}{1+\beta}D\ge(\alpha+\beta-5\varepsilon^6)D.
\end{align*}
%The last step uses
%\[
% \frac{\alpha+2\beta}{1+\beta}-(\alpha+\beta)
% =\frac{\beta(1/p-\beta)}{1+\beta}\ge0.
%\]
For a complete multipartite graph, the quotient equations give
\begin{equation}\label{eq:nonright-y-quotient}
        y_i=\frac{\rho(\widehat G)+|A|}{\rho(\widehat G)+|V_i|}.
\end{equation}
If $|A|>|V_i|$, then $y_i>1$, and the lower bound on $y_i$ follows because
$(\alpha+2\beta)/(1+\beta)\le1$.  If $|A|\le|V_i|$, the right side of
\eqref{eq:nonright-y-quotient} is increasing in $\rho(\widehat G)$, and hence
\begin{equation*}
 y_i\ge\frac{(\alpha+\beta-5\varepsilon^6)D+\beta D}{(\alpha+\beta-5\varepsilon^6)D+(\frac{1}{p}+2\varepsilon^3)D}= \frac{\alpha+2\beta-5\varepsilon^6}
 {1+\beta+2\varepsilon^3-5\varepsilon^6}\ge\frac{\alpha+2\beta}{1+\beta}-10\varepsilon^3.
\end{equation*}
For the upper bound, if $|A|\le|V_i|$ then $y_i\le1$; otherwise
$0<|A|-|V_i|\le2\varepsilon^3D$, and
\begin{equation*}
	y_i=1+\frac{|A|-|V_i|}{\rho(\widehat G)+|V_i|}\leq 1+5\varepsilon^3.
\end{equation*}
This completes the proof.
\end{proof}

Recall that $\boldsymbol{x}=(x_v)_{v\in V(G^{\star})}$ be a Perron vector of $G^{\star}$. 
\begin{lemma}\label{lem:esitimate-x-y}
	For every $u\in A^+$, $v\in V_i$, and $w\in V_j$ with $i\ne j$,
	\begin{enumerate}[label=\textnormal{(\roman*)}]
		\item $x_{v}+y_i x_{u}\geq 1+\frac{\varepsilon^2}{1000}$,
		\item $y_i x_{w}+y_jx_{v} \geq\frac{2}{5}$,
		\item $y_i(x_v+x_w)\leq 3$.
	\end{enumerate}
%	\begin{align}
%		x_u+y_i x_a&\ge1+\frac{\varepsilon^2}{1000},
%		\label{eq:nonright-AB-weight}\\
%		y_i x_v+y_jx_u&\ge\frac25,
%		\label{eq:nonright-BB-weight}\\
%		y_i(x_u+x_v)&\le3.
%		\label{eq:nonright-internal-weight}
%	\end{align}
\end{lemma}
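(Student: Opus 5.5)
The three inequalities in \Cref{lem:esitimate-x-y} follow by direct substitution of the uniform Perron bounds already proved. No further switching or structural argument is needed. The inputs are \Cref{lem:nonright-perron-lower} for $\boldsymbol x$ (normalized by $\max x=1$) and \Cref{lem:nonright-envelope} for $\boldsymbol y$ (normalized by $y_0=1$), together with the range \eqref{eq:estimate-beta-non-right}, namely $\frac{\varepsilon^2}{200}<\beta\le\frac1p$, and $\alpha=\frac{p-1}{p}\ge\frac12$. The restriction $u\in A^+$ in (i) is essential, because \eqref{eq:nonright-xA-uniform} is only available there, and $x_u=0$ on $A^0$.

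For (i), I will multiply the lower bounds $y_i\ge\frac{\alpha+2\beta}{1+\beta}-10\varepsilon^3$ and $x_u\ge\frac{\alpha+\beta}{\alpha+2\beta}-12\varepsilon^3$. The factor $\alpha+2\beta$ cancels in the main term, so
\[
y_ix_u\ge\frac{\alpha+\beta}{1+\beta}-O(\varepsilon^3).
\]
Adding $x_v\ge\frac{\alpha+\beta}{1+\beta}-12\varepsilon^3$ then gives
\[
x_v+y_ix_u\ge\frac{2(\alpha+\beta)}{1+\beta}-O(\varepsilon^3)=1+\frac{2\alpha-1+\beta}{1+\beta}-O(\varepsilon^3).
\]
Since $2\alpha-1\ge0$ and $\beta\le\frac1p\le\frac12$, the gain satisfies $\frac{2\alpha-1+\beta}{1+\beta}\ge\frac{\beta}{1+\beta}\ge\frac{2\beta}{3}>\frac{\varepsilon^2}{300}$. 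This exceeds $\frac{\varepsilon^2}{1000}$ by a margin of order $\varepsilon^2$, which absorbs the $O(\varepsilon^3)$ loss once $\varepsilon$ is small. This is the only step that needs care: the $\varepsilon^3$ error terms must be genuinely smaller than the $\beta$-gain, and that is exactly where the non-right hypothesis $\beta>\varepsilon^2/200$ enters.

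For (ii), I will use the elementary inequalities $\frac{\alpha+2\beta}{1+\beta}\ge\alpha$ and $\frac{\alpha+\beta}{1+\beta}\ge\alpha$, both valid for $\alpha\le1$ and $\beta\ge0$. These give $y_i,y_j\ge\frac12-10\varepsilon^3$ and $x_v,x_w\ge\frac12-12\varepsilon^3$. Hence
\[
y_ix_w+y_jx_v\ge2\left(\tfrac12-10\varepsilon^3\right)\left(\tfrac12-12\varepsilon^3\right)>\tfrac25.
\]

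For (iii), I will use only the upper bounds $y_i\le1+5\varepsilon^3$ from \Cref{lem:nonright-envelope} and $x_v,x_w\le1$ from the normalization. This gives $y_i(x_v+x_w)\le2(1+5\varepsilon^3)\le3$.
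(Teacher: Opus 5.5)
Your proposal is correct and follows the paper's proof essentially step by step. You substitute the bounds of \Cref{lem:nonright-perron-lower} and \Cref{lem:nonright-envelope}, use the cancellation of $\alpha+2\beta$ in (i), the bound $\frac{\alpha+\beta}{1+\beta},\frac{\alpha+2\beta}{1+\beta}\ge\alpha\ge\frac12$ in (ii), and $y_i\le1+5\varepsilon^3$, $x\le1$ in (iii); the only cosmetic difference is that you treat (i) uniformly in $p$ via $2\alpha-1\ge0$ and $\frac{\beta}{1+\beta}>\frac{\varepsilon^2}{300}$, whereas the paper splits into the cases $p=2$ and $p\ge3$.
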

\begin{proof}
	\textnormal{(i)}: Combining \Cref{lem:nonright-perron-lower} and \cref{lem:nonright-envelope}, we obtain
	\begin{equation*}
		x_{v}+y_i x_{u}\ge
		\left(\frac{\alpha+\beta}{1+\beta}-12\varepsilon^3\right)
		+\left(\frac{\alpha+2\beta}{1+\beta}-10\varepsilon^3\right)
		\left(\frac{\alpha+\beta}{\alpha+2\beta}-12\varepsilon^3\right)\ge 2\frac{\alpha+\beta}{1+\beta}-34\varepsilon^3.
	\end{equation*}
	Now, $\frac{2(\alpha+\beta)}{1+\beta}-1=\frac{2\alpha-1+\beta}{1+\beta}$. If $p=2$, then $\beta\le1/2$ and
	\eqref{eq:estimate-beta-non-right} gives $\frac{\beta}{1+\beta}>\frac{\varepsilon^2/200}{3/2}=\frac{\varepsilon^2}{300}$.
	Thus
	\begin{equation*}
		x_{v}+y_ix_{u}>1+\frac{\varepsilon^2}{300}-34\varepsilon^3\ge1+\frac{\varepsilon^2}{1000},
	\end{equation*}
	where the last inequality is exactly
	$34\varepsilon\le 7/3000$, one of the local smallness requirements included
	when $\varepsilon$ was fixed before \eqref{eq:small-constant-choice}.  If
	$p\ge3$, then $2\alpha-1=(p-2)/p$ and $\beta\le1/p$, so
	\begin{equation*}
		\frac{2\alpha-1+\beta}{1+\beta}\ge\frac{p-2}{p+1}\ge\frac{1}{4}.
	\end{equation*}
	For the same initial choice of $\varepsilon$,
	$1/4-34\varepsilon^3\ge\varepsilon^2/1000$.  This proves (i) in all cases.
	
	\textnormal{(ii)}: 
%Set
%	\[
%	a_0:=\frac{\alpha+2\beta}{1+\beta},\quad
%	b_0:=\frac{\alpha+\beta}{1+\beta}.
%	\]
	Because $\alpha\ge1/2$ and $0\le\beta\le1/p$, we have
	$1/2\leq \frac{\alpha+\beta}{1+\beta} \leq\frac{\alpha+2\beta}{1+\beta}\leq 1$.  Consequently $2\frac{(\alpha+2\beta)(\alpha+\beta)}{(1+\beta)^2}\geq 1/2$, and
	\begin{align*}
		y_i x_{w}+y_jx_{v}\ge2(\frac{\alpha+2\beta}{1+\beta}-10\varepsilon^3)(\frac{\alpha+\beta}{1+\beta}-12\varepsilon^3)\ge2\frac{(\alpha+2\beta)(\alpha+\beta)}{(1+\beta)^2}-44\varepsilon^3\ge\frac12-44\varepsilon^3\ge\frac25,
	\end{align*}
	where the last inequality follows from the same fixed smallness choice of $\varepsilon$.  This proves (ii).
	
	\textnormal{(iii)}:	Finally, $x_{v},x_{w}\le1$ and $y_i\le1+5\varepsilon^3$ give $y_i(x_v+x_w)\le2(1+5\varepsilon^3)\le3,$
	again for the globally fixed sufficiently small $\varepsilon$.  This proves (iii).
\end{proof}

\begin{lemma}\label{lem:nonright-cleaning}
We have $B_2=L=\emptyset$. And in fact, $ G^{\star}=|A|K_1\vee K_{|V_1|,\ldots,|V_p|} $.
Consequently $G^{\star}\cong S_{n,D,p}$.
\end{lemma}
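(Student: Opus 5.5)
The plan is to expand the bilinear form $\boldsymbol y^\top(A(\widehat G)-A(G^{\star}))\boldsymbol x$, whose nonpositivity is \eqref{eq:nonright-envelope-bilinear}, pair by pair, and to show that it is strictly positive unless $G^{\star}=\widehat G$.

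\textbf{Splitting the form.} Every pair $\{a,b\}$ contributes $\pm(y_ax_b+y_bx_a)$. The contribution is $+$ for a missing crossing pair of $\widehat G$ and $-$ for an edge of $G^{\star}$ inside $A$ or inside some $V_i$. We split it into four terms:
\[
\Sigma_{AB}+\Sigma_{BB}-\Sigma_{AA}-\Sigma_{\rm int}\le 0 .
\]
Here $\Sigma_{AB}$ runs over missing $A$--$B$ pairs, $\Sigma_{BB}$ over missing $V_i$--$V_j$ pairs, $\Sigma_{AA}$ over edges inside $A$, and $\Sigma_{\rm int}$ over edges inside the $V_i$.

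Vertices of $A^0$ have $x=0$ and no neighbours in $B\cup A^+$. For them we only use $y\ge0$, and we treat them separately at the end.

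\textbf{Paying for edges inside $A$.} Each missing pair $ab$ with $a\in A^+$, $b\in V_i$ has weight $y_0x_b+y_ix_a=x_b+y_ix_a\ge 1+\varepsilon^2/1000$ by Lemma~\ref{lem:esitimate-x-y}(i). Each edge $aa'$ inside $A^+$ costs $x_a+x_{a'}\le 2$, since $y_0=1$. The rooted inequality $M\ge 2I$ is what makes this balance. Summed naively it only gives $M(1+\varepsilon^2/1000)-2I\ge \frac{\varepsilon^2}{1000}M$, and it also misattributes $A^0$. So I would apply the inequality vertexwise, as in the proof of Lemma~\ref{lem:root-defect-main}: for each $a\in A^+$ we have $|B|-d_B(a)\ge d_A(a)$. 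Charge each edge $aa'$ half to the missing pairs at $a$ and half to those at $a'$. Then each $a\in A^+$ pays $d_A(a)$ units (at cost $\le 1$ per unit, counting each endpoint's share) out of at least $d_A(a)$ missing pairs, each of value $>1$. Edges between $A^+$ and $A^0$ do not exist, and $A^0$ has zero $x$-entries, so edges inside $A^0$ cost nothing.

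The result is
\[
\Sigma_{AB}-\Sigma_{AA}\ge \frac{\varepsilon^2}{1000}\,M_{A^+}\ge0,
\]
where $M_{A^+}$ counts missing pairs between $A^+$ and $B$. In particular, positivity is strict whenever $B_2\neq\emptyset$, because a vertex of $B_2$ has at least $|A|/(10^4p^2f^2)$ missing pairs to $A$. Some of these might go to $A^0$ instead of $A^+$. I would try to show $A^0=\emptyset$: otherwise moving an $A^0$ vertex into the Perron component by joining it to all of $B_1'$, as in Lemma~\ref{lem:nonright-switch-certificate}(i), strictly increases $\rho$ while staying $F$-free and admissible.

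\textbf{Paying for edges inside the $V_i$.} By Lemma~\ref{lem:nonright-B2-internal}(iii), the edges avoiding $L$ number at most $(\Lambda_FD^{s_F}+f)|B_2\setminus L|$. By Lemma~\ref{lem:esitimate-x-y}(iii) each costs at most $3$. These are paid from $\frac{\varepsilon^2}{1000}M_{A^+}\ge \frac{\varepsilon^2}{1000}\cdot\frac{|A|}{10^4p^2f^2}|B_2|$, which is possible because $|A|>\varepsilon^2D/200\gg D^{s_F}$ in the non-right range.

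Edges inside $V_i$ touching $L$ are paid by $\Sigma_{BB}$. A vertex $v\in L\cap V_i$ has $d_B(v)\le(1-\frac1p-c\varepsilon^3)D$, so it misses at least
\[
\sum_{j\ne i}|V_j|-d_B(v)+d_{V_i}(v)\ge d_{V_i}(v)+\Omega(\varepsilon^3 D)
\]
crossing pairs. Each such missing pair has weight $\ge 2/5$ by Lemma~\ref{lem:esitimate-x-y}(ii), while each internal edge costs $\le 3$. This ratio is the main obstacle: $2/5<3$, so one missing pair per internal edge does not suffice. I would first try to sharpen (iii) to cost $\le 2(1+5\varepsilon^3)$, and (ii) to $\ge 2\cdot\frac{\alpha+2\beta}{1+\beta}\cdot\frac{\alpha+\beta}{1+\beta}$, which is close to the cost. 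If that fails, I would instead bound $d_{V_i}(v)$ for $v\in L$ by $O(\varepsilon^3D)$ via the max-cut property $d_{V_i}(v)\le d_{V_j}(v)$, which yields
\[
d_{V_i}(v)\le \frac{d_B(v)}{p}\le \Bigl(\frac1p-\frac{1}{p^2}\Bigr)D .
\]
Then I would use the $\Omega(\varepsilon^3D)$ surplus together with the factor $d_{V_i}\cdot 3$ versus $(\ldots)\cdot 2/5$. The honest route is to compare against the count of missing pairs $\ge (p-1)d_{V_i}(v)+\ldots$, since max-cut forces $v$ to miss many pairs in every other part. This gives weight $(p-1)\cdot\frac25\cdot d_{V_i}$ plus surplus; for $p=2$ a further factor is still needed from sharper $x$-bounds.

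\textbf{Conclusion.} Once the total is $\ge c(|B_2|+|L|+\#\text{missing pairs})$, nonpositivity forces $B_2=L=\emptyset$, no missing crossing pairs, and no internal edges. Hence $G^{\star}=\widehat G$. Lemma~\ref{lem:model-spectral-comparison}(ii) and extremality then give balanced parts, so $G^{\star}\cong S_{n,D,p}$.
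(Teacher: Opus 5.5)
You handle the $A$-side the same way the paper does: you apply $|B|-d_B(a)\ge d_A(a)$ vertexwise on $A^+$ to get $\frac{\varepsilon^2}{1000}M$. Your treatment of the internal edges avoiding $L$ also matches: you use \Cref{lem:nonright-B2-internal}(iii) and pay from the $B_2$ share of $M$, since $|A|\gg D^{s_F}$.

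The genuine gap is the step you flag yourself: paying for internal $V_i$-edges that touch $L$. You leave it unresolved, and the routes you suggest do not close it.
\begin{itemize}
\item The max-cut inequality $d_{V_i}(v)\le d_{V_j}(v)$ only gives $d_{V_i}(v)\le d_B(v)/p\le(\frac1p-\frac1{p^2})D$. This is of order $D$, not $\varepsilon^3D$.
\item Max-cut is a \emph{lower} bound on the neighbours of $v$ in the other parts. It therefore bounds the missing crossing pairs at $v$ from \emph{above}, so it cannot yield ``$(p-1)d_{V_i}(v)$'' missing pairs.
\item Sharpening the weights in \Cref{lem:esitimate-x-y} does not rescue a one-for-one matching of $d_{V_i}(v)$ missing pairs against $d_{V_i}(v)$ internal edges. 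For $p=2$, $\beta=1/2$ the pair-weight bound is only about $4/3$, against an edge-cost bound of about $2$. A missing pair with both ends in $L$ must also be shared between them.
\end{itemize}
The surplus term has to dominate $d_{V_i}(v)$ outright.

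The missing ingredient is already proved: \Cref{lem:nonright-K-empty} gives $K=\emptyset$. That is, $d_{V_i}(v)<(10^4p^2f^2)\varepsilon^3D$ for \emph{every} $v\in V_i$, including $v\in L$, whereas $L$ is defined with the threshold $40(10^4p^2f^2)\varepsilon^3D$. Write $c:=10^4p^2f^2$. Then each $v\in L\cap V_i$ misses at least $d_{V_i}(v)+(40c-2)\varepsilon^3D$ crossing pairs, a surplus forty times any possible internal degree.

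Put $R_L:=\sum_i\sum_{v\in L\cap V_i}d_{V_i}(v)$. Use the crude weights $2/5$ (missing pair) and $3$ (internal edge), and note each missing pair is counted at most twice. The positive crossing contribution $P_L$ at $L$ and the negative internal contribution $N_L$ at $L$ then satisfy
\[
P_L\ge\tfrac15\bigl(R_L+(40c-2)\varepsilon^3D|L|\bigr),\qquad N_L\le 3R_L<3c\varepsilon^3D|L|.
\]
Hence $P_L-N_L>\frac{26c-2}{5}\varepsilon^3D|L|>0$ for every $p\ge2$, with no extra factor needed when $p=2$.

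A smaller point concerns $A^0$. You plan to force $A^0=\emptyset$ by a switching ``as in \Cref{lem:nonright-switch-certificate}(i)'', but that does not give strictness as stated: the Rayleigh gain there is $2x_u(\cdots)=0$ when $x_u=0$. It is simpler to argue as the paper does. Each missing $A^0$--$B$ pair has weight $y_0x_v\ge\frac{\alpha+\beta}{1+\beta}-12\varepsilon^3>\frac{\varepsilon^2}{1000}$ by \Cref{lem:nonright-perron-lower}, and edges inside $A^0$ have weight $0$. So the full $\frac{\varepsilon^2}{1000}M$ is available without knowing $A^0=\emptyset$ in advance.
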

\begin{proof}
Use the notation of \Cref{lem:nonright-envelope}. A direct calculation gives
\begin{equation}\label{eq:nonright-bilinear-expansion}
	\begin{split}
		\boldsymbol y^\top(A(\widehat G)-A(G^{\star}))\boldsymbol x&=\sum_{i=1}^p\sum_{\substack{u\in A,\ v\in V_i\\uv\notin E(G^{\star})}}
		(x_{v}y_{0}+y_i x_{u})
		-\sum_{uu'\in E(G^{\star}[A])}(x_u+x_{u'})y_0\\
		&\quad+
		\sum_{1\le i<j\le p}
		\sum_{\substack{v\in V_i,\ w\in V_j\\vw\notin E(G^{\star})}}
		(y_i x_w+y_jx_v)
		-\sum_{i=1}^p\sum_{vw\in E(G^{\star}[V_i])}y_i(x_v+x_w).
	\end{split}
\end{equation}
There are exactly $M$ missing $A$--$B$ pairs and $I$ edges inside $A$, and the support decomposition gives
\[
M=\sum_{u\in A^+}(D-d_B(u))+D|A^0|.
\]
For $u\in A^+$, \cref{lem:esitimate-x-y} (i) applies.  For $u\in A^0$, we have
$x_u=0$ and $e_{G^{\star}}(A^0,B)=e_{G^{\star}}(A^0,A^+)=0$; hence the internal $A^0$-edges have zero weight, while every missing $A^0$--$B$ pair contributes $x_v\ge(\alpha+\beta)/(1+\beta)-12\varepsilon^3>\varepsilon^2/1000$.  Applying the rooted inequality separately on $A^+$ gives
\begin{equation}\label{eq:nonright-A-payment}
 \begin{aligned}
 &\sum_{i=1}^p\sum_{\substack{u\in A,\ v\in V_i\\uv\notin E(G^{\star})}}
 (x_{v}y_{0}+y_i x_{u})
 -\sum_{uu'\in E(G^{\star}[A])}(x_u+x_{u'})y_0\\
 &\ge \left(1+\frac{\varepsilon^2}{1000}\right)\sum_{u\in A^+}(D-d_B(u))
      -2e(G^{\star}[A^+])
      +\frac{\varepsilon^2}{1000}D|A^0|\\
 &\ge \frac{\varepsilon^2}{1000}\left(\sum_{u\in A^+}(D-d_B(u))+D|A^0|\right)
 =\frac{\varepsilon^2}{1000}M.
 \end{aligned}
\end{equation}
Furthermore, by \eqref{eq:nonright-B12}, every $v\in B_2$ has more than $|A|/(10^4p^2f^2)$ non-neighbours in
$A$, and hence
\begin{equation*}
        M\ge\frac{|A|}{(10^4p^2f^2)}|B_2|.
\end{equation*}

Consider next the internal edges of the $V_i$ having both endpoints outside
$L$.  By \cref{lem:esitimate-x-y} (iii) and \Cref{lem:nonright-B2-internal}\textnormal{(iii)}, their total negative contribution is
at most
\begin{equation}\label{eq:nonright-nonL-negative}
        3(\Lambda_FD^{s_F}+f)|B_2\setminus L|.
\end{equation}
Combining \eqref{eq:nonright-A-payment}--\eqref{eq:nonright-nonL-negative},
we obtain that, if $B_2\ne\emptyset$,
\begin{equation}\label{eq:nonright-B2-positive}
 \frac{\varepsilon^2}{1000}M-3(\Lambda_FD^{s_F}+f)|B_2\setminus L|\ge\left(\frac{\varepsilon^2|A|}{1000(10^4p^2f^2)}-3\Lambda_FD^{s_F}-3f\right)|B_2|>0.
\end{equation}
Indeed, $|A|=n-D\ge\kappa_Fn^{s_F}$ by
\Cref{lem:endpoint-buffer-exclusion}.  From \eqref{eq:small-constant-choice},
\[
 \kappa_F\ge2\zeta_F
 =2\cdot10^6p(10^4p^2f^2)
   (\Lambda_F+C_\partial+f+2)\varepsilon^{-2},
\]
and therefore
\begin{align*}
 \frac{\varepsilon^2|A|}{1000(10^4p^2f^2)}\ge\frac{\varepsilon^2\kappa_F}{1000(10^4p^2f^2)}n^{s_F}\ge2000p(\Lambda_F+C_\partial+f+2)n^{s_F}>10(\Lambda_F+f+1)n^{s_F}.
\end{align*}
Since $D^{s_F}\le n^{s_F}$ and $n^{s_F}\ge1$, the coefficient in
\eqref{eq:nonright-B2-positive} is bounded below by
\begin{equation*}
	\bigl(10(\Lambda_F+f+1)-3\Lambda_F-3f\bigr)n^{s_F}	=(7\Lambda_F+7f+10)n^{s_F}>0.
\end{equation*}
This verifies the strict positivity in \eqref{eq:nonright-B2-positive}
without any further choice of constants.

It remains to pay for the internal edges that touch $L$.  Fix
$v\in L\cap V_i$.  By \Cref{lem:nonright-K-empty},
$d_{V_i}(v)<(10^4p^2f^2)\varepsilon^3D$.  Also,
$|V_i|\le D/p+2\varepsilon^3D$ by
\eqref{eq:nonright-basic-defect}, while the definition of $L$ gives
$d_B(v)\le(1-1/p-40(10^4p^2f^2)\varepsilon^3)D$.  The exact missing-crossing count satisfies
\begin{equation}\label{eq:nonright-L-missing-cross}
 \sum_{j\in [p]\setminus\{i\}}(|V_j|-d_{V_j}(v))=D-|V_i|-d_B(v)+d_{V_i}(v)\ge d_{V_i}(v)+(40(10^4p^2f^2)-2)\varepsilon^3D.
\end{equation}
Let $P_L$ denote the positive contribution in
\eqref{eq:nonright-bilinear-expansion} from missing crossing pairs having at
least one endpoint in $L$, and let $N_L$ denote the negative contribution from
internal $V_i$-edges having at least one endpoint in $L$.  Each missing crossing edge is counted at most twice when
\eqref{eq:nonright-L-missing-cross} is summed over $v\in L$, and each such edge
has weight at least $2/5$ by \cref{lem:esitimate-x-y} (ii).  If $Q_L$ denotes
the number of these missing crossing pairs and $T_L$ the triple sum below,
then $T_L\le2Q_L$, so their weighted positive contribution is at least
$(2/5)Q_L\ge T_L/5$.  Hence
\begin{equation*}
 P_L\ge\frac15
 \sum_{i=1}^p\sum_{v\in L\cap V_i}
 \sum_{j\in[p]\setminus\{i\}}(|V_j|-d_{V_j}(v)).
\end{equation*}
On the other hand, every internal edge has weight at most $3$ by
\cref{lem:esitimate-x-y} (iii), and an internal edge touching $L$ is
counted at least once by the sum of the internal degrees of vertices in $L$.
Thus
\begin{equation}\label{eq:nonright-NL-upper}
        N_L\le3\sum_{i=1}^p\sum_{v\in L\cap V_i}d_{V_i}(v).
\end{equation}
Writing
$R_L:=\sum_{i=1}^{p}\sum_{v\in L\cap V_i}d_{V_i}(v)$, equations
\eqref{eq:nonright-L-missing-cross}--\eqref{eq:nonright-NL-upper} give
\begin{align}
 P_L-N_L
 &\ge \frac15R_L
   +\frac{40(10^4p^2f^2)-2}{5}\varepsilon^3D|L|-3R_L\nonumber\\
 &=\frac{40(10^4p^2f^2)-2}{5}\varepsilon^3D|L|
   -\frac{14}{5}R_L\nonumber\\
 &>\frac{26(10^4p^2f^2)-2}{5}\varepsilon^3D|L|
 >5(10^4p^2f^2)\varepsilon^3D|L|,
 \label{eq:nonright-L-positive}
\end{align}
where $R_L<(10^4p^2f^2)\varepsilon^3D|L|$ and the last inequality uses
$(10^4p^2f^2)>2$.

The negative edges paid for in \eqref{eq:nonright-nonL-negative} have both
endpoints outside $L$, whereas those counted by $N_L$ have at least one
endpoint in $L$; hence these two negative contributions are disjoint.  Likewise,
the payment in \eqref{eq:nonright-A-payment} uses missing $A$--$B$ pairs,
whereas $P_L$ uses missing crossing pairs between distinct $V_i$, so no
positive term is used twice.

The positive crossing terms not incident with $L$ have not been used.  Hence
\eqref{eq:nonright-bilinear-expansion},
\eqref{eq:nonright-B2-positive}, and
\eqref{eq:nonright-L-positive} imply that the left side of
\eqref{eq:nonright-envelope-bilinear} is strictly positive whenever
$B_2\cup L\ne\emptyset$.  This contradicts
\eqref{eq:nonright-envelope-bilinear}.  Therefore
\begin{equation}\label{eq:nonright-B2-L-empty}
        B_2=L=\emptyset.
\end{equation}

Now, \Cref{lem:nonright-K-empty} gives $K=\emptyset$, so
$V_i'=V_i$; and \eqref{eq:nonright-B2-L-empty} gives $B_1=B$, hence
$B'_{1,i}=V_i$.  By \cref{lem:non-right-structural} (iii), every $V_i$ is
independent.  Returning to \eqref{eq:nonright-bilinear-expansion}, all negative
terms inside the $V_i$ vanish.  Therefore
\begin{equation*}
 0\ge \boldsymbol y^\top(A(\widehat G)-A(G^{\star}))\boldsymbol x\ge \frac{\varepsilon^2}{1000}M+\frac25\sum_{1\le i<j\le p}\bigl(|V_i||V_j|-e_{G^{\star}}(V_i,V_j)\bigr).
\end{equation*}
Both terms on the last line are non-negative.  Hence $ M=0,$ $e_{G^{\star}}(V_i,V_j)=|V_i||V_j|$ for $i\ne j$.  In particular, $A^0=\emptyset$, since every vertex of $A^0$ misses all $D$ vertices of $B$.  By \eqref{eq:nonright-root-defect}, $M=0$ also gives $I=0$.  Thus, $A$ and all
$V_i$ are independent, all $A$--$B$ pairs are edges, and all pairs between
distinct $V_i,V_j$ are edges.  Consequently
\begin{equation*}
	G^{\star}=\widehat{G}=|A|K_1\vee K_{|V_1|,\ldots,|V_p|}.
\end{equation*}
Finally, upper-tail extremality gives
$\rho(G^{\star})\ge\rho(S_{n,D,p})$, while
\Cref{lem:model-spectral-comparison} (ii) gives
$\rho(\widehat G)\le\rho(S_{n,D,p})$, with equality only for the balanced
choice of $|V_1|,\ldots,|V_p|$.  Hence, equality holds and
$G^{\star}\cong S_{n,D,p}$.
\end{proof}

The preceding lemma yields the following proposition.
\begin{proposition}\label{prop:nonright-range-exact}
Let $G^{\star}\in\mathfrak U_n(\Delta,\zeta_F;F)$ be an upper-tail extremal graph and put $D=\Delta(G^{\star})$.  If $\left\lceil\frac{p}{p+1}n\right\rceil\le D<\left(1-\frac{\varepsilon^2}{200}\right)n$ and $n-D\ge\kappa_Fn^{s_F}$, then $G^{\star}\cong S_{n,D,p}$ for all sufficiently large $n$.
\end{proposition}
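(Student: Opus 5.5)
This proposition is essentially a packaging of the non-right cleaning argument, in the same way that \Cref{prop:right-range-exact} packages the right-end argument. My plan is to check that every hypothesis used in \Cref{subsec:non-right} is supplied by the assumptions of the proposition, and then to quote \Cref{lem:nonright-cleaning}.

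First, fix an upper-tail extremal $G^{\star}$ with $D=\Delta(G^{\star})$ in the stated range and with $n-D\ge\kappa_Fn^{s_F}$. Choose a Perron component $C^{\star}$. \Cref{lem:perron-component-root} gives a root $u^*\in V(C^{\star})$ of degree $D$, and we form $A,B,A^+,A^0$ as in \eqref{eq:extremal-uptail-decomposition}. Let $V_1\cup\cdots\cup V_p$ be a max-cut partition of $B$. We need $S_{n,D,p}\in\mathfrak U_n(\Delta,\zeta_F;F)$, because upper-tail extremality then gives $\rho(G^{\star})\ge\rho(S_{n,D,p})$. This holds since $D\ge\Delta$, $n-D\ge\kappa_Fn^{s_F}\ge\zeta_Fn^{s_F}$, and $D\ge\lceil pn/(p+1)\rceil$; the last condition ensures $\Delta(S_{n,D,p})=D$. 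With this in hand, \Cref{lem:B-partition} yields \eqref{eq:nonright-basic-defect}, and the bound $\beta>\varepsilon^2/200$ follows from the non-right assumption.

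Next, I run the chain of lemmas in order. \Cref{lem:non-right-structural} bounds the exceptional sets and gives the reservoir estimates. \Cref{lem:nonright-K-empty} shows $K=\emptyset$. \Cref{lem:nonright-switch-certificate} supplies the switching inequalities; its admissibility checks use only the linear gap $n-D>\varepsilon^2n/200$, so the switched graphs stay in $\mathfrak U_n$. These feed into the uniform Perron bounds of \Cref{lem:nonright-perron-lower}. \Cref{lem:nonright-B2-internal} controls the internal edges outside $L$. \Cref{lem:nonright-envelope} and \Cref{lem:esitimate-x-y} then provide the envelope $\widehat G$ and the weight estimates.

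Finally, \Cref{lem:nonright-cleaning} gives $B_2=L=\emptyset$ and $M=0$, hence $G^{\star}=\widehat G$. The balanced-part comparison in \Cref{lem:model-spectral-comparison}(ii), together with $\rho(G^{\star})\ge\rho(S_{n,D,p})$, then forces $G^{\star}\cong S_{n,D,p}$.

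The main thing to verify is the one place where the buffer $n-D\ge\kappa_Fn^{s_F}$ matters rather than the linear gap. This is the positivity \eqref{eq:nonright-B2-positive}: the payment $\varepsilon^2|A||B_2|/(1000\cdot10^4p^2f^2)$ must beat the boundary term $3(\Lambda_FD^{s_F}+f)|B_2|$. I would recheck it directly from $|A|=n-D\ge\kappa_Fn^{s_F}$, the choice \eqref{eq:small-constant-choice} of $\zeta_F$ and $\kappa_F\ge2\zeta_F$, and $D^{s_F}\le n^{s_F}$. Since $\varepsilon$ is fixed before $\zeta_F$, this comparison is uniform in $n$, and the proof closes for all sufficiently large $n$.
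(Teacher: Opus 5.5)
Your proposal is correct and follows the paper's route. The paper's proof of this proposition is just an appeal to \Cref{lem:nonright-cleaning}, which closes exactly the chain of non-right lemmas you list: the structural bounds, $K=\emptyset$, the switching certificates, the Perron lower bounds, the boundary control outside $L$, and the envelope comparison. One small correction to your closing remark: in this range the positivity in \eqref{eq:nonright-B2-positive} does not really depend on the buffer $n-D\ge\kappa_Fn^{s_F}$, because the linear gap already gives $|A|>\varepsilon^2D/200$, which beats $3\Lambda_FD^{s_F}+3f$ for large $n$ since $s_F<1$; the paper verifies it through $\kappa_F$, and either argument works.
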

\begin{proof}
The hypotheses are precisely those of \Cref{lem:nonright-cleaning}, which gives the asserted isomorphism.
\end{proof}

\subsection{\texorpdfstring{Proof of \cref{thm:exact-color-critical-main}}{Proof of the main theorem}}
Together with \Cref{prop:right-range-exact} and \Cref{prop:nonright-range-exact}, we have the following proposition.
\begin{proposition}\label{prop:upper-tail-exact}
Let $G^{\star}\in\mathfrak U_n(\Delta,\zeta_F;F)$ be an upper-tail extremal graph, put $D=\Delta(G^{\star})$, and assume $n-D\ge\kappa_Fn^{s_F}$. Then $G^{\star}\cong S_{n,D,p}$.
\end{proposition}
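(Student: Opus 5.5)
This proposition is a case split on the actual degree $D=\Delta(G^{\star})$, with each case already handled by one of the two preceding propositions. The only work is to check that their hypotheses are met.

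First, $G^{\star}\in\mathfrak U_n(\Delta,\zeta_F;F)$ gives $D\ge\Delta\ge\left\lceil\frac{p}{p+1}n\right\rceil$, so $D$ lies in the admissible range in which the rooted decomposition \eqref{eq:extremal-uptail-decomposition} is defined. \Cref{lem:perron-component-root} supplies a root $u^*$ of degree $D$ in a Perron component, and \Cref{lem:B-partition} applies, since $S_{n,D,p}\in\mathfrak U_n(\Delta,\zeta_F;F)$ forces $\rho(G^{\star})\ge\rho(S_{n,D,p})$. The buffer hypothesis $n-D\ge\kappa_Fn^{s_F}$ is assumed in the statement, so we do not need to invoke \Cref{lem:endpoint-buffer-exclusion} here. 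This buffer is exactly what makes the local switchings used in both subsections stay inside the upper tail.

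We then split at the threshold $\left(1-\frac{\varepsilon^2}{200}\right)n$, using the same fixed $\varepsilon$ chosen in \Cref{sec:windows}. If $D\ge\left(1-\frac{\varepsilon^2}{200}\right)n$, then together with $n-D\ge\kappa_Fn^{s_F}$ the hypotheses of \Cref{prop:right-range-exact} hold, and it gives $G^{\star}\cong S_{n,D,p}$. Otherwise $\left\lceil\frac{p}{p+1}n\right\rceil\le D<\left(1-\frac{\varepsilon^2}{200}\right)n$ and the buffer still holds, so \Cref{prop:nonright-range-exact} gives the same conclusion. The two ranges cover every admissible $D$.

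There is no genuine obstacle. The one point to verify is uniformity: the largeness threshold for $n$ must be the maximum of the thresholds in the two propositions, and both depend only on $F$, $p$ and the fixed constants $\varepsilon,\zeta_F,\kappa_F$, not on $D$ or $\Delta$. We take $n$ beyond that maximum, so the conclusion holds for all sufficiently large $n$ uniformly in the degree layer.
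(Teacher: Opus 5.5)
Your proposal is correct and is the paper's argument: split at $D=\bigl(1-\frac{\varepsilon^2}{200}\bigr)n$, apply \Cref{prop:right-range-exact} in the right range and \Cref{prop:nonright-range-exact} otherwise, with the buffer $n-D\ge\kappa_Fn^{s_F}$ taken from the hypothesis. Your added checks — that $D\ge\Delta\ge\left\lceil\frac{p}{p+1}n\right\rceil$, and that the largeness threshold for $n$ is uniform in $D$ — are consistent with what those two propositions already assume.
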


\begin{proof}
If $D\ge(1-\varepsilon^2/200)n$, apply \Cref{prop:right-range-exact}; otherwise apply \Cref{prop:nonright-range-exact}.
\end{proof}

\begin{proof}[Proof of \cref{thm:exact-color-critical-main}]
	Let $G^{\star}$ be an arbitrary upper-tail extremal graph in $\mathfrak U_n(\Delta,\zeta_F;F)$ and put $D=\Delta(G^{\star})$.  The endpoint buffer gives $n-D\geq\kappa_Fn^{s_F}$.  By \Cref{prop:upper-tail-exact}, $G^{\star}\cong S_{n,D,p}$. Since $D\geq\Delta$ and $D\mapsto\rho(S_{n,D,p})$ is strictly decreasing by \Cref{lem:model-spectral-comparison}(iii), $\rho(G^{\star})=\rho(S_{n,D,p})\leq\rho(S_{n,\Delta,p})$. The graph $S_{n,\Delta,p}$ belongs to the upper-tail class, so the reverse inequality follows from upper-tail extremality.  Thus the upper-tail maximum equals $\rho(S_{n,\Delta,p})$, and equality forces $D=\Delta$.

	Every $n$-vertex $F$-free graph $G$ with $\Delta(G)=\Delta$ belongs to the upper-tail class.  Hence $\rho(G)\le \rho(S_{n,\Delta,p})$. If equality holds, then $G$ itself attains the upper-tail maximum and is therefore upper-tail extremal.  Applying \Cref{prop:upper-tail-exact} with $D=\Delta$ gives $G\cong S_{n,\Delta,p}$.
\end{proof}

\section{Concluding remarks}

The theorem gives the exact spectral form of the rooted high-maximum-degree phenomenon for color-critical forbidden graphs with $\chi(F)\ge4$. The proof combines the stability and decomposition-family estimates from \cite{paper1} with an upper-tail argument. The upper-tail formulation permits Perron-vector switchings while the endpoint buffer controls the maximum degree. The reduction to a component attaining the spectral radius, followed by the multipartite envelope comparison, forces the rooted model. Thus, the edge theorem of Huo and Yuan \cite{Huo-Yuan} and the stability result of \cite{paper1} are upgraded to an exact spectral statement.

The case $\chi(F)=3$ is not covered here. Then $S_{n,\Delta}^{(2)}=K_{n-\Delta,\Delta}$, and the right-endpoint comparison has a square-root character rather than the linear expansion used for $p\ge2$. Moreover, the relevant boundary scale in this case need not be governed by the exponent $s_F$ used above; a different exponent may arise from the corresponding boundary family. Thus, the three-chromatic case differs from the case $\chi(F)\ge4$ both in the endpoint analysis and, potentially, in the boundary scale.

\begin{conjecture}\label{conj:three-chromatic-exact}
Let $F$ be a connected color-critical graph with $\chi(F)=3$. Then there is an exponent $\tau_F\in[0,1)$, not necessarily equal to $s_F$, such that, for all sufficiently large $n$, the unique spectral maximizer among $n$-vertex $F$-free graphs with maximum degree $\Delta$ in the range $\left\lceil\frac n2\right\rceil\le \Delta\le n-\Theta(n^{\tau_F})$ is $K_{n-\Delta,\Delta}$.
\end{conjecture}

\section*{Declaration of competing interest}
The author declares no competing interests.

%\section*{Declaration of generative AI and AI-assisted technologies in the manuscript preparation process}
%The author used AI-assisted tools only for language editing, editorial organization, and LaTeX consistency checks.  The author reviewed the manuscript and is responsible for all mathematical content.

\section*{Acknowledgments}
The author would like to thank Professor Yongtang Shi and Professor Shuchao Li for their valuable comments and suggestions on this manuscript.

\end{document}